\numberwithin{equation}{section}
\newtheorem{theorem}{Theorem}[section] %
\newtheorem{lemma}{Lemma}[section] %
\newtheorem{corollary}{Corollary}[section] %
\newtheorem{definition}{Definition}[section] %
\newtheorem{remark}{Remark}[section] %
\newcommand{\vertiii}[1]{{\left\vert\kern-0.25ex\left\vert\kern-0.25ex\left\vert #1
    \right\vert\kern-0.25ex\right\vert\kern-0.25ex\right\vert}}
\title{Wong-Zakai approximation for the dynamics of stochastic evolution equations driven by rough path with Hurst index $H\in(\frac{1}{3},\frac{1}{2}]$\thanks
{This work is supported in part by a NSFC Grant No. 12171084 and the fundamental Research Funds for the Central Universities.}}
\author{Qiyong Cao$^1$\thanks{E-mail addresses: xjlyysx@163.com (Q. Cao)}, Hongjun Gao$^2$\thanks{Corresponding author: hjgao@seu.edu.cn(H. Gao).}\\ {\small \it 1. School of Mathematical Sciences, Nanjing Normal University, Nanjing 210023, P. R. China}\\
 {\small \it $2$. School of Mathematics, Southeast University, Nanjing 211189, P. R. China}
}
\date{}
\begin{document}
\maketitle
\begin{abstract}
In this paper, we obtain the existence of random attractors for a class of evolution equations driven by a geometric
fractional Brownian rough path with Hurst index $H\in(\frac{1}{3},\frac{1}{2}]$ and establish the upper semi-continuity of random attractors $\mathcal{A}_{\eta}$  for the approximated systems of the evolution equations.
\end{abstract}

{\bf 2020 Mathematics Subject Classification:} Primary: 37L55; 60H15
Secondary: 37H05; 35B41

{\bf Keywords:} rough path theory; Wong-Zakai approximation; random
dynamical system; upper semi-continuity
\section{Introduction}
In this paper, we consider the existence of random attractors for a class of evolution equations driven by a geometric
fractional Brownian rough path(GFBRP) with Hurst index $H\in(\frac{1}{3},\frac{1}{2}]$. There are some results about  the existence of random attractors for stochastic (partial)  differential equations driven by a fractional Brownian motion(fBm). For additive noise with Hurst index $H\in(0,1]$, the cohomology  method  transforms stochastic (partial) differential equations  to  random  differential equations, then deterministic method can be used to construct random attractors. We refer to\cite{MR3374736,MR3225217} and references therein.  For nonlinear multiplicative noise, Garrido-Atienza, Maslowski, Schmalfuss\cite{MR2738732} considered  the  existence of random attractors of stochastic differential equations(SDEs) driven by a fBm with Hurst index $H>\frac{1}{2}$, where the stochastic integral  can be defined by the  fractional integral. Gao, Garrido-Atienza, Schmalfuss\cite{MR3226746} extended the existence of random attractors of SDEs to a class of evolution equations driven by an infinite dimensional fBm with Hurst index $H>\frac{1}{2}$, they used the modified H\"{o}lder space $C^{\beta,\thicksim}([0,T],V)$ to overcome semigroup is not H\"{o}lder continuity at zero.  However, there are a few of works for random attractors of SDEs(or SPDEs)  driven by fBm with Hurst index $H\in (\frac{1}{3},\frac{1}{2}]$.   Duc\cite{MR4385780} proved the existence of random attractors for rough differential equations driven by a Gaussian rough path which corresponding to a Gaussian process with $(\frac{1}{3},\frac{1}{2})\ni\nu$-H\"{o}lder regularity a.s., and the drift term is locally Lipschitz and satisfies $\langle f(y),y \rangle\leq \|y\|(D_{1}-D_{2}\|y\|)$. In particular, the Doss-Sussmann technique was used to derive the uniform energy estimate, somehow it can be regard as   cohomology  method.

\smallskip

To the best of our knowledge, for stochastic partial differential equations(SPDEs) driven by multiplicative fBm with $H\in (\frac{1}{3},\frac{1}{2})$, there are only few works on the solutions of SPDEs can generate a random dynamical system. About the global well-posedness of the solution.  Garrido-Atienza, Lu, Schmalfuss\cite{MR2998821} considered a class of evolution equations driven by an infinite dimensional fBm with Hurst index $H\in (\frac{1}{3},\frac{1}{2}]$.  They constructed a second order process which depends on the semigroup $S$ by the fractional integral\cite{MR2998821}, and then the global solution can be obtained.  In addition, they used the fractional integral to construct rough paths and consider the global solutions of a class of evolution equations driven by fBm with Hurst index $H\in (\frac{1}{3},\frac{1}{2}]$  and its  random dynamical system in \cite{MR3479690}. Hesse and Neam\c{t}u \cite{MR4001062} constructed the local solutions for a class of rough evolution equation driven by a rough path via establishing rough integral, and then  obtained the global solutions\cite{MR4097587}. Compared with finite-dimensional case\cite{MR4385780,MR4174393}, the stochastic integrals in the above works have more complex structure and therefore it is vital to define the controlled rough paths  as finite-dimensional case. Thus,
Gerasimovi\v{c}s, Hocquet, Nilssen\cite{MR4299812} established the controlled rough paths over interpolation spaces as finite dimensional case. Using the approach of \cite{MR4299812}, Hesse and Neam\c{t}u\cite{hesse2021global} constructed the global solutions for SPDEs driven by the finite-dimensional rough paths. We will use this framework to construct the random attractors.  In addition, Kuehn and Neam\c{t}u\cite{kuehn2021center} also established center manifold for a class of SPDEs driven by a Gaussian rough path under this framework.

\smallskip

Our first aim  in this paper is to obtain the existence of random attractors for following rough SPDEs
\begin{equation}\label{1.1}
  dy_{t}=Ay_{t}dt+F(y_{t})dt+G(y_{t})d\boldsymbol{W}_{t},
\end{equation}
where $\boldsymbol{W}:[0,T]\times\Omega\rightarrow \boldsymbol{W}_{t}(\omega)$ is a  GFBRP, and let $\boldsymbol{W}(\omega)$   represent a geometric rough path lifted by the path $\omega\in\Omega$ and the Hurst index $H\in (\frac13, \frac12]$.  We will use the technique as \cite{MR2738732,MR3226746} to obtain the  random attractors of \eqref{1.1}. Note that the metric dynamical system are ergodic in \cite{MR2738732,MR3226746} for the case of fBm. There are two strategies to construct the metric dynamical system for GFBRP. One is that the orbits of a fBm generate a metric dynamical systems. Due to   the GFBRP is a geometric rough path, namely its second order process $\mathbb{W}(\omega)$ as the limit of canonical lift(see,\cite[page 16]{MR4174393}) a smooth path $W^{\eta}(\omega)$, and  $\theta_{\tau}\mathbb{W}^{\eta}(\omega)$ is also the canonical lift of  the path $\theta_{\tau}W^{\eta}(\omega),\tau\in \mathbb{R},\omega\in\Omega $.  Thus, we use the same symbol $\theta$ for the path component and the second order process in our paper.  So it is enough to define the classical Wiener shift,  and the metric dynamical system is ergodic\cite{MR2836654}.   The other one is that GFBRP can be regard as a new tensor-valued stochastic process and its distribution is the transformation of the law of fBm. In \cite{MR3624539}, the authors  introduced the new Wiener shift for tensor-valued paths and rough cocycle, and then to construct a metric dynamical system. Similar to \cite{MR2836654}, it is not difficult to prove that the metric dynamical system is also ergodic. In our paper, we adopt the first strategy, so we require that the path of sample space $\Omega$ has a canonical  lift.  
\smallskip

 In our paper, for the argument of the compactness of the absorbing set, we impose the condition that a scale of Banach spaces $\mathcal{B}_{\gamma},0<\gamma<(\alpha-\sigma)\wedge(1-\delta)$ compactly embedding into a separable Banach space   $\mathcal{B}$.  For a class of evolution equations driven by a fractional Brownian motion with Hurst index $H>\frac{1}{2}$, Gao et al.\cite{MR3226746} did not impose the smallness condition for the diffusion term $G$, but the covariance $q$  of the fBm is sufficiently small.  Due to the structure of the solution $(y,G(y))$ and the estimate of radius of the absorbing set require temperedness of the stopping times, we  impose  the condition ``$C_{G}\leq\mu$ ". But the case $C_{G}>\mu$ can be dealt as the case  $C_{G}\leq\mu$(see Remark \ref{remark 3.1*} ). For simplicity, we only consider the case $C_{G}\leq \mu$. So we construct a sequence of stopping times in the spirit of  \cite{MR3226746}. In addition, Duc\cite{MR4385780}  considered rough differential equations driven by a Gaussian rough path, and the  similar condition for the diffusion term $G$ was imposed. For the drift term $F$, the smallness condition is still necessary.
 \smallskip

 {\it Hence, under the smallness conditions for $F$ and the noise, we obtain the existence of random attractor of the system \eqref{1.1} with Hurst index $H\in(\frac13, \frac12]$. In addition, the existence of global random attractor of SPDEs with generally multiplicative noise is still open by now, our results can be regarded as  first attempt to solve this problem using rough path theory.}


\smallskip
Our second  aim is to establish the upper semi-continuity for $\mathcal{A}_{\eta}$, where  $\mathcal{A}_{\eta}$ are the random attractors of the above evolution equations driven by $\boldsymbol{W}^{\eta}(\omega)$  which is the smooth and stationary approximation of the GFBRP $\boldsymbol{W}(\omega)$ in Section \ref{Wong-Zakai}. For fixed $\eta\in(0,1]$, due to the smoothness of $W^{\eta}(\omega)$, the rough integral $$\int_{0}^{t}\!S(t-r)\!G(y^{\eta})d\boldsymbol{W}^{\eta}_{r}\!=\!\lim_{|\mathcal{P}(0,t)|\rightarrow 0}\sum_{[u_i,v_i]\in \mathcal{P}(0,t)}\!S(t-u_i)\!\left(G(y^{\eta}_{u_i})W^{\eta}_{u_i,v_i}\!+\!DG(y^{\eta}_{u_i})G(y^{\eta}_{u_i})\mathbb{W}^{\eta}_{u_{i},v_{i}}\right)$$
is just a Young integral
$$\int_{0}^{t}\!S(t-r)\!G(y^{\eta})d\boldsymbol{W}^{\eta}_{r}\!=\!\lim_{|\mathcal{P}(0,t)|\rightarrow 0}\!\!\sum_{[u_i,v_i]\in \mathcal{P}(0,t)}\!S(t-u_i)G(y^{\eta}_{u_i})W^{\eta}_{u_i,v_i}.$$
Then the approximate equation  can be regarded as a random partial differential equation. Since the approximate noise is not truly rough\cite[page 109]{MR4174393}, then the Gubinelli derivative is not unique. In order to obtain the  existence and uniqueness of the solutions of approximate equations, similar to controlled rough paths, we choose a proper Gubinelli derivative $G(y^{\eta})$, then we can get the existence of the approximate equations in rough paths sense and the uniqueness for $y^\eta$  can be easily obtained by Lipschitz conditions for coefficients. It means that there is a  unique solution for random partial differential equation. That is to say that the random dynamical system for an approximate equation in the frame work of rough path coincides with the random dynamical system generated by a random partial differential equation. Moreover, we concern the limit behavior of $\eta\rightarrow 0$ for approximate system, then  we  regard $\boldsymbol{W}^{\eta}(\omega)$ as a smooth rough path. Since all moments of the approximated noise $\boldsymbol{W}^{\eta}$ are uniform bounded, it guarantees that the sufficiently small and uniform covariance  $q$ for $W$ and $W^{\eta},\eta\in(0,1]$ can be chosen. Thus, the existence of the $\mathcal{A}_{\eta}$ can be obtained as $\mathcal{A}$. Finally, we establish  the convergence of the stopping times to get the convergence of absorbing sets, and then get the upper semi-continuity of $\mathcal{A}_{\eta}$ as\cite{MR3270946,MR4266114}.
\smallskip

 Compared with \cite{MR3270946,MR4266114}, due to uniform convegence of the approximated noise, the  random dynamical system $\varphi^{\eta}$ with respect to $y^{\eta}$ converge to $\varphi$ with respect to $y$  is uniform for any $t\in [0,\infty)$. For this  stationary  and smooth approximation,  our method can be used to rough noise not just Brownian motion.       

\smallskip

The paper is organized as follows: In Section 2, we give some basic knowledge on rough path theory and random dynamical system. In Section 3, we consider the dynamics for non-autonomous dynamical system $\Phi$  which is generated  by the solution   of \eqref{1.1} on stopping times $\{T_{i}(\boldsymbol{W}(\omega))\}_{i\in\mathbb{Z}}$  and $\varphi$  is generated  by  the solution of \eqref{1.1} on $\mathbb{R}$. Section 4 is devoted to the existence of random attractor under the random settings. In Section 5, we establish the upper semi-continuity for approximated attractors $\mathcal{A}_{\eta}$. In an Appendix,  we give some proofs of lemmas and theorems which are used in our paper.
\section{Preliminaries}
In this section, we will recall some basic concepts in rough path theory and random dynamical system.

\textbf{Notation:} Let $I$ be a compact interval on $\mathbb{R}$. Denote $C(I,V)$ and $C^{\alpha}(I,V)$ by the V-valued continuous function space and the space of V-valued $\alpha$-H\"{o}lder continuous function space respectively, where V is a Banach space and  $\alpha\in(0,1)$. We use the notation $y_{t}$ instead of $y(t)$, and $y_{s,t}=y_t-y_s$. Constant $C$ different from line to line and the notion $C(x)$ to indicate the constant $C$ depends on $x$. The simplex $\triangle^{n}_{[0,T]}:= \{0\leq t_{1}\leq t_{2}\leq \cdots\leq t_{n-1}\leq t_{n}\leq T\}$. $\delta\xi_{s,u,t}=\xi_{s,t}-\xi_{s,u}-\xi_{u,t}$ for $(s,u,t)\in\triangle^{3}$. The space $C_{2}^{\alpha}([0,T],\mathcal{B}_{\gamma})$ of all families such that its element $g:\triangle^{2}_{[0,T]}\rightarrow \mathcal{B}_{\gamma}$ satisfying  $\interleave g\interleave_{\alpha,\gamma}=\frac{\|g_{s,t}\|_{\gamma}}{|t-s|^{\alpha}}<\infty$ and the space $C_{3}^{\alpha_{1},\alpha_{2}}([0,T],\mathcal{B}_{\gamma})$ of all families such that its element $h:\triangle_{[0,T]}^{3}\rightarrow \mathcal{B}_{\gamma}$ satisfying  $\interleave h\interleave_{\alpha_{1},\alpha_{2},\gamma}=\frac{\|h_{s,u,t}\|_{\gamma}}{|t-u|^{\alpha_{2}}|u-s|^{\alpha_{1}}}<\infty$.

\subsection{Rough path and rough integral}

\begin{definition}[$\alpha$-H\"{o}lder rough paths]
Let $\alpha\in(\frac{1}{3},\frac{1}{2}]$, we call the couple $\boldsymbol{W}=(W,\mathbb{W})$ a $d$-dimensional $\alpha$-H\"{o}lder rough path on $I$, if $W\in C^{\alpha}(I,\mathbb{R}^{d})$ and $\mathbb{W}\in C^{2\alpha}(\Delta^2_I,\mathbb{R}^{d}\otimes\mathbb{R}^{d})$,  and satisfy the Chen's identity
$$\mathbb{W}_{s,t}-\mathbb{W}_{s,u}-\mathbb{W}_{u,t}=W_{s,u}\otimes W_{u,t},\quad s\leq u\leq t\in I.$$
Let $\mathcal{C}^{\alpha}(I,\mathbb{R}^{d})$ denote  the set of all $\alpha$-H\"{o}lder rough paths.  In addition, for any $W\in C^1(I,\mathbb{R}^d)$, there is a canonical lift $S(W):=(W,\mathbb{W})$ in $\mathcal{C}^{\alpha}(I,\mathbb{R}^{d})$  defined as
	\begin{equation*}
		\mathbb{W}_{s,t}^{k,l}=\int_{s}^{t}\int_{s}^{r}dW_{r^\prime}^{k}dW_r^{l},\quad s<t\in I~ \text{and} ~ k,l\in\{1,\cdots,d\}.
	\end{equation*}
We denote the geometric rough space by $\mathcal{C}_{g}^{\alpha}(I,\mathbb{R}^{d})$, i.e. the closure of  the  canonical lift $S(W),W\in C^{1}(I,\mathbb{R}^d)$.
\end{definition}
\begin{remark}
The first component $W$ of $\alpha$-H\"{o}lder rough path $\boldsymbol{W}$ is a path on interval $I$ and it has an $\alpha$-H\"{o}lder regularity, and the second component is called second order process or L\'{e}vy area, and it satisfies an analytic condition, namely $\sup_{(s,t)\in \triangle^2_I}\frac{|\mathbb{W}_{s,t}|}{|t-s|^{2\alpha}}<\infty$. More details on rough path theory we refer to \cite{MR4174393}.
\end{remark}

For two different $\alpha$-H\"{o}lder rough paths we can compare their distance by defining a metric on rough path space $\mathcal{C}^{\alpha}(I,\mathbb{R}^{d})$ as follows.
\begin{definition}
Let $I\subset\mathbb{R}$ be a compact set and $\boldsymbol{W}, \tilde{\boldsymbol{W}}\in\mathcal{C}^{\alpha}(I,\mathbb{R}^{d}),\alpha\in(\frac{1}{3},\frac{1}{2}]$, we give an (inhomogeneous)$\alpha$-H\"{o}lder rough path metric
$$d_{\alpha,I}(\boldsymbol{W},\tilde{\boldsymbol{W}})=\sup_{(s,t)\in \triangle^2_I}\frac{|W_{s,t}-\tilde{W}_{s,t}|}{|t-s|^{\alpha}}+\sup_{(s,t)\in \triangle^2_I}\frac{|\mathbb{W}_{s,t}-\tilde{\mathbb{W}}_{s,t}|}{|t-s|^{2\alpha}}.$$
\end{definition}
\begin{remark}
The rough path metric induces an inhomogeneous norm of rough paths,
 $$\|\boldsymbol{W}\|_{\alpha,I}=\interleave W\interleave_{\alpha,I}+\interleave \mathbb{W}\interleave_{2\alpha,\Delta_I^{2}}.$$
In order to  keep with \cite{kuehn2021center,hesse2021global}, we sometimes use the notation $\rho_{\alpha,I}(\boldsymbol{X})$ instead of $\|\boldsymbol{X}\|_{\alpha,I}$  and more results on this topic can be found in \cite{MR4174393}.
\end{remark}
In order to introduce the  controlled rough path for a class of evolution equations, we need the following notion of interpolation spaces.
\begin{definition}[A monotone family of interpolation spaces]\label{def 2.3}
We call a family of separable Banach spaces $(\mathcal{B}_{\gamma},\|\cdot\|_{\gamma})_{\gamma\in\mathbb{R}}$ are monotone interpolation spaces if for $\gamma_{1}\leq \gamma_{2}$, the space $\mathcal{B}_{\gamma_{2}}\subset\mathcal{B}_{\gamma_{1}}$ with dense and continuous embedding and have the following interpolation inequality holds for $\gamma_{1}\leq \gamma_{2}\leq \gamma_{3}$ and $x\in \mathcal{B}_{\gamma_{3}}$:
$$\|x\|^{\gamma_{3}-\gamma_{1}}_{\gamma_{2}}\leq C\|x\|_{\gamma_{1}}^{\gamma_{3}-\gamma_{2}}\|x\|_{\gamma_{3}}^{\gamma_{2}-\gamma_{1}}.$$
\end{definition}
\begin{remark}
The above monotone family of interpolation spaces have the following nice properties:
If semigroup $S:[0,T]\rightarrow \mathcal{L}(\mathcal{B}_{\gamma},\mathcal{B}_{\gamma+1})$ is such that for each $x\in\mathcal{B}_{\gamma+1}$ and $t\in [0,T]$ we have $|(S(t)-Id)x|_{\gamma}\leq Ct\|x\|_{\gamma+1}$ and $|S(t)x|_{\gamma+1}\leq Ct^{-1}\|x\|_{\gamma}$, then for all $\sigma\in [0, 1]$,  we  get $S(t)\in \mathcal{L}(\mathcal{B}_{\gamma+\sigma},\mathcal{B}_{\gamma+\sigma})$ and have the following estimates
\begin{align}
\|(S(t)-Id)x\|_{\gamma}&\leq Ct^{\sigma}\|x\|_{\gamma+\sigma},\label{2.1}\\
\|S(t)x\|_{\gamma+\sigma}&\leq Ct^{-\sigma}\|x\|_{\gamma}.\label{2.2}
\end{align}
An example for these interpolation spaces is fractional Sobolev spaces(Bessel potential spaces)$H^{s},s\in\mathbb{R}$.  The further results on interpolation spaces we refer to \cite{MR3012216}. In the following deliberation, we stress that $\alpha\in (\frac{1}{3},\frac{1}{2})$ represents the time regularity and $\gamma$ is the spatial regularity in $\mathcal{B}_{\gamma}$.
\end{remark}
In  the following section, we will consider the mild solution of \eqref{1.1}
\begin{align}
y_{t}=S(t)y_{0}+\int_{0}^{t}S(t-r)F(y_{r})dr+\int_{0}^{t}S(t-r)G(y_{r})d\boldsymbol{W}_{r}.
\end{align}
It is vital to understand the last integral in the mild solution. To this end, we need to give a proper definition of controlled rough paths. Based on the monotone family of interpolation spaces, we introduce the concept of controlled rough paths which are similar to  finite-dimensional case,  and it reflects the interaction effect of regularity in time and space.
\begin{definition}[Controlled rough paths]\label{def 2.4}
We call the pair $(y,y^{\prime})$ is a controlled rough path if
\begin{itemize}
                                                                      \item $(y,y^{\prime})\in C([0,T],\mathcal{B}_{\gamma})\times(C([0,T],\mathcal{B}_{\gamma-\alpha})\bigcap C^{\alpha}([0,T],\mathcal{B}_{\gamma-2\alpha}))$, the component $y^{\prime}$ is also called Gubinelli derivative as finite-dimensional case.
                                                                      \item $(y,y^{\prime})$ satisfies the following identity
\begin{align}\label{2.4}
R^{y}_{s,t}=y_{s,t}-y^{\prime}_{s}W_{s,t},\end{align}
where $R^{y}_{s,t}$ is called the remainder and it belongs to {\small$C^{\alpha}(\triangle^2_{[0,T]},\mathcal{B}_{\gamma-\alpha})\cap C^{2\alpha}(\triangle^2_{[0,T]},\mathcal{B}_{\gamma-2\alpha})$}.

\end{itemize}
      We denote  the set of all controlled rough paths by $\mathcal{D}_{W,\gamma,[0,T]}^{2\alpha}:=\mathcal{D}_{W}^{2\alpha}([0,T],\mathcal{B}_{\gamma})$  and endowed with the norm as follows
      $$\|y,y^{\prime}\|_{W,2\alpha,\gamma}=\|y\|_{\infty,\gamma}+\|y^{\prime}\|_{\infty,\gamma-\alpha}+\interleave y^{\prime}\interleave_{\alpha,\gamma-2\alpha}+\interleave R^{y}\interleave_{\alpha,\gamma-\alpha}+\interleave R^{y}\interleave_{2\alpha,\gamma-2\alpha}.$$
\end{definition}
\begin{remark}
$D_{W,\gamma}^{2\alpha}$ is a Banach space under the above norm\cite{MR4299812}. Furthermore, the $\alpha$-H\"{o}lder semi-norm  for $y$ does not emerge here. Indeed, using \eqref{2.4} for $\theta\in \{\alpha,2\alpha\}$ we have that
\begin{align}
\interleave y\interleave_{\alpha,\gamma-\theta}\leq \|y^{\prime}\|_{\infty,\gamma-\theta}\interleave W\interleave_{\alpha}+\interleave R^{y} \interleave_{\alpha,\gamma-\theta}.
\end{align}
\end{remark}
The following  lemma gives the definition of rough integral and its' estimate.
\begin{lemma}[Lemma 4.5,\cite{MR4299812}]\label{Lemma 2.1}
Let $\boldsymbol{W}$ be an $\alpha$-H\"{o}lder rough path and $(y,y^{\prime})\in D_{W,\gamma}^{2\alpha}$, then the rough integral
$$\int_{0}^{t}S(t-r)y_{r}d\boldsymbol{W}_{r}:=\lim_{|\mathcal{P}(0,t)|\rightarrow0}\sum_{[u,v]\in\mathcal{P}(0,t)}S(t-u)\left[y_{u}W_{u,v}+y^{\prime}_{u}\mathbb{W}_{u,v}\right]$$
exists in $\mathcal{B}_{\gamma-2\alpha}$, where $\mathcal{P}(0,t)$ is a partition of the interval $[0,t]$ and the limit is independent of the choice of the specific partition  $\mathcal{P}(0,t)$. In addition, there is an estimate  as follows
\begin{align}\label{2.6}
\bigg|\int_{s}^{t}S(t-r)y_{r}d\boldsymbol{W}_{r}-S(t-s)y_{s}W_{s,t}-&S(t-s)y^{\prime}_{s}\mathbb{W}_{s,t}\bigg|_{\gamma-2\alpha+\beta}\nonumber\\
\leq& C\rho_{\alpha,[s,t]}(\boldsymbol{W})\|y,y^{\prime}\|_{X,2\alpha,\gamma,[s,t]}(t-s)^{3\alpha-\beta},
\end{align}
for all $0\leq \beta<3\alpha$ and $0\leq s< t \leq T$.
\end{lemma}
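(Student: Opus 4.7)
The strategy is a semigroup-compensated sewing argument. Fix $t\in(0,T]$ and, for $(u,v)\in\Delta^{2}_{[0,t]}$, introduce the germ
\[
\Xi^{t}_{u,v}:=S(t-u)\bigl[y_{u}W_{u,v}+y'_{u}\mathbb{W}_{u,v}\bigr],
\]
so that the desired integral arises as $\lim_{|\mathcal{P}|\to 0}\sum_{[u,v]\in\mathcal{P}(0,t)}\Xi^{t}_{u,v}$. My first task is to compute the defect $\delta\Xi^{t}_{u,w,v}:=\Xi^{t}_{u,v}-\Xi^{t}_{u,w}-\Xi^{t}_{w,v}$ for $u<w<v\le t$. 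Applying Chen's identity $\mathbb{W}_{u,v}=\mathbb{W}_{u,w}+\mathbb{W}_{w,v}+W_{u,w}\otimes W_{w,v}$ and the controlled-path expansion $y_{u,w}=y'_{u}W_{u,w}+R^{y}_{u,w}$, a short rearrangement in which the cross-term $S(t-u)y'_{u}W_{u,w}\otimes W_{w,v}-S(t-w)y'_{u}W_{u,w}W_{w,v}$ is regrouped as $[S(t-u)-S(t-w)]y'_{u}W_{u,w}W_{w,v}$ yields the identity
\begin{align*}
\delta\Xi^{t}_{u,w,v}=&\bigl[S(t-u)-S(t-w)\bigr]\bigl[y_{u}W_{w,v}+y'_{u}W_{u,w}W_{w,v}+y'_{u}\mathbb{W}_{w,v}\bigr]\\
&-S(t-w)R^{y}_{u,w}W_{w,v}-S(t-w)(y'_{w}-y'_{u})\mathbb{W}_{w,v}.
\end{align*}

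Next I would estimate each of these five pieces in $\mathcal{B}_{\gamma-2\alpha+\beta}$ using the interpolation inequalities \eqref{2.1}--\eqref{2.2}. For the three terms carrying $S(t-u)-S(t-w)=[S(w-u)-\mathrm{Id}]\,S(t-w)$, I can gain a factor $(w-u)^{\sigma}$ from \eqref{2.1} at the price of $\sigma$ extra units of spatial regularity, and then recover that regularity from $S(t-w)$ via \eqref{2.2} at the cost of an integrable singular weight $(t-w)^{-(\sigma-2\alpha+\beta)_{+}}$; the remaining two terms are controlled by applying \eqref{2.2} directly. The Hölder factors picked up from $W,\mathbb{W},R^{y}$ and $y'$ are dominated by $\rho_{\alpha,[0,t]}(\boldsymbol{W})\cdot\|y,y'\|_{W,2\alpha,\gamma,[0,t]}$; because $W,y'\in C^{\alpha}$ and $\mathbb{W},R^{y}\in C^{2\alpha}$, the worst summand yields $(v-u)^{\sigma+\alpha}$, which is strictly super-linear as soon as $\sigma>1-\alpha$, a range reachable precisely because $\alpha>\tfrac{1}{3}$. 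With these bounds I would invoke a singular-sewing argument to conclude that the Riemann sums $\mathcal{I}_{\mathcal{P}}$ are Cauchy, that the limit is partition-independent and takes values in $\mathcal{B}_{\gamma-2\alpha+\beta}$, and — starting the telescoping refinement from the trivial partition $\{s,t\}$ — that its difference from $S(t-s)y_{s}W_{s,t}+S(t-s)y'_{s}\mathbb{W}_{s,t}$ obeys exactly the bound \eqref{2.6}.

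\textbf{The main obstacle.} The essential difficulty is that $\Xi^{t}_{u,v}$ is \emph{not} a local germ: the weight $S(t-u)$ sees the right endpoint $t$ sitting outside $[u,v]$, so the classical Gubinelli sewing lemma does not apply verbatim. One must work with a sewing principle whose hypotheses permit a defect bound of the form $C(v-u)^{1+\varepsilon}(t-w)^{-\kappa}$ with $\kappa<1$, and the technical heart of the proof is to choose the interpolation parameter $\sigma$ so that the sub-criticality ($\sigma+\alpha>1$), the integrability of the singular weight ($\kappa<1$), and the target-space index constraint ($\sigma\le 2\alpha-\beta+\kappa$) can all be met simultaneously. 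It is precisely this three-way balancing that fixes both the lower threshold $\alpha>\tfrac{1}{3}$ and the exponent $3\alpha-\beta$ that appears on the right-hand side of \eqref{2.6}.
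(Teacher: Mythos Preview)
Your proposal is essentially correct, but it takes a different route from the one the paper relies on. The paper does not give its own proof of this lemma: it is cited verbatim from \cite{MR4299812}, and that reference builds the integral via the \emph{multiplicative sewing lemma} (their Theorem~4.1). You can see this framework in action in the paper's own Lemma~5.3: one considers the \emph{un-weighted} germ $\xi_{s,t}=y_{s}W_{s,t}+y'_{s}\mathbb{W}_{s,t}$, checks that $\xi\in\mathcal{Z}^{\alpha}_{\gamma}$ (i.e.\ that $\xi$ and $\delta\xi$ decompose with the right H\"older and space indices), and then the semigroup enters only through the abstract sewing machinery, which is designed so that the propagator $S(t-\cdot)$ plays the role of a multiplicative factor. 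In particular no singular weight $(t-w)^{-\kappa}$ ever appears explicitly; the balancing you identify as the ``main obstacle'' is absorbed once and for all into the proof of the multiplicative sewing lemma itself.

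Your approach, by contrast, bakes $S(t-\cdot)$ directly into the germ $\Xi^{t}_{u,v}$ and then runs a sewing argument with a singular kernel. Your defect computation is correct, and your discussion of how the parameter $\sigma$ must be chosen to make the three constraints (super-additivity $\sigma+\alpha>1$, integrability $\kappa<1$, target index) compatible is exactly the substance of the argument; this is a legitimate and well-known alternative (sometimes called ``mild'' or ``singular'' sewing). What you gain is transparency about where $\alpha>\tfrac13$ and the exponent $3\alpha-\beta$ come from; what the multiplicative-sewing route buys is a cleaner bookkeeping, since one verifies membership in $\mathcal{Z}^{\alpha}_{\gamma}$ once and reads off the estimate without tracking the weight $(t-w)^{-\kappa}$ through a dyadic refinement.
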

\subsection{Non-autonomous and random dynamical system}
The rough path theory in previous subsection provides a framework of pathwise.  In this subsection, we give some basic concepts on non-autonomous and random dynamical systems.

Let $\mathbb{T}=\mathbb{R}$ or $\mathbb{Z}$, a flow of non-autonomous perturbations $(\Omega,\theta)$ as follows
$$\theta: \mathbb{T}\times \Omega\rightarrow \Omega,$$
 $\theta_{t_1}\circ\theta_{t_2}=\theta_{t_{1}+t_{2}}$ for $t_{1},t_{2}\in \mathbb{T}$ and $\theta_{0}=Id_{\Omega}$. Furthermore, we call
a mapping $\varphi: \mathbb{T}^{+}\times \Omega\times V\rightarrow V$ is a non-autonomous dynamical system, if
$$\varphi(t+\tau,\omega,u_{0}) = \varphi(t,\theta_{\tau}\omega,\cdot)\circ\varphi(\tau,\omega,u_{0}),$$
for $t,\tau\in \mathbb{T}^{+},\omega\in\Omega, u_{0}\in V$, where $V$ is a separable Banach space. For the random settings, we have the following definition.
\begin{definition}[Metric dynamical system]
Let $(\Omega,\mathcal{F},\mathbb{P})$ be a probability space, and the flow $\theta: \mathbb{T}\times\Omega\rightarrow\Omega$ satisfies the following conditions:\begin{enumerate}
            \item $\theta_{0}=Id_{\Omega}$;
            \item $\theta_{t+s}=\theta_{t}\circ\theta_{s}$;
            \item the mapping $(t,\omega)\mapsto \theta_{t}\omega$ is $(\mathcal{B}(\mathbb{T})\times\mathcal{F},\mathcal{F})$-measurable, where $\mathcal{B}(\mathbb{T})$ and $\mathcal{F}$ are Borel $\sigma$-algebra generated by $\mathbb{T}$ and $\Omega$, respectively;
            \item $\theta_{t}:\Omega\rightarrow \Omega$ are $\mathbb{P}$-preserving transformations, namely $\theta_{t}\mathbb{P}=\mathbb{P}$.

          \end{enumerate}
Then the quadruple $(\Omega,\mathcal{F},\mathbb{P},\{\theta_{t}\}_{t\in \mathbb{T}})$ is called a metric dynamical system.
\end{definition}

In following consideration, the Wiener shift $\theta$ acts on a rough path, we need to extend the action of the $\theta$ from the path $W$ to rough path $\boldsymbol{W}$, namely, for any $\alpha$-H\"{o}lder rough path $\boldsymbol{W}=(W, \mathbb{W})$ and $\tau,s,t\in\mathbb{R}$, the Wiener shift can be changed to $\theta_{\tau}\boldsymbol{W}=(\theta_{\tau}W,\theta_{\tau}\mathbb{W})$ by
\begin{equation*}
	\begin{aligned}
\theta_{\tau}W_{t}&=W_{t+\tau}-W_{\tau},\\
\theta_{\tau}\mathbb{W}_{s,t}&=\mathbb{W}_{s+\tau,t+\tau}.
    \end{aligned}
\end{equation*}
In particular, let $W_{t}(\omega)=\omega_{t}$. Since fBm $W_t$ has a canonical lift almost surely,   then
\begin{equation*}
	\begin{aligned}
\theta_{\tau}W_t(\omega)&=W_{t}(\theta_\tau\omega)=W_{t+\tau}(\omega)-W_{\tau}(\omega),\\
\theta_{\tau}\mathbb{W}_{s,t}(\omega)&=\mathbb{W}_{s,t}(\theta_\tau\omega)=\mathbb{W}_{s+\tau,t+\tau}(\omega),
    \end{aligned}
\end{equation*}	
 it means that $\theta_{\tau}\boldsymbol{W}(\omega)=\boldsymbol{W}(\theta_\tau\omega)$. According to the definition of the rough path, the new Wiener shift $\theta$ act on a rough path is also  a rough path, we give it in a lemma.
\begin{lemma}[Lemma 32,\cite{MR4097587}]
Let $T_{1},T_{2},\tau\in\mathbb{R}$ and $\boldsymbol{W}=(W,  \mathbb{W})$ be an $\alpha$-H\"{o}lder rough path on $[T_{1},T_{2}]$ for $\alpha\in(\frac{1}{3},\frac{1}{2})$, then $\theta_{\tau}\boldsymbol{W}$ is an  $\alpha$-H\"{o}lder rough path on $[T_{1}-\tau,T_{2}-\tau]$.
\end{lemma}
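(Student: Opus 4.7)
The plan is to verify directly the three defining properties of an $\alpha$-Hölder rough path for the shifted pair $\theta_{\tau}\boldsymbol{W}=(\theta_{\tau}W,\theta_{\tau}\mathbb{W})$ on the interval $[T_1-\tau,T_2-\tau]$: (i) $\alpha$-Hölder continuity of the path component, (ii) $2\alpha$-Hölder analytic bound for the second-order process, and (iii) Chen's identity. All three follow by a change of variables from the corresponding properties of $\boldsymbol{W}$, so the argument is essentially bookkeeping with no genuine obstacle.

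First I would compute the increment of $\theta_{\tau}W$. For $s,t\in[T_1-\tau,T_2-\tau]$, using the definition $\theta_{\tau}W_t=W_{t+\tau}-W_{\tau}$, the constant $W_{\tau}$ cancels and one gets $(\theta_{\tau}W)_{s,t}=W_{s+\tau,t+\tau}$. Since $s+\tau,t+\tau\in[T_1,T_2]$, the Hölder bound for $W$ on $[T_1,T_2]$ immediately yields
\[
|(\theta_{\tau}W)_{s,t}|=|W_{s+\tau,t+\tau}|\le \interleave W\interleave_{\alpha,[T_1,T_2]}\,|t-s|^{\alpha},
\]
so $\theta_{\tau}W\in C^{\alpha}([T_1-\tau,T_2-\tau],\mathbb{R}^d)$ with seminorm bounded by $\interleave W\interleave_{\alpha,[T_1,T_2]}$.

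Next I would handle the second-order process. Directly from $\theta_{\tau}\mathbb{W}_{s,t}=\mathbb{W}_{s+\tau,t+\tau}$ and the analytic condition for $\mathbb{W}$,
\[
|\theta_{\tau}\mathbb{W}_{s,t}|=|\mathbb{W}_{s+\tau,t+\tau}|\le \interleave\mathbb{W}\interleave_{2\alpha,\Delta^2_{[T_1,T_2]}}\,|t-s|^{2\alpha},
\]
giving $\theta_{\tau}\mathbb{W}\in C^{2\alpha}(\Delta^2_{[T_1-\tau,T_2-\tau]},\mathbb{R}^d\otimes\mathbb{R}^d)$. For Chen's identity, for $s\le u\le t$ in the shifted interval, applying Chen's relation for $\boldsymbol{W}$ at the shifted times $s+\tau\le u+\tau\le t+\tau$,
\[
\theta_{\tau}\mathbb{W}_{s,t}-\theta_{\tau}\mathbb{W}_{s,u}-\theta_{\tau}\mathbb{W}_{u,t}=\mathbb{W}_{s+\tau,t+\tau}-\mathbb{W}_{s+\tau,u+\tau}-\mathbb{W}_{u+\tau,t+\tau}=W_{s+\tau,u+\tau}\otimes W_{u+\tau,t+\tau},
\]
and by the identity from the first step this last quantity equals $(\theta_{\tau}W)_{s,u}\otimes(\theta_{\tau}W)_{u,t}$, which is exactly Chen's identity for $\theta_{\tau}\boldsymbol{W}$.

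The only point requiring a word of care is that the translation $t\mapsto t+\tau$ is a bijection from $[T_1-\tau,T_2-\tau]$ onto $[T_1,T_2]$ and from the simplex $\Delta^2_{[T_1-\tau,T_2-\tau]}$ onto $\Delta^2_{[T_1,T_2]}$, so the bounds we invoke for $W$ and $\mathbb{W}$ genuinely apply, and in fact the above inequalities show $\|\theta_{\tau}\boldsymbol{W}\|_{\alpha,[T_1-\tau,T_2-\tau]}=\|\boldsymbol{W}\|_{\alpha,[T_1,T_2]}$. This completes the verification, and no step is expected to be hard.
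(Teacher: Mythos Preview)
Your proof is correct: the direct verification of the H\"older bounds and Chen's identity via the change of variables $t\mapsto t+\tau$ is exactly what is needed, and your observation that in fact $\|\theta_{\tau}\boldsymbol{W}\|_{\alpha,[T_1-\tau,T_2-\tau]}=\|\boldsymbol{W}\|_{\alpha,[T_1,T_2]}$ is a useful sharpening. The paper itself does not give a proof of this lemma but simply cites it from \cite{MR4097587}, so there is no approach to compare against.
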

 In our paper, the GFBRP is a geometric rough path, it has a stationary and smooth approximation. So compared with Bailleul et al.\cite{MR3624539},  our sample space is an $\alpha$-H\"{o}lder path space rather than an $\alpha$-H\"{o}lder rough path space.
\begin{definition}
Let $(\Omega,\mathcal{F},\mathbb{P},\{\theta_{t}\}_{t\in\mathbb{T}})$ be a metric dynamical system and $V$ be a separable Banach space. We call the mapping $\varphi$  continuous random dynamical system on $V$ if $\varphi$ has the following properties:
\begin{enumerate}
  \item $\varphi(0,\omega,\cdot)=Id_{V}$ for all $\omega\in\Omega;$
  \item $\varphi(t+\tau,\omega,x)=\varphi(t,\theta_{\tau}\omega,\varphi(\tau,\omega,x))$ for all $t,\tau\in \mathbb{T}^{+}$ and $x\in V;$
  \item $\varphi(t,\omega,\cdot): V\rightarrow V$  is continuous for all $t\in \mathbb{T}^{+}$ and $\omega\in\Omega;$
  \item the mapping $\varphi:\mathbb{T^{+}}\times\Omega\times V\rightarrow V$ is $(\mathcal{B}(\mathbb{T}^{+})\otimes\mathcal{F}\otimes\mathcal{B}(V),\mathcal{B}(V))$-measurable.
\end{enumerate}
\end{definition}

In order to describe the attractors, we  introduce some concepts.
\begin{definition}
Let $\mathcal{D}$ be consisted of a  family of nonempty sets $(D(\omega))_{\omega\in\Omega}$, it has a property $\mathcal{P}$  and if $D=(D(\omega))_{\omega\in\Omega}\in \mathcal{D}$ and  $\emptyset\neq D^{\prime}=(D^{\prime}(\omega))_{\omega\in\Omega}\subset D$, then $D^{\prime}\in \mathcal{D}$.
The property $\mathcal{P}$: For a given a constant $\nu>0$, we call  $\mathcal{D}$  backward exponential growth if there exist a mapping $\omega\rightarrow r(\omega)\in \mathbb{R}^{+}$ such that $\emptyset\neq D(\omega)\in \mathcal{D}_{V}^{\nu}$ is contained in a ball $B_{V}(0,r(\omega))$ in space $V$ with center $0$ and radius $r(\omega)$, and
$$\lim_{\mathbb{T}\ni t\rightarrow -\infty}\frac{\log^{+}r(\theta_{t}\omega)}{|t|}<\nu,\quad \forall \omega\in\Omega.$$
 We use  notations $\mathcal{D}_{\mathbb{R},V}^{\nu},\mathcal{D}_{\mathbb{Z},V}^{\nu}$ to stress  different time sets $\mathbb{R}$ and $\mathbb{Z}$, respectively.

In random settings, $\emptyset\neq D=D(\omega)\subset V$ equipped with the measurability, namely $dist(v,D(\omega))$ is a random variable for $\omega\in\Omega, v\in V$, then $D$ is called a random set.  In addition, for  a  random set $D$, if
 there exists a random variable $r(\omega)\in\mathbb{R}^{+}$ such that $D(\omega)\in B_{V}(0,r(\omega))$ and
$$\lim_{\mathbb{T}\ni t\rightarrow \pm\infty}\frac{\log^{+}r(\theta_{t}\omega)}{|t|}=0,\quad \forall \omega\in \Omega,$$
then we call the random set $D$  tempered or subexponential growth. We denote
the set  of random tempered sets by $\hat{\mathcal{D}}$.
\end{definition}
\begin{definition}
A family  $B=(B(\omega))_{\omega}\subset V$  is called pullback absorbing set for $\mathcal{D}$ if
$$\varphi(t,\theta_{-t}\omega,D(\theta_{-t}\omega))\subset B(\omega),\quad t\geq T(D,\omega)$$
for any $D\in \mathcal{D}$ and $\omega\in\Omega$, where $T(D,\omega)$ is called absorption time.
\end{definition}
\begin{definition}
Let $\mathcal{A}=\{A(\omega)\}$ be a random attractor for random dynamical system $\varphi$ if
\begin{itemize}
  \item $\mathcal{A}(\omega)$ is a compact set for all $\omega\in\Omega.$
  \item for any $\omega\in\Omega$ and $t\in \mathbb{T}^{+}$,
  $$\varphi(t,\omega,\mathcal{A}(\omega))=\mathcal{A}(\theta_{t}\omega).$$
  \item for any $D\in\hat{\mathcal{D}},\omega\in\Omega$,
  $$\lim_{\mathbb{T}^{+}\ni t\rightarrow \infty}dist(\varphi(t,\theta_{-t}\omega,D(\theta_{-t}\omega)),\mathcal{A}(\omega))\rightarrow 0.$$
\end{itemize}
\end{definition}
We have the following lemma about the existence of pullback/random attractors(see \cite{MR1427258,MR1870217})
\begin{lemma}\label{lemma 2.3}
Let $\varphi$ be a continuous nonautonomous dynamical system. If $\varphi$ has a compact pullback absorbing set $B\in\mathcal{D}$. Then nonautonomous dynamical system $\varphi$ has a pullback attractor for $\mathcal{D}$. Furthermore, if $\varphi$ is a random dynamical system and has a compact pullback absorbing set $B$ in $\hat{\mathcal{D}}$. Then $\varphi$ has a unique random attractor $\mathcal{A}=\{\mathcal{A}(\omega)\}$ for $\hat{\mathcal{D}}$ as follows:
$$\mathcal{A}(\omega)=\bigcap_{s\geq T(B,\omega)\in \mathbb{T}^{+}}\overline{\bigcup_{t\geq s}\varphi(t,\theta_{-t}\omega,B(\theta_{-t}\omega))},\quad \forall \omega\in\Omega.$$
\end{lemma}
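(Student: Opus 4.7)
The plan is to take $\mathcal{A}(\omega)$ to be exactly the omega-limit set specified in the statement and then verify the three attractor axioms (compactness, strict invariance, pullback attraction), with the random case requiring an additional measurability check and a uniqueness argument. The absorbing property together with compactness of $B$ does most of the work.

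Fix $\omega$ and set
$$\mathcal{A}(\omega):=\bigcap_{s\geq T(B,\omega)}\overline{\bigcup_{t\geq s}\varphi(t,\theta_{-t}\omega,B(\theta_{-t}\omega))}.$$
Compactness of $\mathcal{A}(\omega)$ is immediate: the absorbing property gives $\varphi(t,\theta_{-t}\omega,B(\theta_{-t}\omega))\subset B(\omega)$ for $t\geq T(B,\omega)$, so $\mathcal{A}(\omega)$ is a closed subset of the compact set $B(\omega)$. Pullback attraction of any $D\in\mathcal{D}$ is a standard contradiction argument: if the required convergence failed, one would extract from $\varphi(t_n,\theta_{-t_n}\omega,x_n)\subset B(\omega)$ (with $x_n\in D(\theta_{-t_n}\omega)$, $t_n\to\infty$) a subsequential limit that, by the definition of the intersection-closure, would have to lie in $\mathcal{A}(\omega)$.

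The main obstacle is the strict invariance $\varphi(t,\omega,\mathcal{A}(\omega)) = \mathcal{A}(\theta_t\omega)$. The forward inclusion uses continuity of $\varphi(t,\omega,\cdot)$ together with the cocycle identity $\varphi(t,\omega,\varphi(t_n,\theta_{-t_n}\omega,b_n)) = \varphi(t+t_n,\theta_{-(t+t_n)}(\theta_t\omega),b_n)$, which pushes any approximating sequence for a point of $\mathcal{A}(\omega)$ into an approximating sequence for a point of $\mathcal{A}(\theta_t\omega)$. The reverse inclusion is trickier: for $y\in\mathcal{A}(\theta_t\omega)$ written as $\lim_n \varphi(s_n,\theta_{-s_n}(\theta_t\omega),b_n)$ with $b_n\in B(\theta_{-s_n}(\theta_t\omega))$, I would rewrite the orbit as $\varphi(t,\omega,\varphi(s_n-t,\theta_{-(s_n-t)}\omega,b_n))$, use compactness of $B(\omega)$ to extract a subsequential limit $x$ of the inner factor (which by construction lies in $\mathcal{A}(\omega)$), and then invoke continuity of $\varphi(t,\omega,\cdot)$ to conclude $y=\varphi(t,\omega,x)$.

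In the random case the remaining work is measurability and uniqueness. For measurability it suffices to show that $\omega\mapsto \mathrm{dist}(v,\mathcal{A}(\omega))$ is $\mathcal{F}$-measurable for each $v\in V$, which I would obtain by approximating the closure-intersection by countably many rational times, using separability of $V$ and exploiting joint measurability of $\varphi$ and $\theta$ together with measurability of the random set $B$. Uniqueness follows from the usual two-attractor trick: any other random attractor $\mathcal{A}'\in\hat{\mathcal{D}}$ is attracted by $\mathcal{A}$, which together with compactness and invariance of $\mathcal{A}'$ forces $\mathcal{A}'(\omega)\subset\mathcal{A}(\omega)$; swapping the roles gives equality.
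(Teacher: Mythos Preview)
The paper does not actually prove this lemma: it is stated as a known result and simply cites \cite{MR1427258,MR1870217} (Flandoli--Schmalfuss and Schmalfuss). Your sketch is precisely the standard omega-limit-set argument found in those references---take the nested intersection of closures, use compactness of $B(\omega)$ to guarantee nonemptiness and compactness of $\mathcal{A}(\omega)$, prove attraction by a compactness/contradiction argument, and verify invariance via the cocycle identity---so there is nothing to compare beyond noting that you have reproduced the classical proof that the authors chose to quote rather than repeat.

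One small caution on the measurability step: the approximation by countably many rational times is the right idea, but to make it work cleanly one typically passes through the characterization of $\mathcal{A}(\omega)$ as the omega-limit set of the absorbing set and uses that the map $(t,\omega,x)\mapsto\varphi(t,\theta_{-t}\omega,x)$ is jointly measurable together with a projection/measurable-selection argument (e.g.\ via Castaing representations of $B(\theta_{-t}\omega)$). This is routine but not entirely trivial; the cited references handle it carefully, so if you were to write this up in full you would want to consult them for that detail.
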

\section{Evolution equations driven by an \texorpdfstring{$\alpha$}{lg}-H\"{o}lder  geometric rough path}
In this section, we consider the following rough evolution equations which are defined on the interval $[0,T]$:
\begin{equation}\label{3.1}
dy_{t}=\left(Ay_{t}+F(y_{t})\right)dt+G(y_{t})d\boldsymbol{W}_{t},\quad y(0)=y_{0}\in \mathcal{B},
\end{equation}
where $\boldsymbol{W}\in \mathcal{C}_{g}^{\alpha}([0,T],\mathbb{R}^d)$ 
and $\mathcal{B}$ is a separable Banach space.  In what follows we give some assumptions on  equation \eqref{3.1}:
\begin{description}
  \item[(A)] $-A$ generates a $C_0$-analytical semigroup $S(t)$ on $\mathcal{B}$, and  the Banach spaces $\mathcal{B}_{\gamma}\subset\subset\mathcal{B},\gamma>0$.
  \item[(F)] The nonlinear drift term $F:\mathcal{B}_{\gamma}\rightarrow \mathcal{B}_{\gamma-\delta}$ for $\delta\in [0,1)$ and it is global Lipschitz. Furthermore,  $\|F(0)\|_{\gamma-\delta}$ and Lipschitz constant $l_{F}$ are less than a constant $\mu$ which is enough small and will be determined later.
  \item[(G)] Let $\theta\in\{0,\alpha,2\alpha\}$ and $0\leq \sigma<\alpha$. The nonlinear diffusion coefficient $G:\mathcal{B}_{\gamma-\theta}\rightarrow \mathcal{B}_{\gamma-\theta-\sigma}$ is three times Frèchet differentiable with bounded derivatives, i.e. $\left\|D^{k} G\right\|_{\mathcal{L}\left(\mathcal{B}_{\gamma-\theta}^{\otimes k}, \mathcal{B}_{\gamma-\theta-\sigma}\right)}<\infty$ for $k \in\{1,2,3\}$ and the derivative of $DG(\cdot)G(\cdot):\mathcal{B}_{\gamma-\alpha}\rightarrow \mathcal{B}_{\gamma-2\alpha-\sigma}$ is also bounded. In addition, we require  $C_{G}\leq \mu$, where  $C_{G}=\max\left\{\left\|D^{k} G\right\|_{\mathcal{L}\left(\mathcal{B}_{\gamma-\theta}^{\otimes k}, \mathcal{B}_{\gamma-\theta-\sigma}\right)},k=1,2,3,\left\|D(DG(\cdot)G(\cdot))\right\|_{\mathcal{L}\left(\mathcal{B}_{\gamma-\alpha}, \mathcal{B}_{\gamma-2\alpha-\sigma}\right)}\right\}$.
\end{description}
\begin{remark}\label{remark 3.1*}
To obtain the compactness of the absorbing set, the condition (\textbf{A}) is necessary. For example, we can consider the operator $A=\triangle-I$, $-A$ is a strictly positive and symmetric operator and it's  inverse operator is compact on $L^{2}$, and the spaces $\mathcal{B}_{\gamma}:=D\left((-A)^{\frac{\gamma}{2}}\right)$ with norm $\|\cdot\|_{\gamma}:=\|(-A)^{\frac{\gamma}{2}}\cdot\|$, it coincides with the fractional Sobolev space $H^{\gamma},\gamma\in\mathbb{R}$.   Then $\mathcal{B}_{\gamma}$ compactly embedded into $\mathcal{B}$ for $\gamma>0$.  Thus, let $-A$ be a strictly and symmetric  operator  with a compact inverse which is a generator of an analytic exponential decreasing semigroup $S(t),t\geq 0$ in our paper. Furthermore, the semigroup $S(t)$ has the following properties
                                \begin{equation}\label{2.1*}
                                \|S(t)\|_{\mathcal{L}(\mathcal{B}_{\gamma},\mathcal{B}_{\gamma+\sigma})}\leq \frac{C}{t^
                                \sigma}e^{-\lambda t} \quad \text{for} \quad \sigma\geq 0,\gamma\in\mathbb{R},\quad t\in[0,T],
                                \end{equation}
                                \begin{equation}\label{2.2*}
                                \|S(t)-Id\|_{\mathcal{L}(\mathcal{B}_{\gamma+\sigma},\mathcal{B}_{\gamma})}\leq Ct^\sigma, \quad \text{for} \quad 1\geq\sigma\geq 0,\gamma\in \mathbb{R}, \quad t\in[0,T],
                                \end{equation}
 where the constant $C>0$ depends on $S$ and $\lambda>0$ is the smallest eigenvalue of $-A$. The condition $C_{G}\leq \mu$ seems to be strong. 
However,  for the case $C_{G}>\mu$, we  transform the diffusion term part $G(y)d\boldsymbol{W}$ to  $\frac{\mu}{C_{G}} G(y)d\boldsymbol{W}^{\mu,G}$, $\boldsymbol{W}^{\mu,G}=\left(\frac{C_{G}}{\mu}W,\frac{C^2_{G}}{\mu^2}\mathbb{W}\right)$,  Lemma \ref{Lemma 2.1} shows that $\int_{s}^{t}S(t-r)G(y_r)d\boldsymbol{W}_r=\int_{s}^{t}S(t-r)\frac{\mu}{C_{G}} G(y)d\boldsymbol{W}^{\mu,G}_r, s,t\in[0,T]$. 
Finally, we can choose $G(y)=g(x)(-\triangle)^{\sigma}y$ as \cite{MR4299812,hesse2021global}, where $g$ is a smooth function.
\end{remark}
Based on the above assumptions, we have the following global existence of the rough evolution equations.
\begin{theorem}[Theorem 3.9,\cite{hesse2021global}]
Let $T>0$, under assumptions \textbf{(A)},\textbf{(F)},\textbf{(G)}, $\boldsymbol{W}=(W,\mathbb{W})\in \mathcal{C}^\alpha([0,T];\mathbb{R}^d),\alpha\in(\frac{1}{3},\frac{1}{2}]$  and $y_{0}\in \mathcal{B}_{\gamma},\gamma\geq 0$. Then there exists a unique global  solution $(y,G(y))\in \mathcal{D}_{W,\gamma}^{2\alpha}([0,T])$ of \eqref{3.1}.
\end{theorem}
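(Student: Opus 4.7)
The plan is to apply a Banach fixed-point argument in the controlled rough path space $\mathcal{D}_{W,\gamma}^{2\alpha}([0,T])$ and then extend the local solution globally. I would define the solution map
\begin{equation*}
\mathcal{M}_T(y,y') := \Bigl(S(\cdot)y_0 + \int_0^\cdot S(\cdot - r)F(y_r)\,dr + \int_0^\cdot S(\cdot - r)G(y_r)\,d\boldsymbol{W}_r,\; G(y)\Bigr),
\end{equation*}
where the second component is the natural candidate for the Gubinelli derivative of the first. Self-consistency with $y' = G(y)$ is forced by the ansatz, and because $G$ is three times Fréchet differentiable with bounded derivatives by (\textbf{G}), the chain rule at the level of controlled rough paths yields $(G(y))' = DG(y)G(y)$, so $(G(y), DG(y)G(y))$ lies in the appropriate controlled rough path space and the integral is well defined via Lemma \ref{Lemma 2.1}.

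First I would verify that $\mathcal{M}_T$ maps $\mathcal{D}_{W,\gamma}^{2\alpha}([0,T])$ into itself. This reduces to bounding each of the five seminorms in $\|\cdot\|_{W,2\alpha,\gamma}$ of the image. The drift part is handled by the semigroup estimates \eqref{2.1}--\eqref{2.2} together with the Lipschitz assumption on $F$ and contributes a factor of order $T^{1-\delta}$. For the rough integral one subtracts the germ $S(t-s)G(y_s)W_{s,t} + S(t-s)DG(y_s)G(y_s)\mathbb{W}_{s,t}$, controls the residual of order $(t-s)^{3\alpha-\beta}$ in $\mathcal{B}_{\gamma-2\alpha+\beta}$ via \eqref{2.6}, and re-expresses the result using the interpolation inequality of Definition \ref{def 2.3} and the bounds in (\textbf{G}) in terms of $\|y,y'\|_{W,2\alpha,\gamma}$.

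Next I would establish the contraction on a short interval $[0,T_1]$. For $(y,G(y))$ and $(\tilde y, G(\tilde y))$ in a closed ball of radius $R$, the difference $\mathcal{M}_{T_1}(y,G(y)) - \mathcal{M}_{T_1}(\tilde y, G(\tilde y))$ is itself a controlled rough path, and a careful telescoping using the mean value theorem (valid thanks to $D^kG$, $k \le 3$, and $D(DG\cdot G)$ being bounded) produces an estimate of the form
\begin{equation*}
\|\mathcal{M}_{T_1}(y,G(y)) - \mathcal{M}_{T_1}(\tilde y, G(\tilde y))\|_{W,2\alpha,\gamma,[0,T_1]} \leq C\bigl(T_1^{\kappa}(1+R) + \mu\bigr)\,\|(y,G(y)) - (\tilde y, G(\tilde y))\|_{W,2\alpha,\gamma,[0,T_1]},
\end{equation*}
for some $\kappa>0$ depending on $\alpha,\sigma,\delta$. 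Choosing $\mu$ small (from (\textbf{F})--(\textbf{G})) and then $T_1$ small depending on $\|y_0\|_\gamma$ and $\rho_{\alpha,[0,T]}(\boldsymbol{W})$ produces local existence and uniqueness.

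The main obstacle, and the nontrivial part of the argument, is the extension to a solution on all of $[0,T]$. A priori the contraction time $T_1$ at each iteration depends on the growing norm of the solution at the start of the subinterval, so naive patching need not exhaust $[0,T]$. The standard remedy is to derive an a priori estimate on $\|y,G(y)\|_{W,2\alpha,\gamma,[s,t]}$ that closes via a Gronwall-type inequality in the length $t-s$, with a bound that depends only on $\rho_{\alpha,[0,T]}(\boldsymbol{W})$ and the structural constants in (\textbf{F})--(\textbf{G}), not on the value of $y$ at the left endpoint. Partitioning $[0,T]$ into finitely many subintervals of a length dictated by $\rho_{\alpha,[0,T]}(\boldsymbol{W})$, the smallness conditions $C_G \le \mu$ and $l_F, \|F(0)\|_{\gamma-\delta} < \mu$ keep the per-subinterval amplification factor under control, and a discrete Gronwall argument rules out blow-up so that the local solutions concatenate into a unique global one in $\mathcal{D}_{W,\gamma}^{2\alpha}([0,T])$.
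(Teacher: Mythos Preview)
Your approach is correct and matches the paper's, which simply cites \cite{hesse2021global} for the Banach fixed-point and concatenation argument and adds only the observation that the drift bound $\left\|\int_0^\cdot S(\cdot-s)F(y_s)\,ds,\,0\right\|_{W,2\alpha,\gamma}\lesssim T^{(1-\delta)\wedge(1-2\alpha)}(1+\|y\|_{\infty,\gamma})$ permits relaxing the restriction $\delta\in[2\alpha,1)$ of \cite{hesse2021global} to $\delta\in[0,1)$. One small remark: neither the contraction step nor the global extension actually requires the smallness of $\mu$---working in the affine subspace with fixed initial data $(y_0,G(y_0))$ produces small-time factors in every term of the contraction estimate, and for the concatenation the linear growth of $F$ and $G$ (global Lipschitz $F$, bounded $DG$) alone yields the a priori bound that rules out blow-up---so your appeals to $C_G\le\mu$ and $l_F<\mu$ here are unnecessary, though harmless under the stated hypotheses.
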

\begin{remark}
\cite{hesse2021global} required $\delta\in [2\alpha,1)$, we don't need this condition. Indeed, for $\delta\in[0,1)$, the estimate in Lemma 3.3\cite{hesse2021global} as follows
\begin{equation*}
 \left\|\int_{0}^{\cdot}S(\cdot-s)F(y_s)ds,0\right\|_{W,2\alpha,\gamma}\lesssim T^{(1-\delta)\wedge(1-2\alpha)}(1+\|y\|_{\infty,\gamma}),
 \end{equation*}
  the Banach fixed point theorem and concatenation argument can be used to get the global solution as\cite{hesse2021global}. So we omit its' proof.
\end{remark}
We consider the mild solution for \eqref{3.1} as follows
\begin{align}\label{3.2}
y_{t}=S(t)y_{0}+\int_{0}^{t}S(t-r)F(y_{r})dr+\int_{0}^{t}S(t-r)G(y_{r})d\boldsymbol{W}_{r},\quad t\in[0,T].
\end{align}
\subsection{Non-autonomous dynamical system associated to \texorpdfstring{\eqref{3.2}}{mg}}
The system \eqref{3.2} can generate a continuous random dynamical system $\varphi$, and it can be found in \cite{kuehn2021center,hesse2021global}. Thus, it also generates non-autonomous continuous dynamical system for a given sample space $\Omega$ which satisfies every $\omega\in\Omega$ has canonical lift, namely $\omega$ can generate an  $\frac{1}{2}\geq H>\alpha^\prime$-H\"{o}lder  geometric rough path $\boldsymbol{W}(\omega)=(W(\omega),\mathbb{W}(\omega))$.
\begin{lemma}
Let $y_{0}\in\mathcal{B}_{\gamma},\gamma\geq 0$ and $\boldsymbol{W}(\omega)=(W(\omega),\mathbb{W}(\omega))$ be a geometric $\alpha^\prime$-rough path for every $\omega\in\Omega$. Then the system \eqref{3.1} can generate a non-autonomous dynamical system $\varphi$ on $\mathbb{R}^{+}$ with state space $\mathcal{B}_{\gamma}$:
\begin{align}
&\varphi:\mathbb{R}^{+}\times \Omega\times\mathcal{B}_{\gamma}\rightarrow \mathcal{B}_{\gamma},\nonumber\\
&\varphi(t,\omega,y_{0})=S(t)y_{0}+\int_{0}^{t}S(t-r)F(y_{r})dr+\int_{0}^{t}S(t-r)G(y_{r})d\boldsymbol{W}_r(\omega).\label{3.3}
\end{align}
\end{lemma}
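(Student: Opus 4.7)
The plan is to check the two defining conditions of a non-autonomous dynamical system: the identity property at $t=0$ and the cocycle property, using uniqueness of the mild solution and the behaviour of the rough integral under the Wiener shift.

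First I would establish well-definedness. Fix $\omega\in\Omega$. Since $\boldsymbol{W}(\omega)\in\mathcal{C}^{\alpha^\prime}_{g}$ with $\alpha^\prime>\frac{1}{3}$, pick $\alpha\in(\frac{1}{3},\alpha^\prime)$ so that $\boldsymbol{W}(\omega)\in\mathcal{C}^{\alpha}$ as well. Under assumptions \textbf{(A)},\textbf{(F)},\textbf{(G)} and with $y_{0}\in\mathcal{B}_{\gamma}$, Theorem 3.9 of \cite{hesse2021global} (quoted in the excerpt) yields a unique global controlled rough path solution $(y,G(y))\in\mathcal{D}^{2\alpha}_{W,\gamma}([0,T])$ for every $T>0$, and in particular $y_t\in\mathcal{B}_{\gamma}$ for all $t\geq 0$. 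This makes $\varphi(t,\omega,y_{0}):=y_{t}$ well defined by the mild formula \eqref{3.3}. The identity $\varphi(0,\omega,y_{0})=y_{0}$ is immediate since $S(0)=\mathrm{Id}$ and both integrals in \eqref{3.3} vanish at $t=0$.

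The main step is the cocycle property. Fix $t,\tau\geq 0$ and let $y$ denote the solution corresponding to $(y_{0},\boldsymbol{W}(\omega))$. Split the mild formula at $r=\tau$:
\begin{align*}
y_{t+\tau}=S(t)\Bigl[S(\tau)y_{0}+\int_{0}^{\tau}S(\tau-r)F(y_{r})dr+\int_{0}^{\tau}S(\tau-r)G(y_{r})d\boldsymbol{W}_{r}(\omega)\Bigr]\\
+\int_{\tau}^{t+\tau}S(t+\tau-r)F(y_{r})dr+\int_{\tau}^{t+\tau}S(t+\tau-r)G(y_{r})d\boldsymbol{W}_{r}(\omega).
\end{align*}
The bracketed quantity is exactly $\varphi(\tau,\omega,y_{0})=y_{\tau}$. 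Setting $z_{s}:=y_{s+\tau}$ and changing variable $r=s+\tau$, the deterministic integral becomes $\int_{0}^{t}S(t-s)F(z_{s})ds$. For the rough integral I return to the compensated Riemann sums from Lemma \ref{Lemma 2.1}: a partition $\mathcal{P}(\tau,t+\tau)$ becomes, after shifting, a partition $\mathcal{P}(0,t)$, and using $(\theta_{\tau}W)_{\tilde u,\tilde v}=W_{\tilde u+\tau,\tilde v+\tau}$, $(\theta_{\tau}\mathbb{W})_{\tilde u,\tilde v}=\mathbb{W}_{\tilde u+\tau,\tilde v+\tau}$, each summand
\[
S(t+\tau-u)\bigl[G(y_{u})W_{u,v}+DG(y_{u})G(y_{u})\mathbb{W}_{u,v}\bigr]
\]
transforms into $S(t-\tilde u)\bigl[G(z_{\tilde u})(\theta_{\tau}W)_{\tilde u,\tilde v}+DG(z_{\tilde u})G(z_{\tilde u})(\theta_{\tau}\mathbb{W})_{\tilde u,\tilde v}\bigr]$. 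In passing to the limit one uses that $(z,G(z))$ is a controlled rough path with respect to $\theta_{\tau}\boldsymbol{W}(\omega)$ with the same Gubinelli derivative $G(z)$, which is a direct reindexing of Definition \ref{def 2.4} using $R^{z}_{\tilde s,\tilde t}=R^{y}_{\tilde s+\tau,\tilde t+\tau}$ and the action of $\theta_{\tau}$ on the rough path. Thus
\[
\int_{\tau}^{t+\tau}S(t+\tau-r)G(y_{r})d\boldsymbol{W}_{r}(\omega)=\int_{0}^{t}S(t-s)G(z_{s})d(\theta_{\tau}\boldsymbol{W})_{s}(\omega).
\]

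Combining these identities shows that $z$ solves \eqref{3.2} on $[0,\infty)$ with initial datum $y_{\tau}=\varphi(\tau,\omega,y_{0})$ and driving rough path $\theta_{\tau}\boldsymbol{W}(\omega)$. By the uniqueness part of Theorem 3.9 of \cite{hesse2021global}, $z_{t}=\varphi(t,\theta_{\tau}\omega,\varphi(\tau,\omega,y_{0}))$, which is the cocycle property $\varphi(t+\tau,\omega,y_{0})=\varphi(t,\theta_{\tau}\omega,\varphi(\tau,\omega,y_{0}))$.

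The main obstacle is the shift invariance of the rough integral, since the integral is only defined as a limit of compensated Riemann sums and one must justify passing to the limit after relabelling the partition; the controlled rough path structure of $(z,G(z))$ over $\theta_{\tau}\boldsymbol{W}$ is needed both to invoke Lemma \ref{Lemma 2.1} on the shifted interval and to make the Gubinelli derivative agree on both sides. Everything else (analyticity of $S$, the contraction of $F$, and Chen's identity to check that $\theta_{\tau}\boldsymbol{W}$ is again a geometric rough path, already noted in Lemma 2.3 of the excerpt) is routine.
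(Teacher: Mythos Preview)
Your argument is correct and is the standard route to the cocycle property: split the mild formula at $\tau$, change variables, identify the shifted rough integral via the compensated Riemann sums and the action of $\theta_\tau$ on $(W,\mathbb{W})$, then invoke uniqueness. The paper itself does not prove this lemma; it simply remarks (just before the lemma) that the random dynamical system property is established in \cite{kuehn2021center,hesse2021global} and that the non-autonomous statement follows, so your write-up is in effect supplying the details those references contain. One minor correction: the result you cite as ``Lemma 2.3 of the excerpt'' for $\theta_\tau\boldsymbol{W}$ being again an $\alpha$-H\"older rough path is Lemma~2.2 in the paper's numbering.
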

In the following section, we shall derive another discrete non-autonomous dynamical system on $\mathbb{Z}^{+}$ and construct a priori estimate of the solution. To this end, we introduce a sequence of stopping times to make the norm of rough path $\boldsymbol{W}(\omega)$ sufficiently small on each stopping time interval.

Firstly,  for some $\mu\in (0,1),\alpha\in(0,\alpha^\prime)$ and every $\omega\in\Omega$, we define  the stopping times as follows
\begin{align}
T(\boldsymbol{W}(\omega))&=\inf\{\tau>0:\|\boldsymbol{W}(\omega)\|_{\alpha,[0,\tau]}+\mu\tau^{1-\alpha}\geq \mu\},\label{3.4}\\
\hat{T}(\boldsymbol{W}(\omega))&=\sup\{\tau<0:\|\boldsymbol{W}(\omega)\|_{\alpha,[\tau,0]}+\mu|\tau|^{1-\alpha}\geq \mu\}.\label{3.5}
\end{align}
\begin{lemma}\label{lemma 3.2}
For  all $\omega\in\Omega$, stopping times $T(\boldsymbol{W}(\omega))$ and $\hat{T}(\boldsymbol{W}(\omega))$ have the following properties:
\begin{description}
  \item[(i)] $T(\boldsymbol{W}(\omega)),\left|\hat{T}(\boldsymbol{W}(\omega))\right|\in (0,1]$;
  \item[(ii)] $T(\boldsymbol{W}(\omega))=-\hat{T}(\theta_{T(\boldsymbol{W}(\omega))}\boldsymbol{W}(\omega)),\quad \hat{T}(\boldsymbol{W}(\omega))=-T(\theta_{\hat{T}(\boldsymbol{W}(\omega))}\boldsymbol{W}(\omega)). $
\end{description}
\end{lemma}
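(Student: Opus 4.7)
My plan for part (i) is to handle the two bounds separately. For the upper bound $T(\boldsymbol{W}(\omega)) \leq 1$, I would simply observe that at $\tau = 1$ the additive term $\mu \tau^{1-\alpha}$ already equals $\mu$, so the defining condition is trivially satisfied and $\tau = 1$ lies in the set over which the infimum is taken. For positivity, I would use that each $\omega \in \Omega$ lifts to an $\alpha'$-Hölder geometric rough path with $\alpha' > \alpha$ and invoke the interpolation $\interleave W(\omega)\interleave_{\alpha, [0, \tau]} \leq \tau^{\alpha' - \alpha} \interleave W(\omega)\interleave_{\alpha', [0, 1]}$ (and the analogue for the second level at $2\alpha$ vs.\ $2\alpha'$), so that $\|\boldsymbol{W}(\omega)\|_{\alpha, [0, \tau]} + \mu \tau^{1-\alpha} \to 0$ as $\tau \to 0^+$; this forces the defining condition to fail on some right neighbourhood of $0$, giving $T(\boldsymbol{W}(\omega)) > 0$. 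The bounds on $|\hat{T}(\boldsymbol{W}(\omega))|$ follow by an entirely symmetric argument on negative intervals.

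For part (ii), the key ingredient is the interval shift invariance of the rough path Hölder norm. From $(\theta_\tau W)_{s,t} = W_{s+\tau, t+\tau}$ and $(\theta_\tau \mathbb{W})_{s,t} = \mathbb{W}_{s+\tau, t+\tau}$, substituting $s' = s+\tau$, $t' = t+\tau$ in the defining suprema yields
\[
\|\theta_\tau \boldsymbol{W}\|_{\alpha, [a, b]} = \|\boldsymbol{W}\|_{\alpha, [a+\tau, b+\tau]}
\]
for all admissible intervals. I would then set $T := T(\boldsymbol{W}(\omega))$ and use continuity together with strict monotonicity of $g(\tau) := \|\boldsymbol{W}(\omega)\|_{\alpha, [0, \tau]} + \mu \tau^{1-\alpha}$ (the second summand is strictly increasing) to conclude that the infimum is attained with equality, $g(T) = \mu$. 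Applying the shift identity with $a = -T$, $b = 0$, $\tau = T$ gives
\[
\|\theta_T \boldsymbol{W}(\omega)\|_{\alpha, [-T, 0]} + \mu T^{1-\alpha} = \|\boldsymbol{W}(\omega)\|_{\alpha, [0, T]} + \mu T^{1-\alpha} = \mu,
\]
so $-T$ belongs to the set defining $\hat{T}(\theta_T \boldsymbol{W}(\omega))$, whence $\hat{T}(\theta_T \boldsymbol{W}(\omega)) \geq -T$.

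For the reverse inequality I would pick $\tau \in (-T, 0)$, set $\varepsilon := -\tau \in (0, T)$, and use the shift identity together with monotonicity of the Hölder norm under restriction to sub-intervals:
\[
\|\theta_T \boldsymbol{W}(\omega)\|_{\alpha, [\tau, 0]} = \|\boldsymbol{W}(\omega)\|_{\alpha, [T - \varepsilon, T]} \leq \|\boldsymbol{W}(\omega)\|_{\alpha, [0, T]} = \mu - \mu T^{1-\alpha}.
\]
Since $\varepsilon < T$ gives $\mu \varepsilon^{1-\alpha} < \mu T^{1-\alpha}$, adding these produces a strict $< \mu$ bound and rules out $\tau$ from the defining set of $\hat{T}(\theta_T \boldsymbol{W}(\omega))$. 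Combining both inequalities yields $\hat{T}(\theta_T \boldsymbol{W}(\omega)) = -T$, which is the first identity of (ii). The second identity $\hat{T}(\boldsymbol{W}(\omega)) = -T(\theta_{\hat{T}(\boldsymbol{W}(\omega))}\boldsymbol{W}(\omega))$ follows by the symmetric argument, exchanging the roles of the intervals $[0, T]$ and $[\hat{T}, 0]$.

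The main obstacle I anticipate is the technical point that $\tau \mapsto \|\boldsymbol{W}(\omega)\|_{\alpha, [0, \tau]}$ is continuous, so that $g$ actually attains the critical value $\mu$ at $T$ rather than merely crossing it. This amounts to controlling the variation of a supremum defining a Hölder norm as the upper endpoint moves; for the first level one splits pairs $(s, t)$ according to whether they lie inside the old interval or straddle the endpoint, using the continuity of $W$ and Chen's identity for the second level. Once this continuity is in hand, the remainder of the proof is a clean combination of the shift invariance above with the strict monotonicity of $\mu |\tau|^{1-\alpha}$.
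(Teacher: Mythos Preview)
Your proposal is correct and follows essentially the same route as the paper's proof. Both arguments establish part (i) via the $\alpha'$-to-$\alpha$ interpolation for positivity and the trivial observation at $\tau=1$ for the upper bound; both identify continuity of $\tau \mapsto \|\boldsymbol{W}(\omega)\|_{\alpha,[0,\tau]}$ as the key technical point (the paper proves it via a truncation $W^{\tau_0}$, which is one concrete way to implement the endpoint-splitting you sketch); and both derive (ii) from the shift identity $\|\theta_\tau\boldsymbol{W}\|_{\alpha,[a,b]}=\|\boldsymbol{W}\|_{\alpha,[a+\tau,b+\tau]}$ combined with the fact that $g$ attains the value $\mu$ exactly at $T$. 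The only cosmetic difference is that the paper packages (ii) as ``unique crossing point $+$ shift'' whereas you split it into the two inequalities $\hat{T}(\theta_T\boldsymbol{W})\geq -T$ and $\hat{T}(\theta_T\boldsymbol{W})\leq -T$; your reverse-inequality step, using only sub-interval monotonicity of the norm together with strict monotonicity of $\mu|\tau|^{1-\alpha}$, is in fact the same mechanism that makes $g$ strictly increasing in the paper's formulation.
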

\begin{proof}
It is clear that $T(\boldsymbol{W}(\omega)),\left|\hat{T}(\boldsymbol{W}(\omega))\right|\leq 1$, it can be easily obtained from the definition \eqref{3.4} and \eqref{3.5}. Since $\boldsymbol{W}(\omega)$ is an
$\alpha^{\prime}$-H\"{o}lder geometric rough path for $\omega\in\Omega$ and $\alpha<\alpha^{\prime}$, we have
\begin{equation*}
\|\boldsymbol{W}(\omega)\|_{\alpha,[0,\tau]}\leq \|\boldsymbol{W}(\omega)\|_{\alpha^\prime,[0,\tau]}(\tau^{\alpha^\prime-\alpha}+\tau^{2(\alpha^\prime-\alpha)})\rightarrow 0, \quad \text{as}\quad \tau\rightarrow 0
\end{equation*}
and
\begin{equation*}
\lim_{\tau\rightarrow\infty}\|\boldsymbol{W}(\omega)\|_{\alpha,[0,\tau]}+\mu\tau^{1-\alpha}=+\infty.
\end{equation*}
Hence, once proving the mapping $\tau\mapsto \|\boldsymbol{W}(\omega)\|_{\alpha,[0,\tau]}+\mu\tau^{1-\alpha}$ is continuous and strictly increasing, then   there is a unique $\hat{\tau}_{0}$ such that $\|\boldsymbol{W}(\omega)\|_{\alpha,[0,\hat{\tau}_{0}]}+\mu\hat{\tau}_{0}^{1-\alpha}=\mu$.

For fixed $\tau_{0}$, we define the following truncated function:
\begin{align*}
W^{\tau_{0}}_{s}(\omega):=\left\{\begin{array}{l}
W_{s}(\omega),\quad s\leq \tau_{0}, \\
W_{\tau_{0}}(\omega),\quad  s> \tau_{0}.
\end{array}\right.
\end{align*}
Then $W^{\tau_{0}}(\omega)$ has a canonical lift, since $W_{s}^{\tau_0}(\omega),s>\tau_0$ is smooth, using Chen's identity  we have that
\begin{align*}
\mathbb{W}^{\tau_{0}}_{s,t}(\omega):=\left\{\begin{array}{l}
\mathbb{W}_{s,t}(\omega),\quad s<t\leq \tau_{0}, \\
\mathbb{W}_{s,\tau_{0}}(\omega),\quad  s\leq \tau_{0}\leq t,\\
0,\quad \tau_{0}\leq s<t.
\end{array}\right.
\end{align*}
Thus, for $\tau\geq\tau_{0}$,
\begin{align*}
&\left|\|\boldsymbol{W}(\omega)\|_{\alpha,[0,\tau]}-\|\boldsymbol{W}(\omega)\|_{\alpha,[0,\tau_{0}]}\right|=\left|\|\boldsymbol{W}(\omega)\|_{\alpha,[0,\tau]}-\|\boldsymbol{W}^{\tau_{0}}(\omega)\|_{\alpha,[0,\tau]}\right|\\
&~~~\leq \left|\interleave W(\omega)\interleave_{\alpha,[0,\tau]}-\interleave W^{\tau_{0}}(\omega)\interleave_{\alpha,[0,\tau]}\right|+\left|\interleave \mathbb{W}(\omega)\interleave_{2\alpha,[0,\tau]}-\interleave \mathbb{W}^{\tau_{0}}(\omega)\interleave_{2\alpha,[0,\tau]}\right|\\
&~~~\leq \interleave W(\omega) \interleave_{\alpha,[\tau_{0},\tau]}+\interleave \mathbb{W}(\omega)-\mathbb{W}^{\tau_{0}}(\omega)\interleave_{2\alpha,[0,\tau]}\\
&~~~\leq (\tau-\tau_{0})^{\alpha^{\prime}-\alpha}\|W(\omega)\|_{\alpha^\prime,[\tau_{0},\tau]}+ \Bigg[(\tau-\tau_{0})^{(2\alpha^{\prime}-2\alpha)}\interleave \mathbb{W}(\omega)\interleave_{2\alpha^\prime,[\tau_{0},\tau]}\\
&~~~~~+\interleave W(\omega) \interleave_{\alpha,[0,\tau_{0}]}\interleave W(\omega) \interleave_{\alpha^{\prime},[\tau_{0},\tau]}(\tau-\tau_{0})^{\alpha^{\prime}-\alpha}\Bigg]\! \vee\! \Bigg[(\tau-\tau_{0})^{2\alpha^{\prime}-2\alpha}\interleave \mathbb{W}(\omega)\interleave_{2\alpha^\prime,[\tau_{0},\tau]}\Bigg]\\
&~~~\leq (\tau-\tau_{0})^{2\alpha^{\prime}-2\alpha}\interleave \mathbb{W}(\omega)\interleave_{2\alpha,[\tau_{0},\tau]}+\left(\interleave W(\omega) \interleave_{\alpha,[0,\tau_{0}]}+1\right)\interleave W(\omega) \interleave_{\alpha^{\prime},[\tau_{0},\tau]}(\tau-\tau_{0})^{\alpha^{\prime}-\alpha}.
\end{align*}
Therefore, we have $\lim_{\tau\downarrow\tau_{0}}\|\boldsymbol{W}(\omega)\|_{\alpha,[0,\tau]}=\|\boldsymbol{W}(\omega)\|_{\alpha,[0,\tau_{0}]}$. Furthermore, we can obtain similar result for $\tau\uparrow\tau_{0}$. Thus, the mapping $\tau\mapsto\|\boldsymbol{W}(\omega)\|_{\alpha,[0,\tau]}+\mu\tau^{1-\alpha}$ is strictly increasing and continuous with respect to $\tau$.  And  the mapping $\tau\mapsto\|\boldsymbol{W}(\omega)\|_{\alpha,[\tau,0]}+\mu|\tau|^{1-\alpha}$ is  strictly decreasing. Then we have that
\begin{align*}
\mu&=\|\boldsymbol{W}(\omega)\|_{\alpha,[0,T(\boldsymbol{W}(\omega))]}+\mu\left(T(\boldsymbol{W}(\omega))\right)^{1-\alpha}\\
&=\|\theta_{T(\boldsymbol{W}(\omega))}\boldsymbol{W}(\omega)\|_{\alpha,[\hat{T}(\theta_{T(\boldsymbol{W}(\omega))}\boldsymbol{W}(\omega)),0]}+\mu(-\hat{T}(\theta_{T(\boldsymbol{W}(\omega))}\boldsymbol{W}(\omega)))^{1-\alpha}
\end{align*}
if and only if $T(\boldsymbol{W}(\omega))=-\hat{T}(\theta_{T(\boldsymbol{W}(\omega))}\boldsymbol{W}(\omega)))$. it is not difficult  to check that the similar property holds for $\hat{T}(\boldsymbol{W}(\omega))$.
\end{proof}
Furthermore, the stopping times  have the following monotonicity.
\begin{lemma}\label{lemma 3.3}
For any $t_{1}\leq t_{2}$, we have
\begin{align*}
t_{1}+\hat{T}(\theta_{t_{1}}\boldsymbol{W}(\omega))\leq t_{2}+\hat{T}(\theta_{t_{2}}\boldsymbol{W}(\omega)).
\end{align*}
\end{lemma}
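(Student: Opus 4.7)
The plan is to translate the sup defining $\hat{T}$ back to the original time line and then test the witness coming from $t_1$ in the set defining $\hat{T}$ at $t_2$.

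First, I would set $s_i := t_i + \hat{T}(\theta_{t_i}\boldsymbol{W}(\omega))$ for $i=1,2$ and use the identity $\theta_{t_i}\boldsymbol{W}(\omega)_{s,t} = \boldsymbol{W}(\omega)_{s+t_i,t+t_i}$ to obtain
$$\|\theta_{t_i}\boldsymbol{W}(\omega)\|_{\alpha,[\tau,0]} = \|\boldsymbol{W}(\omega)\|_{\alpha,[t_i+\tau,t_i]}.$$
With the change of variable $s = t_i + \tau$, the supremum in \eqref{3.5} becomes
$$s_i = \sup\bigl\{s < t_i : \|\boldsymbol{W}(\omega)\|_{\alpha,[s,t_i]} + \mu(t_i - s)^{1-\alpha} \geq \mu\bigr\}.$$

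Next, I would exploit two monotonicities. The inhomogeneous $\alpha$-H\"older norm $\|\boldsymbol{W}(\omega)\|_{\alpha,[s,t]} = \interleave W(\omega)\interleave_{\alpha,[s,t]} + \interleave\mathbb{W}(\omega)\interleave_{2\alpha,\Delta^2_{[s,t]}}$ is non-decreasing when the interval grows, and the map $t \mapsto \mu(t-s)^{1-\alpha}$ is strictly increasing. Combined with the continuity argument already established in the proof of Lemma \ref{lemma 3.2}, this shows that the supremum is attained, so
$$\|\boldsymbol{W}(\omega)\|_{\alpha,[s_1,t_1]} + \mu(t_1 - s_1)^{1-\alpha} = \mu.$$
From Lemma \ref{lemma 3.2}(i) we also have $s_1 < t_1 \leq t_2$, hence $\tau_\star := s_1 - t_2 < 0$.

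The key step is to plug $\tau_\star$ into the defining condition for $\hat{T}(\theta_{t_2}\boldsymbol{W}(\omega))$. Enlarging the interval from $[s_1,t_1]$ to $[s_1,t_2]$ can only increase the H\"older norm, and $t_2 - s_1 \geq t_1 - s_1$ increases the second term, so
$$\|\theta_{t_2}\boldsymbol{W}(\omega)\|_{\alpha,[\tau_\star,0]} + \mu|\tau_\star|^{1-\alpha} = \|\boldsymbol{W}(\omega)\|_{\alpha,[s_1,t_2]} + \mu(t_2 - s_1)^{1-\alpha} \geq \mu.$$
Thus $\tau_\star$ belongs to the set whose supremum defines $\hat{T}(\theta_{t_2}\boldsymbol{W}(\omega))$, giving $s_1 - t_2 \leq \hat{T}(\theta_{t_2}\boldsymbol{W}(\omega))$, which rearranges to $s_1 \leq s_2$, the desired inequality.

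I do not expect any genuine obstacle: the whole argument is a direct unfolding of the two-sided monotonicity of $(s,t) \mapsto \|\boldsymbol{W}(\omega)\|_{\alpha,[s,t]} + \mu(t-s)^{1-\alpha}$. The only point requiring explicit care is that the supremum is attained (rather than just approached), but this is precisely the content of the continuity argument in the proof of Lemma \ref{lemma 3.2}, and can simply be invoked.
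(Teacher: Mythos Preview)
Your argument is correct and is precisely the natural monotonicity proof: rewrite $\hat T(\theta_{t_i}\boldsymbol W(\omega))$ on the original time axis and use that both $\|\boldsymbol W(\omega)\|_{\alpha,[s,t]}$ and $\mu(t-s)^{1-\alpha}$ are non-decreasing when the interval is enlarged. The paper omits the proof and refers to \cite[Lemma~3.5]{MR3226746}, whose argument is exactly this; your write-up could serve as the omitted proof verbatim.
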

\begin{proof}
It's proof is similar to\cite[Lemma 3.5]{MR3226746}, we omit it.
\end{proof}
\begin{remark}\label{remark 3.1}
In particular,  if the $t_{2}+\hat{T}(\theta_{t_{2}}\boldsymbol{W}(\omega))\leq t_{1}\leq t_{2}$. Then repeatedly use the formula in  Lemma \ref{lemma 3.3}, we obtain the following property of stoping times.
\begin{align*}
t_{2}\geq t_{1}\geq t_{2}+\hat{T}(\theta_{t_{2}}\boldsymbol{W}(\omega))&\geq t_{1}+\hat{T}(\theta_{t_{1}}\boldsymbol{W}(\omega))\\
&\geq t_{2}+\hat{T}(\theta_{t_{2}}\boldsymbol{W}(\omega))+\hat{T}(\theta_{t_{2}+\hat{T}(\theta_{t_{2}}\boldsymbol{W}(\omega))}\boldsymbol{W}(\omega))\\
&\geq t_{1}+\hat{T}(\theta_{t_{1}}\boldsymbol{W}(\omega))+\hat{T}(\theta_{t_{1}+\hat{T}(\theta_{t_{1}}\boldsymbol{W}(\omega))}\boldsymbol{W}(\omega))\\
&\cdots.
\end{align*}
\end{remark}
\subsection{Global attractor for the non-autonomous dynamical system associated to \texorpdfstring{\eqref{3.2}}{pg}}
 We first establish a priori estimate of the solution to \eqref{3.2}. To this end, for fixed $\omega\in\Omega$, we define a sequence of stopping times $(T_{i}(\boldsymbol{W}(\omega)))_{i\in\mathbb{Z}}$  as follows

\begin{equation}\label{3.6*}
\begin{aligned}
T_{i}(\boldsymbol{W}(\omega))=\left\{\begin{array}{l}
0,\quad i=0, \\
T_{i-1}(\boldsymbol{W}(\omega))+T(\theta_{T_{i-1}(\boldsymbol{W}(\omega))}\boldsymbol{W}(\omega)),\quad i\in \mathbb{N}^{+},\\
T_{i+1}(\boldsymbol{W}(\omega))+\hat{T}(\theta_{T_{i+1}(\boldsymbol{W}(\omega))}\boldsymbol{W}(\omega)),\quad i\in \mathbb{N}^{-}.
\end{array}\right.
\end{aligned}
\end{equation}
According to the definition of stopping times, we have the following cocycle property:
\begin{align}
T_{0}(\boldsymbol{W}(\omega))=0,\quad T_{j}(\boldsymbol{W}(\omega))+T_{i}(\theta_{T_{j}(\boldsymbol{W}(\omega))}\boldsymbol{W}(\omega))=T_{i+j}(\boldsymbol{W}(\omega)),\forall i,j\in\mathbb{Z}.
\end{align}
Furthermore, $T(\boldsymbol{W}(\omega))=T_{1}(\boldsymbol{W}(\omega))$ and $\hat{T}(\boldsymbol{W}(\omega))=T_{-1}(\boldsymbol{W}(\omega))$.
\begin{lemma}\label{lemma 3.4}
For all $\omega\in\Omega$, let $(y,y^{\prime})$ be the solution of \eqref{3.1} with respect to $y_{0}\in\mathcal{B}_{\gamma},\gamma\geq 0$ and  a sequence of stopping times $(T_{i}(\theta_{T_{j}(\boldsymbol{W}(\omega))}\boldsymbol{W}(\omega)))_{i\in\mathbb{Z}}$ which are defined by \eqref{3.6*}. Then we have
\begin{align*}
&\|y,y^{\prime}\|_{W,2\alpha,\gamma,[T_{i-1}(\theta_{T_{j}}\boldsymbol{W}(\omega)),T_{i}(\theta_{T_{j}}\boldsymbol{W}(\omega))]}\\
&\leq C\mu\sum_{m=1}^{i-1}(1+\|y,y^{\prime}\|_{W,2\alpha,\gamma,[T_{m-1}(\theta_{T_{j}}\boldsymbol{W}),T_{m}(\theta_{T_{j}}\boldsymbol{W}(\omega))]})e^{-\lambda(T_{i-1}(\theta_{T_{j}}\boldsymbol{W}(\omega))-T_{m}(\theta_{T_{j}}\boldsymbol{W}(\omega)))}\nonumber\\
&+C\mu(1+\|y,y^{\prime}\|_{W,2\alpha,\gamma,[T_{i-1}(\theta_{T_{j}}\boldsymbol{W}(\omega)),T_{i}(\theta_{T_{j}}\boldsymbol{W}(\omega))]})+Ce^{-\lambda T_{i-1}(\theta_{T_{j}}\boldsymbol{W}(\omega))}\|y_{0}\|_{\gamma},
\end{align*}
where $T_{j}=T_{j}(\boldsymbol{W}(\omega))$, and $\lambda>0$ is the smallest eigenvalue of $-A$, the constant C depends on $S,\sigma,\alpha$ .
\end{lemma}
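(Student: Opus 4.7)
\medskip

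\textbf{Proof proposal.} The plan is to estimate the controlled rough path norm $\|y,G(y)\|_{W,2\alpha,\gamma,[T_{i-1},T_i]}$ locally on each stopping-time interval via the mild formulation, and then iterate the resulting bound down to $y_0$. Let me write $\omega\in\Omega$ fixed and abbreviate $T_m=T_m(\theta_{T_j}\boldsymbol{W}(\omega))$. Using the semigroup property, on $[T_{i-1},T_i]$ the solution satisfies
\begin{equation*}
y_t=S(t-T_{i-1})y_{T_{i-1}}+\int_{T_{i-1}}^{t}S(t-r)F(y_r)\,dr+\int_{T_{i-1}}^{t}S(t-r)G(y_r)\,d\boldsymbol{W}_r.
\end{equation*}
I would estimate each of the five component seminorms in Definition~\ref{def 2.4} — namely $\|y\|_{\infty,\gamma}$, $\|G(y)\|_{\infty,\gamma-\alpha}$, $\interleave G(y)\interleave_{\alpha,\gamma-2\alpha}$, $\interleave R^y\interleave_{\alpha,\gamma-\alpha}$, $\interleave R^y\interleave_{2\alpha,\gamma-2\alpha}$ — separately for these three terms, using: the analytic semigroup estimates \eqref{2.1*}--\eqref{2.2*} for the homogeneous part (which produces the factor $e^{-\lambda(T_i-T_{i-1})}\|y_{T_{i-1}}\|_\gamma$); the Lipschitz bound on $F$ with constants $\leq\mu$ together with the drift estimate of Remark~3.2; and Lemma~\ref{Lemma 2.1} together with assumption~(\textbf{G}) ($C_G\leq\mu$) for the rough integral. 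The crucial input is the definition \eqref{3.4} of $T_1$, which ensures $\|\boldsymbol{W}\|_{\alpha,[T_{i-1},T_i]}\leq\mu$, so that every rough contribution carries a factor of $\mu$.

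Combining these and using that each interval has length $\leq 1$ by Lemma~\ref{lemma 3.2}(i), I expect to obtain a local bound of the shape
\begin{equation*}
\|y,G(y)\|_{W,2\alpha,\gamma,[T_{i-1},T_i]}\le C\mu\bigl(1+\|y,G(y)\|_{W,2\alpha,\gamma,[T_{i-1},T_i]}\bigr)+Ce^{-\lambda(T_i-T_{i-1})}\|y_{T_{i-1}}\|_{\gamma}.
\end{equation*}
Choosing $\mu$ sufficiently small absorbs the self-term on the left; this already accounts for the $C\mu(1+\|y,G(y)\|_{[T_{i-1},T_i]})$ summand in the statement.

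To produce the sum over $m$ and the $Ce^{-\lambda T_{i-1}}\|y_0\|_\gamma$ term, I would expand $\|y_{T_{i-1}}\|_\gamma$ via the mild formula on $[0,T_{i-1}]$, split into subintervals $[T_{m-1},T_m]$:
\begin{equation*}
y_{T_{i-1}}=S(T_{i-1})y_0+\sum_{m=1}^{i-1}S(T_{i-1}-T_m)\!\left[\int_{T_{m-1}}^{T_m}\!S(T_m-r)F(y_r)\,dr+\int_{T_{m-1}}^{T_m}\!S(T_m-r)G(y_r)\,d\boldsymbol{W}_r\right].
\end{equation*}
The prefactor $S(T_{i-1}-T_m)$ yields the decay $e^{-\lambda(T_{i-1}-T_m)}$ by \eqref{2.1*}, while the inner integrals are estimated exactly as above by $C\mu(1+\|y,G(y)\|_{W,2\alpha,\gamma,[T_{m-1},T_m]})$. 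The homogeneous piece produces $Ce^{-\lambda T_{i-1}}\|y_0\|_\gamma$. Summing these contributions reproduces the displayed inequality.

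The main obstacle is the rough-integral estimate \eqref{2.6}, which naturally delivers bounds in $\mathcal{B}_{\gamma-2\alpha+\beta}$ with a loss in spatial regularity; recovering all five component norms of $(y,G(y))$ at the required regularities $\gamma,\gamma-\alpha,\gamma-2\alpha$ requires the interpolation inequality in Definition~\ref{def 2.3} and the smoothing \eqref{2.2*} of the analytic semigroup, with a careful choice of the parameter $\beta$. The delicate point is to make sure that the constants produced depend only on $S,\sigma,\alpha$ and not on the (variable) interval length, leveraging $T_i-T_{i-1}\leq 1$ uniformly.
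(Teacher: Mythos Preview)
Your overall strategy is sound and leads to the stated inequality, but it differs from the paper's route, and there is one inaccuracy in your local step.

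\medskip
\textbf{A minor error.} In your first displayed local bound you write $Ce^{-\lambda(T_i-T_{i-1})}\|y_{T_{i-1}}\|_\gamma$. This cannot be right: the controlled rough path norm on $[T_{i-1},T_i]$ contains $\|S(\cdot-T_{i-1})y_{T_{i-1}}\|_{\infty,\gamma}$, and at $t=T_{i-1}$ the semigroup is the identity, so there is no decay. The correct local bound is simply
\[
\|y,G(y)\|_{W,2\alpha,\gamma,[T_{i-1},T_i]}\le C\mu\bigl(1+\|y,G(y)\|_{W,2\alpha,\gamma,[T_{i-1},T_i]}\bigr)+C\|y_{T_{i-1}}\|_\gamma.
\]
All exponential factors $e^{-\lambda(T_{i-1}-T_m)}$ and $e^{-\lambda T_{i-1}}$ arise only in your second step, from the prefactors $S(T_{i-1}-T_m)$ and $S(T_{i-1})$ in the expansion of $y_{T_{i-1}}$. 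Once this is corrected, your iteration goes through exactly as you describe.

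\medskip
\textbf{Comparison with the paper.} The paper does \emph{not} restart the mild formula at $T_{i-1}$ and then expand the pointwise norm $\|y_{T_{i-1}}\|_\gamma$. Instead it works with the mild formula from time $0$ throughout,
\[
y_t=S(t)y_0+\int_0^tS(t-r)F(y_r)\,dr+\int_0^tS(t-r)G(y_r)\,d\theta_{T_j}\boldsymbol{W}_r,\qquad t\in I_{i,j},
\]
and bounds the controlled rough path norm on $I_{i,j}$ of each of the three summands separately (Lemmas A.1, A.2, A.5). The integrals are split as $\int_0^t=\sum_{m=1}^{i-1}\int_{I_{m,j}}+\int_{T_{i-1}}^t$; the key observation is that for $m<i$ the map $t\mapsto\int_{I_{m,j}}S(t-r)(\,\cdot\,)\,d\boldsymbol{W}_r$ is a smooth function of $t$ on $I_{i,j}$ (it equals $S(t-T_m)$ applied to a fixed element), hence has Gubinelli derivative $0$, and its $\mathcal{D}^{2\alpha}_{W,\gamma}$-norm reduces to sup and H\"older norms that are controlled by \eqref{2.1*}--\eqref{2.2*} with the factor $e^{-\lambda(T_{i-1}-T_m)}$. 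Your approach is more modular---a one-interval local estimate plus a single pointwise expansion of $\|y_{T_{i-1}}\|_\gamma$---while the paper's one-pass decomposition avoids the intermediate restart but must track all five component norms for each past piece and relies explicitly on the ``zero Gubinelli derivative of past integrals'' observation. Both yield the same final bound with constants depending only on $S,\sigma,\alpha$.
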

The proof of this lemma can be found in the Appendix \ref{appendix A}.
We can define a discrete non-autonomous dynamical system based on the cocycle property of the stopping times $(T_{i}(\boldsymbol{W}(\omega)))_{i\in\mathbb{Z}}$ 
with index set $\mathbb{Z}^{+}$ for $\omega\in\Omega$, where $\omega$ as a parameter.  We introduce a new shift as follows
\begin{align*}
&\tilde{\theta}:\mathbb{Z}\times\mathbb{Z}\rightarrow\mathbb{Z},\\
&\tilde{\theta}_{i}j=i+j\quad i,j\in\mathbb{Z}.
\end{align*}
Then we define
\begin{align*}
&\Phi:\mathbb{Z}^{+}\times\mathbb{Z}\times\Omega\times\mathcal{B}\rightarrow\mathcal{B},\\
&\Phi(i,j,\omega,y_{0})=S(T_{i}(\theta_{T_{j}(\boldsymbol{W}(\omega))}\boldsymbol{W}(\omega)))y_{0}\\
&~~~~+\int_{0}^{T_{i}(\theta_{T_{j}(\boldsymbol{W}(\omega))}\boldsymbol{W}(\omega))}
S(T_{i}(\theta_{T_{j}(\boldsymbol{W}(\omega))}\boldsymbol{W}(\omega))-r)F(y_{r})dr\nonumber\\
&~~~~+\int_{0}^{T_{i}(\theta_{T_{j}(\boldsymbol{W}(\omega))}\boldsymbol{W}(\omega))}S(T_{i}(\theta_{T_{j}(\boldsymbol{W}(\omega))}\boldsymbol{W}(\omega))-r)G(y_{r})d\theta_{T_{j}(\boldsymbol{W}(\omega))}\boldsymbol{W}_{r}(\omega)\\
&~~=\varphi(T_{i}(\theta_{T_{j}(\boldsymbol{W}(\omega))}\boldsymbol{W}(\omega)),\theta_{T_{j}(\boldsymbol{W}(\omega))}\omega,y_{0}).
\end{align*}
Note that $\Phi(i,j,\omega,y_{0})$ is the solution of \eqref{3.2} driven by rough noise $\theta_{T_{j}(\boldsymbol{W}(\omega))}\boldsymbol{W}(\omega)$  at time $T_{i}(\theta_{T_{j}(\boldsymbol{W}(\omega))}\boldsymbol{W}(\omega))$. In order to construct the absorbing set of the discrete non-autonomous dynamical system $\Phi$, we need a
discrete Gr\"{o}nwall lemma, its proof can be found in \cite{MR3226746}.
\begin{lemma}[Lemma 3.6,\cite{MR3226746}]
Assume $\lambda^{\ast},v_{0},k_{0},k_{1}<1,k_{2}$ are positive constants and $\{t_{i}\}_{i\in\mathbb{Z}^{+}}$ is a sequence of positive constants with $t_{0}=0$, satisfy
$$t_{i-1}-t_{i-2}\leq -\frac{2}{\lambda^{\ast}}\log k_{1}, \quad \forall i\geq2. $$
Suppose that the sequence of positive constants $\{U_{i}\}_{i\in\mathbb{N}}$ have the following relation:
\begin{align}\label{3.53}
U_{i}&\leq k_{0}v_{0}e^{-\lambda t_{i-1}}+\sum_{m=1}^{i-1}k_{1}U_{m}e^{-\lambda(t_{i-1}-t_{m})}\nonumber\\
&~~+\sum_{m=1}^{i-1}k_{2}e^{-\lambda(t_{i-1}-t_{m})}+k_{2},\quad i=1,2,3,\cdots.
\end{align}
Then $U_{i}$ have the following estimates
\begin{align}
U_{i}&\leq (k_{0}v_{0}+k_{2})(1+k_{1})^{i-1}e^{-\frac{\lambda^{\ast}}{2}t_{i-1}}\nonumber\\
&~~+\sum_{m=1}^{i-1}2k_{2}(1+k_{1})^{i-1-m}e^{-\frac{\lambda^{\ast}}{2}(t_{i-1}-t_{m})},\quad \forall i=1,2,3,\cdots.
\end{align}
\end{lemma}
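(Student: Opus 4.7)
The plan is to dominate $U_i$ by a sequence satisfying the same recursion with equality, collapse that recursion into a two-term one, and then iterate, using the step-size hypothesis to convert the multiplier $k_1$ into exponential decay.

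Define $\bar{U}_1 := U_1$ and, for $i \geq 2$, set
\begin{equation*}
\bar{U}_i := A_i + \sum_{m=1}^{i-1} k_1 \bar{U}_m e^{-\lambda(t_{i-1}-t_m)}, \qquad A_i := k_0 v_0 e^{-\lambda t_{i-1}} + \sum_{m=1}^{i-1} k_2 e^{-\lambda(t_{i-1}-t_m)} + k_2.
\end{equation*}
A monotone induction gives $U_i \leq \bar{U}_i$, so it suffices to bound $\bar{U}_i$. Splitting off the $m=i$ summand in the convolution for $\bar{U}_{i+1}$ and subtracting $e^{-\lambda(t_i - t_{i-1})} \bar{U}_i$ yields the two-term recursion
\begin{equation*}
\bar{U}_{i+1} = B_i + \alpha_i \bar{U}_i, \qquad \alpha_i := k_1 + e^{-\lambda(t_i - t_{i-1})}, \qquad B_i := A_{i+1} - e^{-\lambda(t_i - t_{i-1})} A_i,
\end{equation*}
and a direct telescoping calculation gives $B_i = k_2\bigl(2 - e^{-\lambda(t_i - t_{i-1})}\bigr) \leq 2k_2$. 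Iterating, together with $\bar{U}_1 = U_1 \leq k_0 v_0 + k_2$ (read off \eqref{3.53} at $i=1$), produces
\begin{equation*}
\bar{U}_i \leq (k_0 v_0 + k_2) \prod_{j=1}^{i-1}\alpha_j + \sum_{m=1}^{i-1} 2 k_2 \prod_{j=m+1}^{i-1}\alpha_j.
\end{equation*}

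The heart of the argument is the sharp one-step bound
\begin{equation*}
\alpha_j = k_1 + e^{-\lambda s} \leq (1+k_1)\, e^{-\lambda^* s/2}, \qquad s := t_j - t_{j-1} \in \bigl[0,\, -\tfrac{2}{\lambda^*}\log k_1\bigr].
\end{equation*}
Setting $u := e^{-\lambda^* s/2} \in [k_1, 1]$, this is equivalent to $u^{2\lambda/\lambda^*} \leq (1+k_1) u - k_1$. The function $g(u) := (1+k_1) u - k_1 - u^{2\lambda/\lambda^*}$ is concave on $[0,\infty)$ (under $\lambda \geq \lambda^*/2$), vanishes at $u=1$, and satisfies $g(k_1) = k_1^2 - k_1^{2\lambda/\lambda^*} \geq 0$ provided $\lambda \geq \lambda^*$, the operative regime since $\lambda$ is the spectral gap from assumption \textbf{(A)}. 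Concavity then forces $g \geq 0$ on $[k_1, 1]$. Plugging this estimate into every $\alpha_j$ and telescoping the exponentials collapses the two products above to $(1+k_1)^{i-1} e^{-\lambda^* t_{i-1}/2}$ and $(1+k_1)^{i-1-m} e^{-\lambda^*(t_{i-1}-t_m)/2}$, which is exactly the claimed bound.

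The main obstacle is this sharp one-step inequality: the immediate bound $\alpha_j \leq 2\, e^{-\lambda^* s/2}$ follows from $k_1 \leq e^{-\lambda^* s/2}$ (which is the step-size hypothesis rewritten) and $e^{-\lambda s} \leq e^{-\lambda^* s/2}$, but it would yield the inferior factor $2^{i-1}$ in place of $(1+k_1)^{i-1}$, destroying the subexponential growth needed when constructing the tempered absorbing set later. The remaining steps --- introducing the dominating sequence, collapsing the convolution into a two-term recursion, and verifying $B_i \leq 2k_2$ --- are routine bookkeeping.
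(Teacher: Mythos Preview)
The paper does not prove this lemma; it simply cites \cite{MR3226746} (``its proof can be found in \cite{MR3226746}''). So there is no in-paper proof to compare against.

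Your argument is correct and self-contained. The reduction to the two-term recursion $\bar U_{i+1}=B_i+\alpha_i\bar U_i$ via the telescoping subtraction is clean, and the computation $B_i=k_2(2-e^{-\lambda(t_i-t_{i-1})})\le 2k_2$ checks out. The crucial step is your one-step multiplier bound $k_1+e^{-\lambda s}\le(1+k_1)e^{-\lambda^\ast s/2}$ on the admissible range $s\in[0,-\tfrac{2}{\lambda^\ast}\log k_1]$; the concavity argument for $g(u)=(1+k_1)u-k_1-u^{2\lambda/\lambda^\ast}$ is valid, and you are right that it requires $\lambda\ge\lambda^\ast$. The lemma as stated in the paper is sloppy on this point --- it declares $\lambda^\ast$ but then uses an undeclared $\lambda$ in \eqref{3.53} --- and indeed in the application immediately following the lemma the authors take $\lambda^\ast=\lambda$, so your reading is the intended one. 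Your remark that the cheap bound $\alpha_j\le 2e^{-\lambda^\ast s/2}$ would give $2^{i-1}$ rather than $(1+k_1)^{i-1}$, and that this matters for temperedness downstream, is exactly the right diagnosis of why the sharper inequality is needed.
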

We can specify coefficients which emerge in  \eqref{3.53}. Let $\mu>0$ be a sufficiently small constant such that
$k_{1}(\mu)=\frac{C\mu}{1-C\mu}<1$ and
\begin{align}\label{3.55}
-\frac{2}{\lambda^{\ast}}\log k_{1}(\mu)>1,
\end{align}
 where constant C emerges in the inequality of Lemma \ref{lemma 3.4}. Furthermore, choosing
 $$\lambda^{\ast}=\lambda,k_{0}=\frac{C}{1-C\mu},k_{2}=k_{1}=k_{1}(\mu),t_{i}=T_{i}(\theta_{T_{j}\boldsymbol{W}(\omega)}\boldsymbol{W}(\omega)).$$
 Based on the above lemmas we have the following corollary.
 \begin{corollary}\label{corollary 3.1}
 Let $y_{0}\in \mathcal{B}$ and  the constant $\mu$ satisfies \eqref{3.55}. Then the discrete non-autonomous dynamical system $\Phi$ has the following estimate
 \begin{align*}
 \|\Phi(i,j,\omega,y_{0})\|&\leq (1+k_{1})^{i-1}e^{-\frac{\lambda}{2}T_{i-1}(\theta_{T_{j}(\boldsymbol{W}(\omega))}\boldsymbol{W}(\omega))}(k_{0}\|y_{0}\|+k_{2})\nonumber\\
 &+\sum_{m=1}^{i-1}2k_{2}(1+k_{1})^{i-1-m}e^{-\frac{\lambda}{2}(T_{i-1}(\theta_{T_{j}(\boldsymbol{W}(\omega))}\boldsymbol{W}(\omega))-T_{m}(\theta_{T_{j}(\boldsymbol{W}(\omega))}\boldsymbol{W}(\omega)))}.
 \end{align*}
 \end{corollary}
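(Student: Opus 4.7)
The plan is to combine the pathwise a priori estimate of Lemma \ref{lemma 3.4} with the discrete Grönwall inequality (the lemma immediately preceding the corollary). To set up the notation, I would write
$$U_i := \|y,y^{\prime}\|_{W,2\alpha,\gamma,[T_{i-1}(\theta_{T_j(\boldsymbol{W}(\omega))}\boldsymbol{W}(\omega)),\,T_i(\theta_{T_j(\boldsymbol{W}(\omega))}\boldsymbol{W}(\omega))]}, \qquad t_i := T_i(\theta_{T_j(\boldsymbol{W}(\omega))}\boldsymbol{W}(\omega)),$$
and observe that since $\Phi(i,j,\omega,y_0)$ is the value at time $t_i$ of the controlled rough path $(y,y^{\prime})$, the continuous embedding $\mathcal{B}_\gamma \hookrightarrow \mathcal{B}$ yields $\|\Phi(i,j,\omega,y_0)\| \le C \|y_{t_i}\|_\gamma \le C\, U_i$, so it suffices to bound $U_i$.

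Next I would rearrange the estimate in Lemma \ref{lemma 3.4}. Expanding the $(1+U_m)$ factor and moving the single term $C\mu\, U_i$ (arising from the $m=i$ contribution) to the left-hand side gives
$$(1-C\mu)\,U_i \;\le\; C\mu \sum_{m=1}^{i-1} U_m\, e^{-\lambda(t_{i-1}-t_m)} \;+\; C\mu \sum_{m=1}^{i-1} e^{-\lambda(t_{i-1}-t_m)} \;+\; C\mu \;+\; C e^{-\lambda t_{i-1}}\|y_0\|_\gamma.$$
Because $\mu$ has been chosen so that \eqref{3.55} holds, the factor $1-C\mu$ is positive; dividing through produces exactly the recursion \eqref{3.53} with
$$v_0 = \|y_0\|_\gamma,\quad \lambda^\ast = \lambda,\quad k_0 = \frac{C}{1-C\mu},\quad k_1 = k_2 = \frac{C\mu}{1-C\mu}.$$

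To invoke the Grönwall lemma I must still verify the time-gap hypothesis $t_{i-1}-t_{i-2} \le -\frac{2}{\lambda^\ast}\log k_1$. By the cocycle property of the stopping times, $t_{i-1}-t_{i-2}$ equals a single stopping time $T(\theta_{\cdot}\boldsymbol{W}(\omega))$, which lies in $(0,1]$ by Lemma \ref{lemma 3.2}(i); combined with \eqref{3.55} this gives $t_{i-1}-t_{i-2}\le 1 < -\frac{2}{\lambda}\log k_1(\mu)$ as required. Applying the Grönwall lemma to $(U_i)$ therefore yields precisely the bound asserted in the corollary, and combining with $\|\Phi(i,j,\omega,y_0)\| \le C\, U_i$ finishes the proof (the constant $C$ can be absorbed into $k_0, k_2$).

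The argument is essentially bookkeeping on top of Lemma \ref{lemma 3.4} and the discrete Grönwall lemma, so no genuinely new estimate is needed. The only delicate point, and what I would regard as the main thing to check carefully, is the absorption step: one must ensure the coefficient of $U_i$ produced on the right in Lemma \ref{lemma 3.4} is strictly less than $1$ uniformly in $i$, so that $k_1 < 1$ and the smallness condition \eqref{3.55} are simultaneously compatible; this is exactly why the smallness of $\mu$ is hypothesized in the assumptions \textbf{(F)} and \textbf{(G)}.
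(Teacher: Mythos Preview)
Your approach is correct and is exactly the argument the paper has in mind: the paper gives no explicit proof of the corollary but simply specifies $\lambda^\ast=\lambda$, $k_0=\frac{C}{1-C\mu}$, $k_1=k_2=\frac{C\mu}{1-C\mu}$, $t_i=T_i(\theta_{T_j(\boldsymbol{W}(\omega))}\boldsymbol{W}(\omega))$ and declares the result a consequence of Lemma~\ref{lemma 3.4} and the discrete Gr\"onwall lemma. One small simplification: since the corollary assumes $y_0\in\mathcal{B}$, take $\gamma=0$ throughout, so that $\|\Phi(i,j,\omega,y_0)\|=\|y_{t_i}\|_{0}\le\|y\|_{\infty,0,[t_{i-1},t_i]}\le U_i$ directly, with no embedding constant and hence no need to absorb anything into $k_0,k_2$ at the end.
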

 In order to construct the absorbing set of the non-autonomous dynamical system $\Phi$, we  impose the smallness condition for $\omega\in\Omega$. 
 Firstly, we assume that the stopping times satisfy
 \begin{align}\label{3.58}
 1>\liminf_{i\rightarrow-\infty}\frac{|T_{i}(\boldsymbol{W}(\omega))|}{|i|}\geq d_1> \frac{2(\log(1+k_{1}(\mu))+\nu)}{\lambda},
 \end{align}
 where $\nu$ is parameter of  $\mathcal{D}_{\mathbb{Z},\mathcal{B}}^{\nu}$ and $\nu\in [0,\frac{d\lambda}{2})$.
 We  also need
 \begin{align}\label{3.59}
 \nu+d_1>1.
 \end{align}
 Finally, the subexponential growth condition is imposed on the sequence $\left\{|T(\theta_{T_{i}(\boldsymbol{W}(\omega))}\boldsymbol{W}(\omega))|^{-\gamma}\right\}_{i\in\mathbb{Z}}$ for $\omega\in\Omega,\gamma\in(0,(\alpha-\sigma)\wedge(1-\delta)]$, namely
 \begin{align}\label{3.60}
 \lim_{i\rightarrow -\infty}\frac{\log^{+}(|T(\theta_{T_{i}(\boldsymbol{W}(\omega))}\boldsymbol{W}(\omega))|^{-\gamma})}{|i|}=0.
 \end{align}
 In the concrete example(fractional Brownian rough path), we shall check  these conditions.

For the discrete non-autonomous dynamical system $\Phi$, we consider  $\mathcal{D}_{\mathbb{Z},\mathcal{B}}^{\nu}$ which element $\{D(i)\}_{i\in\mathbb{Z}}\subset\mathcal{B}$ included in a ball with center $0$ and radius $r(i)$ and satisfy
 $$\limsup_{i\rightarrow -\infty}\frac{\log^{+}r(i)}{|i|}<\nu.$$
 Next, we shall construct an absorbing set $B\in \mathcal{D}_{\mathbb{Z},\mathcal{B}}^{\nu}$. Thus, our first step is to determine the absorbing set $B$ with respect to $\Phi$.
 \begin{lemma}\label{lemma 3.6}
 Assume that \eqref{3.55}, \eqref{3.58}, \eqref{3.59}, \eqref{3.60} hold. 
 Then there exists a $\mathcal{D}_{\mathbb{Z},\mathcal{B}}^{\nu}-$absorbing set $B$ with center 0 and radius
 \begin{align}\label{3.61}
 R(i,\omega)=2\sum_{m=-\infty}^{0}2k_{2}(1+k_{1})^{-m}e^{\frac{\lambda}{2}T_{m}(\theta_{T_{i-1}(\boldsymbol{W}(\omega))}\boldsymbol{W}(\omega))}.
 \end{align}
  \end{lemma}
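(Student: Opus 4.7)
The plan is to establish both that every $D\in\mathcal{D}_{\mathbb{Z},\mathcal{B}}^{\nu}$ is absorbed by $B(i,\omega):=B_{\mathcal{B}}(0,R(i,\omega))$ and that $B$ itself lies in $\mathcal{D}_{\mathbb{Z},\mathcal{B}}^{\nu}$. Fix $\omega\in\Omega$, $j\in\mathbb{Z}$, and take $D\in\mathcal{D}_{\mathbb{Z},\mathcal{B}}^{\nu}$ with $D(j-n)\subset B_{\mathcal{B}}(0,r(j-n))$. For $y_{0}\in D(j-n)$, I apply Corollary \ref{corollary 3.1} with indices $i=n$ and the ``base'' index shifted to $j-n$, producing
\begin{align*}
\|\Phi(n,j-n,\omega,y_{0})\|
&\le(1+k_{1})^{n-1}e^{-\frac{\lambda}{2}T_{n-1}(\theta_{T_{j-n}(\boldsymbol{W}(\omega))}\boldsymbol{W}(\omega))}\bigl(k_{0}r(j-n)+k_{2}\bigr)\\
&\quad+\sum_{m=1}^{n-1}2k_{2}(1+k_{1})^{n-1-m}e^{-\frac{\lambda}{2}\bigl[T_{n-1}(\theta_{T_{j-n}(\boldsymbol{W})}\boldsymbol{W})-T_{m}(\theta_{T_{j-n}(\boldsymbol{W})}\boldsymbol{W})\bigr]}.
\end{align*}

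Next I recast everything into the frame of $\theta_{T_{j-1}(\boldsymbol{W}(\omega))}\boldsymbol{W}(\omega)$ by two applications of the cocycle identity $T_{i}(\theta_{T_{k}(\boldsymbol{W})}\boldsymbol{W})=T_{i+k}(\boldsymbol{W})-T_{k}(\boldsymbol{W})$. This yields $T_{n-1}(\theta_{T_{j-n}(\boldsymbol{W})}\boldsymbol{W})=-T_{1-n}(\theta_{T_{j-1}(\boldsymbol{W})}\boldsymbol{W})$, and the substitution $l=m-n+1\in\{2-n,\dots,0\}$ transforms the sum into $\sum_{l=2-n}^{0}2k_{2}(1+k_{1})^{-l}e^{\frac{\lambda}{2}T_{l}(\theta_{T_{j-1}(\boldsymbol{W})}\boldsymbol{W})}$. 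Since each summand is nonnegative, extending the range down to $l=-\infty$ bounds this sum above by $R(j,\omega)/2$; hence the sum contribution is already inside $B(j,\omega)$.

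For the initial-condition term I use \eqref{3.58} applied at $\theta_{T_{j-1}(\omega)}\omega$, choosing $\eta>0$ with $\log(1+k_{1}(\mu))-\tfrac{\lambda(d_{1}-\eta)}{2}<-\nu$ (this is possible thanks to the strict inequality in \eqref{3.58}). Then for $n$ above some threshold, $T_{1-n}(\theta_{T_{j-1}(\boldsymbol{W})}\boldsymbol{W})\le-(d_{1}-\eta)(n-1)$, so
$(1+k_{1})^{n-1}e^{\frac{\lambda}{2}T_{1-n}(\theta_{T_{j-1}(\boldsymbol{W})}\boldsymbol{W})}\le e^{-\nu(n-1)}$.
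Combined with the temperedness bound $r(j-n)\le e^{\nu^{\star}(n-j)}$ for some $\nu^{\star}<\nu$ and $n$ large, the entire initial-condition contribution decays to $0$ as $n\to\infty$ for fixed $j,\omega$. Choosing $N(D,j,\omega)\in\mathbb{N}$ so that this contribution is at most $R(j,\omega)/2$ for every $n\ge N$ then delivers the absorption property $\Phi(n,j-n,\omega,D(j-n))\subset B(j,\omega)$.

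The principal obstacle is verifying that $B$ itself lies in $\mathcal{D}_{\mathbb{Z},\mathcal{B}}^{\nu}$, i.e., $\limsup_{j\to-\infty}\log^{+}R(j,\omega)/|j|<\nu$. I split the defining series for $R(j,\omega)$ at an index $-M(j,\omega)$ past which \eqref{3.58} at $\theta_{T_{j-1}(\omega)}\omega$ gives $|T_{m}(\theta_{T_{j-1}(\boldsymbol{W})}\boldsymbol{W})|\ge(d_{1}-\eta)|m|$: the tail is then a convergent geometric-type series with ratio $e^{\log(1+k_{1})-\frac{\lambda(d_{1}-\eta)}{2}}<e^{-\nu}$, and the head is dominated by $C(1+k_{1})^{M(j,\omega)}$. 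The delicate step is controlling the growth of $M(j,\omega)$ in $|j|$: one must show that the first-passage index $M$, viewed as a measurable functional on $\Omega$ and evaluated along the orbit $\theta_{T_{j-1}(\omega)}\omega$, grows sublinearly in $|j|$. This is where the temperedness of the fBm path and the subexponential growth condition \eqref{3.60} are used (the latter preventing degeneration of the stopping-time increments), giving $M(j,\omega)/|j|\to 0$ and hence $\log^{+}R(j,\omega)/|j|\to 0<\nu$, which completes the argument.
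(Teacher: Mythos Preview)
Your absorption argument is correct and follows the same route as the paper: apply Corollary~\ref{corollary 3.1}, use the stopping-time cocycle to re-index relative to $\theta_{T_{j-1}(\boldsymbol{W})}\boldsymbol{W}$, bound the finite sum by $R(j,\omega)/2$ after extending to $l=-\infty$, and kill the initial-data term via \eqref{3.58} combined with the sub-$\nu$-exponential growth of $r$. One small point: the paper also verifies along the way that the summand is $\le e^{m\nu}$ for $m<0$, so that $R(j,\omega)<\infty$; you only reach this in your final paragraph, so strictly speaking you should record it earlier so that the threshold $R(j,\omega)/2$ is meaningful.

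Your final paragraph, however, attempts something this lemma does not ask for. The statement only claims $B$ is $\mathcal{D}_{\mathbb{Z},\mathcal{B}}^{\nu}$-\emph{absorbing}; membership $B\in\mathcal{D}_{\mathbb{Z},\mathcal{B}}^{\nu}$ is the content of the separate lemma immediately following, which the paper disposes of by citing \cite[Lemma~3.10]{MR3226746}. Your sketch for that extra claim has a genuine gap: you assert $M(j,\omega)/|j|\to 0$ without proof, and the appeal to \eqref{3.60} is misplaced---\eqref{3.60} is a temperedness statement for $|T(\theta_{T_i(\boldsymbol{W})}\boldsymbol{W})|^{-\gamma}$, used in the compactness step of Theorem~\ref{theorem 3.2}, not for controlling a first-passage index associated to \eqref{3.58}. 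If you want a self-contained argument for $B\in\mathcal{D}_{\mathbb{Z},\mathcal{B}}^{\nu}$ you need a mechanism to compare the sequences $(T_m(\theta_{T_{j-1}(\boldsymbol{W})}\boldsymbol{W}))_m$ across different $j$ (e.g.\ via the monotonicity in Remark~\ref{remark 3.1}), which your sketch does not supply.
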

 \begin{proof}
For any $ D \in \mathcal{D}_{\mathbb{Z}, \mathcal{B}}^{\nu}$, namely the set $D(j,\omega)\in \mathcal{D}_{\mathbb{Z}, \mathcal{B}}^{\nu}$ and there is a sequence $\{B_{\mathcal{B}}(0,\rho(i,\omega))\}_{i\in\mathbb{Z}}\in \mathcal{D}_{\mathbb{Z}, \mathcal{B}}^{\nu}$ such that  $D(i,\omega)\subset B_{\mathcal{B}}(0,\rho(i,\omega))$. 
In terms of Corollary \ref{corollary 3.1}, we know that
\begin{align}
\sup_{y_{0}\in D(j-i,\omega)}&\|\Phi(i,j-i,\omega,y_{0})\|\leq (1+k_{1})^{i-1}e^{-\frac{\lambda}{2}T_{i-1}(\theta_{T_{j-i}(\boldsymbol{W}(\omega))}\boldsymbol{W}(\omega))}(k_{0}\rho(j-i,\omega)+k_{2})\nonumber\\
 +&\sum_{m=1}^{i-1}2k_{2}(1+k_{1})^{i-1-m}e^{-\frac{\lambda}{2}(T_{i-1}(\theta_{T_{j-i}(\boldsymbol{W}(\omega))}\boldsymbol{W}(\omega))-T_{m}(\theta_{T_{j-i}(\boldsymbol{W}(\omega))}\boldsymbol{W}(\omega)))}.
\end{align}
Using the cocycle property of stopping times
\begin{align*}
T_{i-1}(\theta_{T_{j-i}(\boldsymbol{W}(\omega))}\boldsymbol{W}(\omega))-T_{m}(\theta_{T_{j-i}(\boldsymbol{W}(\omega))}\boldsymbol{W}(\omega))&=T_{i-m-1}(\theta_{T_{j-i+m}(\boldsymbol{W}(\omega))}\boldsymbol{W}(\omega))\\
&=-T_{-i+m+1}(\theta_{T_{j-1}(\boldsymbol{W}(\omega))}\boldsymbol{W}(\omega)),
\end{align*}
then we have
\begin{align}\label{3.73}
\sup_{y_{0}\in D(j-i,\omega)}\|\Phi(i,j-i,\omega,y_{0})\|&\leq (1+k_{1})^{i-1}e^{\frac{\lambda}{2}T_{-i+1}(\theta_{T_{j-1}(\boldsymbol{W}(\omega))}\boldsymbol{W}(\omega))}(k_{0}\rho(j-i,\omega)+k_{2})\nonumber\\
&~~+\sum_{m=2-i}^{0}2k_{2}(1+k_{1})^{-m}e^{\frac{\lambda}{2}T_{m}(\theta_{T_{j-1}(\boldsymbol{W}(\omega))}\boldsymbol{W}(\omega))}.
\end{align}
The second inequality of  \eqref{3.58} means that for any $\epsilon>0$ there exists a number $m(\epsilon)>0$ such that for $|m|>m(\epsilon)$,  we have $|T_{m}(\theta_{T_{j-1}(\boldsymbol{W}(\omega))}\boldsymbol{W}(\omega))|>(d_1-\epsilon)|m|$. Thus, by the third inequality of \eqref{3.58} and the property of the backward $\nu$-exponentially growing of $\rho(j-i,\omega)$, then
\begin{align*}
&(1+k_{1})^{i-1}e^{\frac{\lambda}{2}T_{-i+1}(\theta_{T_{j-1}(\boldsymbol{W}(\omega))}\boldsymbol{W}(\omega))}(k_{0}\rho(j-i,\omega)+k_{2})\\
&~~~~~~~~~~~~~\leq C(k_0,k_2) e^{\nu|j-1|}e^{(i-1)(\log(1+k_1)-\frac{\lambda}{2}(d_1-\epsilon)+\nu)}.
\end{align*}
Hence, the first term of \eqref{3.73} converges to zero as $i\rightarrow \infty$. Similarly, for the second term  of \eqref{3.73}, choosing $\epsilon$ sufficiently small, we have
\begin{align*}
(1+k_{1})^{-m}e^{\frac{\lambda}{2}(T_{m}(\theta_{T_{j-1}(\boldsymbol{W}(\omega))}\boldsymbol{W}(\omega))}&=e^{-m\log(1+k_{1})+\frac{\lambda}{2}T_{m}(\theta_{T_{j-1}(\boldsymbol{W}(\omega))}\boldsymbol{W}(\omega))}\\
&\leq e^{m(-\log(1+k_{1})+\frac{\lambda}{2}(d_1-\epsilon))}\leq e^{m\nu}, \quad m<0.
\end{align*}
Then the second term  of \eqref{3.73} converges to $\frac{R(j)}{2}$ as $i\rightarrow \infty$. The attraction of the $B$ can be easily obtain by the cocycle property of the $\Phi$. Indeed,
\begin{align*}
\|\Phi(i+1,j-i-1,\omega,y_0)\|=\|\Phi(i,j-i,\omega,\Phi(1,j-i-1,\omega,y_0))\|.
\end{align*}
Thus,
\begin{align*}
\sup_{y_{0}\in D(j-i-1)}\|\Phi(i+1,j-i-1,\omega,y_0)\|\leq \sup_{y_{0}^\prime\in D(j-i)}\|\Phi(i,j-i,\omega,y_{0}^{\prime})\|<R(j)
\end{align*}
for sufficiently enough $i\in\mathbb{Z}^{+}$.
\end{proof}
  Lemma \ref{lemma 3.6} shows that a absorbing set $B$ in $\mathcal{B}$. In order to construct pullback attractor for $\Phi$, we need to illustrate $B\in \mathcal{D}_{\mathbb{Z},\mathcal{B}}^{\nu}$. We give this fact by the following lemma and its proof can be found in \cite[Lemma 3.10]{MR3226746}.
\begin{lemma}
The absorbing sets $B(i,\omega)$ which are constructed in Lemma \ref{lemma 3.6} belong to $\mathcal{D}_{\mathbb{Z},\mathcal{B}}^{\nu}$.
\end{lemma}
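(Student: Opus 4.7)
\medskip

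\noindent\textbf{Proof proposal.} The goal is to verify that
\[
\limsup_{i\to-\infty}\frac{\log^{+}R(i,\omega)}{|i|}<\nu,
\]
where $R(i,\omega)$ is the explicit radius \eqref{3.61}. The starting point is the cocycle property of the stopping times, $T_{m}(\theta_{T_{i-1}(\boldsymbol{W}(\omega))}\boldsymbol{W}(\omega))=T_{m+i-1}(\boldsymbol{W}(\omega))-T_{i-1}(\boldsymbol{W}(\omega))$, which lets me rewrite
\[
R(i,\omega)=4k_{2}\,e^{-\frac{\lambda}{2}T_{i-1}(\boldsymbol{W}(\omega))}\sum_{m=-\infty}^{0}(1+k_{1})^{-m}e^{\frac{\lambda}{2}T_{m+i-1}(\boldsymbol{W}(\omega))}.
\]
This separates the $i$-dependence into an explicit prefactor and a tail sum indexed by the absolute position $n=m+i-1\le i-1$.

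Next I would split the sum into a finite ``head'' over $-N_{\epsilon}<m+i-1\le i-1$ and a ``tail'' over $m+i-1\le -N_{\epsilon}$, where $N_{\epsilon}$ is chosen so that the second inequality of \eqref{3.58} gives $|T_{n}(\boldsymbol{W}(\omega))|\ge(d_{1}-\epsilon)|n|$ for $|n|\ge N_{\epsilon}$. On the tail, substituting this lower bound and using $i\le 0$ to rewrite $|m+i-1|=(1-i)-m$ turns the tail into a geometric series with ratio $e^{\log(1+k_{1})-\frac{\lambda(d_{1}-\epsilon)}{2}}$; by the third inequality of \eqref{3.58}, $d_{1}>2(\log(1+k_{1})+\nu)/\lambda$, so for $\epsilon$ small this ratio is strictly less than $e^{-\nu}$. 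The tail sum is therefore bounded by $C\,e^{-\frac{\lambda(d_{1}-\epsilon)}{2}|i-1|}$. The head contains only finitely many terms, each of which is bounded uniformly by $(1+k_1)^{N_{\epsilon}}$, so the head contributes a sub-exponential factor; condition \eqref{3.60} is used here to control the constants that come from the few small indices where the liminf estimate does not yet hold.

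To finish, I would bound the prefactor via the elementary estimate $|T_{i-1}(\boldsymbol{W}(\omega))|\le|i-1|$, which follows from $T(\boldsymbol{W}),|\hat T(\boldsymbol{W})|\le 1$ in Lemma \ref{lemma 3.2}. Combining this prefactor with the tail estimate gives
\[
R(i,\omega)\le C\,e^{\frac{\lambda}{2}(1-d_{1}+\epsilon)|i-1|},
\]
and by \eqref{3.59} we have $1-d_{1}<\nu$, so the exponential rate is strictly less than $\nu$ once $\epsilon$ is small enough. Taking $\log^{+}$, dividing by $|i|$, and sending $i\to-\infty$ (with $\epsilon\to0$) yields the temperedness claim.

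The main obstacle is precisely the delicate balance between the exponential growth of the prefactor $e^{-\frac{\lambda}{2}T_{i-1}}$ and the exponential decay of the tail of the series: only the combined use of both inequalities in \eqref{3.58} (to dominate the geometric tail) \emph{and} of \eqref{3.59} (to cancel the worst-case growth $|T_{i-1}|\le|i-1|$ of the prefactor) produces a net rate below $\nu$. Assumption \eqref{3.60} is auxiliary and absorbs the sub-exponential constants that enter through the finitely many ``bad'' indices. Since the architecture mirrors \cite[Lemma 3.10]{MR3226746}, the full computational details can be imported from that reference with only notational changes reflecting that our shift acts on a rough path rather than on a fractional Brownian path.
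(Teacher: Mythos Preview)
Your overall strategy---rewriting $R(i,\omega)$ via the cocycle identity $T_m(\theta_{T_{i-1}}\boldsymbol{W})=T_{m+i-1}-T_{i-1}$, controlling the sum by the liminf bound $|T_n|\ge(d_1-\epsilon)|n|$ from \eqref{3.58}, and then handling the prefactor $e^{-\frac{\lambda}{2}T_{i-1}}$ via $|T_{i-1}|\le|i-1|$---is exactly the route taken in \cite[Lemma~3.10]{MR3226746}, which is what the paper invokes. So the architecture is correct and matches the paper.

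However, there is a genuine gap in your final step. Your computation correctly produces
\[
R(i,\omega)\le C\,e^{\frac{\lambda}{2}(1-d_1+\epsilon)|i-1|},
\]
so that $\limsup_{i\to-\infty}\frac{\log^{+}R(i,\omega)}{|i|}\le\frac{\lambda}{2}(1-d_1)$. You then assert that \eqref{3.59}, namely $1-d_1<\nu$, forces this rate below $\nu$. But the rate you obtained is $\frac{\lambda}{2}(1-d_1)$, not $1-d_1$; the inference ``$1-d_1<\nu\Rightarrow\frac{\lambda}{2}(1-d_1)<\nu$'' is valid only when $\lambda\le 2$, and nothing in the assumptions guarantees this. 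For instance, with $\lambda=4$, $d_1=0.6$, $\nu=0.5$ and $k_1$ small, all of \eqref{3.55}, \eqref{3.58}, \eqref{3.59} are satisfied, yet $\frac{\lambda}{2}(1-d_1)=0.8>\nu$. Thus as written the argument does not close. You should revisit \cite[Lemma~3.10]{MR3226746} for the precise way the exponent is handled there; the key point is that the combination of the prefactor and the tail has to be estimated so that the resulting rate matches the form of \eqref{3.59}, rather than carrying the extra factor $\lambda/2$. (Your remark that \eqref{3.60} only absorbs finitely many constants is correct; that condition is not what rescues the exponent.)
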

Based on the above considerations, we construct the pullback attractor of the discrete non-autonomous dynamical system $\Phi$.
\begin{theorem}\label{theorem 3.2}
Assume $\omega\in\Omega$ which satisfy the assumptions \eqref{3.55},\eqref{3.58},\eqref{3.59},\eqref{3.60}. Then the discrete non-autonomous dynamical system $\Phi(\cdot,\omega)$has a pullback attractor $\{\mathcal{A}(i,\omega)\}_{i\in\mathbb{Z}}$ on $\mathcal{D}_{\mathbb{Z},\mathcal{B}}^{\nu}$.
\end{theorem}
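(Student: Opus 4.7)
The strategy is to invoke Lemma \ref{lemma 2.3}: the preceding Lemma \ref{lemma 3.6} together with the subsequent lemma already furnish a pullback absorbing family $B(i,\omega)\in \mathcal{D}_{\mathbb{Z},\mathcal{B}}^{\nu}$ for $\Phi$, so what remains is (i) the continuity of the one-step map $y_0\mapsto \Phi(1,j,\omega,y_0)$ on $\mathcal{B}$, and (ii) the upgrade of $B$ to a \emph{compact} pullback absorbing family.

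For (i), I would appeal to Theorem 3.1 and the Lipschitz dependence on initial data for the rough equation \eqref{3.1}, applied over the stopping-time interval $[0,T_1(\theta_{T_j(\boldsymbol{W}(\omega))}\boldsymbol{W}(\omega))]$ and iterated finitely many times. Since the coefficients $F,G$ satisfy the Lipschitz and regularity hypotheses \textbf{(F)}, \textbf{(G)}, and the rough integral estimate \eqref{2.6} depends continuously on the controlled rough path norm $\|y,y'\|_{W,2\alpha,\gamma}$, a standard contraction-type argument shows that $\Phi(i,j,\omega,\cdot):\mathcal{B}\to\mathcal{B}$ is continuous for every fixed $i\in\mathbb{Z}^{+}$, $j\in\mathbb{Z}$, $\omega\in\Omega$.

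For (ii), the key is the compact embedding $\mathcal{B}_\gamma \subset\subset \mathcal{B}$ from assumption \textbf{(A)} combined with the smoothing effect of the analytic semigroup $S$. Starting from any $y_0\in B(i-1,\omega)$, I would apply Lemma \ref{lemma 3.4} over the single stopping-time interval from $T_{i-1}$ to $T_i$, then use the representation \eqref{3.2}, the semigroup bound \eqref{2.1*} and the rough integral estimate \eqref{2.6} to conclude that $\Phi(1,i-1,\omega,y_0)$ lies in $\mathcal{B}_\gamma$ with a bound controlled by $\|y_0\|_{\mathcal{B}}$, $\|\boldsymbol{W}(\omega)\|_{\alpha}$ on the relevant interval, and a factor $|T(\theta_{T_{i-1}(\boldsymbol{W}(\omega))}\boldsymbol{W}(\omega))|^{-\gamma}$ generated by the smoothing. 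Setting
$$\widetilde{B}(i,\omega):=\overline{\Phi\bigl(1,i-1,\omega,B(i-1,\omega)\bigr)}^{\,\mathcal{B}},$$
this set is bounded in $\mathcal{B}_\gamma$, hence precompact in $\mathcal{B}$, and by the cocycle property of $\Phi$ it is again $\mathcal{D}_{\mathbb{Z},\mathcal{B}}^{\nu}$-absorbing.

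The main obstacle is verifying that $\widetilde{B}\in \mathcal{D}_{\mathbb{Z},\mathcal{B}}^{\nu}$, which requires that the $\mathcal{B}_\gamma$-bound on $\widetilde{B}(i,\omega)$ grows at most sub-exponentially in $|i|$. The delicate point is to absorb the negative power $|T(\theta_{T_{i-1}(\boldsymbol{W}(\omega))}\boldsymbol{W}(\omega))|^{-\gamma}$ coming from the smoothing factor into the exponential decay $e^{-\lambda T_{i-1}}$ from the semigroup and the absorbing radius $R(i,\omega)$ of Lemma \ref{lemma 3.6}. This is precisely where the subexponential growth condition \eqref{3.60}, the restriction $\gamma\in (0,(\alpha-\sigma)\wedge(1-\delta)]$, and the lower bound \eqref{3.58} on $|T_i(\boldsymbol{W}(\omega))|/|i|$ are used in concert. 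Once $\widetilde{B}$ is shown to be a compact pullback absorbing family in $\mathcal{D}_{\mathbb{Z},\mathcal{B}}^{\nu}$, Lemma \ref{lemma 2.3} produces the pullback attractor
$$\mathcal{A}(i,\omega)=\bigcap_{k\geq k_0}\overline{\bigcup_{n\geq k}\Phi\bigl(n,i-n,\omega,\widetilde{B}(i-n,\omega)\bigr)},$$
as required.
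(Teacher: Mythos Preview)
Your proposal is correct and follows essentially the same route as the paper: the paper likewise reduces the proof to (i) continuity of $\Phi$ in the initial datum (deferred to Section 5) and (ii) constructing a compact absorbing family by applying one more time step of $\Phi$ to the absorbing set $B$ and exploiting the smoothing $S(T_1)\colon \mathcal{B}\to\mathcal{B}_\gamma$ with the factor $|T(\theta_{T_{i-1}(\boldsymbol{W}(\omega))}\boldsymbol{W}(\omega))|^{-\gamma}$, controlled via \eqref{3.60} and the restriction $\gamma<(\alpha-\sigma)\wedge(1-\delta)$. The only cosmetic difference is that the paper writes the compact absorbing set as $C(j,\omega)=\overline{\Phi(1,j-1,\omega,\Phi(T^\ast,-T^\ast+j-1,\omega,B(-T^\ast+j-1,\omega)))}$, i.e.\ first pulling back to the absorption time $T^\ast$ and then smoothing, whereas you take $\widetilde{B}(i,\omega)=\overline{\Phi(1,i-1,\omega,B(i-1,\omega))}$; by the cocycle property these two families are equivalent for the purpose of Lemma \ref{lemma 2.3}.
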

\begin{proof}
We give the outline of the proof, according to Lemma \ref{lemma 2.3}, we need to illustrate $\Phi$ for initial data has continuously dependence(Section 5),  and we need to check the existence of the compact pullback absorbing set. In terms of the absorbing set $B(j,\omega)\in \mathcal{D}_{\mathbb{Z},\mathcal{B}}^{\nu}$, let $T^{\ast}(j)$ is absorbing time of $B(j,\omega)$, then $\Phi(T^{\ast},-T^{\ast}+j-1,\omega, B(-T^{\ast}+j-1,\omega))$ is also an absorbing set.  In order to get the compactness of  absorbing sets.  Firstly, for any $y_0\in D(j-1,\omega)\in \mathcal{D}_{\mathbb{Z},\mathcal{B}}^{\nu}$,  the solutions  $y_{T_{1}(\theta_{T_{j-1}(\boldsymbol{W}(\omega))}\boldsymbol{W}(\omega)}=\Phi(1,j-1,\omega, y_0)$ have  more nicely regularity. Indeed, similar to the computation in Appendix A,  we have the following estimate
\begin{align*}
	\left\|\Phi(1,j-1,\omega, y_0)\right\|_{\gamma}&\leq \frac{C}{|T_{1}(\theta_{T_{j-1}(\boldsymbol{W}(\omega))}\boldsymbol{W}(\omega))|^{\gamma}}\|y_0\|\\
&~~+C\mu(1+\|y,y^\prime\|_{W,2\alpha,0,[0,T_{1}(\theta_{T_{j-1}(\boldsymbol{W}(\omega))}\boldsymbol{W}(\omega))]}),
\end{align*}
where we have to add the condition $\gamma<(1-\delta)\wedge (\alpha-\sigma)$ to  guarantee that above inequality holds.  In addition, let $i=1,\gamma=0$ in Lemma \ref{lemma 3.4}, then
\begin{align*}
\|y,y^\prime\|_{W,2\alpha,0,[0,T_{1}(\theta_{T_{j-1}(\boldsymbol{W}(\omega))}\boldsymbol{W}(\omega))]}\leq \frac{C}{1-C\mu}\|y_0\|+\frac{C\mu}{1-C\mu}.
\end{align*}
Thus, \eqref{3.60} shows that $\Phi(1,j-1,\omega, D(j-1,\omega))\in \mathcal{D}_{\mathbb{Z},\mathcal{B}_{\gamma}}^{\nu}$. Then we can construct a compact pullback absorbing set $C(j,\omega)$ as follows
$$C(j,\omega):=\overline{\Phi(1,j-1,\omega,\Phi(T^{\ast},-T^{\ast}+j-1,\omega, B(-T^{\ast}+j-1,\omega)))}.$$
Note that the compactness of $C(j,\omega)$ can be get from $\mathcal{B}_{\gamma}$ compactly embedded into $\mathcal{B}$  and $C(j,\omega)$ is bounded in $\mathcal{B}_{\gamma}$. So
$$\mathcal{A}(i,\omega)=\cap_{i^\prime\in\mathbb{Z}^{+}}\overline{\cup_{j\geq i^\prime}\Phi(j,i-j,\omega,C(i-j,\omega))},~~i\in\mathbb{Z}.$$
\end{proof}
\subsection{The absorbing set for non-autonomous dynamical system \texorpdfstring{$\varphi$}{qg} }
We shall use the above results to study  the dynamical behavior of the non-autonomous dynamical system $\varphi$ in this subsection. Different from the previous subsection, we consider the sample `$\omega$' instead of `$i$' for the continuous dynamical system $\varphi$. 
Thus, we will try to prove that $\mathcal{A}(\omega)=\mathcal{A}(0,\omega)$ is an invariant attracting set  in $\mathcal{D}^{0}_{\mathbb{R},\mathcal{B}}$ with respect to $\varphi$, where we say $D(\omega)\in\mathcal{D}^{0}_{\mathbb{R},\mathcal{B}}$ for each $\omega\in\Omega$, if there exists a radius $r(\omega)$ such that $D(\omega)\subset B_{\mathcal{B}}(0,r(\omega))$ and
$$\limsup_{\mathbb{R}\ni t\rightarrow-\infty}\frac{\log^{+}r(\theta_{t}\omega)}{|t|}=0.$$
We construct the following sets $$G(i,\omega)=\overline{\bigcup_{t\in(T_{i-1}(\boldsymbol{W}(\omega),T_{i}(\boldsymbol{W}(\omega))]}D(\theta_{t}\omega)}$$
for $D\in \mathcal{D}^{0}_{\mathbb{R},\mathcal{B}}$ and $i\in \mathbb{Z},\omega\in\Omega$, then $G(i,\omega)\in \mathcal{D}^{0}_{\mathbb{Z},\mathcal{B}}$. Indeed, if not, then there exists a $\omega\in\Omega$ and subsequence $i^{\prime}:=\{i^{\prime}(i,\omega)\}_{i\in\mathbb{Z}^{-}}$,$i^{\prime}\in\mathbb{Z}^{-}$, $t_{i^{\prime}}\in(T_{i^{\prime}-1}(\boldsymbol{W}(\omega)),T_{i^{\prime}}(\boldsymbol{W}(\omega))]$ such that
$$\limsup_{i^{\prime}\rightarrow -\infty}\frac{\log^{+}\|y_{i^{\prime}}\|}{|i^{\prime}|}>0,$$
for any $y_{i^{\prime}}\in D(\theta_{t_{i^{\prime}}}\omega)$. However, we have
\begin{align*}
\limsup_{i^{\prime}\rightarrow -\infty}\frac{\log^{+}\|y_{i^{\prime}}\|}{|i^{\prime}|}&\leq\limsup_{i^{\prime}\rightarrow -\infty}\frac{\log^{+}\|y_{i^{\prime}}\|}{|T_{i^{\prime}-1}|}\limsup_{i^{\prime}\rightarrow -\infty}\frac{|T_{i^{\prime}-1}|}{|i^{\prime}|}\\
&\leq \limsup_{i^{\prime}\rightarrow -\infty}\frac{\log^{+}\|y_{i^{\prime}}\|}{|t_{i^{\prime}}|}\limsup_{i^{\prime}\rightarrow -\infty}\frac{|T_{i^{\prime}-1}|}{|i^{\prime}|}=0,
\end{align*}
where the last equality we use the fact $y_{i^{\prime}}\in D(\theta_{t_{i^{\prime}}}\omega)$ and $\frac{|T_{i^{\prime}-1}|}{|i^{\prime}|}\leq \frac{|i^{\prime}-1|}{|i^{\prime}|}< 2$.

Before proving  that $\mathcal{A}(\omega)$ is an invariant attracting set of continuous dynamical system $\varphi$, we need the following lemma.
\begin{lemma}\label{lemma 3.8}
Let $D\in \mathcal{D}^{0}_{R,\mathcal{B}}$, then the set
$$E_{D}(i,\omega)=\overline{\bigcup_{-t\in[\hat{T}(\theta_{T_{i}(\boldsymbol{W}(\omega))}\boldsymbol{W}(\omega)),0]}\bigcup_{y_{0}\in D(\theta_{-t}\theta_{T_{i}(\boldsymbol{W}(\omega))}\boldsymbol{W}(\omega))}\varphi(t,\theta_{-t}\theta_{T_{i}(\boldsymbol{W}(\omega))}\omega,y_{0})}$$
belongs to $\mathcal{D}^{0}_{\mathbb{Z},\mathcal{B}}$. Furthermore, we can define a set
$$H_{D}(i,\omega)=\overline{\bigcup_{t\in[0,T(\theta_{T_{i}(\boldsymbol{W}(\omega))}\boldsymbol{W}(\omega))]}\bigcup_{y_{0}\in D(\theta_{T_{i}(\boldsymbol{W}(\omega))}\boldsymbol{W}(\omega))}\varphi(t,\theta_{T_{i}(\boldsymbol{W}(\omega))}\omega,y_{0})}$$
in $\mathcal{D}^{\nu}_{\mathbb{Z},\mathcal{B}}$ for $D\in \mathcal{D}^{\nu}_{\mathbb{R},\mathcal{B}}$.
\end{lemma}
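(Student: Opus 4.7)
The plan is to reduce both parts to a single--step energy estimate on one stopping--time interval and then transfer the growth rate of the radius $r$ from $\mathbb{R}$ to the discrete index $i$ via the bounds on $|T_i(\boldsymbol{W}(\omega))|/|i|$ provided by Lemma \ref{lemma 3.2} and \eqref{3.58}. First I would apply Lemma \ref{lemma 3.4} with $i=1$ and $\gamma=0$; the sum over $m$ then vanishes and rearranging with $C\mu<1$ gives the pathwise estimate
\begin{align*}
\|\varphi(t,\tilde\omega,y_0)\|\leq \frac{C\mu}{1-C\mu}+\frac{C}{1-C\mu}\|y_0\|,\quad t\in[0,T(\boldsymbol{W}(\tilde\omega))],
\end{align*}
valid for arbitrary $\tilde\omega\in\Omega$ and $y_0\in\mathcal{B}$.

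Next I would specialize this bound in two ways. For $H_D$, setting $\tilde\omega=\theta_{T_i(\boldsymbol{W}(\omega))}\omega$ makes the admissible interval $[0,T(\theta_{T_i(\boldsymbol{W}(\omega))}\boldsymbol{W}(\omega))]$ coincide with the one in the definition of $H_D$, and using $y_0\in D(\theta_{T_i}\omega)\subset B_\mathcal{B}(0,r(\theta_{T_i}\omega))$ yields
\begin{align*}
H_D(i,\omega)\subset B_\mathcal{B}\!\Bigl(0,\tfrac{C\mu}{1-C\mu}+\tfrac{C}{1-C\mu}r(\theta_{T_i(\boldsymbol{W}(\omega))}\omega)\Bigr).
\end{align*}
For $E_D$, I would instead take $\tilde\omega=\theta_{-t+T_i(\boldsymbol{W}(\omega))}\omega$. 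The identity $\|\theta_{-t+T_i}\boldsymbol{W}(\omega)\|_{\alpha,[0,t]}=\|\theta_{T_i}\boldsymbol{W}(\omega)\|_{\alpha,[-t,0]}$ combined with the definitions of $T$ and $\hat T$ shows that $t\leq T(\theta_{-t+T_i}\boldsymbol{W}(\omega))$ is exactly $-t\in[\hat T(\theta_{T_i}\boldsymbol{W}(\omega)),0]$, so that the above estimate applies over the entire range defining $E_D$. Combining with $y_0\in D(\theta_{-t+T_i}\omega)\subset B_\mathcal{B}(0,r(\theta_{-t+T_i}\omega))$ gives, writing $T_i=T_i(\boldsymbol{W}(\omega))$,
\begin{align*}
E_D(i,\omega)\subset B_\mathcal{B}\!\Bigl(0,\tfrac{C\mu}{1-C\mu}+\tfrac{C}{1-C\mu}\!\sup_{s\in[T_i+\hat T(\theta_{T_i}\boldsymbol{W}(\omega)),\,T_i]}\!\!r(\theta_s\omega)\Bigr).
\end{align*}

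It remains to verify the growth of these two radii. Lemma \ref{lemma 3.2}(i) gives $|T_i-T_{i-1}|\leq 1$, so $|T_i|\leq |i|$, while \eqref{3.58} forces $|T_i|/|i|\to d\in[d_1,1]$ as $i\to-\infty$. For $H_D$, the chain
\begin{align*}
\limsup_{i\to-\infty}\frac{\log^+ r(\theta_{T_i}\omega)}{|i|}\leq \limsup_{i\to-\infty}\frac{\log^+ r(\theta_{T_i}\omega)}{|T_i|}\cdot\frac{|T_i|}{|i|}<\nu\cdot 1=\nu
\end{align*}
is immediate from $D\in\mathcal{D}^\nu_{\mathbb{R},\mathcal{B}}$, hence $H_D\in\mathcal{D}^\nu_{\mathbb{Z},\mathcal{B}}$. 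The main obstacle is the supremum in the $E_D$ bound, because temperedness of $r$ constrains $r(\theta_t\omega)$ only pointwise as $t\to-\infty$ and not uniformly on unit intervals. I would close this gap by a direct $\varepsilon$--argument: given $\varepsilon>0$, temperedness provides $M=M(\omega,\varepsilon)$ with $r(\theta_s\omega)\leq e^{\varepsilon|s|}$ whenever $s<-M$; since $|\hat T|\leq 1$, for $i$ sufficiently negative the entire interval $[T_i+\hat T(\theta_{T_i}\boldsymbol{W}(\omega)),T_i]$ lies below $-M$, so the supremum is bounded by $e^{\varepsilon(|T_i|+1)}$. Dividing its logarithm by $|i|$ yields at most $\varepsilon(|T_i|/|i|+1/|i|)\leq 2\varepsilon$ for large $|i|$, and sending $\varepsilon\to 0$ gives $\limsup\log^+R(i,\omega)/|i|=0$, so $E_D\in\mathcal{D}^0_{\mathbb{Z},\mathcal{B}}$.
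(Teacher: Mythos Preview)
Your proof is correct and follows the paper's approach: obtain the single-step bound from Lemma~\ref{lemma 3.4} with $i=1$, embed $E_D$ and $H_D$ in balls of radius affine in $\sup_s r(\theta_s\omega)$ over one stopping-time interval, and transfer the growth rate via $|T_i|\le|i|$---the paper simply packages the last step through the auxiliary set $G(i,\omega)$ established immediately before the lemma, whereas you redo it with a direct $\varepsilon$-argument. Your aside that \eqref{3.58} forces $|T_i|/|i|\to d$ is a slip (only $\liminf\ge d_1$ is asserted), but you never actually use convergence---only $|T_i|\le|i|$ and $T_i\to-\infty$---so the argument stands.
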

\begin{proof}
We first check $E_{D}(i,\omega)\in \mathcal{D}^{0}_{\mathbb{Z},\mathcal{B}}$ for $D\in \mathcal{D}^{0}_{\mathbb{R},\mathcal{B}}$. For any $-t\in[\hat{T}(\theta_{T_{i}(\boldsymbol{W}(\omega))}\boldsymbol{W}(\omega)),0]$ and $y_{0}\in D(\theta_{-t}\theta_{T_{i}(\boldsymbol{W}(\omega))}\omega)$, by the fact $$\| \theta_{-t}\theta_{T_{i}(\boldsymbol{W}(\omega))}\boldsymbol{W}(\omega)\|_{\alpha,[0,t]}\leq \|\theta_{T_{i}(\boldsymbol{W}(\omega))}\boldsymbol{W}(\omega)\|_{\alpha,[\hat{T}(\theta_{T_{i}(\boldsymbol{W}(\omega))}\boldsymbol{W}(\omega)),0]}\leq\mu,$$ similar to Lemma \ref{lemma A.1}-\ref{lemma A.2} and \ref{lemma A.4}-\ref{lemma A.5} for $\gamma=0$, we have
\begin{align*}
\|\varphi(t,\theta_{-t}\theta_{T_{i}(\boldsymbol{W}(\omega))}\omega,y_{0})\|\leq C\|y_{0}\|+C\mu(1+\|y,y^{\prime}\|_{W,2\alpha,0,[0,T_{1}(\theta_{T_{i-1}(\boldsymbol{W}(\omega))}\boldsymbol{W}(\omega)]}).
\end{align*}
Furthermore,  let $i=1, \gamma=0$ in Lemma \ref{lemma 3.4} we get
\begin{align}\label{3.74}
\|y,y^{\prime}\|_{W,2\alpha,0,[0,T_{1}(\theta_{T_{i-1}(\boldsymbol{W}(\omega))}\boldsymbol{W}(\omega)]}\leq \frac{C}{1-C\mu}\|y_{0}\|+\frac{C\mu}{1-C\mu}.
\end{align}
Thus, we have
\begin{align}\label{3.75}
\|\varphi(t,\theta_{-t}\theta_{T_{i}(\boldsymbol{W}(\omega))}\omega,y_{0})\|\leq C\|y_{0}\|+C\mu\left(\frac{C}{1-C\mu}\|y_{0}\|+\frac{C\mu}{1-C\mu}+1\right).
\end{align}
In view of $y_{0}\in D(\theta_{-t}\theta_{T_{i}(\boldsymbol{W}(\omega))}\omega)$, hence we obtain
$$\|\varphi(t,\theta_{-t}\theta_{T_{i}(\boldsymbol{W}(\omega))}\omega,y_{0})\|\leq \sup_{y_{0}\in G(i,\omega)}C\|y_{0}\|\left(1+\frac{C\mu}{1-C\mu}\right)+C\mu\left(1+\frac{C\mu}{1-C\mu}\right).$$
Then $E_{D}(i,\omega)\in \mathcal{D}^{0}_{\mathbb{Z},\mathcal{B}}$. Similarly, we can prove that $H_{D}(i,\omega)\in\mathcal{D}^{\nu}_{\mathbb{R},\mathcal{B}}$.
\end{proof}
\begin{lemma}\label{lemma 3.9}
If conditions in Theorem \ref{theorem 3.2} hold and the family of sets $\mathcal{A}(\omega):=\mathcal{A}(0,\omega)$ which is defined in Theorem \ref{theorem 3.2}. Then $\mathcal{A}$  is attracting and invariant with respect to $\varphi$ in $\mathcal{D}^{0}_{\mathbb{R},\mathcal{B}}$.
\end{lemma}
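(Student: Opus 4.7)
The plan is to transfer the pullback attraction and invariance of the discrete attractor $\{\mathcal{A}(i,\omega)\}_{i\in\mathbb{Z}}$ (from Theorem \ref{theorem 3.2}) to the continuous system $\varphi$, via the identity
\begin{equation*}
\Phi(i,j,\omega,\cdot)=\varphi(T_{i}(\theta_{T_{j}(\boldsymbol{W}(\omega))}\boldsymbol{W}(\omega)),\theta_{T_{j}(\boldsymbol{W}(\omega))}\omega,\cdot),
\end{equation*}
the cocycle property of $\varphi$, the cocycle property of the stopping times, and Lemma \ref{lemma 3.8}. The role of Lemma \ref{lemma 3.8} is to absorb the "leftover" piece of continuous time between two successive stopping times into a set that lives in the correct class $\mathcal{D}^{0}_{\mathbb{Z},\mathcal{B}}$.

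For attraction, fix $D\in\mathcal{D}^{0}_{\mathbb{R},\mathcal{B}}$ and $\omega\in\Omega$. By \eqref{3.58} the sequence $T_{-i}(\boldsymbol{W}(\omega))$ decreases to $-\infty$ as $i\to\infty$, so for $t>0$ large there exists a unique $i=i(t)\geq 0$ with $-t\in(T_{-i-1}(\boldsymbol{W}(\omega)),T_{-i}(\boldsymbol{W}(\omega))]$. Writing $-t=T_{-i}(\boldsymbol{W}(\omega))+s$ with $s\in[\hat{T}(\theta_{T_{-i}(\boldsymbol{W}(\omega))}\boldsymbol{W}(\omega)),0]$ and applying the cocycle property of $\varphi$, I get
\begin{equation*}
\varphi(t,\theta_{-t}\omega,y_{0})=\varphi(-T_{-i}(\boldsymbol{W}(\omega)),\theta_{T_{-i}(\boldsymbol{W}(\omega))}\omega,\varphi(-s,\theta_{-t}\omega,y_{0})).
\end{equation*}
For $y_{0}\in D(\theta_{-t}\omega)$, the inner argument belongs to $E_{D}(-i,\omega)$ by the very definition in Lemma \ref{lemma 3.8}, and $\{E_{D}(i,\omega)\}_{i\in\mathbb{Z}}\in\mathcal{D}^{0}_{\mathbb{Z},\mathcal{B}}$ by that lemma. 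The outer mapping equals $\Phi(i,-i,\omega,\cdot)$ thanks to the cocycle identity $-T_{-i}(\boldsymbol{W}(\omega))=T_{i}(\theta_{T_{-i}(\boldsymbol{W}(\omega))}\boldsymbol{W}(\omega))$. Hence
\begin{equation*}
\varphi(t,\theta_{-t}\omega,D(\theta_{-t}\omega))\subset\Phi(i,-i,\omega,E_{D}(-i,\omega)),
\end{equation*}
and the pullback attraction of $\mathcal{A}(0,\omega)$ for $\Phi$ gives the vanishing distance to $\mathcal{A}(\omega)=\mathcal{A}(0,\omega)$ as $i\to\infty$ (equivalently $t\to\infty$).

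For invariance, $\varphi(t,\omega,\mathcal{A}(\omega))=\mathcal{A}(\theta_{t}\omega)$, I would argue analogously using the forward piece $H_{D}(i,\omega)$ of Lemma \ref{lemma 3.8}. Writing $t=T_{i}(\boldsymbol{W}(\omega))+s^{\prime}$ with the unique $i\geq 0$ and $s^{\prime}\in[0,T(\theta_{T_{i}(\boldsymbol{W}(\omega))}\boldsymbol{W}(\omega)))$ and using the cocycle property gives
\begin{equation*}
\varphi(t,\omega,\cdot)=\varphi(s^{\prime},\theta_{T_{i}(\boldsymbol{W}(\omega))}\omega,\Phi(i,0,\omega,\cdot)).
\end{equation*}
The invariance $\Phi(i,0,\omega,\mathcal{A}(0,\omega))=\mathcal{A}(i,\omega)$ of the discrete pullback attractor, combined with the identification of $\mathcal{A}(0,\theta_{t}\omega)$ via the shifted stopping times $T_{k}(\theta_{t}\boldsymbol{W}(\omega))$ and Lemma \ref{lemma 3.2}, delivers $\varphi(t,\omega,\mathcal{A}(\omega))=\mathcal{A}(\theta_{t}\omega)$. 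The inclusion $\subset$ is the direct image after applying the above decomposition and a continuity argument on the defining intersection/union for $\mathcal{A}(i,\omega)$; the reverse inclusion is obtained by extracting subsequential limits from $\varphi(t_{n}-t,\theta_{-(t_{n}-t)}\omega,z_{n})$ with $z_{n}$ in the absorbing set, using the continuity of $\varphi$ in $y_{0}$ and the compactness inherited from the compact embedding $\mathcal{B}_{\gamma}\subset\subset\mathcal{B}$ in assumption \textbf{(A)}.

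The main obstacle will be the reverse inclusion in the invariance statement, which requires a pre-compactness argument analogous to the one used in Theorem \ref{theorem 3.2} for the discrete system, carefully matched against the two \emph{different} families of stopping times $T_{i}(\boldsymbol{W}(\omega))$ and $T_{i}(\theta_{t}\boldsymbol{W}(\omega))$. A secondary difficulty is bookkeeping: one must verify that at each step of the cocycle decomposition the "leftover" family sits in $\mathcal{D}^{0}_{\mathbb{Z},\mathcal{B}}$ (not merely in $\mathcal{D}^{\nu}_{\mathbb{Z},\mathcal{B}}$), which is exactly the content of Lemma \ref{lemma 3.8} and ultimately rests on estimate \eqref{3.75} and the $\nu$-growth control of the radius together with the lower bound $d_{1}$ on $|T_{-i}|/i$.
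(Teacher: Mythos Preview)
Your approach is correct and matches the intended route: the paper omits the proof entirely, citing \cite[Lemma 3.13]{MR3226746}, and your decomposition via the stopping times together with Lemma~\ref{lemma 3.8} (which is set up in the paper precisely to furnish the sets $E_{D}$ and $H_{D}$ you use) is exactly the argument of that reference. Your identification of the two delicate points---matching the two stopping-time grids $T_{i}(\boldsymbol{W}(\omega))$ and $T_{i}(\theta_{t}\boldsymbol{W}(\omega))$ for invariance, and verifying the leftover families stay in $\mathcal{D}^{0}_{\mathbb{Z},\mathcal{B}}$---is accurate.
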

\begin{proof}
Its' proof is same as \cite[Lemma 3.13]{MR3226746}, we omit it here.
\end{proof}
The next lemma shows that the absorbing set $\mathcal{A}$ is not a attractor, since it is not an absorbing set  on stopping times $\{T_{i}(\boldsymbol{W}(\omega))\}_{i\in\mathbb{Z}}$.
\begin{lemma}\label{lemma 3.10*}
For each $\omega\in\Omega$, let
\begin{align*}
\hat{d}:=\limsup_{i\rightarrow -\infty}\frac{|T_{i}(\boldsymbol{W}(\omega))|}{|i|}\leq 1,\quad \check{d}:=\liminf_{i\rightarrow -\infty}\frac{|T_{i}(\boldsymbol{W}(\omega))|}{|i|}\geq d_1>0
\end{align*}
hold, then we have
\begin{itemize}
  \item $\mathcal{A}\in\mathcal{D}^{\nu/\check{d}}_{\mathbb{R},\mathcal{B}}$;
  \item If $\hat{d}>\check{d}$ and $\nu>0$, then there exists a sample $\omega\in\Omega$ and $D\in \mathcal{D}^{\nu/\check{d}}_{\mathbb{R},\mathcal{B}}$ such that the  range of  mapping $t\rightarrow D(\theta_{t}\omega)$ for $t=T_{i}(\boldsymbol{W}(\omega)),i\in \mathbb{Z}$ in $\mathcal{D}_{\mathbb{Z},\mathcal{B}}^{\nu\hat{d}/\check{d}}$ but not in $\mathcal{D}_{\mathbb{Z},\mathcal{B}}^{\nu}$;
  \item Similarly, if $D\in \mathcal{D}_{\mathbb{R},\mathcal{B}}^{0}$, then the range of mapping $t\rightarrow D(\theta_{t}\omega)$ for $t=T_{i}(\boldsymbol{W}(\omega)),i\in \mathbb{Z}$ in $\mathcal{D}_{\mathbb{Z},\mathcal{B}}^{0}$.
\end{itemize}
\end{lemma}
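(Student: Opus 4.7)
All three parts revolve around comparing the continuous-time growth rate $\log^{+}r(\theta_{t}\omega)/|t|$ with the discrete rate $\log^{+}r(\theta_{T_{i}(\boldsymbol{W}(\omega))}\omega)/|i|$, the bridge being the asymptotic quotient $|T_{i}|/|i|\in[\check{d}+o(1),\hat{d}+o(1)]$. For the first bullet I would start by establishing the identification
\begin{equation*}
\mathcal{A}(i,\omega)=\mathcal{A}(\theta_{T_{i}(\boldsymbol{W}(\omega))}\omega),\qquad i\in\mathbb{Z},
\end{equation*}
which follows from $\Phi(i,0,\omega,\cdot)=\varphi(T_{i}(\boldsymbol{W}(\omega)),\omega,\cdot)$ combined with the $\varphi$-invariance of $\mathcal{A}$ proved in Lemma \ref{lemma 3.9}. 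Since $\{\mathcal{A}(i,\omega)\}\in\mathcal{D}^{\nu}_{\mathbb{Z},\mathcal{B}}$ by Theorem \ref{theorem 3.2}, this hands us a radius $r$ with $L:=\limsup_{i\to-\infty}\frac{\log^{+}r(\theta_{T_{i}}\omega)}{|i|}<\nu$. For a generic $t<0$ I would pick $i=i(t)$ with $T_{i}\leq t<T_{i+1}$, use invariance once more to write $\mathcal{A}(\theta_{t}\omega)=\varphi(t-T_{i},\theta_{T_{i}}\omega,\mathcal{A}(\theta_{T_{i}}\omega))$, and bound the resulting radius through the one-step forward estimate \eqref{3.75} from the proof of Lemma \ref{lemma 3.8}, which is linear in the initial radius plus a constant (valid because $t-T_{i}\in[0,T(\theta_{T_{i}}\boldsymbol{W}(\omega))]\subset[0,1]$). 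Converting $|i(t)|$ into $|t|$: from $|T_{i+1}|<|t|$ together with the liminf bound $|T_{i+1}|/|i+1|\geq\check{d}-\varepsilon$ (for $|i|$ large) one gets $|i(t)|\leq|t|/(\check{d}-\varepsilon)+1$, so after sending $\varepsilon\downarrow 0$ we obtain $\limsup_{t\to-\infty}\frac{\log^{+}R(\theta_{t}\omega)}{|t|}\leq L/\check{d}<\nu/\check{d}$, i.e.\ $\mathcal{A}\in\mathcal{D}^{\nu/\check{d}}_{\mathbb{R},\mathcal{B}}$.

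\textbf{Second bullet.} I would construct an explicit counterexample. Fix an $\omega$ realising $\hat{d}(\omega)>\check{d}(\omega)$ whose $\theta$-orbit is aperiodic (which holds a.s.\ for the Wiener shift). Choose $\varepsilon\in(0,\nu(\hat{d}-\check{d})/(\check{d}\hat{d}))$ so that $(\nu/\check{d}-\varepsilon)\hat{d}>\nu$, and set $r(\omega'):=e^{(\nu/\check{d}-\varepsilon)|t|}$ if $\omega'=\theta_{t}\omega$, $r(\omega'):=1$ otherwise; put $D(\omega'):=B_{\mathcal{B}}(0,r(\omega'))$. A direct check yields $\limsup_{s\to-\infty}\log^{+}r(\theta_{s}\omega'')/|s|\leq\nu/\check{d}-\varepsilon$ at every $\omega''\in\Omega$, so $D\in\mathcal{D}^{\nu/\check{d}}_{\mathbb{R},\mathcal{B}}$. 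Along $t=T_{i}(\boldsymbol{W}(\omega))$ the identity $r(\theta_{T_{i}}\omega)=e^{(\nu/\check{d}-\varepsilon)|T_{i}|}$ gives
\begin{equation*}
\limsup_{i\to-\infty}\frac{\log^{+}r(\theta_{T_{i}(\boldsymbol{W}(\omega))}\omega)}{|i|}=(\nu/\check{d}-\varepsilon)\hat{d}\in(\nu,\nu\hat{d}/\check{d}),
\end{equation*}
placing the restricted family in $\mathcal{D}^{\nu\hat{d}/\check{d}}_{\mathbb{Z},\mathcal{B}}\setminus\mathcal{D}^{\nu}_{\mathbb{Z},\mathcal{B}}$.

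\textbf{Third bullet and main obstacle.} For $D\in\mathcal{D}^{0}_{\mathbb{R},\mathcal{B}}$ the factorisation
\begin{equation*}
\frac{\log^{+}r(\theta_{T_{i}(\boldsymbol{W}(\omega))}\omega)}{|i|}=\frac{\log^{+}r(\theta_{T_{i}(\boldsymbol{W}(\omega))}\omega)}{|T_{i}(\boldsymbol{W}(\omega))|}\cdot\frac{|T_{i}(\boldsymbol{W}(\omega))|}{|i|}
\end{equation*}
has its first factor vanishing as $i\to-\infty$ (since $|T_{i}|\to\infty$ by $\check{d}\geq d_{1}>0$, and the tempered hypothesis forces the first factor to $0$), while the second factor stays bounded by $\hat{d}\leq 1$; the product tends to $0$, giving the claim. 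The main obstacle is the first bullet, where one must marry the discrete-attractor identification $\mathcal{A}(i,\omega)=\mathcal{A}(\theta_{T_{i}}\omega)$ with the short-time forward estimate of Lemma \ref{lemma 3.8} in order to pass from an $|i|$-growth bound to a $|t|$-growth bound, with the ratio $1/\check{d}$ emerging naturally from $|i(t)|\lesssim|t|/\check{d}$. The other two parts are essentially book-keeping driven by the same $|T_{i}|/|i|$ comparison once that bridge is in place.
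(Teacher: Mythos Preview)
Your proposal is correct and follows essentially the same strategy as the paper's proof. For the first bullet, the paper also uses $\mathcal{A}(i,\omega)\in\mathcal{D}^{\nu}_{\mathbb{Z},\mathcal{B}}$ together with the one-step forward bound (the same estimate behind \eqref{3.75}) to pass from the discrete attractor to $\mathcal{A}(\theta_t\omega)$; the only cosmetic difference is that the paper first converts the $|i|$-rate into a $|T_i|$-rate via $\limsup\frac{|i|}{|T_i|}\leq 1/\check{d}$ and then uses $|t|\sim|T_{i-1}|$, whereas you bound $|i(t)|\lesssim|t|/\check{d}$ directly. For the third bullet both arguments are exactly the factorisation you wrote (the paper in fact already used this computation in the paragraph preceding Lemma~\ref{lemma 3.8}).

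The one place where your construction differs slightly from the paper is the counterexample in the second bullet. The paper sets $D(\theta_t\omega)=\{0\}$ for all $t\neq T_{i'}(\boldsymbol{W}(\omega))$ and places a single point $y_{i'}$ with $\log\|y_{i'}\|/|T_{i'}|\to\nu/\check{d}-\varepsilon$ only along a subsequence $(i')$ realising $\hat{d}$; you instead put a ball of radius $e^{(\nu/\check{d}-\varepsilon)|t|}$ along the entire orbit, which forces you to invoke aperiodicity of the $\theta$-orbit to make $r$ well defined. Both constructions yield the same discrete growth rate $(\nu/\check{d}-\varepsilon)\hat{d}\in(\nu,\nu\hat{d}/\check{d}]$, so either works; the paper's version is marginally cleaner because it avoids the aperiodicity caveat, while yours makes the continuous-time membership $D\in\mathcal{D}^{\nu/\check{d}}_{\mathbb{R},\mathcal{B}}$ more transparent to verify.
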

\begin{proof}
For the first conclusion, in view of $\mathcal{A}(i,\omega)\in \mathcal{D}^{\nu}_{\mathbb{Z},\mathcal{B}}$. Then we have
$$\limsup_{i\rightarrow -\infty}\frac{\log^{+}(\sup_{y\in\mathcal{A}(i,\omega)}|y|)}{|T_{i}(\boldsymbol{W}(\omega))|}\leq \limsup_{i\rightarrow -\infty}\frac{\log^{+}(\sup_{y\in\mathcal{A}(i,\omega)}|y|)}{|i|}\limsup_{i\rightarrow -\infty}\frac{i}{|T_{i}(\boldsymbol{W}(\omega))|}\leq \nu/\check{d}.$$
Furthermore, for any $t<0$, there exists a $i(\omega)\in\mathbb{Z}^{-}$ such that $t\in (T_{i-1}(\boldsymbol{W}(\omega)),T_{i}(\boldsymbol{W}(\omega))]$ and $t=\tau+T_{i-1}(\boldsymbol{W}(\omega))$. Then we have
\begin{align*}
&\limsup_{t\rightarrow -\infty}\frac{\log^{+}\sup_{y\in \mathcal{A}(i-1,\omega)}\|\varphi(\tau,\theta_{T_{i-1}(\boldsymbol{W}(\omega))}\omega,y)\|}{|t|}\\
\leq&\limsup_{i\rightarrow -\infty}\frac{\log^{+}\sup_{y\in \mathcal{A}(i-1,\omega)}\|\varphi(\tau,\theta_{T_{i-1}(\boldsymbol{W})}\omega,y)\|}{|T_{i-1}(\boldsymbol{W}(\omega)) |}\leq \frac{\nu}{\check{d}},
\end{align*}
where we use the fact $\|\varphi(\tau,\theta_{T_{i-1}(\boldsymbol{W}(\omega))}\omega,y)\|\leq\sup_{y\in \mathcal{A}(i-1,\omega)}C\|y\|(1+\frac{C\mu}{1+C\mu})+C\mu(1+\frac{C\mu}{1+C\mu})$. Hence, $\mathcal{A}\in\mathcal{D}^{\nu/\check{d}}_{\mathbb{R},\mathcal{B}}$. The proof of the second property and the third property are similar, we only give the proof for the second conclusion. We can construct $D\in\mathcal{D}^{\frac{\nu}{\check{d}}}_{\mathbb{R},\mathcal{B}}$ such that $D(\theta_t\omega)=\{0\}$ for $t\neq T_{i}(\boldsymbol{W}(\omega))$, furthermore, for $t=T_{i^\prime}(\boldsymbol{W}(\omega))$, choosing $D(\theta_{T_{i^\prime}(\boldsymbol{W}(\omega)}\omega)=\{y_{i^{\prime}(i,\omega)}\}_{i\in\mathbb{Z}}$, where $i^{\prime}$ is a subsequence of $i$ such that $$\limsup_{i\rightarrow -\infty}\frac{|T_{i}(\boldsymbol{W}(\omega)|)}{|i|}=\lim_{i^\prime\rightarrow -\infty}\frac{|T_{i^{\prime}}(\boldsymbol{W}(\omega)|)}{|i^{\prime}|},$$
and for a sufficiently small $\epsilon>0$,
$$\lim_{i^{\prime}\rightarrow -\infty}\frac{|y_{i^{\prime}}|}{|T_{i^{\prime}}(\boldsymbol{W}(\omega))|}=\frac{\nu}{\check{d}}-\epsilon.$$
Then, it is easy to check
$$\limsup_{i\rightarrow -\infty}\frac{\log^{+}(\sup_{y\in D(i,\omega)}|y|)}{|i|}=(\frac{\nu}{\check{d}}-\epsilon)\hat{d}\leq \nu\hat{d}/\check{d}.$$
\end{proof}
\section{Random attractors for RPDEs driven by GFBRP}
In this section, we consider the random settings for RPDEs, namely the sample space $\Omega$ equipped with a measurable structure. Firstly, we introduce a metric dynamical system to describe the evolution of the noise. In addition, we shall prove that the non-autonomous dynamical system $\varphi$  is a random dynamical system  under the random settings and it has a random attractor. To this end, we first define the metric dynamical system. fBm has the following form:  we call a stochastic process $W(t)$ a $d$-dimensional fBm with Hurst index $H\in(0,1)$, if each component $W^{i}(t),i=1,\cdots,d$ is an independent, identically distributed, centered Gaussian process and has the following covariance
 $$R(W^{i}(t),W^{i}(s)):=E(W^{i}(t)W^{i}(s))=\frac{q^{2}}{2}(|t|^{2H}+|s|^{2H}-|t-s|^{2H})$$
 for $t,s\in \mathbb{R}$, and we require $q>0$ sufficiently small in following settings. Firstly, we define a quadruple $(C_{0}(\mathbb{R},\mathbb{R}^{d}),\mathcal{B}(C_{0}(\mathbb{R},\mathbb{R}^{d})),\mathbb{P},\theta)$, where $C_{0}(\mathbb{R},\mathbb{R}^{d})$ denotes the set of all $\mathbb{R}^{d}$-values continuous functions which elements are zero at zero, it is endowed with the compact open topology and  $\mathcal{B}(C_{0}(\mathbb{R},\mathbb{R}^{d}))$ is a Borel $\sigma$-algebra which is generated by $C_{0}(\mathbb{R},\mathbb{R}^{d})$. $\mathbb{P}$ is the law of fBm and $\theta$ is the Wiener shift which is introduced in previous section. Then the classical results\cite{MR2095071,MR2836654} tell us that $(C_{0}(\mathbb{R},\mathbb{R}^{d}),\mathcal{B}(C_{0}(\mathbb{R},\mathbb{R}^{d})),\mathbb{P},\theta)$ is an ergodic metric dynamical system.

 Secondly, according to Kolmogorov's continuous criterion, there exists a full measure set $\Omega_1\subset C_{0}(\mathbb{R},\mathbb{R}^d)$  such that its' elements have  $\alpha^{\prime}$-H\"{o}lder continuous paths for every  $\frac{1}{3}<\alpha<\alpha^{\prime}<H\leq \frac{1}{2}$ on any interval $[-T,T]$. In particular, $\Omega_1\in\mathcal{B}(C_{0}(\mathbb{R},\mathbb{R}^d))$ and it is $(\theta)_{t\in\mathbb{R}}$-invariant(see \cite{MR3072986}). Furthermore,  by Corollary \ref{corollary 5.1*}(Section 5), there exists a $(\theta_t)_{t\in\mathbb{R}}$-invariant full set $\Omega_{2}\in\mathcal{B}(C_{0}(\mathbb{R},\mathbb{R}^{d}))$ such that each $\omega\in\Omega_{2}$ has the canonical lift, namely it's second order process  as the limit of  the canonical lift  $C^1$-smooth paths,  we call $S$  a canonical lift mapping, if $S: C^{1}(\mathbb{R},\mathbb{R}^{d})\mapsto G^{2}(\mathbb{R}^{d})$, $S(x)=\left(1,x_{s,t},\int_{s}^{t}x_{s,r}dx_{r}\right)$, where $G^{2}(R^{d})$ is the free step-2 nilpotent group over $\mathbb{R}^{d}$(see\cite[Page 23]{MR4174393}). 

  Thirdly,  according to the  denseness property and countability of rational numbers, the continuity of  $\|\boldsymbol{W}^\eta(\omega)\|_{\alpha^\prime}$ for $\eta$ and ergodic theorem\cite[p538]{MR1723992}, there exists a $(\theta_{t})_{t\in R}$-invariant full set $\Omega_{3}\in \mathcal{B}(C_0(\mathbb{R},\mathbb{R}^d)$  such that
\begin{align}\label{4.1**}
&\lim_{t\rightarrow \pm \infty}\frac{1}{t}\int_{0}^{t}\left(\frac{\sup_{r\in[0,1]}\|\theta_{r+s}\boldsymbol{W}(\omega)\|_{\alpha^{\prime},[-1,0]}+\mu}{\mu}\right)^{\frac{1}{\alpha^{\prime}-\alpha}}ds \nonumber\\
&=E_{\mathbb{P}}\left(\frac{\sup_{r\in[0,1]}\|\theta_{r}\boldsymbol{W}\|_{\alpha^{\prime},[-1,0]}+\mu}{\mu}\right)^{\frac{1}{\alpha^{\prime}-\alpha}}:=d_1^{-1}<\infty,
\end{align}
and for any $\eta\in (0,1]$, there exists a  $d^\eta$ such that \eqref{4.1**} holds for $\boldsymbol{W}^\eta$ which is defined in Section 5, it approximates to $\boldsymbol{W}$. Moreover, due to \cite[Lemma 3.3]{caogao}, $d^\eta>0$ have a uniform lower bound for given $q$.
And under the assumption $q$ sufficiently small, $d_1$ and $d^\eta$ shall close sufficiently  to $1$. 

 Finally, since  GFBRP $\boldsymbol{W}$ and $\boldsymbol{W}^\eta$  can be extended to any finite interval \cite{MR4266219,caogao}, based on the  \cite[Theorem 3.1]{caogao}, we  know that $ \|\boldsymbol{W}\|_{\alpha^\prime,[-1-|t|,1+|t|]}$ and $ \|\boldsymbol{W}^\eta\|_{\alpha^\prime,[-1-|t|,1+|t|]}$  have
  any finite  order moment, then by Proposition 4.13\cite{MR1723992} and  the similar method in step $3$, we have a $(\theta_{t})_{t\in R}$-invariant set $\Omega_{4}\in \mathcal{B}(C_0(\mathbb{R},\mathbb{R}^d) $ of full measure such that the mapping $t\rightarrow \sup_{r\in[0,1]}\|\theta_{t+r}\boldsymbol{W}(\omega)\|_{\alpha^\prime,[-1,0]}$ and $ \sup_{r\in[0,1]}\|\theta_{t+r}\boldsymbol{W}^\eta(\omega)\|_{\alpha^\prime,[-1,0]}$ are sublinear growth. Hence, let $\Omega=\Omega_1\cap\Omega_2\cap\Omega_3\cap\Omega_4$, $\mathcal{F}$ is  a $\sigma$-algebra of $\mathcal{B}(C_0(\mathbb{R},\mathbb{R}^d)$ with respect to $\Omega$, the measure $\mathbb{P}$  is the restriction of  the distribution  of  fBm over $\mathcal{F}$. Hence,  we  construct a new ergodic metric dynamical system $(\Omega,\mathcal{F},\mathbb{P},\theta_{t})$.

 Now, we shall to illustrate that the non-autonomous dynamical system $\varphi$ is a random dynamical system in random settings.
 \begin{lemma}
 The non-autonomous dynamical system $\varphi$ is  a random dynamical system and the stopping times $T(\boldsymbol{W})$ and $\hat{T}(\boldsymbol{W})$ are measurable.
 \end{lemma}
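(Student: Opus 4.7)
The plan is to split the statement into two pieces: first the measurability of the stopping times, then the four axioms of a random dynamical system for $\varphi$.

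For the stopping times, I would start from Lemma \ref{lemma 3.2}, which established that for each fixed $\omega\in\Omega$ the function $\tau\mapsto \|\boldsymbol{W}(\omega)\|_{\alpha,[0,\tau]}+\mu\tau^{1-\alpha}$ is continuous and strictly increasing, and attains the value $\mu$ at the unique point $T(\boldsymbol{W}(\omega))\in(0,1]$. Hence one has the identity
\begin{align*}
\{\omega\in\Omega:T(\boldsymbol{W}(\omega))<a\}=\bigcup_{\substack{\tau\in\mathbb{Q}\\ 0<\tau<a}}\bigl\{\omega:\|\boldsymbol{W}(\omega)\|_{\alpha,[0,\tau]}+\mu\tau^{1-\alpha}>\mu\bigr\}
\end{align*}
for every $a\in(0,1]$. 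So it suffices to show that for each fixed rational $\tau$ the map $\omega\mapsto \|\boldsymbol{W}(\omega)\|_{\alpha,[0,\tau]}$ is $\mathcal{F}$-measurable. This follows because $\interleave W(\omega)\interleave_{\alpha,[0,\tau]}$ and $\interleave \mathbb{W}(\omega)\interleave_{2\alpha,\triangle^2_{[0,\tau]}}$ are both suprema over countable dense subsets of rational pairs $(s,t)$ inside $[0,\tau]$ of continuous functionals of the path $\omega$ and of its canonical lift, and the lift of $\omega\in\Omega$ is measurable by the construction of $\Omega$ (the full set $\Omega_{2}$). The argument for $\hat{T}(\boldsymbol{W}(\omega))$ is completely symmetric.

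For $\varphi$ to be a random dynamical system one must verify the four conditions of Definition 2.6. The initial condition $\varphi(0,\omega,\cdot)=\mathrm{Id}$ is immediate from the mild formulation \eqref{3.3}. The cocycle property
\begin{align*}
\varphi(t+\tau,\omega,y_0)=\varphi(t,\theta_\tau\omega,\varphi(\tau,\omega,y_0))
\end{align*}
follows from three ingredients: the identity $\theta_\tau\boldsymbol{W}(\omega)=\boldsymbol{W}(\theta_\tau\omega)$ established in Subsection 2.2, the additivity of the rough integral over adjacent intervals (together with the fact that the restriction of the global controlled rough path solution $(y,G(y))$ to any sub-interval is again a controlled rough path with respect to the shifted noise), and the semigroup property of $S$; together these yield the equality of both sides as mild solutions of the same equation driven by $\theta_\tau\boldsymbol{W}(\omega)$ with the same initial datum $\varphi(\tau,\omega,y_0)$, which is unique by the global well-posedness Theorem 3.1. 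Continuity of $\varphi(t,\omega,\cdot):\mathcal{B}_\gamma\to\mathcal{B}_\gamma$ is a consequence of the Lipschitz-continuous dependence of the controlled-rough-path solution on its initial data, proved via the same fixed-point argument as in \cite{hesse2021global,MR4299812} applied on successive intervals $[T_{i-1},T_i]$ of size controlled by $\mu$.

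The step I expect to be the main obstacle is joint $(\mathcal{B}(\mathbb{R}^+)\otimes\mathcal{F}\otimes\mathcal{B}(\mathcal{B}_\gamma),\mathcal{B}(\mathcal{B}_\gamma))$-measurability of $(t,\omega,y_0)\mapsto\varphi(t,\omega,y_0)$. My strategy would be to factor $\varphi$ through the solution map of rough evolution equations. Namely, define the Polish space $\mathcal{C}^{\alpha}([-N,N],\mathbb{R}^d)$ with its inhomogeneous metric $d_{\alpha}$; by the construction of $\Omega$ the map $\omega\mapsto \boldsymbol{W}(\omega)|_{[-N,N]}$ is measurable into this space for every $N$. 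The solution map
\begin{align*}
\Psi:\mathcal{C}^\alpha([0,N],\mathbb{R}^d)\times\mathcal{B}_\gamma\longrightarrow C([0,N],\mathcal{B}_\gamma),\qquad(\boldsymbol{X},y_0)\mapsto y,
\end{align*}
is continuous, since the controlled-rough-path fixed-point construction produces Lipschitz dependence on $(\boldsymbol{X},y_0)$ on each sub-interval of length bounded in terms of $\rho_{\alpha}(\boldsymbol{X})$, and the global solution is obtained by a finite concatenation whose number of steps depends only on $\rho_{\alpha}(\boldsymbol{X})$ on a compact time window; see the stability estimates underlying Lemma \ref{Lemma 2.1}. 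Then $\varphi(t,\omega,y_0)=(\mathrm{ev}_t\circ\Psi)(\boldsymbol{W}(\omega),y_0)$ is a composition of a jointly continuous map with a measurable one, whence jointly measurable. Continuity in $t$ is inherited from the continuity of $y$ as a $\mathcal{B}_\gamma$-valued path.
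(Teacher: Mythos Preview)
Your proposal is correct, but both parts take a different route from the paper.

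For the random dynamical system property of $\varphi$, the paper simply cites \cite{hesse2021global,kuehn2021center} and does not spell out the verification of the four axioms. Your detailed check of identity, cocycle, continuity and joint measurability is self-contained and makes explicit the ingredients (additivity of the rough integral, the shift identity $\theta_\tau\boldsymbol{W}(\omega)=\boldsymbol{W}(\theta_\tau\omega)$, and continuity of the solution map in the rough-path metric) that those references actually use. This is longer but more transparent; the paper's version is economical but leaves the reader to consult the literature.

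For the measurability of the stopping times, the paper argues via the Wong-Zakai approximation of Section~\ref{Wong-Zakai}: since $\boldsymbol{W}^{\eta}(\omega)$ is built by Riemann integration from the path $\omega$, the map $\omega\mapsto\|\boldsymbol{W}^{\eta}(\omega)\|_{\alpha',[0,\tau]}$ is continuous in the compact-open topology, and the pointwise convergence $\boldsymbol{W}^{\eta}(\omega)\to\boldsymbol{W}(\omega)$ from Lemma~\ref{lemma 5.1} then makes $\omega\mapsto\|\boldsymbol{W}(\omega)\|_{\alpha',[0,\tau]}$ measurable as a limit of continuous functions. Your approach instead writes $\{T(\boldsymbol{W}(\omega))<a\}$ as a countable union over rational $\tau$, using the strict monotonicity established in Lemma~\ref{lemma 3.2}, and then reduces the measurability of $\omega\mapsto\|\boldsymbol{W}(\omega)\|_{\alpha,[0,\tau]}$ to a countable supremum over rational pairs $(s,t)$. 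Both arguments are valid; the paper's version has the conceptual advantage of reusing the approximation machinery already developed, while yours avoids any forward reference to Section~\ref{Wong-Zakai} and is more elementary.
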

 \begin{proof}
 $\varphi$ is random dynamical system, it has been proved in \cite{hesse2021global,kuehn2021center}. The measurability can be obtained easily, by the Wong-Zakai approximation of the noise in $\alpha^{\prime}$-H\"{o}lder rough path, we know that the mapping $\omega\mapsto \|\boldsymbol{W}(\omega)\|_{\alpha^{\prime},[0,\tau]}$ is continuous, thus it is measurable. Then $T(\boldsymbol{W})$ is measurable. Furthermore, the measurability of $\hat{T}(\boldsymbol{W})$ can be obtained in the same way.
 \end{proof}
 In order to construct the inequality of \eqref{3.58}, we need the following lemma.
 \begin{lemma}\label{lemma 4.2}
 Let random variable $N(\omega)\in \mathbb{N}$ is the number of stopping times $(T_{i}(\boldsymbol{W}(\omega)))_{i\in \mathbb{Z}}$ in $[-1,0]$. Then for every $\omega\in\Omega$, we have
 $$N(\omega)\leq \left(\frac{\|\boldsymbol{W}(\omega)\|_{\alpha^{\prime},[-1,0]}+\mu}{\mu}\right)^{\frac{1}{\alpha^{\prime}-\alpha}},\quad \text{for} \quad \frac{1}{3}<\alpha<\alpha^{\prime}<H\leq\frac{1}{2}.$$
 \end{lemma}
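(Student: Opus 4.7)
The plan is to use the characterizing identity that each stopping time satisfies, namely $\|\boldsymbol{W}(\omega)\|_{\alpha,[T_{i-1},T_i]}+\mu(T_i-T_{i-1})^{1-\alpha}=\mu$, to pin down a uniform lower bound on the length $\ell_i:=T_i-T_{i-1}$ of each interval between two consecutive stopping times lying in $[-1,0]$, and then to sum lengths to bound $N(\omega)$ from above. The crucial ingredient is a H\"{o}lder interpolation that trades the (smaller) $\alpha$-H\"{o}lder norm appearing in the defining identity for the (larger) $\alpha'$-H\"{o}lder norm appearing in the statement, at the price of a factor $\ell_i^{\alpha'-\alpha}$, which is precisely what generates the exponent $1/(\alpha'-\alpha)$ in the final bound.

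Concretely, I would proceed in four steps. First, the continuity and strict monotonicity of $\tau\mapsto\|\boldsymbol{W}\|_{\alpha,[T_{i-1},T_{i-1}+\tau]}+\mu\tau^{1-\alpha}$ established inside the proof of Lemma \ref{lemma 3.2}, combined with the cocycle property of the sequence $(T_i)_{i\in\mathbb{Z}}$, yield the saturating identity $\|\boldsymbol{W}\|_{\alpha,[T_{i-1},T_i]}+\mu\ell_i^{1-\alpha}=\mu$ for every interval $[T_{i-1},T_i]\subset[-1,0]$, together with $\ell_i\leq 1$. Second, using the definition of the inhomogeneous norm $\|\boldsymbol{W}\|_\alpha=\interleave W\interleave_\alpha+\interleave\mathbb{W}\interleave_{2\alpha}$ and the elementary comparisons
\begin{equation*}
\interleave W\interleave_{\alpha,[s,t]}\leq\interleave W\interleave_{\alpha',[s,t]}(t-s)^{\alpha'-\alpha},\qquad \interleave\mathbb{W}\interleave_{2\alpha,[s,t]}\leq\interleave\mathbb{W}\interleave_{2\alpha',[s,t]}(t-s)^{2(\alpha'-\alpha)},
\end{equation*}
together with $\ell_i\leq 1$ to collapse the quadratic exponent via $\ell_i^{2(\alpha'-\alpha)}\leq\ell_i^{\alpha'-\alpha}$, I would derive the key interpolation estimate
\begin{equation*}
\|\boldsymbol{W}\|_{\alpha,[T_{i-1},T_i]}\leq\|\boldsymbol{W}\|_{\alpha',[-1,0]}\,\ell_i^{\alpha'-\alpha}.
\end{equation*}
Third, since $\alpha'\leq 1$ also yields $\ell_i^{1-\alpha}\leq\ell_i^{\alpha'-\alpha}$ for $\ell_i\in(0,1]$, inserting the preceding estimate into the defining identity gives
\begin{equation*}
\mu\leq\bigl(\|\boldsymbol{W}\|_{\alpha',[-1,0]}+\mu\bigr)\ell_i^{\alpha'-\alpha},\qquad \ell_i\geq\left(\frac{\mu}{\|\boldsymbol{W}\|_{\alpha',[-1,0]}+\mu}\right)^{\!1/(\alpha'-\alpha)}.
\end{equation*}
Fourth, since the intervals $[T_{i-1},T_i]$ with endpoints in $[-1,0]$ are pairwise disjoint and their union is contained in $[-1,0]$, summing the uniform lower bound on $\ell_i$ and dividing by the total length $1$ of $[-1,0]$ produces the stated bound on $N(\omega)$.

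The main obstacle I foresee is the H\"{o}lder interpolation step with the right constants: the sum of a linear and a quadratic exponent arising from $W$ and from the L\'{e}vy area $\mathbb{W}$ must collapse into a single factor $\ell_i^{\alpha'-\alpha}$, and this uses $\ell_i\leq 1$ (Lemma \ref{lemma 3.2}(i)) in an essential way, as does the inequality $\ell_i^{1-\alpha}\leq\ell_i^{\alpha'-\alpha}$ needed to factor out the common power in the final step. A secondary, purely bookkeeping subtlety is whether one counts stopping times or intervals contained in $[-1,0]$ and what to do with a boundary interval that may extend partly outside $[-1,0]$; because $\ell_i\leq 1$ is uniform and $|[-1,0]|=1$, these boundary effects at worst shift $N$ by an additive constant and can be absorbed into the stated bound.
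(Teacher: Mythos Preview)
Your proposal is correct and follows essentially the same approach as the paper: both arguments use the defining identity for the stopping times, the H\"{o}lder interpolation $\|\boldsymbol{W}\|_{\alpha,[T_{i-1},T_i]}\leq\|\boldsymbol{W}\|_{\alpha',[-1,0]}\,\ell_i^{\alpha'-\alpha}$ (collapsing the quadratic exponent via $\ell_i\leq 1$), and the bound $\ell_i^{1-\alpha}\leq\ell_i^{\alpha'-\alpha}$ to extract the uniform lower bound on $\ell_i$, then sum lengths inside $[-1,0]$. The paper phrases the argument in terms of $|\hat{T}(\theta_{T_{-j}}\boldsymbol{W})|$ via the cocycle property rather than directly in terms of $\ell_i$, but this is purely notational; your remark on the boundary-interval bookkeeping is also apt, as the paper glosses over the same point.
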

 \begin{proof}
 In view of $\boldsymbol{W}(\omega)\in\mathcal{C}_g^{\alpha^\prime}$ and $|t-s|\leq |\hat{T}(\boldsymbol{W}(\omega))|\leq 1$, we have
 \begin{align*}
 \mu&=\sup_{\hat{T}(\boldsymbol{W}(\omega))\leq s<t\leq 0}\left(\frac{|W_{t}(\omega)-W_{s}(\omega)|}{|t-s|^{\alpha}}+\frac{|\mathbb{W}_{s,t}(\omega)|}{|t-s|^{2\alpha}}\right)+\mu(-\hat{T}(\boldsymbol{W}(\omega)))^{1-\alpha}\\
 &\leq \left(\sup_{\hat{T}(\boldsymbol{X})\leq s<t\leq 0}\left(\frac{|W_{t}(\omega)-W_{s}(\omega)|}{|t-s|^{\alpha^{\prime}}} +\frac{|\mathbb{W}_{s,t}(\omega)|}{|t-s|^{2\alpha^{\prime}}}\right)+\mu(-\hat{T}(\boldsymbol{W}(\omega)))^{1-\alpha^{\prime}}\right)(-\hat{T}(\boldsymbol{W}(\omega)))^{\alpha^{\prime}-\alpha}\\
 &\leq (\|\boldsymbol{W}(\omega)\|_{\alpha^{\prime},[-1,0]}+\mu)(-\hat{T}(\boldsymbol{W}(\omega)))^{\alpha^{\prime}-\alpha}.
 \end{align*}
 Thus, we obtain
 $$|\hat{T}(\boldsymbol{W}(\omega))|\geq \left(\frac{\mu}{\|\boldsymbol{W}(\omega)\|_{\alpha^{\prime},[-1,0]}+\mu}\right)^{\frac{1}{\alpha^{\prime}-\alpha}}.$$
 Let $i>0$ be the largest integer such that $T_{-i-1}(\boldsymbol{W}(\omega))\leq -1$. Then, let $N(\omega)=i+1$ and using the property of cocycle for the stopping times and the above estimate, we can get
 \begin{align*}
1\geq |T_{-N(\omega)}(\boldsymbol{W}(\omega))|=\sum^{N(\omega)-1}_{j=0}|\hat{T}(\theta_{T_{-j}(\boldsymbol{W}(\omega))}\boldsymbol{W}(\omega))|\geq N(\omega)\left(\frac{\mu}{\|\boldsymbol{W}(\omega)\|_{\alpha^{\prime},[-1,0]}+\mu}\right)^{\frac{1}{\alpha^{\prime}-\alpha}}.
\end{align*}
Hence, we complete the proof of this lemma.
 \end{proof}
 \begin{lemma}
 We have the following relation holds
 $$\liminf_{i\rightarrow -\infty}\frac{|T_{i}(\boldsymbol{W}(\omega))|}{|i|}=\liminf_{i\rightarrow -\infty}\frac{T_{i}(\boldsymbol{W}(\omega))}{i}\geq d_1>0$$
 for $\omega\in\Omega$. Furthermore, $|T(\theta_{T_{i}(\boldsymbol{W}(\omega))}\boldsymbol{W}(\omega))|^{-\gamma},\omega\in\Omega,\gamma\in(0,(\alpha-\sigma)\wedge(1-\delta)]$ is subexponentially growing on $\Omega$ for $i\rightarrow -\infty$.
 \end{lemma}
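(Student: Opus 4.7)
The plan is to handle the two assertions separately, with Lemma \ref{lemma 4.2} and the ergodic identity \eqref{4.1**} as the common analytic ingredients. For the first inequality, I would reformulate it as a density statement for the stopping-time point process: setting $K(\omega,t):=\#\{j\leq 0:T_{j}(\boldsymbol{W}(\omega))\geq -t\}$, the bound $\liminf|T_{i}(\boldsymbol{W}(\omega))|/|i|\geq d_1$ is equivalent to $\limsup_{t\to\infty}K(\omega,t)/t\leq d_1^{-1}$. To control $K(\omega,t)$ on unit intervals, for each integer $k\geq 0$ let $K_{k}(\omega)$ denote the number of stopping times in $[-k-1,-k]$ and let $T_{j_0(k)}$ be the largest one. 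By the cocycle identity $T_{i}(\theta_{T_{j_0}}\boldsymbol{W}(\omega))=T_{i+j_0}(\boldsymbol{W}(\omega))-T_{j_0}(\boldsymbol{W}(\omega))$ and the inclusion $[-k-1,T_{j_0}]\subset[T_{j_0}-1,T_{j_0}]$, counting stopping times of $\omega$ in $[-k-1,-k]$ reduces to counting stopping times of $\theta_{T_{j_0}}\omega$ in $[-1,0]$, which Lemma \ref{lemma 4.2} bounds by
\begin{equation*}
K_{k}(\omega)\leq\left(\frac{\|\theta_{T_{j_0}}\boldsymbol{W}(\omega)\|_{\alpha^\prime,[-1,0]}+\mu}{\mu}\right)^{\frac{1}{\alpha^\prime-\alpha}}\leq F(\theta_{-k-1}\omega),
\end{equation*}
where $F$ is exactly the integrand in \eqref{4.1**}; the $\sup_{r\in[0,1]}$ built into $F$ absorbs the freedom in the position of $T_{j_0}$ inside $[-k-1,-k]$.

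Summing yields $K(\omega,t)\leq\sum_{k=0}^{\lceil t\rceil}F(\theta_{-k-1}\omega)$. Since \eqref{4.1**} combined with the continuous Birkhoff theorem for the ergodic flow $(\theta_{s})_{s\in\mathbb{R}}$ already gives $\frac{1}{t}\int_{0}^{t}F(\theta_{-s}\omega)ds\to E[F]=d_1^{-1}$, it remains to pass from this integral to the Riemann sum. This can be done either by invoking the discrete ergodic theorem for $\theta_{-1}$ (ergodic because the Wiener-shift flow for fBm is mixing) or by dominating $\sum F(\theta_{-k-1}\omega)$ by an integral of the enlarged majorant $\tilde F(\omega):=\sup_{r\in[0,2]}F(\theta_{r}\omega)$, which is still integrable thanks to the Fernique-type moment bounds that produced $E[F]<\infty$ to begin with. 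Either route delivers $\limsup_{t\to\infty}K(\omega,t)/t\leq d_1^{-1}$, which is the first assertion.

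For the subexponential growth claim, I would first record the forward analogue of the lower bound proved inside Lemma \ref{lemma 4.2}, namely $T(\boldsymbol{W}(\omega))\geq(\mu/(\|\boldsymbol{W}(\omega)\|_{\alpha^\prime,[0,1]}+\mu))^{1/(\alpha^\prime-\alpha)}$, which follows from exactly the same truncation argument run forward in time. Evaluated at $\theta_{T_{i}(\boldsymbol{W}(\omega))}\omega$, this yields
\begin{equation*}
|T(\theta_{T_{i}(\boldsymbol{W}(\omega))}\boldsymbol{W}(\omega))|^{-\gamma}\leq\left(\frac{\|\theta_{T_{i}(\boldsymbol{W}(\omega))}\boldsymbol{W}(\omega)\|_{\alpha^\prime,[0,1]}+\mu}{\mu}\right)^{\frac{\gamma}{\alpha^\prime-\alpha}}.
\end{equation*}
Lemma \ref{lemma 3.2}(i) gives $|T_{i}(\boldsymbol{W}(\omega))|\leq|i|$, while the construction of $\Omega_{4}$ guarantees $\sup_{r\in[0,1]}\|\theta_{t+r}\boldsymbol{W}(\omega)\|_{\alpha^\prime,[-1,0]}=o(|t|)$ as $|t|\to\infty$ on $\Omega$. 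Composing the two and using $|T_{i}|\leq|i|$, the logarithm of the right-hand side is $o(|i|)$, which is the required subexponential growth.

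The main obstacle I anticipate is the transfer from the continuous-time averaging \eqref{4.1**} to the discrete Riemann sum bounding $K(\omega,t)$; neither of the two routes above is deep, but they must be set up carefully so as not to inflate the constant $d_1^{-1}$, which is the content of the supremum-over-$r$ regularization baked into $F$.
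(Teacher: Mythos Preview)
Your proposal is correct and matches the strategy that the paper defers to \cite[Lemma 4.5]{MR3226746}: recast the $\liminf$ as a density bound for the stopping-time point process, control the count on unit intervals via Lemma \ref{lemma 4.2} and the cocycle shift to the largest stopping time $T_{j_0(k)}$ in $[-k-1,-k]$, and finish with an ergodic-theorem argument; then handle the subexponential growth by combining the forward lower bound for $T(\cdot)$ with the sublinear growth built into $\Omega_4$ and the elementary estimate $|T_i|\leq|i|$.

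One point deserves tightening. Of your two routes for passing from the continuous average \eqref{4.1**} to the discrete sum $\sum_{k}F(\theta_{-k-1}\omega)$, only route (a) actually delivers the sharp constant $d_1^{-1}$. Route (b) with $\tilde F(\omega)=\sup_{r\in[0,2]}F(\theta_r\omega)$ yields $\limsup_{n}\frac{1}{n}\sum_{k=0}^{n-1}F(\theta_{-k-1}\omega)\leq E_{\mathbb{P}}[\tilde F]$, and in general $E_{\mathbb{P}}[\tilde F]>E_{\mathbb{P}}[F]=d_1^{-1}$, so it proves only $\liminf|T_i|/|i|\geq 1/E_{\mathbb{P}}[\tilde F]<d_1$. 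Since the lemma asserts the bound with the specific constant $d_1$ defined in \eqref{4.1**}, you must use route (a): the fBm Wiener shift is mixing, hence the time-one map $\theta_{-1}$ is ergodic, and the discrete Birkhoff theorem applied to $F\in L^{1}(\mathbb{P})$ gives exactly $\frac{1}{n}\sum_{k=0}^{n-1}F(\theta_{-k-1}\omega)\to E_{\mathbb{P}}[F]=d_1^{-1}$ on a $\theta$-invariant full set. With that clarification your argument is complete.
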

 \begin{proof}
 The proof of this lemma can be found in \cite[Lemma 4.5]{MR3226746}.
 \end{proof}
The constant $\mu$ and $\nu$ can be chosen as we determine $d_1$.  Now we conclude the random attractor to finish this section, and its proof is same as \cite[Theorem 4.6]{MR3226746}.
 \begin{theorem}
 The pullback attractor $\mathcal{A}$ which is stated in Lemma \ref{lemma 3.9} for  non-autonomous system $\varphi$ is random attractor and it attracts random tempered sets in $\hat{\mathcal{D}}$.

 \end{theorem}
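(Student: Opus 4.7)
The plan is to upgrade the non-autonomous pullback attractor $\mathcal{A}$ produced in Lemma \ref{lemma 3.9} to a genuine random attractor for $\varphi$ on the probability space $(\Omega,\mathcal{F},\mathbb{P})$ constructed at the start of this section, and then to show that its attraction extends from the deterministic universe $\mathcal{D}^{0}_{\mathbb{R},\mathcal{B}}$ to the random universe $\hat{\mathcal{D}}$ of tempered sets. The three things to verify are (i) measurability of $\omega\mapsto\mathcal{A}(\omega)$ as a random compact set, (ii) invariance $\varphi(t,\omega,\mathcal{A}(\omega))=\mathcal{A}(\theta_{t}\omega)$ on a $\theta$-invariant full-measure set, and (iii) pullback attraction of every $D\in\hat{\mathcal{D}}$. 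Item (ii) is already contained in Lemma \ref{lemma 3.9}, so the work concentrates on (i) and (iii).

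For (iii), I would first observe that every tempered random set $D\in\hat{\mathcal{D}}$ lies in $\mathcal{D}^{0}_{\mathbb{R},\mathcal{B}}$ on a $\theta$-invariant full-measure subset of $\Omega$: by definition, the radius $r(\omega)$ of the enveloping ball satisfies $\lim_{t\to\pm\infty}\log^{+}r(\theta_{t}\omega)/|t|=0$ almost surely, which is exactly the subexponential condition defining $\mathcal{D}^{0}_{\mathbb{R},\mathcal{B}}$. Restricting $\Omega$ if necessary to the intersection with this full-measure set (it remains $\theta$-invariant and of full measure), Lemma \ref{lemma 3.9} then delivers $\mathrm{dist}(\varphi(t,\theta_{-t}\omega,D(\theta_{-t}\omega)),\mathcal{A}(\omega))\to 0$ as $t\to\infty$, which is exactly the attraction required for $\hat{\mathcal{D}}$.

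For (i), I would trace measurability through the construction. Since $\varphi$ is a continuous random dynamical system (established in \cite{hesse2021global,kuehn2021center}), and the stopping times $T_{i}(\boldsymbol{W}(\cdot))$ are measurable by Lemma 4.1, the discrete cocycle $\Phi(i,j,\omega,\cdot)=\varphi(T_{i}(\theta_{T_{j}}\boldsymbol{W}(\omega)),\theta_{T_{j}(\boldsymbol{W}(\omega))}\omega,\cdot)$ is measurable in $\omega$ and continuous in the initial datum. The absorbing ball with radius $R(i,\omega)$ from \eqref{3.61} is then a random closed set, and the compact absorbing set $C(j,\omega)=\overline{\Phi(1,j-1,\omega,\Phi(T^{\ast},-T^{\ast}+j-1,\omega,B(-T^{\ast}+j-1,\omega)))}$ inherits measurability. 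The explicit formula $\mathcal{A}(i,\omega)=\bigcap_{i'\in\mathbb{Z}^{+}}\overline{\bigcup_{j\geq i'}\Phi(j,i-j,\omega,C(i-j,\omega))}$ from Theorem \ref{theorem 3.2}, together with separability of $\mathcal{B}$ and the standard Crauel--Flandoli criterion, then shows that $\omega\mapsto\mathcal{A}(\omega):=\mathcal{A}(0,\omega)$ is a random compact set (the distance $\omega\mapsto\mathrm{dist}(x,\mathcal{A}(\omega))$ is $\mathcal{F}$-measurable for every $x\in\mathcal{B}$).

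The main obstacle is really a bookkeeping one rather than a conceptual one: ensuring that all the exceptional null sets accumulated along the way (tempered condition, canonical lift, ergodic time averages in \eqref{4.1**}, sublinear growth of $\|\theta_{t}\boldsymbol{W}\|_{\alpha^{\prime}}$) can be collected into a single $\theta$-invariant full measure $\Omega$ on which every step above is valid simultaneously. Once this is arranged, the conclusion follows verbatim from the argument of \cite[Theorem 4.6]{MR3226746}, with the fractional Brownian rough path $\boldsymbol{W}$ replacing the fractional Brownian motion in the cited proof and the discrete non-autonomous system $\Phi$ on the stopping-time grid playing the role of the iterated cocycle there.
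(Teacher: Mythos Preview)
Your proposal is correct and follows essentially the same route as the paper, which simply defers to \cite[Theorem 4.6]{MR3226746}: verify measurability of $\omega\mapsto\mathcal{A}(\omega)$ via the explicit countable formula and continuity of $\varphi$, inherit invariance from Lemma \ref{lemma 3.9}, and obtain attraction of $\hat{\mathcal{D}}$ by observing that tempered random sets belong to $\mathcal{D}^{0}_{\mathbb{R},\mathcal{B}}$ (cf.\ \cite[Lemma 2.1]{MR3226746}). Note also that the null-set bookkeeping you flag has already been absorbed into the construction of $\Omega=\Omega_1\cap\Omega_2\cap\Omega_3\cap\Omega_4$ at the beginning of this section, so no further restriction is needed.
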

\begin{remark}\label{remark 4.1*}
	According to \cite[Lemma 2.1 ]{MR3226746}, $\nu$-exponentially growing random  sets is also subexponentially growing. Thus,  similar to the first item of Lemma \ref{lemma 3.10*} , $B(\omega)=B(0,\omega)$ is subexponentially growing on $\mathbb{R}$. Furthermore,
	for any $t>0$, we can find a $i^{*}(\omega)\in \mathbb{Z}^{-}$ such that
	$-t\in (T_{i^{*}-1}(\boldsymbol{W}(\omega)),T_{i^{*}}(\boldsymbol{W}(\omega))]$, then for any $D\in\mathcal{D}^{0}_{\mathbb{R},\mathcal{B}}$, by the property of  cocycle and stopping times  we have
	\begin{align*}
	&\varphi(t,\theta_{-t}\boldsymbol{X},D(\theta_{-t}\omega))\\
	&=\varphi(T_{-i^{*}}(\theta_{T_{i^{*}}}\boldsymbol{W}(\omega)),\theta_{T_{i^{*}}}\omega,\varphi(t-T_{-i^{*}}(\theta_{T_{i^{*}}}\boldsymbol{W}(\omega)),\theta_{-t}\omega,D(\theta_{-t}\omega)))\\
	&=\varphi(T_{-i^{*}}(\theta_{T_{i^{*}}}\boldsymbol{W}(\omega)),\theta_{T_{i^{*}}}\omega,\varphi(t-T_{-i^{*}}(\theta_{T_{i^{*}}}\boldsymbol{W}(\omega)),\theta_{-t-T_{i^{*}}}\theta_{T_{i^{*}}}\omega,D(\theta_{-t-T_{i^{*}}}\theta_{T_{i^{*}}}\omega)))\\
	&=\Phi(-i^{*},i^{*},\omega,\varphi(t-T_{-i^{*}}(\theta_{T_{i^{*}}}\boldsymbol{W}(\omega)),\theta_{-t-T_{i^{*}}}\theta_{T_{i^{*}}}\omega,D(\theta_{-t-T_{i^{*}}}\theta_{T_{i^{*}}}\omega)))\\
	&\subset \Phi(-i^{*},i^{*},\omega,E_{D}(i^{*},\omega))\subset B(\omega).
	\end{align*}
In addition, similar to Theorem \ref{theorem 3.2}, there exists a compact absorbing set
$$
C(\omega):=\overline{\Phi\left(1,-1, \omega, \Phi\left(T,-T-1, \omega, B\left(\theta_{-T-1} \omega\right)\right)\right)} \subset B(\omega),
$$
where $T$ is the absorption time corresponding to $B$.  Random attractors $\mathcal{A}(\omega)$ are also given by
$$
\mathcal{A}(\omega)=\bigcap_{s \geq 0} \overline{\bigcup_{t \geq s} \varphi\left(t, \theta_{-t} \omega, C\left(\theta_{-t} \omega\right)\right)}=\bigcap_{s \geq 0} \overline{\bigcup_{t \geq s} \varphi\left(t, \theta_{-t} \omega, B\left(\theta_{-t} \omega\right)\right)}.
$$
\end{remark}
\section{Wong-Zakai approximation for evolution equation driven by GFBRP}\label{Wong-Zakai}
\subsection{Wong-Zakai approximation for GFBRP}
In this subsection, we introduce a  smooth  and stationary approximation scheme. Let $(\Omega,\mathcal{F},\mathbb{P})$ be probability space which is defined in previous section. $(\theta_{t})_{t\in R}$ is a Wiener shift and $\theta_{t}W_{\cdot}(\omega)=W_{\cdot+t}(\omega)-W_t(\omega)=\omega_{\cdot+t}-\omega_t$ for any $\omega\in\Omega$. As \cite{MR3680943} and \cite{MR3912728} did, for any $\eta\in (0,1)$, we define a random variable  $\mathcal{G}_{\eta}: \Omega \rightarrow \mathbb{R}^{d}$
$$
\mathcal{G}_{\eta}(\omega)=\frac{1}{\eta} \omega_{\eta}.
$$
Then we have
$$
\mathcal{G}_{\eta}\left(\theta_{t} \omega\right)=\frac{1}{\eta}(\omega_{t+\eta}-\omega_t).
$$
By the properties of fBm,  $\mathcal{G}_{\eta}(\theta_{t}\omega)$ is a stochastic process with  normal distribution and stationary increments. Let
\begin{equation}\nonumber
W^{\eta}(t,\omega):=\int_{0}^{t}\mathcal{G}_{\eta}(\theta_{s}\omega)ds.
\end{equation}
$W^{\eta}(t, \omega)$ can be viewed as an approximation of the fractional Brownian motion. Furthermore, we denote by
\begin{equation}\nonumber
\mathbb{W}^{\eta}(\omega)_{s,t}:=\int_{s}^{t}W^{\eta}(\cdot,\omega)_{s,r}\otimes dW^{\eta}(r,\omega)
\end{equation}
 as a smooth second order process, it is well defined as Riemann-Stieljes integral. In addition, we constructed the convergence between  $\boldsymbol{W}^{\eta}(\omega)=(W^{\eta}(\cdot,\omega),\mathbb{W}^{\eta}(\omega))$ and $\boldsymbol{W}(\omega)=(W(\omega),\mathbb{W}(\omega))$ in \cite{caogao}.
 \begin{lemma}[Theorem 3.2,\cite{caogao}]\label{lemma 5.1}
Let $\boldsymbol{W}=(W,\mathbb{W})$ be the canonical lift of the fractional Brownian motion and $\boldsymbol{W}^{\eta}=\left(W^{\eta},\mathbb{W}^{\eta}\right)$ as the approximation of $\boldsymbol{W}$. Then we have
$$\rho_{\alpha^{\prime},[-T,T]}(\boldsymbol{W},\boldsymbol{W}^{\eta})\rightarrow 0,\quad \text{as}\quad \eta\rightarrow 0$$
for any  $T>0$, $\alpha^{\prime}\in(\frac{1}{3},\frac{1}{2})$.  Furthermore, the convergence takes place for all $\omega$ in an $\theta$-invariant set $\Omega^{\prime}$ of full measure.
 \end{lemma}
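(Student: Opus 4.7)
The plan is to adapt the Gaussian rough path framework (Friz--Victoir type) to this stationary approximation, splitting the proof into moment estimates at each level of the rough path followed by a Kolmogorov/interpolation upgrade to H\"older-norm convergence, and finally a measure-theoretic step to produce a single $\theta$-invariant full-measure set.

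First, I would treat the path component $W^{\eta}-W$. Writing $W^{\eta}_{s,t}=\tfrac{1}{\eta}\int_{s}^{t}(W_{u+\eta}-W_{u})\,du$ and expanding against the fBm covariance $R(u,v)=\frac{q^{2}}{2}(|u|^{2H}+|v|^{2H}-|u-v|^{2H})$, one obtains a pointwise bound $E|W^{\eta}_{s,t}-W_{s,t}|^{2}\le C\varepsilon(\eta)|t-s|^{2H}$ with $\varepsilon(\eta)\to 0$, together with the uniform-in-$\eta$ estimate $E|W^{\eta}_{s,t}|^{2}\le C|t-s|^{2H}$. Since $W^{\eta}-W$ lies in the first Wiener chaos, Gaussian hypercontractivity upgrades both estimates to $L^{p}$ for every $p\ge 2$.

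Second, for the second-order process, I would view $\mathbb{W}^{\eta}_{s,t}=\int_{s}^{t}W^{\eta}_{s,r}\otimes dW^{\eta}_{r}$ (a classical Riemann--Stieltjes integral, since $W^{\eta}\in C^{1}$) and its limit $\mathbb{W}$ as elements living in the sum of the first two inhomogeneous Wiener chaoses. The key analytic input is that the covariance $R$ has finite 2D $\rho$-variation of order $1/(2H)<2$ for $H\in(\tfrac{1}{3},\tfrac{1}{2}]$; this yields $L^{2}$ bounds of the form $E|\mathbb{W}^{\eta}_{s,t}-\mathbb{W}_{s,t}|^{2}\le C\varepsilon(\eta)|t-s|^{4H}$, lifted again to $L^{p}$ by hypercontractivity on the second chaos. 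With these in hand, Kolmogorov's continuity theorem for rough paths produces a.s.\ convergence $\rho_{\alpha',[-T,T]}(\boldsymbol{W}^{\eta}(\omega),\boldsymbol{W}(\omega))\to 0$ for every $\alpha'<H$, on a full-measure set possibly depending on $T$. To obtain a single $\theta$-invariant $\Omega'$, I would intersect the corresponding null-set complements over $T=n\in\mathbb{N}$ to get convergence on every compact, and then intersect with all rational $\theta_{t}$-shifts, using the fact that by construction $W^{\eta}(t,\theta_{\tau}\omega)=W^{\eta}(t+\tau,\omega)-W^{\eta}(\tau,\omega)$, so shifting an $\omega$ simply translates the compact interval on which convergence is tested.

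The main obstacle will be the level-two estimate. The quantity $\mathbb{W}^{\eta}_{s,t}-\mathbb{W}_{s,t}$ is a quadratic Gaussian functional, and extracting a quantitative rate in $\eta$ against the singular covariance of fBm when $H$ is close to $\tfrac{1}{3}$ is delicate: one must handle the symmetric part of $\mathbb{W}^{\eta}$ (which matches the geometric/Stratonovich convention of the canonical lift $\mathbb{W}$) separately from the antisymmetric L\'evy area, and verify that the $\rho$-variation bookkeeping delivers the H\"older-in-$(s,t)$ rate rather than just pointwise control. This is exactly the point where the restriction $H>\tfrac{1}{3}$ is used, and the $\rho$-variation machinery of Friz--Victoir carries the main weight of the argument.
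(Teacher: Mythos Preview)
The paper does not prove this lemma directly; it is cited from \cite{caogao}. However, the proof of Corollary~\ref{corollary 5.1*} immediately below reveals the structure of that argument: (i) an $L^{q'/2}$ moment bound $\bigl|\rho_{\alpha',[s,t]}(\boldsymbol{W},\boldsymbol{W}^{\eta})\bigr|_{L^{q'/2}}\le C\eta^{H-\alpha''}$ with an explicit rate; (ii) Borel--Cantelli along a sufficiently fast subsequence $(\eta_{i})$ to obtain a.s.\ convergence $\rho_{\alpha',[n,n+2]}(\boldsymbol{W},\boldsymbol{W}^{\eta_{i}})\to 0$ uniformly in $n\in\mathbb{Z}$; (iii) a pathwise comparison of $\boldsymbol{W}^{\eta}$ against $\boldsymbol{W}^{\eta_{i}}$ to pass from the subsequence to the full limit $\eta\to 0$.

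Your strategy via Wiener-chaos hypercontractivity and the 2D $\rho$-variation of the fBm covariance is a valid alternative and is indeed the textbook Friz--Victoir route to Gaussian rough path convergence; the moment estimates you describe are correct in spirit. The difference lies in the passage from moments to a.s.\ convergence. You write that ``Kolmogorov's continuity theorem for rough paths produces a.s.\ convergence $\rho_{\alpha',[-T,T]}(\boldsymbol{W}^{\eta},\boldsymbol{W})\to 0$'', but Kolmogorov on its own controls the H\"older norm of a \emph{fixed} process; since $\eta$ ranges over a continuum, one still needs either a Kolmogorov argument in the joint variable $(\eta,s,t)$ or the subsequence-plus-interpolation trick that the cited proof uses. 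This is not a fatal gap---either route closes it---but your sketch glosses over precisely the step where the paper's approach (subsequence via Borel--Cantelli, then pathwise $\rho_{\alpha'}(\boldsymbol{W}^{\eta},\boldsymbol{W}^{\eta_{i}})\to 0$) does real work. The advantage of the paper's route is that the explicit rate $\eta^{H-\alpha''}$ and the uniformity over intervals $[n,n+2]$ feed directly into the uniform-in-$j$ statement of Corollary~\ref{corollary 5.1*}, which is what the rest of Section~5 actually needs.
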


\begin{corollary}\label{corollary 5.1*}
Let $\boldsymbol{W}=(W,\mathbb{W})$ be the canonical lift of the fractional Brownian motion and $\boldsymbol{W}^{\eta}=\left(W^{\eta},\mathbb{W}^{\eta}\right)$ as the approximation of $\boldsymbol{W}$. Then we have
$$\rho_{\alpha^{\prime}, [T_j(\boldsymbol{W}),T_{j+1}(\boldsymbol{W})]}(\boldsymbol{W},\boldsymbol{W}^{\eta})\rightarrow 0,\quad \text{as}\quad \eta\rightarrow 0$$
for any  $j\in\mathbb{Z}$, $\alpha^{\prime}\in(\frac{1}{3},\frac{1}{2})$.  Furthermore, the convergence takes place for all $\omega$ in an $\theta$-invariant set $\Omega^{\prime\prime}$ of full measure and the convergence is uniform for $j\in\mathbb{Z}$ .
 \end{corollary}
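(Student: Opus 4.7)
The strategy is to reduce the statement to Lemma 5.1 by controlling the location and length of the stopping intervals. From Lemma 3.2 we know $T_j(\boldsymbol{W}(\omega)) - T_{j-1}(\boldsymbol{W}(\omega)) \in (0,1]$ for every $j \in \mathbb{Z}$, so by telescoping $|T_j(\omega)| \leq |j|$ and each stopping interval has length at most one; in particular
\begin{equation*}
[T_j(\omega), T_{j+1}(\omega)] \subset [-|j|-1,\, |j|+1].
\end{equation*}
Since the $\alpha'$-rough-path metric $\rho_{\alpha', I}$ is monotone in the underlying interval $I$ (both the Hölder sup-norm and the $2\alpha'$-sup-norm only grow when $I$ is enlarged), we obtain
\begin{equation*}
\rho_{\alpha',[T_j(\omega), T_{j+1}(\omega)]}(\boldsymbol{W}(\omega),\boldsymbol{W}^\eta(\omega)) \;\leq\; \rho_{\alpha',[-|j|-1,\,|j|+1]}(\boldsymbol{W}(\omega),\boldsymbol{W}^\eta(\omega)),
\end{equation*}
and Lemma 5.1 applied with $T = |j|+1$ gives the pointwise convergence at each fixed $j$.

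For the uniformity-in-$j$ claim, the key observation is that Lemma 5.1 already produces a \emph{single} $\theta$-invariant full-measure set $\Omega'$ on which $\rho_{\alpha',[-T,T]}(\boldsymbol{W},\boldsymbol{W}^\eta)\to 0$ holds simultaneously for every $T>0$: taking the countable intersection $\Omega' = \bigcap_{n \in \mathbb{N}} \Omega'_n$ of the full-measure sets indexed by $T=n$ produces a single $\theta$-invariant full-measure set (invariance is preserved under countable intersection). I would set $\Omega'' := \Omega'$. Then for each fixed $\omega\in\Omega''$, every interval $[T_j(\omega),T_{j+1}(\omega)]$ lies in some $[-n,n]$, so the above monotonicity bound yields $\rho_{\alpha',[T_j,T_{j+1}]}(\boldsymbol{W},\boldsymbol{W}^\eta)\to 0$ for every $j\in\mathbb{Z}$ on the same $\theta$-invariant null-free set; this is the sense in which the convergence is uniform in $j$ (the exceptional set does not depend on~$j$).

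If a stronger reading is intended, namely $\sup_{j\in\mathbb{Z}}\rho_{\alpha',[T_j,T_{j+1}]}(\boldsymbol{W},\boldsymbol{W}^\eta)\to 0$, the plan must be supplemented by a quantitative estimate on intervals of length at most one. One would use the interpolation inequality $\|f\|_{\alpha'}\leq 2\|f\|_\infty^{1-\alpha'/\alpha''}\|f\|_{\alpha''}^{\alpha'/\alpha''}$ for $\alpha'<\alpha''<H$ together with the explicit $L^\infty$-rate coming from
\begin{equation*}
W^\eta_t-W_t=\frac{1}{\eta}\int_t^{t+\eta}(\omega_u-\omega_t)\,du-\frac{1}{\eta}\int_0^\eta\omega_u\,du,
\end{equation*}
which gives $\|W^\eta-W\|_{\infty,[s,t]}\lesssim \eta^{\alpha''}\|\omega\|_{\alpha'',[s,t+\eta]}$, and similarly for the Lévy area using the Young-integral representation of $\mathbb{W}^\eta$. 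The \textbf{main obstacle} is then to balance this $\eta^{\alpha''}$ rate against the (at most sublinear in $|j|$) growth of the local $\alpha''$-Hölder norms $\|\boldsymbol{W}\|_{\alpha'',[T_j,T_{j+1}+\eta]}$, which is controlled by the $\theta$-invariant full-measure set $\Omega_4$ introduced after \eqref{4.1**}, on which $\sup_{r\in[0,1]}\|\theta_{t+r}\boldsymbol{W}\|_{\alpha',[-1,0]}$ is sublinear in $|t|$. Combined with the linear lower bound $|T_j(\omega)|\gtrsim |j|$ from \eqref{3.58}, this sublinear control is precisely the ingredient needed to upgrade pointwise to genuine uniform-in-$j$ convergence.
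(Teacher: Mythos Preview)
Your monotonicity argument for fixed $j$ is correct and actually simpler than what the paper does for that part. The problem is the uniformity claim. Your first reading---that ``uniform in $j$'' only means the exceptional set does not depend on $j$---is too weak: the corollary is used in the proof of Theorem~\ref{theorem 5.2} (Appendix~C) exactly in the form ``for every $\epsilon>0$ there exists $\eta(\epsilon)$ such that $d_{\alpha,I_m(T)}(\theta_{-T}\boldsymbol W,\theta_{-T}\boldsymbol W^\eta)<\epsilon$ for all $m\in\mathbb Z$ whenever $\eta<\eta(\epsilon)$'', so the genuine $\sup_j$ statement is required.

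Your sketch for that stronger statement has a real gap. The pathwise estimate you propose has the form $\rho_{\alpha',[T_j,T_{j+1}]}(\boldsymbol W,\boldsymbol W^\eta)\lesssim \eta^{\kappa}\cdot\|\boldsymbol W(\omega)\|_{\alpha'',[T_j,T_{j+1}+\eta]}$ for some $\kappa>0$. Sublinear growth of the second factor in $|j|$ still means it is \emph{unbounded} in $j$, so for every fixed $\eta>0$ the supremum over $j$ of the right-hand side is not finite, let alone small. There is nothing to ``balance'': one must take $\sup_j$ before sending $\eta\to 0$, and the lower bound $|T_j|\gtrsim|j|$ from \eqref{3.58} does not help here.

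The paper takes an essentially different route. It first proves an $L^{q'/2}$ moment bound $\|\rho_{\alpha',[s,t]}(\boldsymbol W,\boldsymbol W^\eta)\|_{L^{q'/2}}\le C\eta^{H-\alpha''}$ for every $[s,t]\subset[n,n+2]$ with $C$ \emph{independent of $n$} (stationarity of increments). A Borel--Cantelli argument along a sufficiently fast subsequence $\eta_i\downarrow 0$ then produces a single $\theta$-invariant full-measure set $\Omega''$ on which $\rho_{\alpha',[n,n+2]}(\boldsymbol W,\boldsymbol W^{\eta_i})\to 0$ uniformly in $n$. The passage from the subsequence to all $\eta$ is done by the triangle inequality $\rho(\boldsymbol W^\eta,\boldsymbol W)\le\rho(\boldsymbol W^\eta,\boldsymbol W^{\eta_i})+\rho(\boldsymbol W^{\eta_i},\boldsymbol W)$ together with a pathwise bound on $\rho_{\alpha',[T_j,T_{j+1}]}(\boldsymbol W^\eta,\boldsymbol W^{\eta_i})$ that \emph{is} uniform in $j$ because the stopping-time construction guarantees $|T_{j+1}-T_j|\le 1$ and $\|\boldsymbol W(\omega)\|_{\alpha',[T_j,T_{j+1}]}\le\mu$. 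That built-in uniform bound on $\|\boldsymbol W\|$ over the stopping intervals---not a bound on $\|\omega\|_{\alpha''}$ over deterministic intervals---is the missing ingredient in your approach.
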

 \begin{proof}
 Firstly, similar to \cite[Theorem 3.1]{caogao} or \cite[Theorem 4.5]{MR4266219}, we can prove that $\left|\rho_{\alpha^\prime,[s,t]}(\boldsymbol{W},\boldsymbol{W}^{\eta})\right|_{L^{\frac{q^\prime}{2}}}\leq C(q^\prime,\alpha^{\prime\prime},H)\eta^{H-\alpha^{\prime\prime}}$ for any $[s,t]\subset[n,n+2]$, $n\in\mathbb{Z}$, where $\alpha^{\prime\prime}>\alpha^{\prime}$ and $q^\prime\geq 2$ such  that $\alpha^{\prime\prime}-\frac{1}{q^\prime}>\frac{1}{3}$. In particular,  compared with \cite{caogao}  and \cite{MR4266219}, the  constant $C(q^\prime,\alpha^{\prime\prime},H)$ is uniform  for  each $n\in\mathbb{Z}$. Secondly, similar to \cite[Theorem 3.2]{caogao} or \cite[Theorem 4.6]{MR4266219},  we have a sequence $(\eta_i)_{i\in\mathbb{N}}$ that converges sufficiently fast to zero as $i\rightarrow\infty$ such  that $\lim_{i\rightarrow\infty}\rho_{\alpha^\prime,[n,n+2]}(\boldsymbol{W},\boldsymbol{W}^{\eta_i})=0$  for every $n\in\mathbb{Z}$ and $\alpha^\prime\in\left(\frac{1}{3},\frac{1}{2}\right)$ and these convergence take place almost surely, in an $\theta$-invariant set $\Omega^{\prime\prime}$ of full measure.  Furthermore, since we obtain the uniform estimate of  $\left|\rho_{\alpha^\prime,[s,t]}(\boldsymbol{W},\boldsymbol{W}^{\eta})\right|_{L^{\frac{q^\prime}{2}}}$, then  the convergence is uniform for $n\in\mathbb{Z}$. Finally,  for any $\omega\in\Omega^{\prime\prime}$ and $[T_j(\boldsymbol{W}(\omega)),T_{j+1}(\boldsymbol{W}(\omega))],j\in\mathbb{Z}$, there exists a $n_j\in\mathbb{Z}$ such that $[T_j(\boldsymbol{W}(\omega)),T_{j+1}(\boldsymbol{W}(\omega))]\subset [n(j),n(j)+2]$,  similar to the pathwise analysis of \cite[Theorem 3.2]{caogao}, we can prove that $\lim_{\eta\rightarrow 0}\rho_{\alpha^\prime,[T_j(\boldsymbol{W}(\omega)),T_{j+1}(\boldsymbol{W}(\omega))]}(\boldsymbol{W}^\eta,\boldsymbol{W}^{\eta_i})=0$,  and  based on  the  property of stopping times, i.e. $|T_{j+1}(\boldsymbol{W}(\omega))-T_j(\boldsymbol{W}(\omega))|\leq 1$ and $\|\boldsymbol{W}(\omega)\|_{\alpha^\prime,[T_j(\boldsymbol{W}(\omega)),T_{j+1}(\boldsymbol{W}(\omega))]}\leq\mu\leq 1$,  this convergence is also uniform for $j\in\mathbb{Z}$.  Thus, for any $\omega\in\Omega^{\prime\prime} $, there exists a  subsequence $\{\eta_i\}_{i\in\mathbb{N}}\subset(0,1)$ as before,  such that
 \begin{align*}
  \rho_{\alpha^\prime,[T_j(\boldsymbol{W}(\omega)),T_{j+1}(\boldsymbol{W}(\omega))]}(\boldsymbol{W}^\eta(\omega),\boldsymbol{W}(\omega))&\leq \rho_{\alpha^\prime,[T_j(\boldsymbol{W}(\omega)),T_{j+1}(\boldsymbol{W}(\omega))]}(\boldsymbol{W}^\eta(\omega),\boldsymbol{W}^{\eta_i}(\omega))\\
  &~~+\rho_{\alpha^\prime,[T_j(\boldsymbol{W}(\omega)),T_{j+1}(\boldsymbol{W}(\omega))]}(\boldsymbol{W}^{\eta_i}(\omega),\boldsymbol{W}(\omega)).
  \end{align*}
Due to $[T_j(\boldsymbol{W}(\omega)),T_{j+1}(\boldsymbol{W}(\omega))]\subset [n(j),n(j)+2]$,  we complete the proof.
 \end{proof}
 \subsection{Wong-Zakai approximation of the stopping times}
 \begin{lemma}\label{lemma 5.2}
 Assume  the following conditions hold
 \begin{align*}
 \|\boldsymbol{W}^{\eta}(\omega)\|_{\alpha,[0,T(\boldsymbol{W}^{\eta}(\omega))]}+\mu(T(\boldsymbol{W}^{\eta}(\omega)))^{1-\alpha}&=\mu,\\
 \|\boldsymbol{W}^{\eta}(\omega)\|_{\alpha,[\hat{T}(\boldsymbol{W}^{\eta}(\omega)),0]}+\mu|\hat{T}(\boldsymbol{W}^{\eta}(\omega))|^{1-\alpha}&=\mu
 \end{align*}
 for all $\omega\in\Omega$, then we have
 \begin{align*}
 \lim_{\eta\rightarrow 0}T(\boldsymbol{W}^{\eta}(\omega))=T(\boldsymbol{W}(\omega)),\quad \lim_{\eta\rightarrow 0}\hat{T}(\boldsymbol{W}^{\eta}(\omega))=\hat{T}(\boldsymbol{W}(\omega)).
 \end{align*}
 \end{lemma}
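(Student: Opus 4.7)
My plan is to run a subsequence argument that exploits the continuity and strict monotonicity of the map $\tau \mapsto \|\boldsymbol{W}(\omega)\|_{\alpha,[0,\tau]} + \mu\tau^{1-\alpha}$ established inside the proof of Lemma \ref{lemma 3.2}; in particular, $T^0 := T(\boldsymbol{W}(\omega))$ is the unique positive root of the equation where this map equals $\mu$. Lemma \ref{lemma 3.2} applies equally to each smooth (hence $\alpha'$-H\"older) approximant $\boldsymbol{W}^\eta$, so $T^\eta := T(\boldsymbol{W}^\eta(\omega)) \in (0,1]$. Given any sequence $\eta_n \downarrow 0$, compactness of $[0,1]$ yields a subsequence (still denoted $\eta_n$) with $T^{\eta_n} \to T^\ast \in [0,1]$, and it suffices to show $T^\ast = T^0$.

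The central step is to pass to the limit in $\|\boldsymbol{W}^{\eta_n}(\omega)\|_{\alpha,[0,T^{\eta_n}]} + \mu(T^{\eta_n})^{1-\alpha} = \mu$. I would use the triangle split
\[
\bigl|\|\boldsymbol{W}^{\eta_n}\|_{\alpha,[0,T^{\eta_n}]} - \|\boldsymbol{W}\|_{\alpha,[0,T^\ast]}\bigr| \leq \rho_{\alpha,[0,1]}(\boldsymbol{W}^{\eta_n},\boldsymbol{W}) + \bigl|\|\boldsymbol{W}\|_{\alpha,[0,T^{\eta_n}]} - \|\boldsymbol{W}\|_{\alpha,[0,T^\ast]}\bigr|.
\]
The first summand vanishes because Lemma \ref{lemma 5.1} gives $\rho_{\alpha',[0,1]}(\boldsymbol{W}^{\eta_n},\boldsymbol{W}) \to 0$ and, since $|t-s|^{\alpha'-\alpha}$ and $|t-s|^{2(\alpha'-\alpha)}$ are bounded by $1$ on $[0,1]$, one gets $\rho_{\alpha,[0,1]} \leq \rho_{\alpha',[0,1]}$; the second summand vanishes by the continuity of $\tau \mapsto \|\boldsymbol{W}\|_{\alpha,[0,\tau]}$ proved in Lemma \ref{lemma 3.2}. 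Letting $n \to \infty$ therefore yields $\|\boldsymbol{W}\|_{\alpha,[0,T^\ast]} + \mu(T^\ast)^{1-\alpha} = \mu$.

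It remains to exclude $T^\ast = 0$ and then invoke uniqueness. If $T^\ast = 0$, the $\alpha'$-H\"older interpolation estimates $\interleave W^{\eta_n}\interleave_{\alpha,[0,T^{\eta_n}]} \leq (T^{\eta_n})^{\alpha'-\alpha}\interleave W^{\eta_n}\interleave_{\alpha',[0,1]}$ together with the analogous bound for $\mathbb{W}^{\eta_n}$ and the uniform boundedness of $\|\boldsymbol{W}^{\eta_n}\|_{\alpha',[0,1]}$ granted by Lemma \ref{lemma 5.1}, force $\|\boldsymbol{W}^{\eta_n}\|_{\alpha,[0,T^{\eta_n}]} \to 0$, which contradicts the identity $\|\boldsymbol{W}^{\eta_n}\|_{\alpha,[0,T^{\eta_n}]} = \mu - \mu(T^{\eta_n})^{1-\alpha} \to \mu$. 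Hence $T^\ast > 0$, and strict monotonicity of the defining functional forces $T^\ast = T^0$. A standard subsequence-of-subsequence argument upgrades this to convergence along the full family $\eta \downarrow 0$, and the convergence $\hat{T}(\boldsymbol{W}^\eta(\omega)) \to \hat{T}(\boldsymbol{W}(\omega))$ is proved by the entirely symmetric argument on $[-1,0]$. The one delicate point is the ``moving interval'' in $\|\cdot\|_{\alpha,[0,T^{\eta_n}]}$, and that is precisely what the two-term split above, together with the continuity granted by Lemma \ref{lemma 3.2}, is designed to handle.
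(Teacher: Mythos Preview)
Your proof is correct and follows essentially the same subsequence/uniqueness strategy as the paper: extract a convergent subsequence of $T(\boldsymbol{W}^{\eta_n})$, pass to the limit in the defining equation, and invoke the strict monotonicity from Lemma~\ref{lemma 3.2}. The only cosmetic difference is the intermediate term in the triangle split---you route through $\|\boldsymbol{W}\|_{\alpha,[0,T^{\eta_n}]}$ whereas the paper routes through $\|\boldsymbol{W}^{\eta_n}\|_{\alpha,[0,\tau]}$---and your explicit exclusion of $T^\ast=0$ is in fact redundant, since the limit equation $\|\boldsymbol{W}\|_{\alpha,[0,0]}+\mu\cdot 0^{1-\alpha}=\mu$ already yields the contradiction $0=\mu$.
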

 \begin{proof}
 Suppose  $ \lim_{\eta\rightarrow 0}T(\boldsymbol{W}^{\eta}(\omega))\neq T(\boldsymbol{W}(\omega)),\omega\in\Omega$. In view of $|T(\boldsymbol{W}^{\eta}(\omega))|\leq 1$, then there exists a sequence $\{\eta_{n}\}_{n\in\mathbb{N}}$ such that $\eta_{n}\rightarrow 0,n\rightarrow \infty$ and $$\lim_{n\rightarrow \infty}T(\boldsymbol{W}^{\eta_n}(\omega))=\tau\neq T(\boldsymbol{W}(\omega)).$$ By Lemma \ref{lemma 5.1}, we have
 $$\lim_{n\rightarrow \infty}\|\boldsymbol{W}^{\eta_n}(\omega)\|_{\alpha,[0,\tau]}=\|\boldsymbol{W}(\omega)\|_{\alpha,[0,\tau]}. $$
 Then for any $\epsilon>0$, there is an integer $N_{1}(\tau,\epsilon)$ such that for $n>N_{1}(\tau,\epsilon)$ we have
 $$\left|\|\boldsymbol{W}^{\eta_{n}}(\omega)\|_{\alpha,[0,\tau]}-\|\boldsymbol{W}(\omega)\|_{\alpha,[0,\tau]}\right|\leq \frac{\epsilon}{2}.$$
  According to the proof of the Lemma \ref{lemma 3.2}, we know that the mapping $t\mapsto \|\cdot\|_{\alpha,[0,t]}$ is continuous, then we have
  $$\left|\|\boldsymbol{W}^{\eta_{n}}(\omega)\|_{\alpha,[0,T(\boldsymbol{W}^{\eta_n}(\omega))]}-\|\boldsymbol{W}^{\eta_{n}}(\omega)\|_{\alpha,[0,\tau]}\right|\leq C|T(\boldsymbol{W}^{\eta_{n}}(\omega))-\tau|.$$
  Then for any $\epsilon>0$, there is an integer $N_{2}(\tau,\epsilon)$ such that for $n>N_{2}(\tau,\epsilon)$ we have
  $$|T(\boldsymbol{W}^{\eta_n}(\omega))-\tau|\leq \frac{\epsilon}{2C}.$$
  Let $N=\max\{N_{1}(\tau,\epsilon),N_{2}(\tau,\epsilon)\}$, thus for $n>N$ we obtain
  \begin{align*}
  \left|\|\boldsymbol{W}^{\eta_{n}}(\omega)\|_{\alpha,[0,T(\boldsymbol{W}^{\eta}(\omega))]}-\|\boldsymbol{W}(\omega)\|_{\alpha,[0,\tau]}\right|&\leq \left|\|\boldsymbol{W}^{\eta_{n}}(\omega)\|_{\alpha,[0,T(\boldsymbol{W}^{\eta_n}(\omega))]}-\|\boldsymbol{W}^{\eta_{n}}(\omega)\|_{\alpha,[0,\tau]}\right|\\
  &~~+\left|\|\boldsymbol{W}^{\eta_{n}}(\omega)\|_{\alpha,[0,\tau]}-\|\boldsymbol{W}(\omega)\|_{\alpha,[0,\tau]}\right|\leq \epsilon.
  \end{align*}
  Hence,
  $$\mu=\lim_{n\rightarrow \infty}\left(\|\boldsymbol{W}^{\eta_{n}}(\omega)\|_{\alpha,0,T(\boldsymbol{W}^{\eta_{n}}(\omega))}+\mu(T(\boldsymbol{W}^{\eta_{n}}(\omega)))^{1-\alpha}\right)=\|\boldsymbol{W}(\omega)\|_{\alpha,[0,\tau]}+\mu\tau^{1-\alpha}.$$
 Since the function $t\mapsto \|\cdot\|_{\alpha,0,t}$ is continuously increasing, it  obviously contradicts the definition of stopping times.
 \end{proof}
 \begin{remark}\label{remark 5.1*}
 Similar to this lemma, we can prove
 $\lim_{\eta\rightarrow 0}T(\theta_{T(\boldsymbol{W}^{\eta}W(\omega))}\boldsymbol{W}^{\eta}(\omega))=T(\theta_{T(\boldsymbol{W}(\omega))}\boldsymbol{W}(\omega)),$ and based on this inequality and Lemma \ref{lemma 5.2}, by induction we obtain
 $$\lim_{\eta\rightarrow 0} T_{i}(\theta_{T_{j}(\boldsymbol{W}^{\eta}(\omega))}\boldsymbol{W}^{\eta}(\omega))=T_{i}(\theta_{T_{j}(\boldsymbol{W}(\omega))}\boldsymbol{W}(\omega)),~ i,j\in\mathbb{Z}.$$
 \end{remark}
 \subsection{Wong-Zakai approximation of the solution}
 As we described in the introduction,  it is not difficult to prove the existence and uniqueness of the approximate system
  \begin{equation}\label{5.2}
dy^{\eta}=Ay^{\eta}dt+F(y^{\eta})dt+G(y^{\eta})d\boldsymbol{W}^{\eta},\quad y^{\eta}(0)=y^{\eta}_{0}\in \mathcal{B}.
\end{equation}
Indeed, applying the method of reference \cite{hesse2021global} to construct the global solution for  a given  Gubinelli derivative $G(y^{\eta})$. We have  the following theorem:
\begin{theorem}
Let $T>0$, $\boldsymbol{W}^{\eta}$  be an approximation of $\alpha$-H\"{o}lder GFBRP $\boldsymbol{W}$(see Section 5.1)  and  $y^{\eta}_{0}\in \mathcal{B}_{\gamma},\gamma\geq 0$.  Under  the assumptions \textbf{(A)},\textbf{(F)},\textbf{(G)}, then there exists a unique global in time solution  $(y^{\eta},G(y^{\eta}))\in \mathcal{D}_{W_{\eta}(\cdot,X),\gamma}^{2\alpha}$ of \eqref{5.2} for a given Gubinelli derivative $G(y^{\eta})$.
\end{theorem}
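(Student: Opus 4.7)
The plan is to mirror the rough--path existence proof of \cite{hesse2021global} almost verbatim, since the smoothness of $\boldsymbol{W}^{\eta}$ only helps (it never hurts any estimate that uses the $\alpha$-H\"{o}lder rough norm). First I would fix the Gubinelli derivative of the candidate solution to be $G(y^{\eta})$ itself, so that the unknown lives in the Banach space $\mathcal{D}_{W^{\eta},\gamma}^{2\alpha}([0,\tau])$ introduced in Definition \ref{def 2.4}. Define the solution map
\begin{equation*}
\mathcal{M}_{\tau}(y^{\eta},G(y^{\eta}))_{t}:=S(t)y^{\eta}_{0}+\int_{0}^{t}S(t-r)F(y^{\eta}_{r})dr+\int_{0}^{t}S(t-r)G(y^{\eta}_{r})d\boldsymbol{W}^{\eta}_{r},
\end{equation*}
where the last integral is the rough integral of Lemma \ref{Lemma 2.1} (which coincides with the corresponding Young integral because $\boldsymbol{W}^{\eta}$ is smooth). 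The pair in the codomain is taken to be $(\mathcal{M}_{\tau}(y^{\eta}, G(y^\eta)), G(y^{\eta}))$, so that picking $G(y^{\eta})$ as the Gubinelli derivative is built into the definition of $\mathcal{M}_{\tau}$.

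Second, I would verify that $\mathcal{M}_{\tau}$ maps $\mathcal{D}_{W^{\eta},\gamma}^{2\alpha}([0,\tau])$ into itself and is a contraction on a suitable closed ball for $\tau$ small. For the drift term $\int_{0}^{\cdot}S(\cdot-r)F(y^{\eta}_{r})dr$, the Lipschitz assumption \textbf{(F)} together with the bounds \eqref{2.1}--\eqref{2.2} yields the estimate quoted in the Remark following Theorem 3.1, giving a $\tau^{(1-\delta)\wedge(1-2\alpha)}$ smallness factor; this is why $\delta\in[0,1)$ suffices. For the rough integral term, the assumption \textbf{(G)} ensures $(G(y^{\eta}), DG(y^{\eta})G(y^{\eta}))$ is itself a controlled rough path in $\mathcal{D}_{W^{\eta},\gamma-\sigma}^{2\alpha}([0,\tau])$ with norm Lipschitz in $(y^{\eta},G(y^{\eta}))$ (by the bounded derivative assumption and the composition lemma for controlled rough paths). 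Combining this with the estimate \eqref{2.6} gives a short-time contraction, yielding a local solution by Banach's fixed point theorem. Uniqueness at this stage uses the same contraction inequality on the difference of two candidate solutions.

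Third, I would pass to a global solution by concatenation. The key a priori bound is the linear estimate
\begin{equation*}
\|y^{\eta},G(y^{\eta})\|_{W^{\eta},2\alpha,\gamma,[s,t]}\le C\bigl(1+\|y^{\eta}_{s}\|_{\gamma}\bigr)\bigl(1+\rho_{\alpha,[s,t]}(\boldsymbol{W}^{\eta})\bigr),
\end{equation*}
on any interval $[s,t]$ of length at most one on which $\rho_{\alpha,[s,t]}(\boldsymbol{W}^{\eta})$ is bounded by a fixed constant; since $\boldsymbol{W}^{\eta}$ has finite $\alpha$-H\"{o}lder norm on every compact interval, $[0,T]$ can be covered by finitely many such pieces, and on each piece the local existence theorem applies with the terminal value of the previous piece as the new initial condition. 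The argument blows up in neither $\|y^{\eta}\|_{\gamma}$ nor the Gubinelli derivative because the norms are controlled linearly through the fixed Gubinelli derivative choice. Finally, uniqueness of $y^{\eta}$ on $[0,T]$ follows from patching together local uniqueness; although Gubinelli derivatives of $y^{\eta}$ are in principle non-unique (because $\boldsymbol{W}^{\eta}$ is not truly rough, see \cite[p.\ 109]{MR4174393}), the path component $y^{\eta}$ is unique among solutions for which one takes $G(y^{\eta})$ as derivative, so the statement makes sense as formulated.

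The main obstacle I would expect is not any single estimate but the bookkeeping in step two: one has to track the five seminorms making up $\|\cdot\|_{W^{\eta},2\alpha,\gamma}$ on both $(y^{\eta},G(y^{\eta}))$ and on the composition $(G(y^{\eta}),DG(y^{\eta})G(y^{\eta}))$, and establish that the constants in the contraction estimate do depend on $\rho_{\alpha,[0,\tau]}(\boldsymbol{W}^{\eta})$ only polynomially while the time factor $\tau^{3\alpha-\beta}$ (or $\tau^{(1-\delta)\wedge(1-2\alpha)}$) gives genuine smallness. All of this is carried out in \cite{hesse2021global,MR4299812} for a generic rough path and, crucially, uses only $\|\boldsymbol{W}^{\eta}\|_{\alpha,[0,T]}<\infty$, which holds here by the smoothness of $W^{\eta}$. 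Consequently, the proof reduces to invoking those references with $\boldsymbol{W}$ replaced by $\boldsymbol{W}^{\eta}$, and I would simply omit the calculations as the remark preceding the theorem already does.
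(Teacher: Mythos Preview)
Your proposal is correct and matches the paper's own approach: the paper does not give a detailed proof either, but simply notes that one fixes the Gubinelli derivative to be $G(y^{\eta})$ and then applies the method of \cite{hesse2021global} (Banach fixed point for local existence plus concatenation for global existence), exactly as you outline. Your additional remarks about why smoothness of $\boldsymbol{W}^{\eta}$ only helps and about uniqueness of the path component despite non-uniqueness of Gubinelli derivatives are accurate and echo the discussion in the paper's introduction.
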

In order to compare the distance of controlled rough paths which are controlled by different stochastic processes, we define the following metric.
\begin{definition}\label{definition 5.1}
Let $\frac{1}{2}>\alpha>\frac{1}{3}$ and  $\boldsymbol{W},\tilde{\boldsymbol{W}}\in\mathcal{C}^{\alpha}([0,T];\mathbb{R}^{d}), I\subset [0,T]$. For any $(y,y^{\prime})\in \mathcal{D}_{W,\gamma}^{2\alpha}([0,T])$ and  $(z,z^{\prime})\in \mathcal{D}_{\tilde{W},\gamma}^{2\alpha}([0,T])$, we define the distance $d_{2\alpha,\gamma,I}(y,z)$ as follows
\begin{align*}
d_{2\alpha,\gamma,I}(y,z)=&\|y-z\|_{\infty,\gamma,I}+\|y^{\prime}-z^{\prime}\|_{\infty,\gamma-\alpha,I}+\interleave y^{\prime}-z^{\prime}\interleave_{\alpha,\gamma-2\alpha,I}\\
&+\interleave R^{y}-R^{z}\interleave_{\alpha,\gamma-\alpha,\Delta_{I}}+\interleave R^{y}-R^{z}\interleave_{2\alpha,\gamma-2\alpha,\Delta_{I}}.
\end{align*}
\end{definition}
Our next work is to construct the Wong-Zakai approximation of the  solution on stopping time intervals. To this end, we need  some lemmas.
\begin{lemma}\label{lemma 5.3}
Let $\boldsymbol{W},\tilde{\boldsymbol{W}}\in \mathcal{C}^{\alpha}([0,T],\mathbb{R}^{d})$ for $\frac{1}{2}>\alpha>\frac{1}{3}$,  and $(y,y^{\prime})\in\mathcal{D}_{W,\gamma}^{2\alpha},(z,z^{\prime})\in\mathcal{D}_{\tilde{W},\gamma}^{2\alpha}$. Then
\begin{small}
$$\int_{0}^{t}S(t\!-\!s)y_{s}d\boldsymbol{W}_{s}\!-\!\int_{0}^{t}S(t\!-\!s)z_{s}d\tilde{\boldsymbol{W}}_{s}\!=\!\lim_{|\mathcal{P}(0,t)|\rightarrow 0}\sum_{[u,v]\in \mathcal{P}}\!S(t\!-\!u)(y_{u}W_{u,v}+y^{\prime}_{u}\mathbb{W}_{u,v}-z_{u}\tilde{W}_{u,v}-z^{\prime}_{u}\tilde{\mathbb{W}}_{u,v})$$
\end{small}
exists in $\mathcal{B}_{\gamma-2\alpha}$. Moreover, for $0\leq \beta<3\alpha$ the above integral has the following estimate
\begin{small}
\begin{align*}
&\left\|\int_{s}^{t}S(t-r)y_{r}d\boldsymbol{W}_{r}\!-\!\int_{s}^{t}S(t-r)z_{r}d\tilde{\boldsymbol{W}}_{r}\!-\!S(t-s)(y_{s}W_{s,t}\!+\!y^{\prime}_{s}\mathbb{W}_{s,t}\!-\!z_{s}\tilde{W}_{s,t}-z^{\prime}_{s}\tilde{\mathbb{W}}_{s,t})\right\|_{\gamma-2\alpha+\beta}\\
&~~~~~~~~~\leq \Bigg(d_{\alpha,[s,t]}(\boldsymbol{W},\tilde{\boldsymbol{W}})\|y,y^{\prime}\|_{W,2\alpha,\gamma,[s,t]}
+d_{2\alpha,\gamma,[s,t]}(y,z)d_{\alpha,[s,t]}(\tilde{\boldsymbol{W}},0)\Bigg)(t-s)^{3\alpha-\beta}.
\end{align*}
\end{small}
\end{lemma}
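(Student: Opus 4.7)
The plan is to imitate the proof of Lemma~\ref{Lemma 2.1}, applying the sewing-with-semigroup machinery of \cite{MR4299812} to the \emph{difference germ}
\[
\Xi^{t}_{u,v} := S(t-u)\bigl[y_u W_{u,v} + y'_u \mathbb{W}_{u,v} - z_u \tilde{W}_{u,v} - z'_u \tilde{\mathbb{W}}_{u,v}\bigr].
\]
Once one verifies that $\delta\Xi^{t}_{u,v,w} := \Xi^{t}_{u,w} - \Xi^{t}_{u,v} - \Xi^{t}_{v,w}$ is bounded in $\mathcal{B}_{\gamma-2\alpha+\beta}$ by a constant of the advertised form times $(w-u)^{3\alpha-\beta}$ with $3\alpha-\beta > 1$, the same abstract sewing step that produced Lemma~\ref{Lemma 2.1} simultaneously yields existence of the integral difference as a limit of Riemann sums and the claimed stability estimate, since the latter is precisely the bound on $\mathcal{I}-\Xi$ supplied by that procedure.

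First I would compute $\delta\Xi^{t}_{u,v,w}$ by treating the $(y,y',\boldsymbol{W})$ and $(z,z',\tilde{\boldsymbol{W}})$ summands separately. For the first, Chen's identity $\mathbb{W}_{u,w} = \mathbb{W}_{u,v} + \mathbb{W}_{v,w} + W_{u,v}\otimes W_{v,w}$ and the controlled-path relation $y_{u,v} = y'_u W_{u,v} + R^{y}_{u,v}$ give, after rearrangement,
\begin{align*}
\delta\Xi^{t,y}_{u,v,w} &= \bigl[S(t-u) - S(t-v)\bigr]\bigl(y_v W_{v,w} + y'_v \mathbb{W}_{v,w}\bigr) \\
&\quad - S(t-u)\, R^{y}_{u,v} W_{v,w} + S(t-u)(y'_u - y'_v)\mathbb{W}_{v,w},
\end{align*}
and the analogous identity for the $(z,z',\tilde{\boldsymbol{W}})$-piece. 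Subtracting and splitting $aW - b\tilde{W} = (a-b)W + b(W-\tilde{W})$ (and similarly for the $\mathbb{W}$-terms and for the remainders) reorganises $\delta\Xi^{t}_{u,v,w}$ into two groups: those in which the ``difference'' sits on the driver, producing a factor $d_{\alpha,[s,t]}(\boldsymbol{W},\tilde{\boldsymbol{W}})$ together with a norm of $(y,y')$, and those in which it sits on the controlled path, producing $d_{2\alpha,\gamma,[s,t]}(y,z)$ together with a norm of $\tilde{\boldsymbol{W}}$.

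Each resulting term would then be estimated exactly as in the proof of Lemma~\ref{Lemma 2.1}: the bounds \eqref{2.1}--\eqref{2.2} absorb $S(t-u)-S(t-v)$ at the cost of a factor $(v-u)^{\sigma}$ together with a loss $\sigma$ in spatial regularity, the H\"older bounds on the drivers contribute $(w-v)^{\alpha}$ or $(w-v)^{2\alpha}$, and the norms of $R^{y}-R^{z}$, $y-z$, $y'-z'$ appearing in Definition~\ref{definition 5.1} supply the prefactors. Because $\alpha>\sigma$ and $3\alpha>\beta$, the surviving H\"older exponent in $(w-u)$ always reaches $3\alpha-\beta$, yielding
\[
\|\delta\Xi^{t}_{u,v,w}\|_{\gamma-2\alpha+\beta} \lesssim \Bigl(d_{\alpha,[s,t]}(\boldsymbol{W},\tilde{\boldsymbol{W}})\|y,y'\|_{W,2\alpha,\gamma,[s,t]} + d_{2\alpha,\gamma,[s,t]}(y,z)\,\rho_{\alpha,[s,t]}(\tilde{\boldsymbol{W}})\Bigr)(w-u)^{3\alpha-\beta},
\]
which is exactly the input required by the sewing lemma.

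The main obstacle I anticipate is purely bookkeeping: after expansion there are roughly eight cross-terms, and one must verify in each case that it falls cleanly into one of the two groups above and that the surviving exponent in $(w-u)$ still reaches $3\alpha-\beta$ after paying the semigroup regularity loss $\sigma$. No new analytic ingredient beyond those used for Lemma~\ref{Lemma 2.1} is required: the present lemma is the Lipschitz-in-data refinement of that statement, and the decomposition ``difference of driver $\times$ norm of path $+$ norm of driver $\times$ difference of path'' is forced by the bilinear structure of the germ.
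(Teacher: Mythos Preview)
Your strategy is the right one, but there is a mismatch between the germ you write down and the sewing lemma you invoke, and this creates a genuine gap in the claimed range of~$\beta$.

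The ``sewing-with-semigroup machinery'' of \cite{MR4299812} that produces Lemma~\ref{Lemma 2.1} takes as input a germ \emph{without} the semigroup: one checks that $\xi_{s,t}=y_sW_{s,t}+y'_s\mathbb{W}_{s,t}-z_s\tilde W_{s,t}-z'_s\tilde{\mathbb{W}}_{s,t}$ lies in the space~$\mathcal{Z}^{\alpha}_{\gamma}$, and the multiplicative sewing lemma then inserts $S(t-\cdot)$ and delivers the estimate in $\mathcal{B}_{\gamma-2\alpha+\beta}$ for \emph{every} $0\le\beta<3\alpha$; the only exponent condition is $3\alpha>1$. You instead bake the semigroup into the germ $\Xi^{t}_{u,v}=S(t-u)\xi_{u,v}$ and then try to bound $\delta\Xi^{t}$ directly in $\mathcal{B}_{\gamma-2\alpha+\beta}$ with exponent $3\alpha-\beta$. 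That forces you to invoke \emph{additive} sewing, for which you correctly write the requirement $3\alpha-\beta>1$, i.e.\ $\beta<3\alpha-1$. This does not cover the stated range $\beta<3\alpha$, and the shortfall is not cosmetic: the applications in Appendix~C use Lemma~\ref{lemma 5.3} with $\beta=2\alpha+\sigma$ (to land in $\mathcal{B}_0$ starting from $G(y)\in\mathcal{B}_{-\sigma}$), and since $\alpha<1$ one always has $2\alpha+\sigma>3\alpha-1$.

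The fix is simply to drop $S(t-u)$ from the germ. Then Chen's identity gives the clean expression
\[
\delta\xi_{s,u,t}=\bigl(-y'_{s,u}\mathbb{W}_{u,t}+z'_{s,u}\tilde{\mathbb{W}}_{u,t}\bigr)+\bigl(-R^{y}_{s,u}W_{u,t}+R^{z}_{s,u}\tilde W_{u,t}\bigr),
\]
your bilinear splitting (use $aX-b\tilde X=a(X-\tilde X)+(a-b)\tilde X$ to land on the exact factors $\|y,y'\|\,d_\alpha(\boldsymbol W,\tilde{\boldsymbol W})$ and $d_{2\alpha,\gamma}(y,z)\,\rho_\alpha(\tilde{\boldsymbol W})$) reads off $\|\xi\|_{\mathcal{Z}^{\alpha}_{\gamma}}$ directly, and the multiplicative sewing lemma yields the full statement. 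Your extra $[S(t-u)-S(t-v)]$--terms are precisely what that lemma absorbs internally, so they need not appear in the argument at all.
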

\begin{proof}
The basic idea of the proof  is multiplicative Sewing Lemma\cite[Theorem 4.1]{MR4299812}.  Let $\xi_{s,t}=y_{s}W_{s,t}+y^{\prime}_{s}\mathbb{W}_{s,t}-z_{s}\tilde{W}_{s,t}-z^{\prime}_{s}\tilde{\mathbb{W}}_{s,t}$, we need to show that $\xi\in \mathcal{Z}_{\gamma}^{\alpha}$, where the space $\mathcal{Z}_{\gamma}^{\alpha}$ consists of double index element $\xi=(\xi_{s,t})\in C_{2}^{\alpha}(\mathcal{B}_{\gamma})+C_{2}^{2\alpha}(\mathcal{B}_{\gamma-\alpha})$ with the property $\delta\xi\in C_{3}^{2\alpha,\alpha}(\mathcal{B}_{\gamma-2\alpha})+C_{3}^{\alpha,2\alpha}(\mathcal{B}_{\gamma-2\alpha})$. That is to say, there exists $\xi^{1},\xi^{2}$ and $h^{1},h^{2}$ with
\begin{align*}
\xi_{s,t}=\xi^{1}_{s,t}+\xi^{2}_{s,t},\quad (s,t)\in\triangle_{2},\\
\delta\xi_{s,u,t}=h^{1}_{s,u,t}+h^{2}_{s,u,t},\quad (s,u,t)\in \triangle_{3},
\end{align*}
such that $\interleave \xi^{1}\interleave_{\alpha,\gamma}+\interleave \xi^{2}\interleave_{2\alpha,\gamma-\alpha}+\interleave h^{1}\interleave_{2\alpha,\alpha,\gamma-2\alpha}+\interleave h^{2}\interleave_{\alpha,2\alpha,\gamma-2\alpha}<\infty$, and $\mathcal{Z}_{\gamma}^{\alpha}$ is equipped with the norm
$$\|\xi\|_{\mathcal{Z}_{\gamma}^{\alpha}} =\inf_{\xi^{1},\xi^{2},h^{1},h^{2}}(\interleave \xi^{1}\interleave_{\alpha,\gamma}+\interleave \xi^{2}\interleave_{2\alpha,\gamma-\alpha}+\interleave h^{1}\interleave_{2\alpha,\alpha,\gamma-2\alpha}+\interleave h^{2}\interleave_{\alpha,2\alpha,\gamma-2\alpha}).$$
It is clearly that $\xi\in C_{2}^{\alpha}(\mathcal{B}_{\gamma})+C_{2}^{2\alpha}(\mathcal{B}_{\gamma-\alpha})$ and
\begin{align*}\interleave \xi\interleave_{C_{2}^{\alpha}(\mathcal{B}_{\gamma})+C_{2}^{2\alpha}(\mathcal{B}_{\gamma-\alpha})}\leq &\left(\|y\|_{\infty,\gamma}\interleave W-\tilde{W}\interleave_{\alpha}+\|y-z\|_{\infty,\gamma}\interleave \tilde{W}\interleave_{\alpha}\right) \\
+&\left(\|y^{\prime}\|_{\infty,\gamma-\alpha}\interleave \mathbb{W}-\tilde{\mathbb{W}}\interleave_{2\alpha}+\|y^{\prime}-z^{\prime}\|_{\infty,\gamma-\alpha}\interleave \tilde{\mathbb{W}}\interleave_{2\alpha}  \right).
\end{align*}
Furthermore, In view of Chen's identity we have that
$$\delta\xi_{s,u,t}=\left(-y_{s,u}^{\prime}\mathbb{W}_{u,t}+z_{s,u}^{\prime}\tilde{\mathbb{W}}_{u,t}\right)+\left(-R^{y}_{s,u}W_{u,t}+R^{z}_{s,u}\tilde{W}_{u,t}\right).$$
Invoking this identity we obtain
\begin{small}
\begin{align*}
\|\delta\xi_{s,u,t}\|&\leq \interleave\mathbb{W}\!-\!\tilde{\mathbb{W}} \interleave_{2\alpha}\interleave y^{\prime}\interleave_{\alpha,\gamma-2\alpha}(t-u)^{2\alpha}(u-s)^{\alpha}+\interleave y^{\prime}-z^{\prime}\interleave_{\alpha,\gamma-2\alpha}\interleave\tilde{\mathbb{W}} \interleave_{2\alpha}(t\!-\!u)^{2\alpha}(u\!-\!s)^{\alpha} \\
&+\interleave R^{y}\interleave_{2\alpha}\interleave W-\tilde{W} \interleave_{\alpha}(u-s)^{2\alpha}(t-u)^{\alpha}\!+\!\interleave R^{y}\!-\!R^{z}\interleave_{2\alpha}\interleave \tilde{W}\interleave_{\alpha}(t\!-\!u)^{\alpha}(u\!-\!s)^{2\alpha}.
\end{align*}
\end{small}
Then $\delta\xi\in C_{2}^{\alpha,2\alpha}(\mathcal{B}_{\gamma-2\alpha})+C_{2}^{2\alpha,\alpha}(\mathcal{B}_{\gamma-2\alpha})$ and
\begin{small}
\begin{align*}
\interleave \delta\xi\interleave_{C_{2}^{\alpha,2\alpha}(\mathcal{B}_{\gamma-2\alpha})+C_{2}^{2\alpha,\alpha}(\mathcal{B}_{\gamma-2\alpha})}&\leq \interleave \mathbb{W}-\tilde{\mathbb{W}}\interleave_{2\alpha}\interleave y^{\prime}\interleave_{\alpha,\gamma-2\alpha}+\interleave y^{\prime}-z^{\prime} \interleave_{\alpha,\gamma-2\alpha}\interleave \tilde{\mathbb{W}}\interleave_{2\alpha}\\
&~~~+\interleave R^{y}\interleave_{2\alpha,\gamma-\alpha}\interleave W-\tilde{W}\interleave_{\alpha}+  \interleave R^{y}-R^{z}\interleave_{2\alpha,\gamma-2\alpha}
\interleave\tilde{W}\interleave_{\alpha}.
\end{align*}
\end{small}
Hence, we have
$$\|\xi\|_{\mathcal{Z}_{\gamma}^{\alpha}}\leq \|y,y^{\prime}\|_{W,2\alpha,\gamma}d_{\alpha}(\boldsymbol{W},\tilde{\boldsymbol{W}})+d_{2\alpha,\gamma}(y,z)d_{\alpha}(\tilde{\boldsymbol{W}},0).$$
Finally, the multiplicative Sewing Lemma\cite[Theorem 4.1]{MR4299812} shows that
\begin{small}
\begin{align*}
&\left\|\int_{s}^{t}S(t\!-\!s)y_{s}d\boldsymbol{W}_{s}\!-\!\int_{s}^{t}S(t\!-\!s)z_{s}d\tilde{\boldsymbol{W}}_{s}\!-\!S(t\!-\!s)(y_{s}W_{s,t}\!+\!y^{\prime}_{s}\mathbb{W}_{s,t}\!-\!z_{s}\tilde{W}_{s,t}\!+\!z^{\prime}_{s}\tilde{\mathbb{W}}_{s,t})\right\|_{\gamma-2\alpha+\beta}\\
&\leq \Bigg(d_{\alpha,[s,t]}(\boldsymbol{W},\tilde{\boldsymbol{W}})\|y,y^{\prime}\|_{W,2\alpha,\gamma,[s,t]}
+d_{2\alpha,\gamma,[s,t]}(y,z)d_{\alpha,[s,t]}(\tilde{\boldsymbol{W}},0)\Bigg)(t-s)^{3\alpha-\beta}.
\end{align*}
\end{small}
\end{proof}
\begin{lemma}\label{lemma 5.4}
Let $(y,y^{\prime})\in\mathcal{D}_{W,\gamma}^{2\alpha}([0,T]), (z,z^{\prime})\in\mathcal{D}_{\tilde{W},\gamma}^{2\alpha}([0,T])$ and $\boldsymbol{W},\tilde{\boldsymbol{W}}\in \mathcal{C}^{\alpha}([0,T],\mathbb{R}^{d})$ for $\frac{1}{2}>\alpha>\frac{1}{3}$. Then controlled rough paths $(G(y),DG(y)G(y))\in \mathcal{D}_{W,\gamma}^{2\alpha}([0,T])$ and $(G(z),DG(z)G(z))\in \mathcal{D}_{\tilde{W},\gamma}^{2\alpha}([0,T])$ on each stopping time interval $I_{m}:=[T_{m-1}(\boldsymbol{W}),T_{m}(\boldsymbol{W})]\\(1\leq m\leq i-1,i\geq 1)$  have the following estimate
\begin{small}
 \begin{align*}
d_{2\alpha,\gamma-\sigma,I_{m}}(G(y),G(z))&\leq C\mu d_{2\alpha,\gamma,I_{m}}(y,z)(\|z,z^{\prime}\|_{\tilde{W},2\alpha,\gamma,I_{m}}+\|y,y^{\prime}\|_{W,2\alpha,\gamma,I_{m}}+1)^{2}\nonumber\\
&~~~~~~~~~~~~~~~~~\times(\interleave W\interleave_{\alpha,I_{m}}+\interleave \tilde{W}\interleave_{\alpha,I_{m}}+1)\nonumber\\
&+C\mu(\|z,z^{\prime}\|_{\tilde{W},2\alpha,\gamma,I_{m}}+\|y,y^{\prime}\|_{W,2\alpha,\gamma,I_{m}}+1)^{2}d_{\alpha,I_{m}}(\boldsymbol{W},\tilde{\boldsymbol{W}})\nonumber\\
&~~~~~~~~~~~~~~~~~\times(\interleave W\interleave_{\alpha,I_{m}}+\interleave \tilde{W}\interleave_{\alpha,I_{m}}+1).
 \end{align*}
 \end{small}
\end{lemma}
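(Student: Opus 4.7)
The strategy is to unpack the distance $d_{2\alpha,\gamma-\sigma,I_m}(G(y),G(z))$ according to Definition \ref{definition 5.1} into its five constituent seminorms — the sup norms of $G(y)-G(z)$ and of $DG(y)G(y)-DG(z)G(z)$, the $\alpha$-Hölder seminorm of the Gubinelli derivative difference, and the two remainder seminorms in $\gamma-\sigma-\alpha$ and $\gamma-\sigma-2\alpha$ — and bound each of them using Taylor expansions of $G$, with every derivative of $G$ contributing a factor $\mu$ via the hypothesis $C_G\le\mu$ in assumption \textbf{(G)}. Throughout I use that for the solutions under consideration $y'=G(y)$ and $z'=G(z)$, so that the claimed Gubinelli derivatives of $G(y),G(z)$ are in fact the standard chain-rule ones $DG(y)y'$ and $DG(z)z'$.

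For the two sup-norm terms I would write $G(y_t)-G(z_t)=\int_0^1 DG(z_t+\theta(y_t-z_t))(y_t-z_t)\,d\theta$ and apply the analogous identity to the $\mu$-Lipschitz map $x\mapsto DG(x)G(x)$ (Lipschitz by the extra clause in \textbf{(G)}). This immediately yields $\|G(y)-G(z)\|_{\infty,\gamma-\sigma,I_m}\le C\mu\|y-z\|_{\infty,\gamma,I_m}$ and a parallel bound for $\|DG(y)G(y)-DG(z)G(z)\|_{\infty,\gamma-\sigma-\alpha,I_m}$. The Hölder seminorm $\interleave DG(y)G(y)-DG(z)G(z)\interleave_{\alpha,\gamma-\sigma-2\alpha,I_m}$ is then treated by inserting the mixed term $DG(z_s)G(z_s)-DG(y_s)G(y_s)$ at time $s$ and time $t$ and applying the mean-value theorem twice; at this step the identity $\interleave y\interleave_{\alpha,\gamma-2\alpha,I_m}\le\|y'\|_{\infty,\gamma-2\alpha,I_m}\interleave W\interleave_{\alpha,I_m}+\interleave R^y\interleave_{\alpha,\gamma-2\alpha,I_m}$ from Definition \ref{def 2.4} (and its analogue for $z$) is what produces the prefactor $\interleave W\interleave_{\alpha,I_m}+\interleave \tilde W\interleave_{\alpha,I_m}+1$.

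The main obstacle, and the heart of the lemma, is the pair of remainder estimates. The plan is to use a second-order Taylor expansion to obtain
\begin{align*}
R^{G(y)}_{s,t}&=G(y_t)-G(y_s)-DG(y_s)G(y_s)W_{s,t}\\
&=DG(y_s)R^y_{s,t}+\int_0^1(1-\theta)D^2G\bigl(y_s+\theta y_{s,t}\bigr)(y_{s,t},y_{s,t})\,d\theta,
\end{align*}
together with the analogous expression for $R^{G(z)}_{s,t}$ (with $\tilde{W}$ in place of $W$). Subtracting and inserting mixed terms decomposes $R^{G(y)}_{s,t}-R^{G(z)}_{s,t}$ into four pieces: $[DG(y_s)-DG(z_s)]R^y_{s,t}$, $DG(z_s)[R^y_{s,t}-R^z_{s,t}]$, a cubic remainder bounded by $\|D^3G\|\cdot\|y-z\|$, and a quadratic piece proportional to $\|D^2G\|\cdot(\|y_{s,t}\|+\|z_{s,t}\|)\cdot\|y_{s,t}-z_{s,t}\|$. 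Each piece is estimated using the $\mu$-bound on $D^kG$, the interpolation structure of Definition \ref{def 2.3}, and the previous Hölder control on $y,z$.

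The only remaining subtlety is that $y_{s,t}-z_{s,t}$ must be split as $(y'_s W_{s,t}-z'_s\tilde W_{s,t})+(R^y_{s,t}-R^z_{s,t})$ with the first bracket further decomposed as $(y'_s-z'_s)W_{s,t}+z'_s(W_{s,t}-\tilde W_{s,t})$; the $W_{s,t}-\tilde W_{s,t}$ (and similarly $\mathbb{W}-\tilde{\mathbb{W}}$) factor is precisely what generates the second summand on the right-hand side, involving $d_{\alpha,I_m}(\boldsymbol{W},\tilde{\boldsymbol{W}})$. Collecting all five seminorm estimates, bounding the various polynomial combinations crudely by $(\|y,y'\|_{W,2\alpha,\gamma,I_m}+\|z,z'\|_{\tilde W,2\alpha,\gamma,I_m}+1)^2$, and grouping the $y$--$z$ contributions into the $d_{2\alpha,\gamma,I_m}(y,z)$-coefficient and the $W$--$\tilde W$ contributions into the $d_{\alpha,I_m}(\boldsymbol{W},\tilde{\boldsymbol{W}})$-coefficient then yields the inequality exactly as stated.
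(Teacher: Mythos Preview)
Your proposal is correct and follows essentially the same approach as the paper's proof in Appendix~\ref{Appendix B}: unpack the five seminorms of $d_{2\alpha,\gamma-\sigma,I_m}$, control each via the mean-value theorem and the bound $C_G\le\mu$, and split $y_{s,t}-z_{s,t}$ using the controlled-path identity to separate the $d_{\alpha,I_m}(\boldsymbol{W},\tilde{\boldsymbol{W}})$ contribution. The only cosmetic difference is in the remainder terms, where you use a second-order Taylor expansion of $G$ while the paper uses a first-order one (writing $G(y_t)-G(y_s)=\int_0^1 DG(y_s+\eta y_{s,t})\,y_{s,t}\,d\eta$ and then substituting $y_{s,t}=G(y_s)W_{s,t}+R^y_{s,t}$); both four-piece decompositions close in the same way.
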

The proof of this lemma is too lengthy, we omit it here, it can be found in the Appendix \ref{Appendix B}.
\begin{lemma}
Let $(y^{\eta},G(y^{\eta}))$ be the solution of \eqref{5.2}  driven by $\theta_{-T}\boldsymbol{W}^{\eta}$ with initial data $y_{0}^{\eta}\in\mathcal{B}$. Then  the norm of  controlled rough path
$(y^{\eta},G(y^{\eta}))$ on stopping times interval $I_{i}(T):=[T_{i-1}(\theta_{-T}\boldsymbol{W}),T_{i}(\theta_{-T}\boldsymbol{W})], i\in\mathbb{Z}^{+},T>0$ is bounded for sufficiently small $\eta$.
\end{lemma}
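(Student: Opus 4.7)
The plan is to reproduce the a priori estimate of Lemma \ref{lemma 3.4} for the approximate equation \eqref{5.2} driven by $\theta_{-T}\boldsymbol{W}^{\eta}$. The key observation is that the intervals $I_{i}(T)$ are stopping-time intervals of $\theta_{-T}\boldsymbol{W}$, not of $\theta_{-T}\boldsymbol{W}^{\eta}$; nevertheless, by the uniform Wong-Zakai convergence on stopping-time intervals proved in Corollary \ref{corollary 5.1*}, the rough path norm $\|\theta_{-T}\boldsymbol{W}^{\eta}\|_{\alpha,I_{i}(T)}$ remains as small as $\|\theta_{-T}\boldsymbol{W}\|_{\alpha,I_{i}(T)}\leq\mu$ up to an error that vanishes as $\eta\to 0$.

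First I would apply Corollary \ref{corollary 5.1*} to the shifted sample $\theta_{-T}\omega$: this gives $\rho_{\alpha',I_{i}(T)}(\theta_{-T}\boldsymbol{W},\theta_{-T}\boldsymbol{W}^{\eta})\to 0$ uniformly in $i\in\mathbb{Z}^{+}$ as $\eta\to 0$. Since $|I_{i}(T)|\leq 1$ and $\alpha<\alpha'$, a standard H\"older interpolation yields $\rho_{\alpha,I_{i}(T)}(\theta_{-T}\boldsymbol{W},\theta_{-T}\boldsymbol{W}^{\eta})\leq\rho_{\alpha',I_{i}(T)}(\theta_{-T}\boldsymbol{W},\theta_{-T}\boldsymbol{W}^{\eta})$. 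Combining with the definitional bound $\|\theta_{-T}\boldsymbol{W}\|_{\alpha,I_{i}(T)}\leq\mu$ for the stopping times of $\theta_{-T}\boldsymbol{W}$, this produces $\|\theta_{-T}\boldsymbol{W}^{\eta}\|_{\alpha,I_{i}(T)}\leq\mu+\epsilon(\eta)$ with $\epsilon(\eta)\to 0$ uniformly in $i$.

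Next I would repeat, essentially verbatim, the computations behind Lemma \ref{lemma 3.4} for the solution $(y^{\eta},G(y^{\eta}))$ of \eqref{5.2}. The mild formula, the rough-integral estimate of Lemma \ref{Lemma 2.1}, and the hypotheses (A), (F), (G) on $A$, $F$, $G$ are all unchanged: only the driving rough path is replaced by $\theta_{-T}\boldsymbol{W}^{\eta}$. Consequently one obtains exactly the inequality of Lemma \ref{lemma 3.4} with $\mu$ replaced by $\mu+\epsilon(\eta)$ as the coefficient of the self-referencing term. For $\eta$ small enough that $C(\mu+\epsilon(\eta))<1$ (so that the smallness condition \eqref{3.55} is preserved), the self-reference can be absorbed and the discrete Gronwall lemma applies exactly as in Corollary \ref{corollary 3.1}, giving a bound of the form
\begin{align*}
\|y^{\eta},G(y^{\eta})\|_{W^{\eta},2\alpha,\gamma,I_{i}(T)}\leq C_{1}e^{-\tfrac{\lambda}{2}T_{i-1}(\theta_{-T}\boldsymbol{W})}\|y_{0}^{\eta}\|_{\gamma}+C_{2},
\end{align*}
with constants $C_{1},C_{2}$ independent of $i$ once $\eta$ is chosen small enough.

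The main obstacle is the uniform-in-$i$ smallness of $\rho_{\alpha',I_{i}(T)}(\theta_{-T}\boldsymbol{W},\theta_{-T}\boldsymbol{W}^{\eta})$ in the first step. Corollary \ref{corollary 5.1*} is formulated for the stopping times of $\boldsymbol{W}$ itself, whereas we need the shifted version with stopping times $T_{i}(\theta_{-T}\boldsymbol{W})$; the required uniformity must therefore be traced back to the translation-invariant $L^{q'/2}$-estimate $|\rho_{\alpha',[s,t]}(\boldsymbol{W},\boldsymbol{W}^{\eta})|_{L^{q'/2}}\leq C\eta^{H-\alpha''}$ used in the proof of Corollary \ref{corollary 5.1*}, applied pathwise to $\theta_{-T}\omega$ and exploiting $|I_{i}(T)|\leq 1$. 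Once this uniformity is in hand, the remainder of the argument is a direct transcription of Section 3.
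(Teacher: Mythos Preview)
Your proposal is correct and follows essentially the same approach as the paper: use Corollary \ref{corollary 5.1*} to get $\|\theta_{-T}\boldsymbol{W}^{\eta}\|_{\alpha,I_{i}(T)}\leq\mu+\epsilon$ uniformly in $i$, rerun the a priori estimate of Lemma \ref{lemma 3.4} with $\mu$ replaced by $\mu+\epsilon$, and then apply the discrete Gr\"onwall argument of Corollary \ref{corollary 3.1}/Lemma \ref{lemma 3.6} to obtain a bound of the form $R(0,\theta_{-T}\omega)+o(\epsilon)$. The paper actually glosses over the shift issue you flag (Corollary \ref{corollary 5.1*} is stated for the stopping times of $\boldsymbol{W}$ rather than of $\theta_{-T}\boldsymbol{W}$); your observation that one must go back to the translation-invariant $L^{q'/2}$ estimate is a correct way to justify this step.
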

\begin{proof}
By  the Corollary \ref{corollary 5.1*} and \eqref{3.58},  for any $\left(\frac{1}{C}\bigwedge \frac{e^{\frac{d\lambda}{2}-\nu}-1}{Ce^{\frac{d\lambda}{2}-\nu}}\right)-\mu>\epsilon>0$,  there  exists  a $\eta^{\prime}(\epsilon)$ sufficiently small,  such that
\begin{align*}
\left|\interleave \theta_{-T}\boldsymbol{W}^{\eta} (\omega)\interleave_{\alpha, I_{i}(T)}-\interleave \theta_{-T}\boldsymbol{W}(\omega)\interleave_{\alpha, I_{i}(T)}\right|\leq \epsilon, \quad \eta<\eta^{\prime},
\end{align*}
then
\begin{align}
\interleave \theta_{-T}\boldsymbol{W}^{\eta}(\omega)\interleave_{\alpha, I_{i}(T)}\leq \mu+\epsilon.
\end{align}
Similar to Lemma \ref{lemma 3.4}, we have
\begin{align*}
&\|y^\eta,(y^{\eta})^{\prime}\|_{W,2\alpha,0,[T_{i-1}(\theta_{-T}\boldsymbol{W}(\omega)),T_{i}(\theta_{-T}\boldsymbol{W}(\omega))]}\\
&~~~~~~~~~~~\leq C(\mu+\epsilon)\sum_{m=1}^{i-1}\bigg(e^{-\lambda(T_{i-1}(\theta_{-T}\boldsymbol{W}(\omega))-T_{m}(\theta_{-T}\boldsymbol{W}(\omega)))}\\
&~~~~~~~~~~~~~\times(1+\|y^{\eta},(y^{\eta})^{\prime}\|_{W,2\alpha,0,[T_{m-1}(\theta_{T_{j}}\boldsymbol{W}(\omega)),T_{m}(\theta_{T_{j}}\boldsymbol{W}(\omega))]})\bigg)\nonumber\\
&~~~~~~~~~~~~~+C(\mu+\epsilon)\left(1+\|y,(y^{\eta})^{\prime}\|_{W,2\alpha,0,[T_{i-1}(\theta_{-T}\boldsymbol{W}(\omega)),T_{i}(\theta_{-T}\boldsymbol{W}(\omega))}]\right)\\
&~~~~~~~~~~~~~+Ce^{-\lambda T_{i-1}(\theta_{-T}\boldsymbol{W}(\omega))}\|y^{\eta}_{0}\|.
\end{align*}
Furthermore, analogy with Corollary \ref{corollary 3.1} and Lemma \ref{lemma 3.6}, we obtain
\begin{align}\label{5.3**}
\|y^{\eta},(y^{\eta})^{\prime}\|_{W,2\alpha,0,[T_{i-1}(\theta_{-T}\boldsymbol{W}(\omega)),T_{i}(\theta_{-T}\boldsymbol{W}(\omega))]}\leq R(0,\theta_{-T}\omega)+o(\epsilon).
\end{align}
Due to  $R(0,\theta_{-T}\omega)$ is bounded, so   $\|y^{\eta},(y^{\eta})^{\prime}\|_{W,2\alpha,\gamma,[T_{i-1}(\theta_{-T}\boldsymbol{W}(\omega)),T_{i}(\theta_{-T}\boldsymbol{W}(\omega))]}$ is bounded.
\end{proof}
Based on the above results, we could  construct the Wong-Zakai approximation of the solution on some stopping time intervals. Now, we consider the following mild solutions:
\begin{align}\label{5.22}
  y_{t}=S(t)y_{0}+\int_{0}^{t}S(t-r)F(y_{r})dr+\int_{0}^{t}S(t-r)G(y_{r})d\theta_{-T}\boldsymbol{W}_{r}
\end{align}
and
\begin{align}\label{5.23}
  y^{\eta}_{t}=S(t)y^{\eta}_{0}+\int_{0}^{t}S(t-r)F(y^{\eta}_{r})dr+\int_{0}^{t}S(t-r)G(y^{\eta}_{r})d\theta_{-T}\boldsymbol{W}^{\eta}_{r}.
\end{align}
\begin{theorem}\label{theorem 5.2}
Let $(y,y^{\prime})$ be the solution of equation \eqref{5.22} with initial data $y_{0}\in \mathcal{B}$ and $(y^{\eta},(y^{\eta})^{\prime})$ be the solution of equation \eqref{5.23} with initial data $y^{\eta}_{0}\in \mathcal{B}$, where  a sequence  of stopping times  $(T_{i}(\theta_{-T}\boldsymbol{W}(\omega)))$ is defined above. If $\|y_{0}^{\eta}-y_{0}\|\rightarrow 0$ as $\eta \rightarrow 0$, then
$$d_{2\alpha,0,I_{i}(T)}(y,y^{\eta})\rightarrow 0,\quad \forall i\in\mathbb{Z}^{+}, T>0.$$
Here, for fixed $\omega\in\Omega$, the convergence is uniform for each $i$.
\end{theorem}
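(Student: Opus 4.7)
The plan is to proceed by induction on the index $i$ of the stopping time interval $I_i(T)=[T_{i-1}(\theta_{-T}\boldsymbol{W}),T_{i}(\theta_{-T}\boldsymbol{W})]$, combining the rough path approximation on each interval (Corollary \ref{corollary 5.1*}) with the stability estimates for rough integrals (Lemma \ref{lemma 5.3}) and for the composition with $G$ (Lemma \ref{lemma 5.4}). The smallness condition $\|\theta_{-T}\boldsymbol{W}\|_{\alpha,I_i(T)}\le\mu$ and the uniform bound \eqref{5.3**} on $\|y^\eta,(y^\eta)^\prime\|_{W,2\alpha,0,I_i(T)}$ are the two features that make the argument close; Lemma \ref{lemma 5.2} and Remark \ref{remark 5.1*} further guarantee that the interval endpoints themselves converge as $\eta\to 0$, so that eventually the intervals $I_i(T)$ for $\boldsymbol{W}$ and $\boldsymbol{W}^\eta$ can be compared.

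\textbf{Base step $i=1$.} Writing the mild formulations \eqref{5.22}--\eqref{5.23} and subtracting, the difference splits into three pieces: $S(t)(y_0-y_0^\eta)$, the drift contribution involving $F$, and the rough integral contribution. For the drift contribution, $F$ is globally Lipschitz from $\mathcal{B}_\gamma$ to $\mathcal{B}_{\gamma-\delta}$ and the semigroup estimate \eqref{2.1*} produces a factor $(T_1(\theta_{-T}\boldsymbol{W}))^{1-\delta}\le 1$ multiplied by $C l_F\le C\mu$. For the rough integral contribution, Lemma \ref{lemma 5.3} applied to the pair $(G(y),DG(y)G(y))\in\mathcal{D}^{2\alpha}_{W,\gamma}$ and $(G(y^\eta),DG(y^\eta)G(y^\eta))\in\mathcal{D}^{2\alpha}_{\tilde W,\gamma}$, together with Lemma \ref{lemma 5.4}, produces an estimate of the shape
\begin{align*}
d_{2\alpha,0,I_1(T)}(y,y^\eta)
&\le C\|y_0-y_0^\eta\|\\
&\quad+C\mu\bigl(1+\mathcal R^\eta\bigr)^3\, d_{2\alpha,0,I_1(T)}(y,y^\eta)\\
&\quad+C(1+\mathcal R^\eta)^3\, d_{\alpha,I_1(T)}(\theta_{-T}\boldsymbol{W},\theta_{-T}\boldsymbol{W}^\eta),
\end{align*}
where $\mathcal R^\eta$ denotes the uniform-in-$\eta$ bound for $\|y,y'\|_{W,2\alpha,0,I_1(T)}+\|y^\eta,(y^\eta)'\|_{W,2\alpha,0,I_1(T)}$ supplied by \eqref{5.3**}. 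Choosing $\mu$ small enough that $C\mu(1+\mathcal R^\eta)^3<\frac12$ (this is consistent with the smallness regime already fixed in Section 3), absorb the $d_{2\alpha,0,I_1(T)}(y,y^\eta)$ term into the left-hand side. The remaining two terms tend to zero as $\eta\to 0$: the first by hypothesis on the initial data, and the second by Corollary \ref{corollary 5.1*}.

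\textbf{Inductive step.} Suppose $d_{2\alpha,0,I_m(T)}(y,y^\eta)\to 0$ as $\eta\to 0$ for all $m\le i-1$. In particular, from the definition of $d_{2\alpha,0,I_{i-1}(T)}$ and the embedding $\mathcal B_\gamma\hookrightarrow\mathcal B$, the difference of the endpoint values $\|y_{T_{i-1}(\theta_{-T}\boldsymbol{W})}-y^\eta_{T_{i-1}(\theta_{-T}\boldsymbol{W}^\eta)}\|$ converges to zero; here one also uses Remark \ref{remark 5.1*} to compare $T_{i-1}(\theta_{-T}\boldsymbol{W})$ and $T_{i-1}(\theta_{-T}\boldsymbol{W}^\eta)$ and the H\"older regularity of $y^\eta$ to bridge the small time gap. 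Treating this quantity as the new initial datum, repeat the argument of the base step on $I_i(T)$: the uniformity of the bounds in \eqref{5.3**} across all $i$, the uniform smallness $\|\theta_{-T}\boldsymbol{W}\|_{\alpha,I_i(T)}\le\mu$, $\|\theta_{-T}\boldsymbol{W}^\eta\|_{\alpha,I_i(T)}\le\mu+\epsilon$, and the uniform convergence $d_{\alpha,I_i(T)}(\theta_{-T}\boldsymbol{W},\theta_{-T}\boldsymbol{W}^\eta)\to 0$ from Corollary \ref{corollary 5.1*} yield the same absorption mechanism with constants independent of $i$, which is the source of the stated uniformity.

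\textbf{Main obstacles.} The delicate point is the handover between consecutive intervals: one must convert control in $d_{2\alpha,0,I_{i-1}(T)}$ at the right endpoint into a genuine $\mathcal{B}$-distance between the initial values feeding the next interval, while simultaneously comparing stopping times built from $\boldsymbol{W}$ and from $\boldsymbol{W}^\eta$ (which do not coincide for $\eta>0$). This is handled by combining Lemma \ref{lemma 5.2} and Remark \ref{remark 5.1*} with the uniform H\"older estimates on $y^\eta$ from \eqref{5.3**}; the small residual caused by the stopping-time mismatch is absorbed into the error in the next step. A second subtle point is to verify that the constants $C$ in Lemmas \ref{lemma 5.3} and \ref{lemma 5.4} indeed depend only on the uniform bounds $\mu$, $\mathcal R^\eta$ and $C_G$, which is why no blow-up in $i$ occurs and the convergence is uniform for each $i$ in the sense claimed in the statement.
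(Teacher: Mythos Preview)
Your inductive scheme differs from the paper's route and, as written, has a genuine gap in the uniformity claim. The paper does \emph{not} restart the mild solution at each $T_{i-1}$; instead it keeps the full formula from time $0$, splits each integral as $\int_0^t=\sum_{m=1}^{i-1}\int_{I_m(T)}+\int_{T_{i-1}}^t$, and uses the semigroup decay $\|S(t-T_m)\|\le Ce^{-\lambda(T_{i-1}-T_m)}$ to obtain a recursive inequality of the form
\[
d_{2\alpha,0,I_i(T)}(y,y^\eta)\le \frac{C\mu}{1-C\mu}\sum_{m=1}^{i-1}e^{-\lambda(T_{i-1}-T_m)}\bigl(d_{\alpha,I_m(T)}(\boldsymbol{W},\boldsymbol{W}^\eta)+d_{2\alpha,0,I_m(T)}(y,y^\eta)\bigr)+\cdots,
\]
then applies a discrete Gr\"onwall lemma and shows that the resulting product $\prod_{m=1}^{i-1}\bigl(1+\tfrac{C\mu}{1-C\mu}e^{-m\lambda(d-\epsilon)}\bigr)$ converges uniformly in $i$. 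The exponential weights from the semigroup are precisely what keep the bound independent of $i$.

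Your induction loses these weights: restarting at $T_{i-1}$ with new initial datum $y_{T_{i-1}}-y^\eta_{T_{i-1}}$ produces, after absorption, an estimate $d_{2\alpha,0,I_i(T)}(y,y^\eta)\le K\,d_{2\alpha,0,I_{i-1}(T)}(y,y^\eta)+K'\,d_{\alpha,I_i(T)}(\boldsymbol{W},\boldsymbol{W}^\eta)$ with some $K\ge 1$ (the controlled rough path norm dominates the sup norm, so the handover constant is not contractive). Iterating gives a factor $K^i$, which establishes convergence for each fixed $i$ but not the stated uniformity in $i$; and uniformity is exactly what the upper semi-continuity argument in Section 5.4 needs, since there $t$ (hence $i$) is unbounded. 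To repair this you would have to re-insert the semigroup decay across intervals, at which point you are essentially back to the paper's summed inequality.

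A secondary remark: the obstacle you flag about comparing stopping times built from $\boldsymbol{W}$ versus $\boldsymbol{W}^\eta$ is not actually present. Both solutions are estimated on the \emph{same} intervals $I_i(T)=[T_{i-1}(\theta_{-T}\boldsymbol{W}),T_i(\theta_{-T}\boldsymbol{W})]$; the only ingredient needed for $\boldsymbol{W}^\eta$ is $\|\theta_{-T}\boldsymbol{W}^\eta\|_{\alpha,I_i(T)}\le\mu+\epsilon$, which Corollary \ref{corollary 5.1*} already supplies. Lemma \ref{lemma 5.2} and Remark \ref{remark 5.1*} are used elsewhere (for the convergence of absorbing-set radii), not here.
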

Its proof can be found in Appendix \ref{Appendix C}.
\subsection{The upper semi-continuity of random attractor}
In this subsection, we shall construct the upper semi-continuity for the random attractor $\mathcal{A}_{\eta}$  of the approximate system $\varphi_\eta$ which is generated by the solutions $y^{\eta}$. Since the approximated noise can be regarded  as a smooth rough path. For the metric dynamical system in section 4, $\boldsymbol{W}^\eta$  has the same properties with $\boldsymbol{W}$, so those lemmas for $\boldsymbol{W}$  also hold for $\boldsymbol{W}^\eta$.    Then the existence of $\mathcal{A}_{\eta}$ can be obtained as  $\mathcal{A}$, we do not claim it any more. For the upper semicontinuity of attractor  $\mathcal{A}_{\eta}$, we refer to \cite{MR3270946,MR4266114}.
\begin{theorem}
For any $\omega\in\Omega$, we have
$$\lim_{\eta\rightarrow 0}dist(\mathcal{A}_{\eta},\mathcal{A})=0.$$
\end{theorem}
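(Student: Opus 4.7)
The plan is to establish upper semicontinuity via the classical contradiction scheme for random attractors, using the pathwise convergence of solutions from Theorem \ref{theorem 5.2} together with a uniform-in-$\eta$ absorption estimate. First I would show that the family $\{\mathcal{A}_{\eta}(\omega)\}_{\eta\in(0,\eta_{0}]}$ is contained in a single tempered random set $K(\omega)$ for $\eta_{0}$ sufficiently small. Since the radius $R_{\eta}(i,\omega)$ of the $\boldsymbol{W}^{\eta}$-absorbing set defined through \eqref{3.61} is built from the stopping times $T_{m}(\theta_{T_{i-1}(\boldsymbol{W}^{\eta}(\omega))}\boldsymbol{W}^{\eta}(\omega))$, Remark \ref{remark 5.1*} gives convergence of these stopping times, while the uniform lower bound $d^{\eta}\geq d_{*}>0$ available from \eqref{4.1**} (and the comment following it) guarantees that the geometric tail of the series \eqref{3.61} is dominated uniformly in $\eta$. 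A dominated-convergence argument then yields $R_{\eta}(0,\omega)\to R(0,\omega)$, and a common finite bound $R_{*}(\omega):=\sup_{\eta\leq\eta_{0}}R_{\eta}(0,\omega)$, transported by $\theta$, defines the required tempered envelope $K$.

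Next I would argue by contradiction: if the conclusion fails at some $\omega$, there exist $\delta>0$, $\eta_{n}\downarrow 0$ and $a_{n}\in\mathcal{A}_{\eta_{n}}(\omega)$ with $\mathrm{dist}(a_{n},\mathcal{A}(\omega))\geq\delta$. By invariance of $\mathcal{A}_{\eta_{n}}$ under $\varphi_{\eta_{n}}$, for every fixed $t>0$ I can write $a_{n}=\varphi_{\eta_{n}}(t,\theta_{-t}\omega,b_{n})$ with $b_{n}\in\mathcal{A}_{\eta_{n}}(\theta_{-t}\omega)\subset K(\theta_{-t}\omega)$. The one-step smoothing estimate used in the proof of Theorem \ref{theorem 3.2} shows that $\mathcal{A}_{\eta_{n}}(\theta_{-t}\omega)$ is actually bounded in $\mathcal{B}_{\gamma}$ with a bound uniform in $n$; since $\mathcal{B}_{\gamma}\hookrightarrow\hookrightarrow\mathcal{B}$ by assumption (\textbf{A}), along a subsequence $b_{n}\to b$ strongly in $\mathcal{B}$ with $b\in K(\theta_{-t}\omega)$.

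I would then pass to the limit using Theorem \ref{theorem 5.2}: decomposing $[0,t]$ into finitely many stopping time intervals $I_{i}(T)$ and invoking the uniform (in $i$) controlled-rough-path convergence $d_{2\alpha,0,I_{i}(T)}(y,y^{\eta})\to 0$ with driver $\theta_{-t}\boldsymbol{W}^{\eta_{n}}\to\theta_{-t}\boldsymbol{W}$ and initial data $b_{n}\to b$, one concludes $\varphi_{\eta_{n}}(t,\theta_{-t}\omega,b_{n})\to\varphi(t,\theta_{-t}\omega,b)$ in $\mathcal{B}$. Because $K$ is tempered and, by Remark \ref{remark 4.1*}, $\mathcal{A}$ attracts every element of $\hat{\mathcal{D}}$, the parameter $t$ can be chosen in advance so large that $\sup_{b\in K(\theta_{-t}\omega)}\mathrm{dist}(\varphi(t,\theta_{-t}\omega,b),\mathcal{A}(\omega))<\delta/3$, contradicting $\mathrm{dist}(a_{n},\mathcal{A}(\omega))\geq\delta$.

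The main obstacle is the first step, securing the uniform-in-$\eta$ tempered envelope $K(\omega)$. This is more than pointwise stopping-time convergence: one must verify that the infinite series defining $R_{\eta}$ converges uniformly in $\eta$, which reduces to the inequality \eqref{3.58} holding for $\boldsymbol{W}^{\eta}$ with constants bounded away from the critical threshold $\tfrac{2(\log(1+k_{1}(\mu))+\nu)}{\lambda}$ uniformly in small $\eta$. The ergodic identity \eqref{4.1**} together with the uniform-in-$\eta$ lower bound $d^{\eta}\geq d_{*}$ (mentioned after \eqref{4.1**} and traceable to \cite{caogao}) supplies exactly this input; once it is in hand, the remainder of the proof is the standard upper semicontinuity scheme for random attractors combined with the pathwise convergence Theorem \ref{theorem 5.2}.
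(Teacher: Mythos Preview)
Your proposal is correct and follows essentially the same route as the paper. Both arguments rest on the same three ingredients: pathwise convergence of the cocycles $\varphi_{\eta}\to\varphi$ via Theorem \ref{theorem 5.2} and Corollary \ref{corollary 5.1*}, uniform-in-$\eta$ boundedness of the absorbing radii $R_{\eta}$ from the uniform lower bound $d^{\eta}\geq d_{*}$, and precompactness of $\bigcup_{\eta}\mathcal{A}_{\eta}(\omega)$ in $\mathcal{B}$ via the smoothing into $\mathcal{B}_{\gamma}$ and the compact embedding.

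The only difference is packaging: the paper verifies these three hypotheses and then invokes the abstract upper-semicontinuity criterion of \cite{MR3270946,MR4266114}, whereas you unpack that criterion into the explicit contradiction argument (invariance, extract $b_{n}\to b$, pass to the limit, contradict attraction). Your version is self-contained and makes the role of the compact embedding in the precompactness step more transparent; the paper's version is shorter but relies on the reader knowing the cited framework. Substantively there is nothing to separate them.
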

\begin{proof}
For any $\{x_{\eta_n}\}_{n\in\mathbb{Z}^{+}}\in\mathcal{B}$, and $x_{\eta_n}\rightarrow x\in\mathcal{B}, t\geq 0$, we have that
\begin{align*}
 &\|\varphi_{\eta_n}(t,\theta_{-t}\boldsymbol{W}^{\eta_n}(\omega),x_{\eta_n})-\varphi(t,\theta_{-t}\boldsymbol{W}(\omega),x)\|\nonumber\\
 &\leq \|S(t)(x-x_{\eta_n})\|+\left\|\int_{0}^{t}S(t-r)(F(y_{r})-F(y^{\eta_n}_{r}))dr\right\|\nonumber\\
 &~~+\left\|\int_{0}^{t}S(t-r)G(y_{r})d\theta_{-t}\boldsymbol{W}_{r}(\omega)-\int_{0}^{t}S(t-r)G(y^{\eta_n}_{r})d\theta_{-t}\boldsymbol{W}^{\eta_n}_{r}(\omega)\right\|\nonumber\\
 &\leq Ce^{-\lambda t}\|x-x^{\eta_n}\|+\left\|\int_{0}^{t}S(t-r)(F(y_{r})-F(y^{\eta_n}_{r}))dr\right\|\nonumber\\
 &~~+\left\|\int_{0}^{t}S(t-r)G(y_{r})d\theta_{-t}\boldsymbol{W}_{r}(\omega)-\int_{0}^{t}S(t-r)G(y^{\eta_{n}}_{r})d\theta_{-t}\boldsymbol{W}^{\eta_n}_{r}(\omega)\right\|.\nonumber
\end{align*}
We can always find $i\in \mathbb{Z}$ such that  $t\in I_{i}(t)$. Then by Theorem \ref{theorem 5.2} and Corollary \ref{corollary 5.1*} we have
\begin{align*}
&\|\varphi_{\eta}(t,\theta_{-t}\boldsymbol{W}^{\eta_n}(\omega),x_{\eta_n})-\varphi(t,\theta_{-t}\boldsymbol{W}(\omega),x)\|\\
&\leq Ce^{-\lambda t}\|x-x_{\eta_n}\|+\sum_{m=1}^{i-1}\left\|\int_{T_{m-1}(\theta_{-t}\boldsymbol{W}(\omega))}^{T_{m}(\theta_{-t}\boldsymbol{W}(\omega))}S(t-r)(F(y_{r})-F(y_{r}^{\eta_n}))d_{r}\right\|\nonumber\\
&+\left\|\int_{T_{i-1}(\theta_{-t}\boldsymbol{W}(\omega))}^{t}S(t-r)(F(y_{r})-F(y_{r}^{\delta_n}))d_{r}\right\|\\
&+\sum_{m=1}^{i-1}\left\|\int_{T_{m-1}(\theta_{-t}\boldsymbol{W}(\omega))}^{T_{m}(\theta_{-t}\boldsymbol{W}(\omega))}S(t-r)G(y_{r})d\theta_{-t}\boldsymbol{W}_{r}(\omega)-\int_{T_{m-1}(\theta_{-t}\boldsymbol{W}(\omega))}^{T_{m}(\theta_{-t}\boldsymbol{W}(\omega))}S(t-r)G(y^{\eta_n}_{r})d\theta_{-t}\boldsymbol{W}^{\eta_n}_{r}(\omega)\right\|\\
&+\left\|\int_{T_{m-1}(\theta_{-t}\boldsymbol{W}(\omega))}^{t}S(t-r)G(y_{r})d\theta_{-t}\boldsymbol{W}_{r}(\omega)-\int_{T_{m-1}(\theta_{-t}\boldsymbol{W}(\omega))}^{t}S(t-r)G(y^{\delta_n}_{r})d\theta_{-t}\boldsymbol{W}^{\eta_n}_{r}(\omega)\right\|\\
&\leq Ce^{-\lambda t}\|x-x_{\eta_n}\|+\sum_{m=1}^{i-1}C\mu e^{-\lambda(T_{i-1}-T_{m}(\theta_{-t}W(\omega)))}\bigg(d_{\alpha,I_{m}(t)}(\theta_{-t}\boldsymbol{W}(\omega),\theta_{-t}\boldsymbol{W}^{\eta_n}(\omega))\\
&~~~~~~~~~~~~~~~~~~~~~~~~+d_{2\alpha,0,I_{m}(t)}(y,y^{\eta_n})\bigg)\nonumber\\
& +C\mu \bigg(d_{\alpha,I_{i}(T)}(\theta_{-t}\boldsymbol{W}(\omega),\theta_{-t}\boldsymbol{W}^{\eta_n}(\omega))+d_{2\alpha,0,I_{i}(T)}(y,y^{\eta_n})\bigg)\rightarrow 0,\quad \eta_{n}\rightarrow 0.
\end{align*}
Thus, we obtain the convergence of  random dynamical systems $\varphi_{\eta}$.  We still need to illustrate that the random  absorbing sets $B_{\eta}(\omega)$  are uniform bounded and $\cup_{\eta\in(0,1]}\mathcal{A}_{\eta}$ is precompact.  Remark \ref{remark 4.1*} implies that $B_{\eta}(\omega)$ are also random  absorbing sets, and the version of  Lemma \ref{lemma 3.6} for $B_{\eta}$ and $d^{\eta}$ have a uniform lower bound  show that its radius is uniform bounded.  Due to  Lemma \ref{lemma 5.2} and Remark \ref{remark 5.1*}, we have
$$\lim_{\eta\rightarrow 0}R_{\eta}(0,\theta_{-t}\omega)=R(0,\theta_{-t}\omega).$$
Thus, $\cup_{\eta\in(0,1]}\mathcal{A}_{\eta}$ is precompact.
\end{proof}

\appendix
\renewcommand{\appendixname}{Appendix~\Alph{section}}
\section{Proof of Lemma 3.4}\label{appendix A}
\setcounter{equation}{0}

\renewcommand{\theequation}{A.\arabic{equation}}
In this subsection, for simplicity,  we omit $\omega$ in $W(\omega)$ and let $I_{i,j}=[T_{i-1}(\theta_{T_j}\boldsymbol{W}),\\T_{i}(\theta_{T_j}\boldsymbol{W})],i,j\in \mathbb{Z}$, we shall use the cocycle property of stopping times and the length $|I_{i,j}|$ of $I_{i,j},i,j\in \mathbb{Z}$ less than $1$ to prove the next lemmas.
\begin{lemma}\label{lemma A.1}
Let $y_0\in\mathcal{B}_\gamma,\gamma\geq 0$. Then $\left(S(\cdot)y_0,0\right)\in\mathcal{D}_{W,\gamma,I_{i,j}}^{2\alpha}$ and
\begin{equation}
\left\|S(\cdot)y_0,0\right\|_{W,2\alpha,\gamma,I_{i,j}}\leq Ce^{-\lambda T_{i-1}(\theta_{T_{i}}\boldsymbol{W})}\|y_{0}\|_{\gamma},
\end{equation}
where the  constant $C$ only depends on the semigroup  $S$.
\end{lemma}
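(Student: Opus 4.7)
\smallskip

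\noindent\textbf{Proof proposal for Lemma A.1.}
The plan is to unpack the five pieces of the controlled rough path norm in Definition \ref{def 2.4} for the specific pair $(y_t,y_t')=(S(t)y_0,0)$ and bound each one by $Ce^{-\lambda T_{i-1}(\theta_{T_j}\boldsymbol{W})}\|y_0\|_\gamma$ using only the analytic semigroup estimates \eqref{2.1*} and \eqref{2.2*}. Since the Gubinelli derivative is chosen to be zero, the terms $\|y'\|_{\infty,\gamma-\alpha}$ and $\interleave y'\interleave_{\alpha,\gamma-2\alpha}$ vanish trivially, and the remainder reduces to
\begin{equation*}
R^{y}_{s,t}=y_{s,t}-y'_{s}W_{s,t}=y_{s,t}=(S(t-s)-\mathrm{Id})S(s)y_0,
\end{equation*}
so only three quantities actually need to be estimated.

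For the supremum part I would apply \eqref{2.1*} with $\sigma=0$ to obtain $\|S(t)y_0\|_\gamma\leq Ce^{-\lambda t}\|y_0\|_\gamma$, and then use $t\geq T_{i-1}(\theta_{T_j}\boldsymbol{W})$ on $I_{i,j}$ to get $\|y\|_{\infty,\gamma,I_{i,j}}\leq Ce^{-\lambda T_{i-1}(\theta_{T_j}\boldsymbol{W})}\|y_0\|_\gamma$. For the two remainder seminorms, the key inequality is \eqref{2.2*}, which, after writing $R^{y}_{s,t}=(S(t-s)-\mathrm{Id})S(s)y_0$ and invoking \eqref{2.1*} on the factor $S(s)y_0$, yields
\begin{equation*}
\|R^{y}_{s,t}\|_{\gamma-\theta}\leq C(t-s)^{\theta}\|S(s)y_0\|_{\gamma}\leq C(t-s)^{\theta}e^{-\lambda s}\|y_0\|_\gamma
\end{equation*}
for $\theta\in\{\alpha,2\alpha\}$. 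Dividing by $(t-s)^{\theta}$ and taking the supremum over $T_{i-1}(\theta_{T_j}\boldsymbol{W})\leq s\leq t\leq T_{i}(\theta_{T_j}\boldsymbol{W})$ gives the required exponential factor $e^{-\lambda T_{i-1}(\theta_{T_j}\boldsymbol{W})}$ for both $\interleave R^{y}\interleave_{\alpha,\gamma-\alpha,I_{i,j}}$ and $\interleave R^{y}\interleave_{2\alpha,\gamma-2\alpha,I_{i,j}}$.

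Summing the three contributions absorbs all constants into a single $C=C(S,\alpha)$, and since $|I_{i,j}|\leq 1$ no length-dependent factor survives. This also confirms membership $(S(\cdot)y_0,0)\in\mathcal{D}_{W,\gamma,I_{i,j}}^{2\alpha}$ in passing, because the five component norms are finite. There is no genuine obstacle: the statement is a direct verification, and the only mild subtlety is keeping track of the indices $\gamma,\gamma-\alpha,\gamma-2\alpha$ when applying \eqref{2.1*} and \eqref{2.2*}, so that the loss of regularity matches the power of $(t-s)$ we need on the left. The exponential decay is built in because the semigroup estimate \eqref{2.1*} carries the factor $e^{-\lambda t}$ uniformly in $t\in[0,T]$, and every term in $\|S(\cdot)y_0,0\|_{W,2\alpha,\gamma,I_{i,j}}$ involves $S(s)y_0$ with $s\geq T_{i-1}(\theta_{T_j}\boldsymbol{W})$.
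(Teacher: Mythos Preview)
Your proposal is correct and follows exactly the approach the paper has in mind: the paper's proof simply refers to \cite[Lemma 3.2]{hesse2021global} and notes that replacing the plain semigroup bounds \eqref{2.1}--\eqref{2.2} by the exponentially decaying versions \eqref{2.1*}--\eqref{2.2*} is what produces the factor $e^{-\lambda T_{i-1}(\theta_{T_j}\boldsymbol{W})}$. Your term-by-term verification using $R^y_{s,t}=(S(t-s)-\mathrm{Id})S(s)y_0$ together with \eqref{2.1*} and \eqref{2.2*} is precisely that argument written out in full.
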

\begin{proof}
Its proof is same as \cite[Lemma 3.2]{hesse2021global},  we can use the estimates \eqref{2.1*}-\eqref{2.2*} instead of \eqref{2.1}-\eqref{2.2} to prove this lemma. Thus, compared with \cite[Lemma 3.2]{hesse2021global}, $e^{-\lambda T_{i-1}(\theta_{T_{i}}\boldsymbol{W})}$ will emerge here. We omit the details here.
\end{proof}
\begin{lemma}\label{lemma A.2}
Let $\left(y,y^\prime\right)\in\mathcal{D}^{2\alpha}_{W,\gamma}$ solve the equation \eqref{3.2} on $[0,T]$. Then $\left(\int_{0}^{t}S(t-r)F(y_r)dr,0\right)_{t\in I_{i,j}}\\\in\mathcal{D}^{2\alpha}_{W,\gamma,I_{i,j}}$ and satisfies  the following bounds
\begin{align*}
\left\|\int_{0}^{\cdot}S(\cdot-r)F(y_r)dr,0\right\|_{W,2\alpha,\gamma,I_{i,j}}&\leq C\mu\sum_{m=1}^{i-1} e^{-\lambda(T_{i-1}(\theta_{T_{j}}\boldsymbol{W})-T_{m}(\theta_{T_{j}}\boldsymbol{W})) }(1+\|y\|_{\infty,\gamma,I_{m,j}})\\
 &~~~~+C\mu(1+\|y\|_{\infty,\gamma,I_{i,j}}).
\end{align*}
\end{lemma}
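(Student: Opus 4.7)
The plan is to reduce the controlled-rough-path norm, observe that since the Gubinelli derivative vanishes only three quantities must be controlled ($\|y\|_{\infty,\gamma,I_{i,j}}$ and the two remainder seminorms $\interleave R^y\interleave_{\alpha,\gamma-\alpha,I_{i,j}}$, $\interleave R^y\interleave_{2\alpha,\gamma-2\alpha,I_{i,j}}$, since $R^y_{s,t}=y_{s,t}$), and then estimate each one by decomposing the Duhamel integral according to the grid of stopping times. Throughout I will exploit the global Lipschitz bound $\|F(y_r)\|_{\gamma-\delta}\leq \|F(0)\|_{\gamma-\delta}+l_F\|y_r\|_\gamma\leq C\mu(1+\|y\|_{\infty,\gamma,I_{m,j}})$ on each block $I_{m,j}$, and the exponentially decaying analytic semigroup estimate $\|S(t-r)\|_{\mathcal{L}(\mathcal{B}_{\gamma-\delta},\mathcal{B}_\gamma)}\leq Ce^{-\lambda(t-r)}(t-r)^{-\delta}$ from (2.1*).

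For the supremum term, I would write, for $t\in I_{i,j}$,
\begin{equation*}
\int_0^t S(t-r)F(y_r)\,dr \;=\;\sum_{m=1}^{i-1}\int_{T_{m-1}(\theta_{T_j}\boldsymbol{W})}^{T_m(\theta_{T_j}\boldsymbol{W})}S(t-r)F(y_r)\,dr\;+\;\int_{T_{i-1}(\theta_{T_j}\boldsymbol{W})}^{t}S(t-r)F(y_r)\,dr.
\end{equation*}
On the $m$-th block, for every $r\leq T_m(\theta_{T_j}\boldsymbol{W})$ the distance $t-r$ is bounded below by $T_{i-1}(\theta_{T_j}\boldsymbol{W})-T_m(\theta_{T_j}\boldsymbol{W})$, so the exponential factor may be pulled out as $e^{-\lambda(T_{i-1}(\theta_{T_j}\boldsymbol{W})-T_m(\theta_{T_j}\boldsymbol{W}))}$; the remaining integrable singularity $\int_{T_{m-1}}^{T_m}(t-r)^{-\delta}\,dr$ is uniformly bounded since $|I_{m,j}|\leq 1$ and $\delta<1$. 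The tail $\int_{T_{i-1}}^t$ has no exponential gain but its length is at most $|I_{i,j}|\leq 1$, so that piece contributes the additive $C\mu(1+\|y\|_{\infty,\gamma,I_{i,j}})$.

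For the two remainder bounds I use the standard splitting
\begin{equation*}
y_{s,t}\;=\;\int_s^t S(t-r)F(y_r)\,dr\;+\;\bigl(S(t-s)-\mathrm{Id}\bigr)\int_0^s S(s-r)F(y_r)\,dr,
\end{equation*}
for $s<t$ in $I_{i,j}$. The first term is bounded directly by (2.1*) in the $\mathcal{B}_{\gamma-\theta}$-norm (with $\theta\in\{\alpha,2\alpha\}$) and costs only a factor $(t-s)^\theta$, using $(t-s)\leq |I_{i,j}|\leq 1$ to absorb the discrepancy between $1-\delta$ and $\theta$; this yields a contribution of the shape $C\mu(1+\|y\|_{\infty,\gamma,I_{i,j}})$. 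The second term is handled by the interpolation estimate (2.2*) $\|S(t-s)-\mathrm{Id}\|_{\mathcal{L}(\mathcal{B}_\gamma,\mathcal{B}_{\gamma-\theta})}\leq C(t-s)^\theta$, after which the factor in $\mathcal{B}_\gamma$ is exactly the supremum already estimated in the previous step, so the full exponentially decaying sum reappears.

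The main obstacle is bookkeeping rather than analysis: one must verify that the decomposition at the left endpoint $s\in I_{i,j}$ of the remainder still yields the same sum $\sum_{m=1}^{i-1}e^{-\lambda(T_{i-1}-T_m)}(1+\|y\|_{\infty,\gamma,I_{m,j}})$ that appears in the final bound, and that the passage from the $\mathcal{B}_\gamma$-norm to the lower norms $\mathcal{B}_{\gamma-\alpha},\mathcal{B}_{\gamma-2\alpha}$ via (2.2*) does not spoil the exponential prefactor; this works because (2.2*) is applied only on the short interval $[s,t]\subset I_{i,j}$ of length at most one, while the exponential decay is encoded in the already-bounded supremum over $[0,s]$.
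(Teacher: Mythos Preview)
Your proposal is correct and follows essentially the same strategy as the paper: split the Duhamel integral at the stopping times, extract the exponential decay $e^{-\lambda(T_{i-1}-T_m)}$ on each past block for the supremum bound, and handle the H\"older seminorms via the identity $y_{s,t}=\int_s^t S(t-r)F(y_r)\,dr+(S(t-s)-\mathrm{Id})\int_0^s S(s-r)F(y_r)\,dr$ together with the smoothing estimate \eqref{2.2*}. The only cosmetic difference is that the paper treats the past blocks and the current block separately throughout (and records the current-block H\"older estimate in Remark~\ref{R-A.1}), whereas you feed the already-established supremum bound directly into the $(S(t-s)-\mathrm{Id})$ term; the content is the same.
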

\begin{proof}
Since the Gubinelli derivative of the deterministic integral is zero, then
\begin{align*}
    \left\|\int_0^\cdot S(\cdot-r)F(y_r)dr,0\right\|_{W,2\alpha,\gamma}
   & = \left\|\int_0^\cdot S(\cdot-r)F(y_s)d r\right\|_{\infty,\gamma}
    + \left\| \int_0^\cdot S(\cdot-r)F(y_s)d r \right\|_{\alpha,\gamma-\alpha}\\
    &~~~~+ \left\| \int_0^\cdot S(\cdot-r)F(y_s)d r \right\|_{2\alpha,\gamma-2\alpha}.
\end{align*}
Thanks to the additivity of the deterministic integral, for $t\in I_{i,j}$ the following splitting holds true
\begin{align*}
\int_{0}^{t}S(t-r)F(y_r)dr=\sum_{m=1}^{i-1}\int_{I_{m,j}}S(t-r)F(y_r)dr+\int_{T_{i-1}(\theta_{T_{j}}\boldsymbol{W})}^tS(t-r)F(y_r)dr.
\end{align*}
Similar to \cite[Lemma 3.3]{hesse2021global}, by assumption \textbf{(F)}, $|I_{i,j}|\leq 1$ and \eqref{2.1*}-\eqref{2.2*} we have
\begin{equation}\label{A.3}
\left\|\int_{T_{i-1}(\theta_{T_{j}}\boldsymbol{W})}^{\cdot}S(\cdot-r)F(y_r)dr,0\right\|_{W,2\alpha,\gamma}\leq C\mu(1+\|y\|_{\infty}).
\end{equation}
For $s<t\in I_{i,j}$, let
\begin{align*}
 s^\prime:=s-T_{i-1}(\theta_{T_{j}}\boldsymbol{W}),\quad t^\prime:=t-T_{i-1}(\theta_{T_{j}}\boldsymbol{W}).
 \end{align*}
 Then, $s^\prime<t^\prime\in [0,T(\theta_{T_{i+j-1}}\boldsymbol{W})]$ and $T(\theta_{T_{i+j-1}}\boldsymbol{W})\leq 1$. Using the change of variable we obtain
 \begin{small}
 \begin{align*}
&\int_{I_{m,j}}S(t-r)F(y_r)dr=\int_{0}^{T(\theta_{T_{m+j-1}}\boldsymbol{W})}S(t-r-T_{m-1}(\theta_{T_j}\boldsymbol{W}))F(y_{r+T_{m-1}(\theta_{T_j}\boldsymbol{W})})dr\\
&=S(t^\prime+T_{i-1}(\theta_{T_j}\boldsymbol{W})-T_{m}(\theta_{T_j}\boldsymbol{W}))\int_{0}^{T(\theta_{T_{m+j-1}}\boldsymbol{W})}S(T(\theta_{T_{m+j-1}}\boldsymbol{W})-r)F(y_{r+T_{m-1}(\theta_{T_j}\boldsymbol{W})})dr.
 \end{align*}
 \end{small}
 Thus, by \eqref{2.1*}-\eqref{2.2*} and assumption \textbf{(F)} we obtain
\begin{align}\label{3.14}
&\sum_{m=1}^{i-1}\left\|\int_{T_{m-1}(\theta_{T_{j}}\boldsymbol{W})}^{T_{m}(\theta_{T_{j}}\boldsymbol{W})}S(t-r)F(y_{r})dr\right\|_{\gamma}\leq\sum_{m=1}^{i-1}\bigg(\left\|S( t^{\prime}\!+\!T_{i-1}(\theta_{T_{j}}\!\boldsymbol{W})\!-\!T_{m}(\theta_{T_{j}}\!\boldsymbol{W}))\right\|_{\mathcal{L}(\mathcal{B}_{\gamma})}\nonumber\\
&~~~~~~~~~~~~~~~~~~~~~ \times\left\|\int_{0}^{T(\theta_{T_{\!m\!+\!j\!-\!1}}\!\boldsymbol{W})}S(T(\theta_{T_{\!m\!+\!j\!-\!1}}\!\boldsymbol{W})\!-\!r)F(y_{\!r\!+\!T_{m\!-\!1}(\theta_{T_{j}}\!\boldsymbol{W})})dr \right\|_{\gamma}\bigg)\nonumber\\
&~~~~~~~~~~~~~~~~~~~~~\leq C\mu\sum_{m=1}^{i-1} e^{-\lambda(T_{i-1}(\theta_{T_{j}}\boldsymbol{W})-T_{m}(\theta_{T_{j}}\boldsymbol{W})) }(1+\|y\|_{\infty,\gamma,I_{m,j}}).
\end{align}
For $\theta\in\{\alpha,2\alpha\}, m\in \{1,\cdots,i-1\} $, by the change of variable, \eqref{2.1*}-\eqref{2.2*} and assumption \textbf{(F)} we have that
\begin{small}
\begin{align}\label{A.5}
&\left\|\int_{I_{m,j}}S(t-r)F(y_{r})dr-\int_{I_{m,j}}S(s-r)F(y_{r})dr\right\|_{\gamma-\theta}\nonumber\\
&~~=  \Bigg\|\left(S(t^{\prime}-s^{\prime})-Id\right)S(s^{\prime}+T_{i-1}(\theta_{T_{j}}\boldsymbol{W})-T_{m}(\theta_{T_{j}}\boldsymbol{W}))\nonumber\\
&~~~~~~~~~\times\int_{0}^{T(\theta_{T_{m+j-1}}\boldsymbol{W})}S(T(\theta_{T_{m+j-1}}\boldsymbol{W})-r)F(y_{r+T_{m-1}(\theta_{T_{j}}\boldsymbol{W})})dr \nonumber\Bigg\|_{\gamma-\theta}\nonumber\\
&~~\leq C|t-s|^{\theta}e^{-\lambda(T_{i-1}(\theta_{T_{j}}\boldsymbol{W})-T_{m}(\theta_{T_{j}}\boldsymbol{W}))}\nonumber\\
&~~~~~~~~~\times\Bigg\|
\int_{0}^{T(\theta_{T_{m+j-1}}\boldsymbol{W})}S(T(\theta_{T_{m+j-1}}\boldsymbol{W})-r)F(y_{r+T_{m-1}(\theta_{T_{j}}\boldsymbol{W})})dr \Bigg\|_{\gamma}\nonumber\\
&~~\leq C\mu|t-s|^{\theta}e^{-\lambda(T_{i-1}(\theta_{T_{j}}\boldsymbol{W})-T_{m}(\theta_{T_{j}}\boldsymbol{W}))}(1+\|y\|_{\infty,\gamma,[T_{m-1}(\theta_{T_{j}}\boldsymbol{W}),T_{m}(\theta_{T_{j}}\boldsymbol{W})]}).
\end{align}
\end{small}
Thus, \eqref{3.14}-\eqref{A.5} show that
\begin{align}\label{A.6}
&\left\|\int_{0}^{T_{i-1}(\theta_{T_j}(\boldsymbol{W})}S(t-r)F(y_{r})dr,0\right\|_{W,2\alpha,\gamma,I_{i,j}}\nonumber\\
&~~~~~~~~~~~~~~~~~~\leq C\mu\sum_{i=1}^{i-1}e^{-\lambda(T_{i-1}(\theta_{T_{j}}\boldsymbol{W})-T_{m}(\theta_{T_{j}}\boldsymbol{W})) }(1+\|y\|_{\infty,\gamma,I_{m,j}}).
\end{align}
Furthermore, combining with \eqref{A.3}, \eqref{A.6} we obtain
\begin{align}\label{3.15*}
 &\left\|\!\int_{0}^{\cdot}\!S(\cdot-r)F(y_{r})dr,0\right\|_{W,2\alpha,\gamma,I_{i,j}}\nonumber\\
 &~~\leq C\mu\sum_{m=1}^{i-1} e^{-\lambda(T_{i-1}(\theta_{T_{j}}\boldsymbol{W})-T_{m}(\theta_{T_{j}}\boldsymbol{W})) }(1+\|y\|_{\infty,\gamma,I_{m,j}})\nonumber\\
 &~~~~~~+C\mu(1+\|y\|_{\infty,\gamma,I_{i,j}}),
\end{align}
where the above constant $C>0$ only depends on $S$.
\end{proof}
\begin{remark}\label{R-A.1}
For  $\int_{T_{i-1}(\theta_{T_{j}}\boldsymbol{W})}^{\cdot}S(\cdot-r)F(y_r)dr$,  we do not require that the condition  $\delta\in [2\alpha,1)$, our estimate  is different from \cite{hesse2021global}. Indeed, for any $s<t\in I_{i,j}$ we have
\begin{align*}
&\int_{T_{i-1}(\theta_{T_{j}}\boldsymbol{W})}^{t}S(t-r)F(y_r)dr	-\int_{T_{i-1}(\theta_{T_{j}}\boldsymbol{W})}^{s}S(s-r)F(y_r)dr\nonumber\\
&~~~=\int_{s}^{t}S(t-r)F(y_r)dr+(S(t-s)-Id)\int_{T_{i-1}(\theta_{T_{j}}\boldsymbol{W})}^{s}S(s-r)F(y_r)dr,
\end{align*}
using this identity  for $\theta\in\{\alpha,2\alpha\}$, we obtain
\begin{align*}
	\left\|\int_{s}^{t}S(t-r)F(y_r)dr\right\|_{\gamma-\theta}&\leq C\mu\int_{s}^{t}(t-r)^{(\theta-\delta)\wedge 0}(1+\|y\|_{\infty,\gamma,I_{i,j}})dr\\
	&\leq C\mu (t-s)^{(1+\theta-\delta)\wedge 1}(1+\|y\|_{\infty,\gamma,I_{i,j}}),
\end{align*}
and
\begin{align*}
	&\left\|(S(t-s)-Id)\int_{T_{i-1}(\theta_{T_{j}}\boldsymbol{W})}^{s}S(s-r)F(y_r)dr\right\|_{\gamma-\theta}\\
	&~~~~\leq C(t-s)^{\theta}\left\|\int_{T_{i-1}(\theta_{T_{j}}\boldsymbol{W})}^{s}S(s-r)F(y_r)dr\right\|_{\gamma}\\
	&~~~~\leq C\mu(t-s)^{\theta}|I_{i,j}|^{1-\delta}(1+\|y\|_{\infty,\gamma,I_{i,j}})\leq  C\mu(t-s)^{\theta}(1+\|y\|_{\infty,\gamma,I_{i,j}}).
\end{align*}
Thus,
\begin{align*}
\left\|\int_{T_{i-1}(\theta_{T_{j}}\boldsymbol{W})}^{\cdot}S(\cdot-r)F(y_r)dr\right\|_{\gamma-\theta,I_{i,j}}&\leq C\mu(1+\|y\|_{\infty,\gamma,I_{i,j}})(1+|I_{i,j}|^{(1-\delta)\wedge(1-\theta)})\\
&\leq C\mu(1+\|y\|_{\infty,\gamma,I_{i,j}}).
\end{align*}
\end{remark}
\begin{lemma}\label{lemma A.3}
Let $G$ satisfy assumption $\textbf{(G)}$ and $(y,y^\prime)$ solves the equation \eqref{3.2} on $[0,T]$. Then $(G(y),DG(y)G(y))\in\mathcal{D}_{W,\gamma-\sigma, I_{i,j}}^{2\alpha}$ and have the following estimate
\begin{align*}
\|G(y),DG(y)G(y)\|_{W,2\alpha,\gamma-\sigma,I_{i,j}}&\leq C\mu(\rho_{\alpha,I_{i,j}}(\boldsymbol{W})+1)^{2}(\|y,y^\prime\|_{W,2\alpha,\gamma,I_{i,j}}+1)\\
&\leq C\mu(\|y,y^\prime\|_{W,2\alpha,\gamma,I_{i,j}}+1).
\end{align*}
\end{lemma}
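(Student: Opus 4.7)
The plan is to verify the five components of the norm $\|\cdot,\cdot\|_{W,2\alpha,\gamma-\sigma,I_{i,j}}$ separately, exploiting two facts: along the solution of \eqref{3.2} one has $y^\prime=G(y)$ (so the candidate Gubinelli derivative $DG(y)G(y)$ is exactly $DG(y)y^\prime$), and on each stopping-time interval $\rho_{\alpha,I_{i,j}}(\boldsymbol{W})\le\mu\le 1$ together with $|I_{i,j}|\le 1$. Starting from the second-order Taylor expansion
\[
G(y_t)-G(y_s)=DG(y_s)y_{s,t}+\int_0^1(1-\tau)D^2G(y_s+\tau y_{s,t})(y_{s,t},y_{s,t})\,d\tau,
\]
and using $y_{s,t}=y^\prime_s W_{s,t}+R^y_{s,t}=G(y_s)W_{s,t}+R^y_{s,t}$, I would identify the remainder as
\[
R^{G(y)}_{s,t}=DG(y_s)R^y_{s,t}+\int_0^1(1-\tau)D^2G(y_s+\tau y_{s,t})(y_{s,t},y_{s,t})\,d\tau,
\]
which already confirms that $(G(y),DG(y)G(y))$ is controlled by $W$ with the claimed Gubinelli derivative.

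For the supremum norms I would bound $\|G(y)\|_{\infty,\gamma-\sigma}\le\|G(0)\|_{\gamma-\sigma}+C_G\|y\|_{\infty,\gamma}\lesssim\mu(1+\|y\|_{\infty,\gamma})$ and $\|DG(y)G(y)\|_{\infty,\gamma-\sigma-\alpha}\le C_G\|G(y)\|_{\infty,\gamma-\alpha}\lesssim\mu(1+\|y\|_{\infty,\gamma})$, where the successive losses of spatial regularity are those granted by assumption \textbf{(G)}. For the $\alpha$-Hölder seminorm $\interleave DG(y)G(y)\interleave_{\alpha,\gamma-\sigma-2\alpha}$ I would invoke precisely the hypothesis that $D(DG(\cdot)G(\cdot)):\mathcal{B}_{\gamma-\alpha}\to\mathcal{B}_{\gamma-2\alpha-\sigma}$ is bounded, which immediately gives $\interleave DG(y)G(y)\interleave_{\alpha,\gamma-\sigma-2\alpha}\le C_G\interleave y\interleave_{\alpha,\gamma-\alpha}$, after which Remark 2.4 dominates the right-hand side by $C\mu(\rho_{\alpha,I_{i,j}}(\boldsymbol W)+1)\|y,y^\prime\|_{W,2\alpha,\gamma,I_{i,j}}$. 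The linear piece $DG(y_s)R^y_{s,t}$ of the remainder inherits the $\alpha$- and $2\alpha$-Hölder bounds of $R^y$ directly through the bounded operator $DG(y_s)$; the quadratic piece is estimated by $C_G\|y_{s,t}\|_{\gamma-\alpha}^2$ combined with $\|y_{s,t}\|_{\gamma-\alpha}\lesssim(\rho_{\alpha,I_{i,j}}(\boldsymbol W)+1)(\|y,y^\prime\|_{W,2\alpha,\gamma,I_{i,j}}+1)(t-s)^\alpha$, which produces the squared dependence on $\rho$ visible in the statement.

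Summing the five contributions and factoring $C_G\le\mu$ out of each instance of $DG$, $D^2G$, or $D(DG\cdot G)$ yields the first inequality; the second follows immediately because $\rho_{\alpha,I_{i,j}}(\boldsymbol W)\le\mu\le 1$ makes $(\rho+1)^2\le 4$ a harmless constant. The main obstacle is essentially bookkeeping: each of the five seminorms lives in a different interpolation space $\mathcal{B}_{\gamma-\sigma-\theta}$ with $\theta\in\{0,\alpha,2\alpha\}$, and at every step one must decide whether to absorb a spatial-regularity loss through one of the bounded differentials of $G$ or to trade spatial for temporal regularity via the controlled-rough-path decomposition \eqref{2.4}. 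The strict inequality $0\le\sigma<\alpha$ in \textbf{(G)} is exactly what keeps all these tradeoffs inside the admissible range of Definition 2.3, and it is used implicitly at each embedding $\mathcal{B}_{\gamma-\alpha-\sigma}\hookrightarrow\mathcal{B}_{\gamma-2\alpha-\sigma}$ needed to place the quadratic Taylor term in the correct target space.
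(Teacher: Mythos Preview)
Your outline is exactly the standard composition--lemma computation; the paper itself gives no details at all and simply invokes \cite[Lemma~3.5]{hesse2021global}, saying one only has to track the dependence on $\mu$ and $\rho_{\alpha,I_{i,j}}(\boldsymbol W)$ explicitly. In that sense your approach is not ``different'' from the paper's: it is the computation the paper points to. Your second--order Taylor decomposition of $R^{G(y)}$ is equivalent to (and slightly cleaner than) the first--order integral form the authors use later in Appendix~B.

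There is, however, a genuine gap in your bookkeeping. You estimate the quadratic Taylor remainder by $C_G\|y_{s,t}\|_{\gamma-\alpha}^2$ and then insert
\[
\|y_{s,t}\|_{\gamma-\alpha}\lesssim(\rho_{\alpha,I_{i,j}}(\boldsymbol W)+1)\bigl(\|y,y'\|_{W,2\alpha,\gamma,I_{i,j}}+1\bigr)(t-s)^\alpha .
\]
Squaring this produces $(\rho+1)^2$ \emph{and} $(\|y,y'\|+1)^2$; you correctly attribute the $(\rho+1)^2$ in the lemma to this term, but silently drop the second square when you write ``summing the five contributions \dots\ yields the first inequality''. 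With your argument as written, the bound you actually obtain is $C\mu(\rho+1)^2(\|y,y'\|+1)^2$, not the linear $(\|y,y'\|+1)$ claimed in the statement. A closely related slip occurs in your sup--norm estimate: $\|G(y)\|_{\infty,\gamma-\sigma}\le\|G(0)\|_{\gamma-\sigma}+C_G\|y\|_{\infty,\gamma}$ only gives $\lesssim\mu(1+\|y\|_{\infty,\gamma})$ if $\|G(0)\|_{\gamma-\sigma}$ is itself controlled by $\mu$, which assumption~\textbf{(G)} does not provide (only the derivatives are bounded by $C_G\le\mu$).

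These two points deserve an explicit comment rather than being swept into the final ``summing''. Note that the paper's own stability estimate in Appendix~B (Lemma~5.4) is quadratic in the solution norms, which suggests either that the linear power in the present lemma is a misprint, or that an additional hypothesis (e.g.\ $G$ bounded, or $\|G(0)\|\le\mu$) is being used implicitly in \cite{hesse2021global}. Either way, you should isolate where the extra power of $\|y,y'\|$ arises and say precisely what is needed to remove it---or, if it cannot be removed, state the estimate you can actually prove.
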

\begin{proof}
Its proof is same as \cite[Lemma 3.5]{hesse2021global}, we need only to  explicitly write the noise $\boldsymbol{W}$ and $\mu$  into  the inequality in \cite[Lemma 3.5]{hesse2021global} and use  the assumption \textbf{(G)} and the norm of the rough noise less than $\mu\in(0,1)$ on stopping times interval $I_{i,j}$ to complete the proof.
\end{proof}
\begin{lemma}\label{lemma A.4}
For any $T>0$, let $(y,y^\prime)\in \mathcal{D}^{2\alpha}_{W,\gamma,[0,T]}$ and $z_\cdot=\int_{s}^{\cdot}S(\cdot-r)y_rd\boldsymbol{W}_r$, then
\begin{align*}
\left(\int_{0}^{\cdot}S(\cdot-r)y_rd\boldsymbol{W}_r,y\right)\in\mathcal{D}^{2\alpha}_{W,\gamma+\sigma,[0,T]}
\end{align*}
and have the following estimates
\begin{align*}
&\left|\int_{s}^{t}S(t-r)y_{r}d\boldsymbol{W}_r\right|_{\gamma+\sigma}\leq C(t-s)^{\alpha-\sigma}\|y,y^\prime\|_{W,2\alpha,\gamma,[s,t]} \rho_{\alpha,[s,t]}(\boldsymbol{W}),\\
\|z,z^\prime\|_{W,2\alpha,\gamma\!+\!\sigma,[s,t]}&\leq C(|y_s|_\gamma\!+\!|y^\prime_s|_{\gamma-\alpha}\rho_{\alpha,[s,t]}(\boldsymbol{W})\!+\!(t\!-\!s)^{\alpha\!-\!\sigma}(\rho_{\alpha,[s,t]}(\boldsymbol{W})\!+\!1)\|y,y^\prime\|_{W,2\alpha,\gamma,[s,t]})\\
&\leq C(1+(t-s)^{\alpha-\sigma})(\rho_{\alpha,[s,t]}(\boldsymbol{W})\!+\!1)\|y,y^\prime\|_{W,2\alpha,\gamma,[s,t]},
\end{align*}
for all $0\leq\sigma<\alpha$, $0\leq s<t\leq T$, where the above constant $C$ depends on $\alpha,\sigma$.
\end{lemma}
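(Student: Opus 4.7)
The plan is to combine Lemma \ref{Lemma 2.1} (the rough-integral error bound against a semigroup) with the semigroup smoothing estimates \eqref{2.1*}--\eqref{2.2*} to control each summand of the controlled rough path norm on the interval $[s,t]$. The workhorse is the local expansion, valid for every $s \leq u < v \leq t$,
\begin{equation*}
\int_u^v S(v-r)y_r\,d\boldsymbol{W}_r = S(v-u)y_u W_{u,v} + S(v-u)y'_u \mathbb{W}_{u,v} + Q_{u,v},
\end{equation*}
with $|Q_{u,v}|_{\gamma-2\alpha+\beta} \leq C\rho_{\alpha,[u,v]}(\boldsymbol{W})\,\|y,y'\|_{W,2\alpha,\gamma,[u,v]}\,(v-u)^{3\alpha-\beta}$ for any $0\leq \beta<3\alpha$. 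Different choices of $\beta$ deliver bounds at the several spatial regularities required by the definition of $\mathcal{D}^{2\alpha}_{W,\gamma+\sigma}$.

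For the first (pointwise) estimate, I would set $\beta=2\alpha+\sigma$, which is admissible because $\sigma<\alpha$, so that $Q_{s,t}$ is controlled in $\mathcal{B}_{\gamma+\sigma}$ by $C\rho_{\alpha,[s,t]}(\boldsymbol{W})\|y,y'\|_{W,2\alpha,\gamma,[s,t]}(t-s)^{\alpha-\sigma}$. The two leading terms are then bounded via \eqref{2.1*}--\eqref{2.2*}: $|S(t-s)y_s W_{s,t}|_{\gamma+\sigma} \leq C(t-s)^{-\sigma}|y_s|_\gamma\,\rho_{\alpha,[s,t]}(\boldsymbol{W})(t-s)^\alpha$ and $|S(t-s)y'_s \mathbb{W}_{s,t}|_{\gamma+\sigma} \leq C(t-s)^{-(\alpha+\sigma)}|y'_s|_{\gamma-\alpha}\,\rho_{\alpha,[s,t]}(\boldsymbol{W})(t-s)^{2\alpha}$, so that the H\"{o}lder orders of $W$ and $\mathbb{W}$ exactly cancel the semigroup singularities. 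All three contributions share the common factor $(t-s)^{\alpha-\sigma}\rho_{\alpha,[s,t]}(\boldsymbol{W})$, and estimating $|y_s|_\gamma$ and $|y'_s|_{\gamma-\alpha}$ by $\|y,y'\|_{W,2\alpha,\gamma,[s,t]}$ yields the first stated bound.

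For the second estimate I would analyze the five summands in $\|z,z'\|_{W,2\alpha,\gamma+\sigma,[s,t]}$ one by one. The sup-norm $\|z\|_{\infty,\gamma+\sigma,[s,t]}$ is covered by the first estimate applied on $[s,u]$. Since $\sigma<\alpha$, the embeddings $\mathcal{B}_\gamma\hookrightarrow\mathcal{B}_{\gamma+\sigma-\alpha}$ and $\mathcal{B}_{\gamma-\alpha}\hookrightarrow\mathcal{B}_{\gamma+\sigma-2\alpha}$ reduce $\|z'\|_{\infty,\gamma+\sigma-\alpha}=\|y\|_{\infty,\gamma+\sigma-\alpha}$ and, via the decomposition $y_{u,v}=R^y_{u,v}+y'_u W_{u,v}$, also $\interleave z'\interleave_{\alpha,\gamma+\sigma-2\alpha}=\interleave y\interleave_{\alpha,\gamma+\sigma-2\alpha}$ to multiples of $(\rho_{\alpha,[s,t]}(\boldsymbol{W})+1)\|y,y'\|_{W,2\alpha,\gamma,[s,t]}$. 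For the two remainder seminorms I would use the identity
\begin{equation*}
R^z_{u,v} = (S(v-u)-Id)y_u W_{u,v} + S(v-u)y'_u \mathbb{W}_{u,v} + Q_{u,v} + (S(v-u)-Id)z_u,
\end{equation*}
which follows from $z_v-z_u = \int_u^v S(v-r)y_r\,d\boldsymbol{W}_r + (S(v-u)-Id)z_u$ combined with the Lemma \ref{Lemma 2.1} expansion. Each of the four pieces is then estimated at the two levels $\mathcal{B}_{\gamma+\sigma-\alpha}$ (taking $\beta=\alpha+\sigma$ in Lemma \ref{Lemma 2.1}) and $\mathcal{B}_{\gamma+\sigma-2\alpha}$ (taking $\beta=\sigma$), using $\|S(t)-Id\|_{\mathcal{L}(\mathcal{B}_\gamma,\mathcal{B}_{\gamma-\theta})}\leq Ct^\theta$ to convert excess spatial regularity of the input into extra time powers.

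The main obstacle lies in the fourth piece $(S(v-u)-Id)z_u$: its estimate in either target space ingests $|z_u|_{\gamma+\sigma}$, which from the first estimate already carries the factor $(u-s)^{\alpha-\sigma}\leq (t-s)^{\alpha-\sigma}$. This is precisely what produces the $(t-s)^{\alpha-\sigma}$ appearing in the second stated bound, and it requires careful bookkeeping so that the four remainder contributions, evaluated at the two spatial levels with distinct $(v-u)$-exponents, all collapse into the common scaling $C(t-s)^{\alpha-\sigma}(\rho_{\alpha,[s,t]}(\boldsymbol{W})+1)\|y,y'\|_{W,2\alpha,\gamma,[s,t]}$ of the simplified form.
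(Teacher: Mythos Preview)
Your proposal is correct and follows precisely the standard route. In fact the paper itself does not give a proof of this lemma: it simply refers to \cite[Lemma~3.4]{hesse2021global} and adds a remark explaining the additivity identity $\int_0^t S(t-r)y_r\,d\boldsymbol{W}_r = S(t-t_1)\int_0^{t_1}S(t_1-r)y_r\,d\boldsymbol{W}_r + \int_{t_1}^{t}S(t-r)y_r\,d\boldsymbol{W}_r$, which is exactly what you use to write $z_v-z_u=(S(v-u)-Id)z_u+\int_u^v S(v-r)y_r\,d\boldsymbol{W}_r$ in your remainder decomposition. Your choice of exponents $\beta=2\alpha+\sigma,\ \alpha+\sigma,\ \sigma$ in Lemma~\ref{Lemma 2.1} at the three spatial levels, together with \eqref{2.1*}--\eqref{2.2*}, is the same mechanism as in the cited reference.

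One small remark: your sketch targets what you call the ``simplified form'', i.e.\ the second line of the second estimate. The first, sharper line singles out the initial values $|y_s|_\gamma$ and $|y'_s|_{\gamma-\alpha}\rho_{\alpha}(\boldsymbol{W})$ from the full controlled norm. To recover that refinement you need only observe that the two summands $\|z'\|_{\infty,\gamma+\sigma-\alpha}=\|y\|_{\infty,\gamma+\sigma-\alpha}$ and $\interleave z'\interleave_{\alpha,\gamma+\sigma-2\alpha}=\interleave y\interleave_{\alpha,\gamma+\sigma-2\alpha}$ can be bounded, via $y_u=y_s+R^y_{s,u}+y'_sW_{s,u}$ and the embeddings $\mathcal{B}_{\gamma}\hookrightarrow\mathcal{B}_{\gamma+\sigma-\alpha}$, $\mathcal{B}_{\gamma-\alpha}\hookrightarrow\mathcal{B}_{\gamma+\sigma-2\alpha}$, by $C|y_s|_\gamma$ and $C(|y'_s|_{\gamma-\alpha}\rho_\alpha(\boldsymbol{W})+\interleave R^y\interleave_{\alpha,\gamma-\alpha})$ respectively; the seminorm pieces are then absorbed into the $(t-s)^{\alpha-\sigma}$-bucket since they already appear multiplied by extra time factors in the remaining summands. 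This is pure bookkeeping and does not change your argument. In the paper only the weaker form is actually invoked (cf.\ \eqref{A.8} in Lemma~\ref{lemma A.5}).
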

\begin{remark}
The proof of this lemma can be found in \cite[Lemma 3.4]{hesse2021global}, where the additivity of the rough integral has be used. In fact, since the semigroup $S$ is a continuous linear operator on $\mathcal{B}_{\gamma-2\alpha}$, according to  Lemma \ref{Lemma 2.1} we define
\begin{align*}
\int_{0}^{t_1}S(t-r)y_rd\boldsymbol{W}_r:&=\lim_{|\mathcal{P}(0,t_1)|\rightarrow0}\bigg[\sum_{[u,v]\in\mathcal{P}(0,t_1)}S(t-u)(y_uW_{u,v}+y^\prime_{u}\mathbb{W}_{u,v})\bigg]\\
&=S(t-t_1)\lim_{|\mathcal{P}(0,t_1)|\rightarrow0}\bigg[\sum_{[u,v]\in\mathcal{P}(0,t_1)}S(t_1-u)(y_uW_{u,v}+y^\prime_{u}\mathbb{W}_{u,v})\bigg]\\
&=S(t-t_1)\int_{0}^{t_1}S(t_1-r)y_rd\boldsymbol{W}_r
\end{align*}
for any $t_1\in(0,t)$.  Let $\mathcal{P}(0,t)=\mathcal{P}(0,t_1)\cup\mathcal{P}(t_1,t)$, then we have that
\begin{align*}
\int_{0}^{t}S(t-r)y_{r}d\boldsymbol{W}_r&=\lim_{|\mathcal{P}(0,t)|\rightarrow0}\sum_{[u,v]\in\mathcal{P}(0,t)}S(t-u)\left[y_{u}W_{u,v}+y^{\prime}_{u}\mathbb{W}_{u,v}\right]\\
&=\lim_{|\mathcal{P}(0,t_1)|\rightarrow0}\bigg[\sum_{[u,v]\in\mathcal{P}(0,t_1)}S(t-u)(y_uW_{u,v}+y^\prime_{u}\mathbb{W}_{u,v})\bigg]\\
&~~~~+\lim_{|\mathcal{P}(t_1,t)|\rightarrow0}\bigg[\sum_{[u,v]\in\mathcal{P}(t_1,t)}S(t-u)(y_uW_{u,v}+y^\prime_{u}\mathbb{W}_{u,v})\bigg]\\
&=\int_{0}^{t_1}S(t-r)y_rd\boldsymbol{W}_r+\int_{t_1}^{t}S(t-r)y_rd\boldsymbol{W}_r.
\end{align*}
\end{remark}
\begin{lemma}\label{lemma A.5}
Let $(y,y^\prime)$ solve the equation \eqref{3.2} on $[0,T]$. Then $\left(\int_{0}^{t}S(t-r)G(y_r)d\boldsymbol{W}_r,G(y)\right)_{t\in I_{i,j}}\\\in\mathcal{D}_{W,\gamma,I_{i,j}}^{2\alpha}$ and satisfy the following bound
\begin{align*}
\left\|\int_{0}^{\cdot}S(\cdot-r)G(y_r)d\theta_{T_j}\boldsymbol{W}_r,G(y)\right\|_{W,2\alpha,\gamma,I_{i,j}}&\!\leq\! C\mu\!\sum_{m=1}^{i-1}\! e^{-\lambda(T_{i-1}(\theta_{T_{j}}\boldsymbol{W})-T_{m}(\theta_{T_{j}}\boldsymbol{W})) }(1+\|y,y^\prime\|_{W,2\alpha,\gamma,I_{m,j}})\\
 &~~~~+C\mu(1+\|y,y^\prime\|_{W,2\alpha,\gamma,I_{i,j}}).
\end{align*}
\end{lemma}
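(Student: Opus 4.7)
The plan is to imitate the argument of Lemma A.2, with the Bochner integral replaced by the rough integral: Lemma A.4 plays the role that the trivial bound on $F$ played in Lemma A.2, while Lemma A.3 converts every estimate involving $(G(y),DG(y)G(y))$ into one involving $(y,y')$. Writing $T_m := T_m(\theta_{T_j}\boldsymbol{W})$, for $t\in I_{i,j}$ I would use additivity of the rough integral and the semigroup property (in the form of the Remark to Lemma A.4) to split
\[
z_t := \int_0^t S(t-r)G(y_r)\,d\boldsymbol{W}_r = A_t + B_t,
\]
where $A_t := \sum_{m=1}^{i-1} S(t-T_m)\int_{T_{m-1}}^{T_m} S(T_m-r)G(y_r)\,d\boldsymbol{W}_r$ gathers the past contributions and $B_t := \int_{T_{i-1}}^{t} S(t-r)G(y_r)\,d\boldsymbol{W}_r$ is the current one.

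On each past interval $I_{m,j}$, Lemma A.3 supplies $(G(y),DG(y)G(y))\in\mathcal{D}^{2\alpha}_{W,\gamma-\sigma,I_{m,j}}$ with norm at most $C\mu(1+\|y,y'\|_{W,2\alpha,\gamma,I_{m,j}})$, and Lemma A.4 together with the stopping-time bound $\rho_{\alpha,I_{m,j}}(\boldsymbol{W})\leq\mu$ and $|I_{m,j}|\leq 1$ gives
\[
\Bigl\|\int_{T_{m-1}}^{T_m} S(T_m-r)G(y_r)\,d\boldsymbol{W}_r\Bigr\|_{\gamma} \leq C\mu\bigl(1+\|y,y'\|_{W,2\alpha,\gamma,I_{m,j}}\bigr).
\]
Combined with \eqref{2.1*}, this produces $\|A_t\|_\gamma \leq C\mu\sum_{m=1}^{i-1} e^{-\lambda(T_{i-1}-T_m)}(1+\|y,y'\|_{W,2\alpha,\gamma,I_{m,j}})$ uniformly in $t\in I_{i,j}$, which is precisely the sum appearing on the right-hand side of the claimed inequality. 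A single additional application of Lemma A.4 on $I_{i,j}$ with Gubinelli derivative $G(y)$ then yields $(B,G(y))\in\mathcal{D}^{2\alpha}_{W,\gamma,I_{i,j}}$ with $\|B,G(y)\|_{W,2\alpha,\gamma,I_{i,j}}\leq C\mu(1+\|y,y'\|_{W,2\alpha,\gamma,I_{i,j}})$, accounting for the last term on the right-hand side.

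For $s<t$ in $I_{i,j}$ I would decompose $z_{s,t} = (S(t-s)-\mathrm{Id})A_s + B_{s,t}$, whence $R^z_{s,t} = (S(t-s)-\mathrm{Id})A_s + R^B_{s,t}$; the first summand inherits the past sum through \eqref{2.2*} (which gives $\|S(t-s)-\mathrm{Id}\|_{\mathcal{L}(\mathcal{B}_\gamma,\mathcal{B}_{\gamma-\theta})}\leq C(t-s)^\theta$ for $\theta\in\{\alpha,2\alpha\}$), while the second is dominated by $\|B,G(y)\|_{W,2\alpha,\gamma,I_{i,j}}$. The remaining pieces $\|z\|_{\infty,\gamma,I_{i,j}}$, $\|G(y)\|_{\infty,\gamma-\alpha,I_{i,j}}$ and $\interleave G(y)\interleave_{\alpha,\gamma-2\alpha,I_{i,j}}$ are handled in parallel, the first via $\|A\|_\infty+\|B\|_\infty$ and the last two directly from Lemma A.3 on $I_{i,j}$ together with the embedding $\mathcal{B}_{\gamma-\sigma}\hookrightarrow\mathcal{B}_{\gamma-\alpha}$. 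The main obstacle is the bookkeeping for the remainder $R^z$: one must justify that on $I_{i,j}$ the Gubinelli derivative of $z$ really is $G(y)$ (so that $A_t = S(t-T_{i-1})A_{T_{i-1}}$, being time-Lipschitz, is absorbed into $R^z$ with zero Gubinelli derivative, while all genuine roughness is carried by $B$), and then to propagate the decay $e^{-\lambda(T_{i-1}-T_m)}$ from each past interval consistently into both the $\|\cdot\|_{\alpha,\gamma-\alpha}$ and $\|\cdot\|_{2\alpha,\gamma-2\alpha}$ seminorms of $R^z$ without losing a factor of $\mu$ or of the interval length.
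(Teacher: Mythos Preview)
Your proposal is correct and follows essentially the same route as the paper: split the integral into the past sum $A$ and the current piece $B$, bound $B$ by a direct application of Lemma~A.4 combined with Lemma~A.3 on $I_{i,j}$, and for $A$ observe that it carries zero Gubinelli derivative on $I_{i,j}$ (the paper verifies this explicitly, which resolves the ``main obstacle'' you flag), so that its contribution to $\|\cdot\|_{\infty,\gamma}$ and to the remainder seminorms is controlled via \eqref{2.1*}--\eqref{2.2*} and the endpoint estimate from Lemma~A.4 on each $I_{m,j}$. One small imprecision: $A_t=S(t-T_{i-1})A_{T_{i-1}}$ is not literally time-Lipschitz but satisfies $\|A_{s,t}\|_{\gamma-\theta}\leq C(t-s)^\theta\|A_{T_{i-1}}\|_\gamma$ by \eqref{2.2*}, which is exactly what you need and what the paper uses.
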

\begin{proof}
Similar to Lemma \ref{lemma A.2}, for any $s<t\in I_{i,j}$, by the additivity of the rough integral, we have
\begin{small}
\begin{align}\label{A.7}
&\int_{0}^{t}S(t-r)G(y_{r})d\theta_{T_j}\boldsymbol{W}_{r}=\sum_{m=1}^{i-1}\int_{I_{m,j}}\!S(t\!-\!r)G(y_{r})d\theta_{T_j}\boldsymbol{W}_{r}+\int_{T_{i-1}(\theta_{T_{j}}\!\boldsymbol{W})}^{t}S(t\!-\!r)G(y_{r})d\theta_{T_j}\boldsymbol{W}_{r}\nonumber\\
&~~~~=\sum_{m=1}^{i-1}\int_{0}^{T(T_{m-1}(\theta_{T_{j}}\boldsymbol{W}))}S(t-r-T_{m-1}(\theta_{T_{j}}\boldsymbol{W}))G(y_{r+T_{m-1}(\theta_{T_{j}}\boldsymbol{W})})d\theta_{T_{m-1}(\boldsymbol{W})}\boldsymbol{W}_{r}\nonumber\\
&~~~~~~~~~~+\int_{T_{i-1}(\theta_{T_{j}}\boldsymbol{W})}^{t}S(t-r)G(y_{r})d\theta_{T_j}\boldsymbol{W}_{r}\nonumber\\
&~~~~=\sum_{m=1}^{i-1}\bigg[S(t^{\prime}\!+\!T_{i-1}(\theta_{T_{j}}\!\boldsymbol{W})\!-\!T_{m}(\theta_{T_{j}}\!\boldsymbol{W}))\int_{0}^{T(T_{m-1}(\theta_{T_{j}}\boldsymbol{W}))}S(T(T_{m-1}(\theta_{T_{j}}\boldsymbol{W}))-r)\nonumber\\
&~~~~~~~~~G(y_{r+T_{m-1}(\theta_{T_{j}}\boldsymbol{W})})d\theta_{T_{m-1}(\boldsymbol{W})}\boldsymbol{W}_{r}\bigg]
+\int_{T_{i-1}(\theta_{T_{j}}\boldsymbol{W})}^{t}S(t-r)G(y_{r})d\theta_{T_j}\boldsymbol{W}_{r}.
\end{align}
\end{small}
Using Lemma \ref{lemma A.3}, \ref{lemma A.4} and $|I_{i,j}|\leq 1, \rho_{\alpha,I_{i,0}}(\boldsymbol{W})\leq \mu\leq 1$, we obtain
\begin{align}\label{A.8}
&\left\|\int_{T_{i-1}(\theta_{T_{j}}\boldsymbol{W})}^{\cdot}S(\cdot-r)G(y_r)d\theta_{T_j}\boldsymbol{W}_r,G(y)\right\|_{W,2\alpha,\gamma,I_{i,j}}\nonumber\\
&~~~~~~~~~~~~~\leq C(1+|I_{i,j}|^{\alpha-\sigma})(\rho_{\alpha,I_{i,0}}(\boldsymbol{W})+1)\|G(y),DG(y)G(y)\|_{W,2\alpha,\gamma-\sigma,I_{i,j}}\nonumber\\
&~~~~~~~~~~~~~\leq C\mu(\|y,y^\prime\|_{W,2\alpha,\gamma,I_{i,j}}+1).
\end{align}
Note that the Gubnelli derivative of rough integral $\int_{t_1}^{t_2}S(\cdot-r)G(y_r)d\boldsymbol{W}_{r}$ on $I_{i,j}$ is zero. Indeed, for any interval $[t_1,t_2]$ and $t>s\geq t_2$,
\begin{align*}
   &\int_{t_1}^{t_2}S(t-r)G(y_r)d\theta_{T_j}\boldsymbol{W}_{r}-\int_{t_1}^{t_2}S(s-r)G(y_r)d\theta_{T_j}\boldsymbol{W}_{r}\\
   &~~~~~~=\left(S(t-t_2)-S(s-t_2)\right)\int_{t_1}^{t_2}S(t_2-r)G(y_r)d\theta_{T_j}\boldsymbol{W}_{r},
\end{align*}
and \eqref{2.1*}-\eqref{2.2*} show that  $\int_{t_1}^{t_2}S(\cdot-r)G(y_r)d\boldsymbol{W}_{r}\in C^{\alpha}(I_{i,j},\mathcal{B}_{\gamma-\alpha})\cap C^{2\alpha}(I_{i,j},\mathcal{B}_{\gamma-2\alpha})$. Thus,  by Definition \ref{def 2.4}, we know that $0$ is the  Gubinelli derivative of  the rough integral $\int_{t_1}^{t_2}S(\cdot-r)G(y_r)d\boldsymbol{W}_{r}$ on $I_{i,j}$. Then
\begin{align*}
\left\|\sum_{m=1}^{i-1}\int_{I_{m,j}}\!S(\cdot\!-\!r)G(y_{r})d\theta_{T_j}\boldsymbol{W}_{r},0\right\|_{W,2\alpha,\gamma,I_{i,j}}\leq &\left\|\sum_{m=1}^{i-1}\int_{I_{m,j}}\!S(\cdot\!-\!r)G(y_{r})d\theta_{T_j}\boldsymbol{W}_{r}\right\|_{\infty,\gamma,I_{i,j}}\\
&+\left\|\sum_{m=1}^{i-1}\int_{I_{m,j}}\!S(\cdot\!-\!r)G(y_{r})d\theta_{T_j}\boldsymbol{W}_{r}\right\|_{\alpha,\gamma-\alpha,I_{i,j}}\\
&+\left\|\sum_{m=1}^{i-1}\int_{I_{m,j}}\!S(\cdot\!-\!r)G(y_{r})d\theta_{T_j}\boldsymbol{W}_{r}\right\|_{2\alpha,\gamma-2\alpha,I_{i,j}}.
\end{align*}
By \eqref{A.7}, Lemma \ref{lemma A.3}, \ref{lemma A.4}, $|I_{m,j}|=T(T_{m-1}(\theta_{T_j}\boldsymbol{W}))\leq 1,$ $\rho_{\alpha,I_{m,0}}(\boldsymbol{W})\leq \mu\leq 1, m=1,\cdots, i-1$, we get
\begin{align}\label{A.9}
&\left\|\sum_{m=1}^{i-1}\int_{I_{m,j}}\!S(\cdot\!-\!r)G(y_{r})d\boldsymbol{W}_{r}\right\|_{\infty,\gamma,I_{i,j}}\nonumber \\
&~~\leq\sum_{i=1}^{i-1}C\mu e^{-\lambda(T_{i-1}(\theta_{T_{j}}\boldsymbol{W})-T_{m}(\theta_{T_{j}}\boldsymbol{W}))}\left(\|y,y^{\prime}\|_{W,2\alpha,\gamma,[T_{m-1}(\theta_{T_{j}}\boldsymbol{W}),T_{m}(\theta_{T_{j}}\boldsymbol{W})]}+1\right).
\end{align}
For a given $\theta\in\{\alpha,2\alpha\}$ and $s<t\in I_{i,j}$, similar to Lemma \ref{lemma A.2}, using the same computation as \eqref{A.9}, we have that
\begin{align*}
&\left\|\sum_{m=1}^{i-1}\left(\int_{I_{m,j}}\!S(t\!-\!r)G(y_{r})d\theta_{T_j}\boldsymbol{W}_{r}-\int_{I_{m,j}}\!S(s\!-\!r)G(y_{r})d\theta_{T_j}\boldsymbol{W}_{r}\right)\right\|_{\gamma-\theta,I_{i,j}}\nonumber\\
&\leq\sum_{m=1}^{i-1} \|S(t^\prime)-S(s^\prime)\|_{\mathcal{L}(\mathcal{B}_{\gamma},\mathcal{B}_{\gamma-\theta})}\|S(s^{\prime}+T_{i-1}(\theta_{T_{j}}\boldsymbol{W})-T_{m}(\theta_{T_{j}}\boldsymbol{W}))\|_{\mathcal{L}(\mathcal{B}_{\gamma})}\\
&\times \left\|\int_{0}^{T(T_{m-1}(\theta_{T_{j}}\boldsymbol{W}))}S(T(T_{m-1}(\theta_{T_{j}}\boldsymbol{W}))-r)G(y_{r+T_{m-1}(\theta_{T_{j}}\boldsymbol{W})})d\theta_{T_{m-1}(\boldsymbol{W})}\boldsymbol{W}_{r}\right\|_{\gamma}\\
&\leq \sum_{m=1}^{i-1}C\mu(t^\prime-s^\prime)^\theta e^{-\lambda(T_{i-1}(\theta_{T_{j}}\boldsymbol{W})-T_{m}(\theta_{T_{j}}\boldsymbol{W}))}(1+\|y,y^{\prime}\|_{W,2\alpha,\gamma,[T_{m-1}(\theta_{T_{j}}\boldsymbol{W}),T_{m}(\theta_{T_{j}}\boldsymbol{W})]}).
\end{align*}
Since $t^\prime-s^{\prime}=t-s$, thus,
\begin{align}\label{A.10}
&\left\|\sum_{m=1}^{i-1}\int_{I_{m,j}}\!S(\cdot\!-\!r)G(y_{r})d\theta_{T_j}\boldsymbol{W}_{r}\right\|_{\theta,\gamma-\theta,I_{i,j}}\nonumber\\
&~~\leq \sum_{m=1}^{i-1}C\mu e^{-\lambda(T_{i-1}(\theta_{T_{j}}\boldsymbol{W})-T_{m}(\theta_{T_{j}}\boldsymbol{W}))}(1+\|y,y^{\prime}\|_{W,2\alpha,\gamma,[T_{m-1}(\theta_{T_{j}}\boldsymbol{W}),T_{m}(\theta_{T_{j}}\boldsymbol{W})]}).
\end{align}
It follows from \eqref{A.8}-\eqref{A.10}, one completes the proof of this lemma.
\end{proof}
\begin{proof}
Now, we have all tools to prove Lemma \ref{lemma 3.4}.  Since $(y,y^\prime)$ solves \eqref{3.2}, then
\begin{align*}
    \|y,y^\prime\|_{W,2\alpha,\gamma,I_{i,j}}
    &\leq \|S(\cdot)y_0,0\|_{W,2\alpha,\gamma,I_{i,j}}
     + \left\|\int_0^\cdot S(\cdot-r) F(y_r)d r,0\right\|_{W,2\alpha,\gamma,I_{i,j}}\\
    &+ \left\|\int_0^\cdot S(\cdot-r) G(y_r) d\theta_{T_j}\boldsymbol{W}_r, G(y)\right\|_{W,2\alpha,\gamma,I_{i,j}}.
\end{align*}
Therefore, using Lemma \ref{lemma A.1}, \ref{lemma A.2}, \ref{lemma A.5}, we  complete the proof of Lemma \ref{lemma 3.4}.
\end{proof}
\section{Proof of Lemma 5.4}\label{Appendix B}
\setcounter{equation}{0}
\renewcommand{\theequation}{B.\arabic{equation}}
\begin{proof}
According to Definition \ref{definition 5.1}, we have
\begin{small}
\begin{align*}
d_{2\alpha,\gamma\!-\!\sigma,I_{m}}&(G(y),G(z))=\|G(y)\!-\!G(z)\|_{\infty,\gamma\!-\!\sigma,I_{m}}\!+\!\| DG(y)G(y)\!-\!DG(z)G(z) \|_{\infty,\gamma\!-\!\sigma\!-\!\alpha,I_{m}} \\
&+\interleave DG(y)G(y)-DG(z)G(z)\interleave_{\alpha,\gamma-\sigma-2\alpha,I_{m}}+\interleave R^{G(y)}-R^{G(z)} \interleave_{\alpha,\gamma-\sigma-\alpha,I_{m}}\\
&+\interleave R^{G(y)}-R^{G(z)}\interleave_{2\alpha,\gamma-\sigma-2\alpha,I_{m}}.
\end{align*}
\end{small}
By assumption $\textbf{(G)}$, it is easy to get
\begin{align*}
\|G(y)-G(z)\|_{\infty,\gamma-\sigma,I_{m}}\leq \mu \|y-z\|_{\infty,\gamma,I_{m}}.
\end{align*}
Similarly, 
we have
$$\|DG(y)G(y)-DG(z)G(z)\|_{\infty,\gamma-\sigma-\alpha}\leq C\mu \|y-z\|_{\infty,\gamma,I_{m}}(\|y,y^{\prime}\|_{W,2\alpha,\gamma,I_{m}}+1).$$
For any $s<t\in  I_{m}$,
\begin{align*}
\|DG(y_{t})G(y_{t})&-DG(z_{t})G(z_{t})-DG(y_{s})G(y_{s})+DG(z_{s})G(z_{s})\|_{\gamma-\sigma-2\alpha}\\
&\leq\|DG(y_{t})-DG(z_{t})-DG(y_{s})+DG(z_{s})\|_{\mathcal{L}(\mathcal{B}_{\gamma-2\alpha},\mathcal{B}_{\gamma-\sigma-2\alpha})}\\
&~~~~~~~~~~~~~~~~~~~~~~~~~~~~\times\|G(y_{t})+G(z_{t})+G(y_{s})+G(z_{s})\|_{\gamma-2\alpha}\\
&+\|DG(y_{t})+DG(z_{t})-DG(y_{s})-DG(z_{s})\|_{\mathcal{L}(\mathcal{B}_{\gamma-2\alpha},\mathcal{B}_{\gamma-\sigma-2\alpha})}\\
&~~~~~~~~~~~~~~~~~~~~~~~~~~~~\times\|G(y_{t})-G(z_{t})+G(y_{s})-G(z_{s})\|_{\gamma-2\alpha}\\
&+\|DG(y_{t})-DG(z_{t})+DG(y_{s})-DG(z_{s})\|_{\mathcal{L}(\mathcal{B}_{\gamma-2\alpha},\mathcal{B}_{\gamma-\sigma-2\alpha})}\\
&~~~~~~~~~~~~~~~~~~~~~~~~~~~~\times\|G(y_{t})+G(z_{t})-G(y_{s})-G(z_{s})\|_{\gamma-2\alpha}\\
&+\|DG(y_{t})+DG(z_{t})+DG(y_{s})+DG(z_{s})\|_{\mathcal{L}(\mathcal{B}_{\gamma-2\alpha},\mathcal{B}_{\gamma-\sigma-2\alpha})}\\
&~~~~~~~~~~~~~~~~~~~~~~~~~~~~\times\|G(y_{t})-G(z_{t})-G(y_{s})+G(z_{s})\|_{\gamma-2\alpha}\\
&:=J_{1}+J_{2}+J_{3}+J_{4}.
\end{align*}
We deal with these four terms  separately. For the first one, using the Mean Value Theorem twice we have that
\begin{align}\label{5.3}
&\|DG(y_{t})-DG(z_{t})-DG(y_{s})+DG(z_{s})\|_{\mathcal{L}(\mathcal{B}_{\gamma-2\alpha},\mathcal{B}_{\gamma-\sigma-2\alpha})}\nonumber\\
&\leq\|D^{2}G\|_{\mathcal{L}(\mathcal{B}^{2}_{\gamma-2\alpha},\mathcal{B}_{\gamma-\sigma-2\alpha})}\|y_{s,t}-z_{s,t}\|_{\gamma-2\alpha}+\|D^{3}G\|_{\mathcal{L}(\mathcal{B}^{3}_{\gamma-2\alpha},\mathcal{B}_{\gamma-\sigma-2\alpha})}\nonumber\\
&~~~~~~~~~~~~~~~~~~~~~~~~~~~~~~~~~~~~~~~~~~~~~~~~~~~\times\|y-z\|_{\infty,\gamma-2\alpha,I_{m}}\|y_{s,t}\|_{\gamma-2\alpha}\nonumber\\
&\leq C\mu\left(\|y_{s,t}-z_{s,t}\|_{\gamma-2\alpha}+\|y-z\|_{\infty,\gamma-2\alpha,I_{m}}\|y_{s,t}\|_{\gamma-2\alpha}\right).
\end{align}
Invoking identities $y_{s,t}=y^{\prime}_{s}W_{s,t}+R^{y}_{s,t}$ and $z_{s,t}=z^{\prime}_{s}\tilde{W}_{s,t}+R^{z}_{s,t}$, we have
\begin{align}
\|y_{s,t}-z_{s,t}\|_{\gamma-2\alpha}&=\|y_{s}^{\prime}W_{s,t}+R^{y}_{s,t}-z_{s}^{\prime}\tilde{W}_{s,t}-R^{z}_{s,t}\|_{\gamma-2\alpha}\nonumber\\
&\leq \|y_{s}^{\prime}(W_{s,t}-\tilde{W}_{s,t})\|_{\gamma-2\alpha}+\|(y_{s}^{\prime}-z_{s}^{\prime})\tilde{W}_{s,t}\|_{\gamma-2\alpha}+\|R^{y}_{s,t}-R^{y}_{s,t}\|_{\gamma-2\alpha}\nonumber\\
&\leq \Bigg(\|y^{\prime}_{s}\|_{\gamma-2\alpha}\interleave W-\tilde{W}\interleave_{\alpha,I_{m}}  +\|R^{y}-R^{z}\|_{\alpha,\gamma-2\alpha,I_{m}}\nonumber\\
&~~~~~+\|y^{\prime}_{s}-z^{\prime}_{s}\|_{\gamma-2\alpha}\interleave \tilde{W}\interleave_{\alpha,I_{m}}\Bigg)(t-s)^{\alpha}\nonumber\\
&\leq C\Bigg(\|y^{\prime}\|_{\infty,\gamma-\alpha,I_{m}}\interleave W-\tilde{W}\interleave_{\alpha,I_{m}}  +\|R^{y}-R^{z}\|_{\alpha,\gamma-\alpha,I_{m}}\nonumber\\
&~~~~~+\|y^{\prime}-z^{\prime}\|_{\infty,\gamma-\alpha,I_{m}}\interleave \tilde{W}\interleave_{\alpha,I_{m}}\Bigg)(t-s)^{\alpha}.
\end{align}
Thus,
\begin{align}\label{5.5}
\|y-z\|_{\alpha,\gamma-2\alpha,I_{m}}&\leq C \Bigg(d_{2\alpha,\gamma,I_{m}}(y,z)(\interleave \tilde{W}\interleave_{\alpha,I_{m}}+1)\nonumber\\
&~~~~~+\|y,y^{\prime}\|_{X,2\alpha,\gamma,I_{m}}d_{\alpha,I_{m}}(\boldsymbol{W},\tilde{\boldsymbol{W}})\Bigg).
\end{align}
Similarly,
\begin{align}\label{B.4}
\|y\|_{\alpha,\gamma-2\alpha}\leq C\|y,y^{\prime}\|_{X,2\alpha,\gamma,I_{m}}(1+\interleave W\interleave_{\alpha,I_{m}} ) .
\end{align}
Combining \eqref{5.3}, \eqref{5.5}, \eqref{B.4}, we  get
\begin{align}\label{5.7}
&\|DG(y_{t})-DG(z_{t})-DG(y_{s})+DG(z_{s})\|_{\mathcal{L}(\mathcal{B}_{\gamma-2\alpha},\mathcal{B}_{\gamma-\sigma-2\alpha})}\nonumber\\
&\leq C\mu\Bigg(d_{2\alpha,\gamma,I_{m}}(y,z)\left(\interleave \tilde{W}\interleave_{\alpha,I_{m}} +1+\|y,y^{\prime}\|_{W,2\alpha,\gamma,I_{m}}(\interleave W\interleave_{\alpha,I_{m}} +1)\right)\nonumber\\
&~~~~~~~~~~~~~~~~~~~~~~~~~~~~~~+\|y,y^{\prime}\|_{W,2\alpha,\gamma,I_{m}}d_{\alpha,I_{m}}(\boldsymbol{W},\tilde{\boldsymbol{W}})\Bigg)(t-s)^{\alpha}.
\end{align}
Then, by \eqref{5.7} we obtain
\begin{align}
J_{1}&\leq C\mu(\|y,y^{\prime}\|_{W,2\alpha,\gamma,I_{m}}+\|z,z^{\prime}\|_{\tilde{W},2\alpha,\gamma,I_{m}})\nonumber\\
&\times\Bigg(d_{2\alpha,\gamma,I_{m}}(y,z)\left(\interleave \tilde{W}\interleave_{\alpha,I_{m}} +1+\|y,y^{\prime}\|_{W,2\alpha,\gamma,I_{m}}(\interleave W\interleave_{\alpha,I_{m}} +1)\right)\nonumber\\
&~~~~~~~~~~~~~~~~~~~~~~~~~~~~~~+\|y,y^{\prime}\|_{W,2\alpha,\gamma,I_{m}}d_{\alpha,I_{m}}(\boldsymbol{W},\tilde{\boldsymbol{W}})\Bigg)(t-s)^{\alpha}.
\end{align}
For the second term, using \eqref{B.4} for $y,z$  we get
\begin{align}
J_{2}\leq &C\mu d_{2\alpha,\gamma,I_{m}}(y,z)\left(\|y,y^{\prime}\|_{W,2\alpha,\gamma,I_{m}}+\|z,z^{\prime}\|_{\tilde{W},2\alpha,\gamma,I_{m}}\right)\nonumber\\
&~~~~~~~~~~~~~~~~\times(1+\interleave W\interleave_{\alpha,I_{m}}+\interleave \tilde{W}\interleave_{\alpha,I_{m}} )(t-s)^{\alpha}.
\end{align}
For the third term, we have
\begin{align}
J_{3}\leq C\mu d_{2\alpha,\gamma,I_{m}}(y,z)\left(\|y,y^{\prime}\|_{W,2\alpha,\gamma,I_{m}}+\|z,z^{\prime}\|_{\tilde{W},2\alpha,\gamma,I_{m}}\right)(t-s)^{\alpha}.
\end{align}
Finally, we can easily get the estimate of $J_{4}$ as follows
\begin{align}
J_{4}\leq C\mu d_{2\alpha,\gamma,I_{m}}(y,z)(t-s)^{\alpha}.
\end{align}
Combining the above estimates, we have
\begin{small}
\begin{align}
&\interleave DG(y)G(y)-DG(z)G(z)\interleave_{\alpha,\gamma-\sigma-2\alpha}\nonumber\\
&~~\leq C\mu d_{2\alpha,\gamma,I_{m}}(y,z)\left(\|y,y^{\prime}\|_{W,2\alpha,\gamma,I_{m}}+\|z,z^{\prime}\|_{\tilde{W},2\alpha,\gamma,I_{m}}+1\right)^{2}\nonumber\\
&~~~~~~~~~~~~~~\times(1+\interleave W\interleave_{\alpha,I_{m}}+\interleave \tilde{W}\interleave_{\alpha,I_{m}} )\nonumber\\
&~~~~+C\mu\left(\|y,y^{\prime}\|_{W,2\alpha,\gamma,I_{m}}+\|z,z^{\prime}\|_{\tilde{W},2\alpha,\gamma,I_{m}}+1\right)^{2}d_{\alpha,I_{m}}(\boldsymbol{W},\tilde{\boldsymbol{W}}).
\end{align}
\end{small}
Finally, we deal with the remainder term. For any controlled rough path $(y,y^\prime)\in{D}^{2\alpha}_{W,\gamma}$,  Lemma \ref{lemma A.3} shows that $(G(y),DG(y)G(y))\in{D}^{2\alpha}_{W,\gamma-\sigma}$. Thus,
 \begin{align*}
	 R^{G(y)}_{s,t}&=G(y_{t})-G(y_{s})-DG(y_{s})G(y_{s})W_{s,t}\nonumber\\
	 =&\int_{0}^{1}DG(y_{s}+\eta (y_{t}-y_{s}))(y_{t}-y_{s})d\eta-DG(y_{s})G(y_{s})W_{s,t}\nonumber\\
	 =&\int_{0}^{1}DG(y_{s}\!+\!\eta (y_{t}\!-\!y_{s}))R^{y}_{s,t}d\eta\!+\!\int_{0}^{1}DG(y_{s}+\eta (y_{t}-y_{s}))-DG(y_{s})d\eta G(y_{s})W_{s,t}.
	 \end{align*}
 For $\theta\in\{\alpha,2\alpha\},s<t\in I_{m}$, using the above inequality for $y,z$ we have
\begin{align}
&\|R^{G(y)}_{s,t}-R^{G(z)}_{s,t}\|_{\gamma-\theta-\sigma}\leq \Bigg\|\int_{0}^{1}DG(y_{s}+\eta y_{s,t})R^{y}_{s,t}d\eta-\int_{0}^{1}DG(z_{s}+\eta z_{s,t})R^{z}_{s,t}d\eta\Bigg\|_{\gamma-\sigma-\theta}\nonumber\\
&~~~~~~~~~~~~~~~~~~~~~~~~~~~~~~~~~~+\Bigg\|\int_{0}^{1}DG(y_{s}+\eta y_{s,t})-DG(y_{s})d\eta G(y_{s})W_{s,t}\nonumber\\
&~~~~~~~~~~~~~~~~~~~~~~~~~~~~~~~~~~-\int_{0}^{1}DG(z_{s}+\eta z_{s,t})+DG(z_{s})d\eta G(z_{s})\tilde{W}_{s,t}\Bigg\|_{\gamma-\theta-\sigma}\nonumber\\
&~~~~~~~~~~~~~~~~~~~~~~~~~~~~~\leq \Bigg\|\int_{0}^{1}DG(y_{s}+\eta y_{s,t})(R^{y}_{s,t}-R^{z}_{s,t})d\eta\Bigg\|_{\gamma-\theta-\sigma}\nonumber\\
&~~~~~~~~~~~~~~~~~~~~~~~~~~~~~~~~~+\Bigg\|\int_{0}^{1}(DG(y_{s}+\eta y_{s,t})-DG(z_{s}+\eta z_{s,t}))R^{z}_{s,t}d\eta\Bigg\|_{\gamma-\theta-\sigma}\nonumber\\
&~~~~~~~~~~~~~~~~~~~~~~~~~~~~~~~~~+\Bigg\|\int_{0}^{1}DG(y_{s}+\eta y_{s,t})-DG(y_{s})d\eta (G(y_{s})W_{s,t}-G(z_{s})\tilde{W}_{s,t})\Bigg\|_{\gamma-\theta-\sigma}\nonumber\\
&~~~~~~~~~~~~~~~~~~~~~~~~~~~~+\Bigg\|\int_{0}^{1}DG(y_{s}\!+\!\eta y_{s,t})\!-\!DG(y_{s})\!-\!DG(z_{s}\!+\!\eta z_{s,t})\!+\!DG(z_{s})d\eta G(z_{s})\tilde{W}_{s,t}\Bigg\|_{\gamma-\theta-\sigma}\nonumber\\
&~~~~~~~~~~~~~~~~~~~~~~~~~~~~
:=\uppercase\expandafter{\romannumeral1}+\uppercase\expandafter{\romannumeral2}+\uppercase\expandafter{\romannumeral3}+\uppercase\expandafter{\romannumeral4}.
\end{align}
For the first term, we have
\begin{align}
\uppercase\expandafter{\romannumeral1}&\leq \left\|DG\right\|_{\mathcal{L}(\mathcal{B}_{\gamma-\theta},\mathcal{B}_{\gamma-\theta-\sigma})}\|R^{y}-R^{z}\|_{\theta,\gamma-\theta}(t-s)^{\theta}\nonumber\\
&\leq  \mu d_{2\alpha,\gamma,I_{m}}(y,z)(t-s)^{\theta}.
\end{align}
For the second term, we obtain
\begin{align}
\uppercase\expandafter{\romannumeral2}&\leq \|D^{2}G\|_{\mathcal{L}(\mathcal{B}^{2}_{\gamma-\theta},\mathcal{B}_{\gamma-\theta-\sigma})}\|y-z\|_{\infty,\gamma-\theta,I_{m}}\|R^{z}\|_{\theta,\gamma-\theta,I_{m}}(t-s)^{\theta}\nonumber\\
&\leq C\mu d_{2\alpha,\gamma,I_{m}}(y,z)\|z,z^{\prime}\|_{\tilde{W},2\alpha,\gamma,I_{m}}(t-s)^{\theta}.
\end{align}
Firstly, as $\theta=\alpha$,  for the third term  we have
\begin{small}
\begin{align}
\uppercase\expandafter{\romannumeral3}&\leq C\left\|DG\right\|_{\mathcal{L}(\mathcal{B}_{\gamma-\alpha},\mathcal{B}_{\gamma-\alpha-\sigma})}\left\|G(y_{s})W_{s,t}-G(z_{s})\tilde{W}_{s,t}\right\|_{\gamma-\alpha}\nonumber\\
\leq& C\mu\left(\!\|y,y^{\prime}\|_{W,2\alpha,\gamma,I_{m}}d_{\alpha,I_{m}}(\boldsymbol{W},\tilde{\boldsymbol{W}})+d_{2\alpha,\gamma,I_{m}}(y,z)\interleave \tilde{W}\interleave_{\alpha,I_{m}}\right)(t-s)^{\alpha}.
\end{align}
\end{small}
Secondly, for  $\theta=2\alpha$,  by \eqref{B.4} we obtain
\begin{align}
\uppercase\expandafter{\romannumeral3}&\leq \left\|D^{2}G\right\|_{\mathcal{L}(\mathcal{B}^{2}_{\gamma-2\alpha},\mathcal{B}_{\gamma-2\alpha-\sigma})}\|y_{s,t}\|_{\gamma-2\alpha}\left\|G(y_{s})W_{s,t}-G(z_{s})\tilde{W}_{s,t}\right\|_{\gamma-2\alpha}\nonumber\\
&\leq C\mu\|y,y^{\prime}\|_{W,2\alpha,\gamma,I_{m}}(1+\interleave W\interleave_{\alpha,I_{m}})\bigg(\|y,y^{\prime}\|_{W,2\alpha,\gamma,I_{m}}d_{\alpha,I_{m}}(\boldsymbol{W},\tilde{\boldsymbol{W}})\nonumber\\
&~~~~~~~+d_{2\alpha,\gamma,I_{m}}(y,z)\interleave \tilde{W}\interleave_{\alpha,I_{m}}\bigg)(t-s)^{2\alpha}\nonumber\\
&\leq C\mu \Bigg[\|y,y^{\prime}\|_{W,2\alpha,\gamma,I_{m}}\left(1+\interleave W\interleave_{\alpha,I_{m}} \right)\interleave \tilde{W}\interleave_{\alpha,I_{m}}d_{2\alpha,\gamma,I_{m}}(y,z)\nonumber\\
&+\|y,y^{\prime}\|^{2}_{W,2\alpha,\gamma,I_{m}}\left(1+\interleave W\interleave_{\alpha,I_{m}} \right)d_{\alpha,I_{m}}(\boldsymbol{W},\tilde{\boldsymbol{W}})\Bigg](t-s)^{2\alpha}.
\end{align}
Similarly,  we can deal with $\uppercase\expandafter{\romannumeral4}$. For $\theta=\alpha$, we have
\begin{align}
\uppercase\expandafter{\romannumeral4}&\leq C\left\|D^{2}G\right\|_{\mathcal{L}(\mathcal{B}^{2}_{\gamma-\alpha},\mathcal{B}_{\gamma-\alpha-\sigma})}\|y-z\|_{\infty,\gamma-\alpha,,I_{m}}\left\|G(z_{s})\tilde{W}_{s,t}\right\|_{\gamma-\alpha}\nonumber\\
&\leq C\mu \|y-z\|_{\infty,\gamma,I_{m}}\|z,z^{\prime}\|_{\tilde{W},2\alpha,\gamma,I_{m}}\interleave \tilde{W}\interleave_{\alpha,,I_{m}}(t-s)^{\alpha}\nonumber\\
&\leq C\mu d_{2\alpha,\gamma,I_{m}}(y,z)\|z,z^{\prime}\|_{\tilde{W},2\alpha,\gamma,I_{m}}\interleave \tilde{W}\interleave_{\alpha,,I_{m}}(t-s)^{\alpha}.
\end{align}
For $\theta=2\alpha$, by \eqref{5.5}-\eqref{B.4} we have
\begin{align}
\uppercase\expandafter{\romannumeral4}&\leq \Bigg[\|D^{2}G\|_{\mathcal{L}(\mathcal{B}^{2}_{\gamma-2\alpha},\mathcal{B}_{\gamma-2\alpha-\sigma})}\|y_{s,t}-z_{s,t}\|_{\gamma-2\alpha}+C\|D^{3}G\|_{\mathcal{L}(\mathcal{B}^{3}_{\gamma-2\alpha},\mathcal{B}_{\gamma-2\alpha-\sigma})}\bigg(\nonumber\\
&~~~~~~~~~~~~~\|y_{s,t}\|_{\gamma-2\alpha}\|y-z\|_{\infty,\gamma-2\alpha,I_{m}}\bigg)\Bigg]\|z,z^{\prime}\|_{\tilde{W},2\alpha,\gamma,I_{m}}\interleave \tilde{W}\interleave_{\alpha,I_{m}}(t-s)^{\alpha}\nonumber\\
&\leq C\mu \Bigg[\Bigg(d_{2\alpha,\gamma,I_{m}}(y,z)(\interleave \tilde{W}\interleave_{\alpha,I_{m}}+1)+\|y,y^{\prime}\|_{W,2\alpha,\gamma,I_{m}}d_{\alpha,I_{m}}(\boldsymbol{W},\tilde{\boldsymbol{W}})\Bigg)\nonumber\\
&+d_{2\alpha,\gamma,I_{m}}(y,z)\|y,y^{\prime}\|_{W,2\alpha,\gamma,I_{m}}(1+\interleave W\interleave_{\alpha,I_{m}})\Bigg]\|z,z^{\prime}\|_{\tilde{W},2\alpha,\gamma,I_{m}}\interleave \tilde{W}\interleave_{\alpha,I_{m}}(t-s)^{2\alpha}\nonumber\\
&\leq C\mu \Bigg[ d_{2\alpha,\gamma,I_{m}}(y,z)(\|z,z^{\prime}\|_{\tilde{W},2\alpha,\gamma,I_{m}}+\|y,y^{\prime}\|_{W,2\alpha,\gamma,I_{m}}+1)^{2}\nonumber\\
&~~~~~~~~~~~~~~~~\times(\interleave W\interleave_{\alpha,I_{m}}+\interleave \tilde{W}\interleave_{\alpha,I_{m}}+1)\nonumber\\
&+C\mu(\!\|z,z^{\prime}\|_{\tilde{W},2\alpha,\gamma,I_{m}}\!+\!\|y,y^{\prime}\|_{W,2\alpha,\gamma,I_{m}}\!+\!1)^{2}d_{\alpha,I_{m}}(\boldsymbol{W},\tilde{\boldsymbol{W}})\!\Bigg]\!(t\!-\!s)^{2\alpha}.
\end{align}
Then, by the above  results we get
\begin{align}
&\|R^{y}-R^{z}\|_{\alpha,\gamma-\alpha,I_{m}}+\|R^{y}-R^{z}\|_{2\alpha,\gamma-2\alpha,I_{m}}\nonumber\\
&\leq C\mu d_{2\alpha,\gamma,I_{m}}(y,z)(\|z,z^{\prime}\|_{\tilde{W},2\alpha,\gamma,I_{m}}+\|y,y^{\prime}\|_{W,2\alpha,\gamma,I_{m}}+1)^{2}\nonumber\\
&~~~~~~~~~~~~~~~~~\times(\interleave W\interleave_{\alpha,I_{m}}+\interleave \tilde{W}\interleave_{\alpha,I_{m}}+1)\nonumber\\
&~~+C\mu(\|z,z^{\prime}\|_{\tilde{W},2\alpha,\gamma,I_{m}}+\|y,y^{\prime}\|_{W,2\alpha,\gamma,I_{m}}+1)^{2}d_{\alpha,I_{m}}(\boldsymbol{W},\tilde{\boldsymbol{W}})\nonumber\\
&~~~~~~~~~~~~~~~~~\times(\interleave W\interleave_{\alpha,I_{m}}+\interleave \tilde{W}\interleave_{\alpha,I_{m}}+1).
\end{align}
Furthermore, we obtain
\begin{small}
 \begin{align*}
d_{2\alpha,\gamma-\sigma,I_{m}}(G(y),G(z))&\leq C\mu d_{2\alpha,\gamma,I_{m}}(y,z)(\|z,z^{\prime}\|_{\tilde{W},2\alpha,\gamma,I_{m}}+\|y,y^{\prime}\|_{W,2\alpha,\gamma,I_{m}}+1)^{2}\nonumber\\
&~~~~~~~~~~~~~~~~~\times(\interleave W\interleave_{\alpha,I_{m}}+\interleave \tilde{W}\interleave_{\alpha,I_{m}}+1)\nonumber\\
&~~+C\mu(\|z,z^{\prime}\|_{\tilde{W},2\alpha,\gamma,I_{m}}+\|y,y^{\prime}\|_{W,2\alpha,\gamma,I_{m}}+1)^{2}d_{\alpha,I_{m}}(\boldsymbol{W},\tilde{\boldsymbol{W}})\nonumber\\
&~~~~~~~~~~~~~~~~~\times(\interleave W\interleave_{\alpha,I_{m}}+\interleave \tilde{W}\interleave_{\alpha,I_{m}}+1).
 \end{align*}
 \end{small}
\end{proof}
\section{Proof of Theorem 5.2}\label{Appendix C}
\setcounter{equation}{0}

\renewcommand{\theequation}{C.\arabic{equation}}
\begin{proof}
By the definition \ref{definition 5.1}, we have that
\begin{align}
d_{2\alpha,0,I_{i}(T)}(S(\cdot)y_{0},S(\cdot)y_{0}^{\eta})&=\|S(\cdot)(y_{0}-y_{0}^{\eta})\|_{\infty,0,I_{i}(T)}+\|S(\cdot)(y_{0}-y_{0}^{\eta})\|_{\alpha,-\alpha,I_{i}(T)}\nonumber\\
&+ \|S(\cdot)(y_{0}-y_{0}^{\eta})\|_{2\alpha,-2\alpha,I_{i}(T)}.
\end{align}
Clearly,
\begin{align}\label{5.25}
\|S(\cdot)(y_{0}-y_{0}^{\eta})\|_{\infty,0,I_{i}(T)}&\leq Ce^{-\lambda T_{i-1}(\theta_{-T}\boldsymbol{W}(\omega))}\|y_{0}-y^{\eta}_{0}\|
\end{align}
In addition, for $\theta=\{\alpha,2\alpha\}$ we have
\begin{align}\label{5.26}
\interleave S(\cdot)(y_{0}-y_{0}^{\eta})\interleave_{\theta,-\theta}\leq Ce^{-\lambda T_{i-1}(\theta_{-T}\boldsymbol{W}(\omega))}\|y_{0}-y^{\eta}_{0}\|.
\end{align}
Then, \eqref{5.25} and \eqref{5.26} show that
\begin{align}\label{5.27}
 d_{2\alpha,0,I_{i}(T)}(S(\cdot)y_{0},S(\cdot)y_{0}^{\eta})\leq Ce^{-\lambda T_{i-1}(\theta_{-T}\boldsymbol{W}(\omega))}\|y_{0}-y^{\eta}_{0}\|.
\end{align}
For the drift term we need to compute
\begin{align}\label{5.28}
d_{2\alpha,0,I_{i}(T)}&\left(\int_{0}^{\cdot}S(\cdot-r)F(y_{r})dr,\int_{0}^{\cdot}S(\cdot-r)F(y^{\eta}_{r})dr\right)\nonumber\\
  &=\left\|\int_{0}^{\cdot}S(\cdot-r)\left(F(y_{r})-F(y^{\eta}_{r})\right)dr\right\|_{\infty,0,I_{i}(T)}\nonumber\\
  &~~+\left\|\int_{0}^{\cdot}S(\cdot-r)\left(F(y_{r})-F(y^{\eta}_{r})\right)dr\right\|_{\alpha,-\alpha,I_{i}(T)}\nonumber\\
  &~~+\left\|\int_{0}^{\cdot}S(\cdot-r)\left(F(y_{r})-F(y^{\eta}_{r})\right)dr\right\|_{2\alpha,-2\alpha,I_{i}(T)}.
\end{align}
The estimates of these three terms are similar to Lemma \ref{lemma A.2}  and Remark \ref{R-A.1}, we only use the  property of Lipshcitz  of  $F$  rather than linear growth.  We give directly its  result as follows
\begin{align}\label{5.39}
  d_{2\alpha,0,I_{i}(T)}&\left(\int_{0}^{\cdot}S(\cdot-r)F(y_{r})dr,\int_{0}^{\cdot}S(\cdot-r)F(y^{\eta}_{r})\right)dr\nonumber\\
  \leq& C\mu\sum_{m=1}^{i-1}e^{-\lambda\left(T_{i-1}(\theta_{-T}\boldsymbol{X})-T_{m}(\theta_{-T}\boldsymbol{X})\right)}\|y-y^{\eta}\|_{\infty,0,I_{m}(T)}\nonumber\\
&+C\mu\|y-y^{\eta}\|_{\infty,0,I_{i}(T)}.
\end{align}
Finally, denote  $T_{m}:=T_m(\theta_{-T}\boldsymbol{W}(\omega)), m\in \{1,\cdots,i\}$.   Similar to the drift term, for any $t\in I_{i}(T)$ we have
\begin{align}\label{5.40}
&\int_{0}^{t}S(t-r)G(y_{r})d\theta_{-T}\boldsymbol{W}_{r}(\omega)-\int_{0}^{t}S(t-r)G(y^{\eta}_{r})d\theta_{-T}\boldsymbol{W}^{\eta}_{r}(\omega)\nonumber\\
&~~~~=\sum_{m=1}^{i-1}\left(\int_{T_{m-1}}^{T_{m}}S(t-r)G(y_{r})d\theta_{-T}\boldsymbol{W}_{r}(\omega)-\int_{T_{m-1}}^{T_{m}}S(t-r)G(y^{\eta}_{r})d\theta_{-T}\boldsymbol{W}^{\eta}_{r}(\omega)\right)\nonumber\\
&~~~~~~~+\int_{T_{i-1}}^{t}S(t-r)G(y_{r})d\theta_{-T}\boldsymbol{W}_{r}(\omega)-\int_{T_{i-1}}^{t}S(t-r)G(y^{\eta}_{r})d\theta_{-T}\boldsymbol{W}^{\eta}_{r}(\omega).
\end{align}
Let $t^\prime=t-T_{i-1}$, $s^\prime=s-T_{i-1}$. For any  $m\in\{1,2,\cdots,i-1\}$, by \eqref{2.1*}, Lemma \ref{lemma A.3}, Lemma \ref{lemma 5.3} and Lemma \ref{lemma 5.4} for $\gamma=0$, we have that
\begin{small}
\begin{align}\label{5.41}
&\left\|\int_{T_{m-1}}^{T_{m}}S(t-r)G(y_{r})d\theta_{-T}\boldsymbol{W}_{r}(\omega)-\int_{T_{m-1}}^{T_{m}}S(t-r)G(y^{\eta}_{r})d\theta_{-T}\boldsymbol{W}^{\eta}_{r}(\omega)\right\|\nonumber\\
&~~~~=\Bigg\|\int_{0}^{T(\theta_{T_{m-1}-T}\boldsymbol{W}(\omega))}S(t-r-T_{m-1})G(y_{r+T_{m-1}})d\theta_{T_{m-1}-T}\boldsymbol{W}_{r}(\omega)\nonumber\\
&~~~~~~~~-\int_{0}^{T(\theta_{T_{m-1}-T}\boldsymbol{W}(\omega))}S(t-r-T_{m-1})G(y^{\eta}_{r+T_{m-1}})d\theta_{T_{m-1}-T}\boldsymbol{W}^{\eta}_{r}(\omega)\Bigg\|\nonumber\\
&~~~~=\Bigg\|S(t^{\prime}+T_{i-1}-T_{m})\bigg[\int_{0}^{T(\theta_{T_{m-1}-T}\boldsymbol{W}(\omega))}S(T(\theta_{T_{m-1}-T}\boldsymbol{W}(\omega))-r)G(y_{r+T_{m-1}})d\theta_{T_{m-1}-T}\boldsymbol{W}_{r}(\omega)\nonumber\\
&~~~~~~~~-\int_{0}^{T(\theta_{T_{m-1}-T}\boldsymbol{W}(\omega))}S(T(\theta_{T_{m-1}-T}\boldsymbol{W}(\omega))-r)G(y^{\eta}_{r+T_{m-1}})d\theta_{T_{m-1}-T}\boldsymbol{W}^{\eta}_{r}(\omega)\bigg]\Bigg\|\nonumber\\
&~~~~\leq Ce^{-\lambda(T_{i-1}-T_{m})}\Bigg\|\bigg[\int_{0}^{T(\theta_{T_{m-1}-T}\boldsymbol{W}(\omega))}S(T(\theta_{T_{m-1}-T}\boldsymbol{W}(\omega))-r)G(y_{r+T_{m-1}})d\theta_{T_{m-1}-T}\boldsymbol{W}_{r}(\omega)\nonumber\\
&~~~~~~~~-\int_{0}^{T(\theta_{T_{m-1}-T}\boldsymbol{W}(\omega))}S(T(\theta_{T_{m-1}-T}\boldsymbol{W}(\omega))-r)G(y^{\eta}_{r+T_{m-1}})d\theta_{T_{m-1}-T}\boldsymbol{W}^{\eta}_{r}(\omega)\bigg]\Bigg\|\nonumber\\
&~~~~\leq Ce^{-\lambda(T_{i-1}-T_{m})}\Bigg[\left\|S\left(T(\theta_{T_{m-1}-T}\boldsymbol{W}(\omega))\right)\left(G(y_{T_{m}})\theta_{-T}W_{T_{m-1},T_{m}}(\omega)-G(y^{\eta}_{T_{m}})\theta_{-T}W^{\eta}_{T_{m-1},T_{m}}(\omega)\right)\right\|\nonumber\\
&~~~~~~~~+\left\|S\left(T(\theta_{T_{m-1}-T}\boldsymbol{W}(\omega))\right)\left(DG(y_{T_{m}})G(y_{T_{m}})\theta_{-T}\mathbb{W}_{T_{m-1},T_{m}}(\omega)-DG(y^{\eta}_{T_{m}})G(y^{\eta}_{T_{m}})\theta_{-T}\mathbb{W}^{\eta}_{T_{m-1},T_{m}}(\omega)\right)\right\|\nonumber\\
&~~~~~~~~+\bigg(d_{\alpha,I_{m}(T)}(\theta_{-T}\boldsymbol{W}(\omega),\theta_{-T}\boldsymbol{W}^{\eta}(\omega))\|G(y),DG(y)G(y)\|_{W,2\alpha,-\sigma,I_{m}(T)}\nonumber\\
&~~~~~~~~+d_{2\alpha,-\sigma,I_{m}(T)}(G(y),G(y^{\eta}))d_{\alpha,I_{m}(T)}(\theta_{-T}\boldsymbol{W}^{\eta}(\omega),0)\bigg) |T(\theta_{T_{m-1}-T}\boldsymbol{W}(\omega))|^{\alpha-\sigma}\Bigg]\nonumber\\
&~~~~\leq Ce^{-\lambda(T_{i-1}-T_{m})}\Bigg[\|S(T(\theta_{T_{m-1}-T}\boldsymbol{W}(\omega)))\|_{\mathcal{L}(\mathcal{B}_{-\sigma},\mathcal{B})}\left\|G(y_{T_{m}})\theta_{-T}W_{T_{m-1},T_{m}}(\omega)-G(y^{\eta}_{T_{m}})\theta_{-T}W^{\eta}_{T_{m-1},T_{m}}(\omega)\right\|_{-\sigma}\nonumber\\
&~~~~~~~~+\Bigg(\left\|DG(y_{T_{m}})G(y_{T_{m}})\theta_{-T}\mathbb{W}_{T_{m-1},T_{m}}(\omega)-DG(y^{\eta}_{T_{m}})G(y^{\eta}_{T_{m}})\theta_{-T}\mathbb{W}^{\eta}_{T_{m-1},T_{m}}(\omega)\right\|_{-\sigma-\alpha}\nonumber\\
&~~~~~~~~\times\|S(T(\theta_{T_{m-1}-T}\boldsymbol{W}(\omega)))\|_{\mathcal{L}(\mathcal{B}_{-\sigma-\alpha},\mathcal{B})}\Bigg)\!+\!\bigg(\!d_{\alpha,I_{m}(T)}(\theta_{-T}\boldsymbol{W}(\omega),\theta_{-T}\boldsymbol{W}^{\eta}(\omega))\|G(y),DG(y)G(y)\|_{W,2\alpha,-\sigma,I_{m}(T)}\nonumber\\
&~~~~~~~~+d_{2\alpha,-\sigma,I_{m}(T)}(G(y),G(y^{\eta}))d_{\alpha,I_{m}(T)}(\theta_{-T}\boldsymbol{W}^{\eta}(\omega),0)\bigg)|T(\theta_{T_{m-1}-T}\boldsymbol{W}(\omega))|^{\alpha-\sigma}\Bigg]\nonumber\\
&~~~~\leq Ce^{-\lambda(T_{i-1}-T_{m})}\bigg(d_{\alpha,I_{m}(T)}(\theta_{-T}\boldsymbol{W}(\omega),\theta_{-T}\boldsymbol{W}^{\eta}(\omega))\|G(y),DG(y)G(y)\|_{X,2\alpha,-\sigma,I_{m}(T)}\nonumber\\
&~~~~~~~~+d_{2\alpha,-\sigma,I_{m}(T)}(G(y),G(y^{\eta}))d_{\alpha,I_{m}(T)}(\theta_{-T}\boldsymbol{W}^{\eta}(\omega),0))\bigg) |T(\theta_{T_{m-1}-T}\boldsymbol{W}(\omega))|^{\alpha-\sigma}\nonumber\\
&~~~~\leq C\mu e^{-\lambda(T_{i-1}-T_{m})}(\|y,y^{\prime}\|_{W,2\alpha,0,I_{m}(T)}+\|y^{\eta},(y^{\eta})^{\prime}\|_{W^\eta,2\alpha,0,I_{m}(T)}+1)^{2}\nonumber\\
&~~~~~~~~\times(1+\|\theta_{-T}\boldsymbol{W}(\omega)\|_{\alpha,I_{m}(T)}+\|\theta_{-T}\boldsymbol{W}^{\eta}(\omega)\|_{\alpha,I_{m}(T)})^{2}\nonumber\\
&~~~~~~~~\times\bigg(d_{\alpha,I_{m}(T)}(\theta_{-T}\boldsymbol{W}(\omega),\theta_{-T}\boldsymbol{W}^{\eta}(\omega))+d_{2\alpha,0,I_{m}(T)}(y,y^{\eta})\bigg).
\end{align}
\end{small}
Furthermore, we have
\begin{align}\label{5.42}
&\left\|\int_{T_{i-1}}^{t}S(t-r)G(y_{r})d\theta_{-T}\boldsymbol{W}_{r}(\omega)-\int_{T_{i-1}}^{t}S(t-r)G(y^{\eta}_{r})d\theta_{-T}\boldsymbol{W}^{\eta}_{r}(\omega)\right\|\nonumber\\
&~~~~\leq C\mu (\|y,y^{\prime}\|_{W,2\alpha,0,I_{i}(T)}+\|y^{\eta},(y^{\eta})^{\prime}\|_{W^\eta,2\alpha,0,I_{i}(T)}+1)^{2}\nonumber\\
&~~~~~~\times(1+\|\theta_{-T}\boldsymbol{W}(\omega)\|_{\alpha,I_{i}(T)}+\|\theta_{-T}\boldsymbol{W}^{\eta}(\omega)\|_{\alpha,I_{i}(T)})^{2}\nonumber\\
&~~~~~~\times\bigg(d_{\alpha,I_{i}(T)}(\theta_{-T}\boldsymbol{W}(\omega),\theta_{-T}\boldsymbol{W}^{\eta}(\omega))+d_{2\alpha,0,I_{i}(T)}(y,y^{\eta})\bigg).
\end{align}
Combining  \eqref{5.41} and \eqref{5.42}, we obtain
\begin{align}\label{5.43}
&\left\|\int_{0}^{\cdot}S(\cdot-r)G(y_{r})d\theta_{-T}\boldsymbol{W}_{r}(\omega)-\int_{0}^{\cdot}S(\cdot-r)G(y^{\eta}_{r})d\theta_{-T}\boldsymbol{W}^{\eta}_{r}(\omega)\right\|_{\infty,0,I_{i}(T)}\nonumber\\
&\leq \sum_{m=1}^{i-1}C\mu e^{-\lambda(T_{i-1}-T_{m})}(\|y,y^{\prime}\|_{W,2\alpha,0,I_{m}(T)}+\|y^{\eta},(y^{\eta})^{\prime}\|_{W^\eta,2\alpha,0,I_{m}(T)}+1)^{2}\nonumber\\
&~~\times(1+\|\theta_{-T}\boldsymbol{W}(\omega)\|_{\alpha,I_{m}(T)}+\|\theta_{-T}\boldsymbol{W}^{\eta}(\omega)\|_{\alpha,I_{m}(T)})^{2}\nonumber\\
&~~\times\bigg(d_{\alpha,I_{m}(T)}(\theta_{-T}\boldsymbol{W}(\omega),\theta_{-T}\boldsymbol{W}^{\eta}(\omega))+d_{2\alpha,0,I_{m}(T)}(y,y^{\eta})\bigg)\nonumber\\
&~~ +C\mu \Bigg[(\|y,y^{\prime}\|_{W,2\alpha,0,I_{i}(T)}+\|y^{\eta},(y^{\eta})^{\prime}\|_{W^\eta,2\alpha,0,I_{i}(T)}+1)^{2}\nonumber\\
&~~\times(1+\|\theta_{-T}\boldsymbol{W}(\omega)\|_{\alpha,I_{i}(T)}+\|\theta_{-T}\boldsymbol{W}^{\eta}(\omega)\|_{\alpha,I_{i}(T)})^{2}\nonumber\\
&~~\times\bigg(d_{\alpha,I_{i}(T)}(\theta_{-T}\boldsymbol{W}(\omega),\theta_{-T}\boldsymbol{W}^{\eta}(\omega))+d_{2\alpha,0,I_{i}(T)}(y,y^{\eta})\bigg)\Bigg].
\end{align}
The estimation of the term $\left\|G(y)-G(y^{\eta})\right\|_{\infty,-\alpha}$ is similar to \eqref{A.8} , and using Lemma \ref{lemma 5.4}, we have
\begin{align}\label{5.44}
&\left\|G(y)\!-\!G(y^{\eta})\right\|_{\infty,-\alpha}\!\leq \!C\!(\!1\!+\!(T_{i}(\theta_{-T}\boldsymbol{W}(\omega))\!-\!T_{i-1}(\theta_{-T}\boldsymbol{W}(\omega))^{\alpha-\sigma}))d_{2\alpha,-\sigma,I_{i}(T)}(G(y),G(y^{\eta}))\nonumber\\
&~~\leq C\mu (\|y,y^{\prime}\|_{W,2\alpha,0,I_{i}(T)}+\|y^{\eta},(y^{\eta})^{\prime}\|_{W^\eta,2\alpha,0,I_{i}(T)}+1)^{2}\nonumber\\
&~~~~~~\times(1+\|\theta_{-T}\boldsymbol{W}(\omega)\|_{\alpha,I_{i}(T)}+\|\theta_{-T}\boldsymbol{W}^{\eta}(\omega)\|_{\alpha,I_{i}(T)})^{2}\nonumber\\
&~~\times\bigg(d_{\alpha,I_{i}(T)}(\theta_{-T}\boldsymbol{W}(\omega),\theta_{-T}\boldsymbol{W}^{\eta}(\omega))+d_{2\alpha,0,I_{i}(T)}(y,y^{\eta})\bigg).
\end{align}
The estimation of  $\|G(y)-G(y^{\eta})\|_{\alpha,-2\alpha}$ is also similar to $\|G(y)\|_{\alpha,-2\alpha}$, namely \eqref{A.8}.  Thus, by Lemma \ref{lemma 5.4} we have that  
\begin{align}\label{5.46}
&\|G(y)-G(y^{\eta})\|_{\alpha,-2\alpha}\leq C\mu (\|y,y^{\prime}\|_{W,2\alpha,0,I_{i}(T)}+\|y^{\eta},(y^{\eta})^{\prime}\|_{W^\eta,2\alpha,0,I_{i}(T)}+1)^{2}\nonumber\\
&~~~~\times(1+\|\theta_{-T}\boldsymbol{W}(\omega)\|_{\alpha,I_{i}(T)}+\|\theta_{-T}\boldsymbol{W}^{\eta}(\omega)\|_{\alpha,I_{i}(T)})^{2}\nonumber\\
&~~~~\times\bigg(d_{\alpha,I_{i}(T)}(\theta_{-T}\boldsymbol{W}(\omega),\theta_{-T}\boldsymbol{W}^{\eta}(\omega))+d_{2\alpha,0,I_{i}(T)}(y,y^{\eta})\bigg).
\end{align}
Finally, we consider the H\"{o}lder seminorm of the remainder terms of the stochastic convolution $Z_{t}=\int_{0}^{t}S(t-r)G(y_{r})d\theta_{-T}\boldsymbol{W}_{r}(\omega)$ and  $Z^{\eta}_{t}=\int_{0}^{t}S(t-r)G(y^{\eta}_{r})d\theta_{-T}\boldsymbol{W}^{\eta}_{r}(\omega)$, namely $\left\|R^{Z}-R^{Z^{\eta}}\right\|_{\theta,-\theta}$. For any $s<t\in I_{i}(T)$, we have
\begin{align}\label{5.47}
R^{Z}_{s,t}-R_{s,t}^{Z^{\eta}}&=\int_{s}^{t}S(t-r)G(y_{r})d\theta_{-T}\boldsymbol{W}_{r}(\omega)-\int_{s}^{t}S(t-r)G(y^{\eta}_{r})d\theta_{-T}\boldsymbol{W}^{\eta}_{r}(\omega)\nonumber\\
&-\!S(t\!-\!s)\!(G(y_{s})\theta_{-T}W_{s,t}(\omega)\!+\!DG(y_{s})G(y_{s})\theta_{-T}\mathbb{W}_{s,t}(\omega)\!\nonumber\\
&-\!G(y^{\eta}_{s})\theta_{-T}W^{\eta}_{s,t}(\omega)\!-\!DG(y^{\eta}_{s})G(y^{\eta}_{s})\theta_{-T}\mathbb{W}^{\eta}_{s,t}(\omega))\nonumber\\
&+\left(S(t-s)-Id\right)(G(y_{s})\theta_{-T}W_{s,t}(\omega)-G(y^{\eta}_{s})\theta_{-T}W_{s,t}^{\eta}(\omega))\nonumber\\
&+S(t-s)(D(y_{s})G(y_{s})\theta_{-T}\mathbb{W}_{s,t}(\omega)-DG(y^{\eta}_{s})G(y^{\eta}_{s})\theta_{-T}\mathbb{W}_{s,t}^{\eta}(\omega))\nonumber\\
&+\left(S(t-s)-Id\right)\bigg(\int_{0}^{s}S(s-r)G(y_{r})d\theta_{-T}\boldsymbol{W}_{r}(\omega)\nonumber\\
&~~~~~~~~~~~~~~~-\int_{0}^{s}S(s-r)G(y^{\eta}_{r})d\theta_{-T}\boldsymbol{W}^{\eta}_{r}(\omega)\bigg).
\end{align}
By Lemma \ref{lemma 5.3}, \ref{lemma A.3},  \ref{lemma 5.4}, we obtain
\begin{small}
\begin{align}\label{5.48}
\bigg\|&\int_{s}^{t}S(t-r)G(y_{r})d\theta_{-T}\boldsymbol{W}_{r}(\omega)-\int_{s}^{t}S(t-r)G(y^{\eta}_{r})d\theta_{-T}\boldsymbol{W}^{\eta}_{r}(\omega)\nonumber\\
&~~~~-S(t\!-\!s)(G(y_{s})\theta_{-T}W_{s,t}(\omega)\!+\!DG(y_{s})G(y_{s})\theta_{-T}\mathbb{W}_{s,t}(\omega)\!\nonumber\\
&~~~~-\!G(y^{\eta}_{s})\theta_{-T}W^{\eta}_{s,t}(\omega)\!-\!DG(y^{\eta}_{s})G(y^{\eta}_{s})\theta_{-T}\mathbb{W}^{\eta}_{s,t}(\omega))\bigg\|_{-\theta}\nonumber\\
&~~\leq \bigg(d_{\alpha,I_{i}(T)}(\theta_{-T}\boldsymbol{W}(\omega),\theta_{-T}\boldsymbol{W}^{\eta}(\omega))\|G(y),DG(y)G(y)\|_{W,2\alpha,-\sigma,I_{i}(T)}\nonumber\\
&~~~~~~~~~~~~~~~~~~~~~~~~~~~~~~~~~~~~~~~~~+d_{2\alpha,-\sigma,I_{i}(T)}(G(y),G(y^{\eta})\|\theta_{-T}\boldsymbol{W}^{\eta}(\omega)\|_{\alpha,I_{i}(T)}\bigg)(t-s)^{\alpha+\theta-\sigma}\nonumber\\
&~~\leq C\mu (\|y,y^{\prime}\|_{W,2\alpha,0,I_{i}(T)}+\|y^{\eta},(y^{\eta})^{\prime}\|_{W^\eta,2\alpha,0,I_{i}(T)}+1)^{2}\nonumber\\
&~~~~\times(1+\|\theta_{-T}\boldsymbol{W}(\omega)\|_{\alpha,I_{i}(T)}+\|\theta_{-T}\boldsymbol{W}^{\eta}(\omega)\|_{\alpha,I_{i}(T)})^{2}\nonumber\\
&~~~~\times\bigg(d_{\alpha,I_{i}(T)}(\theta_{-T}\boldsymbol{W}(\omega),\theta_{-T}\boldsymbol{W}^{\eta}(\omega))+d_{2\alpha,0,I_{i}(T)}(y,y^{\eta})\bigg)(t-s)^{\alpha+\theta-\sigma}.
\end{align}
\end{small}
By \eqref{2.2*}, Lemma \ref{lemma A.3}, \ref{lemma 5.4},  we have
\begin{align}\label{5.49}
  &\|\left(S(t-s)-Id\right)(G(y_{s})\theta_{-T}W_{s,t}(\omega)-G(y^{\eta}_{s})\theta_{-T}W_{s,t}^{\eta}(\omega))\|_{-\theta}\nonumber\\
  &\leq \|S(t-s)-Id\|_{\mathcal{L}(\mathcal{B}_{-\sigma},\mathcal{B}_{-\theta})}\|G(y_{s})\theta_{-T}W_{s,t}(\omega)-G(y^{\eta}_{s})\theta_{-T}W_{s,t}^{\eta})(\omega)\|_{-\sigma}\nonumber\\
  &\leq C|t-s|^{\theta+\alpha-\sigma}\bigg[\|G(y),DG(y)G(y)\|_{W,2\alpha,-\sigma,I_{i}(T)}d_{\alpha,I_{i}(T)}(\theta_{-T}\boldsymbol{W}(\omega),\theta_{-T}\boldsymbol{W}^{\eta}(\omega))\nonumber\\
  &~~~~~~~~~~~~~~~~~~~~~~~~~~~+d_{2\alpha,-\sigma,I_{i}(T)}d(G(y),G(y^{\eta}))d_{\alpha,I_{i}(T)} (\theta_{-T}\boldsymbol{W}^{\eta}(\omega),0)\bigg]\nonumber\\
  &\leq C\mu (\|y,y^{\prime}\|_{W,2\alpha,0,I_{i}(T)}+\|y^{\eta},(y^{\eta})^{\prime}\|_{W^\eta,2\alpha,0,I_{i}(T)}+1)^{2}\nonumber\\
  &~~~~\times(1+\|\theta_{-T}\boldsymbol{W}(\omega)\|_{\alpha,I_{i}(T)}+\|\theta_{-T}\boldsymbol{W}^{\eta}(\omega)\|_{\alpha,I_{i}(T)})^{2}\nonumber\\
&~~~~\times\bigg(d_{\alpha,I_{i}(T)}(\theta_{-T}\boldsymbol{W}(\omega),\theta_{-T}\boldsymbol{W}^{\eta}(\omega))+d_{2\alpha,0,I_{i}(T)}(y,y^{\eta})\bigg)(t-s)^{\alpha+\theta-\sigma}.
\end{align}
Using \eqref{2.1*}, Lemma \ref{lemma A.3}, \ref{lemma 5.4},   we get
\begin{align}\label{5.50}
 &\|S(t-s)(D(y_{s})G(y_{s})\theta_{-T}\mathbb{W}_{s,t}(\omega)-DG(y^{\eta}_{s})G(y^{\eta}_{s})\theta_{-T}\mathbb{W}_{s,t}^{\eta}(\omega))\|_{-\theta}\nonumber\\
 &~~~~\leq C(t-s)^{\theta-2\alpha}\|DG(y_s)G(y_{s})\theta_{-T}\mathbb{W}_{s,t}(\omega)-DG(y_s^\eta)G(y^{\eta}_{s})\theta_{-T}\mathbb{W}_{s,t}^{\eta}(\omega)\|_{-2\alpha}\nonumber\\
 &~~~~\leq C(t-s)^{\theta}\biggr(\|DG(y_s)G(y_s)\|_{-2\alpha}d_{\alpha,I_{i}(T)}(\theta_{-T}\boldsymbol{W}(\omega),\theta_{-T}\boldsymbol{W}^{\eta}(\omega))\nonumber\\
 &~~~~~~~~~~+d_{\alpha,I_{i}(T)} (\theta_{-T}\boldsymbol{W}^{\eta}(\omega),0)\|DG(y_s)G(y_s)-DG(y^{\eta}_s)G(y^\eta_s)\|_{-2\alpha}\biggr)\nonumber\\
 &~~~~\leq C\mu (\|y,y^{\prime}\|_{W,2\alpha,0,I_{i}(T)}+\|y^{\eta},(y^{\eta})^{\prime}\|_{W^\eta,2\alpha,0,I_{i}(T)}+1)^{2}\nonumber\\
 &~~~~~~~~~~\times(1+\|\theta_{-T}\boldsymbol{W}(\omega)\|_{\alpha,I_{i}(T)}+\|\theta_{-T}\boldsymbol{W}^{\eta}(\omega)\|_{\alpha,I_{i}(T)})^{2}\nonumber\\
&~~~~~~~~~~\times \bigg(d_{\alpha,I_{i}(T)}(\theta_{-T}\boldsymbol{W}(\omega),\theta_{-T}\boldsymbol{W}^{\eta}(\omega))+d_{2\alpha,0,I_{i}(T)}(y,y^{\eta})\bigg)(t-s)^{\theta}.
\end{align}
For the last term \eqref{5.47}, by \eqref{5.41}, \eqref{5.43}, we obtain
\begin{align}\label{5.51}
&\left\|\left(S(t-s)-Id\right)\left(\int_{0}^{s}S(s-r)G(y_{r})d\theta_{-T}\boldsymbol{W}_{r}(\omega)-\int_{0}^{s}S(s-r)G(y^{\eta}_{r})d\theta_{-T}\boldsymbol{W}^{\eta}_{r}(\omega)\right)\right\|_{-\theta}\nonumber\\
&\leq (t-s)^{\theta}\Bigg(\sum_{m=1}^{i-1}C\mu e^{-\lambda(T_{i-1}-T_{m})}(\|y,y^{\prime}\|_{W,2\alpha,0,I_{m}(T)}+\|y^{\eta},(y^{\eta})^{\prime}\|_{W^\eta,2\alpha,0,I_{m}(T)}+1)^{2}\nonumber\\
&~~~~\times(1+\|\theta_{-T}\boldsymbol{W}(\omega)\|_{\alpha,I_{m}(T)}+\|\theta_{-T}\boldsymbol{W}^{\eta}(\omega)\|_{\alpha,I_{m}(T)})^{2}\nonumber\\
&~~~~\times\bigg(d_{\alpha,I_{m}(T)}(\theta_{-T}\boldsymbol{W}(\omega),\theta_{-T}\boldsymbol{W}^{\eta}(\omega))+d_{2\alpha,0,I_{m}(T)}(y,y^{\eta})\bigg)\nonumber\\
& ~~~~+C\mu\bigg[ (\|y,y^{\prime}\|_{W,2\alpha,0,I_{i}(T)}+\|y^{\eta},(y^{\eta})^{\prime}\|_{W^\eta,2\alpha,0,I_{i}(T)}+1)^{2}\nonumber\\
&~~~~\times(1+\|\theta_{-T}\boldsymbol{W}(\omega)\|_{\alpha,I_{i}(T)}+\|\theta_{-T}\boldsymbol{W}^{\eta}(\omega)\|_{\alpha,I_{i}(T)})^{2}\nonumber\\
&~~~~\times\bigg(d_{\alpha,I_{i}(T)}(\theta_{-T}\boldsymbol{W}(\omega),\theta_{-T}\boldsymbol{W}^{\eta}(\omega))+d_{2\alpha,0,I_{i}(T)}(y,y^{\eta})\bigg)\bigg]\Bigg).
\end{align}
Combining  \eqref{5.47}-\eqref{5.51}, we have
\begin{small}
\begin{align}\label{5.52}
  &\|R^{y}-R^{y^{\eta}}\|_{\theta,-\theta,I_{i}(T)}\leq \sum_{m=1}^{i-1}C\mu\bigg[ e^{-\lambda(T_{i-1}-T_{m})}(\|y,y^{\prime}\|_{W,2\alpha,0,I_{m}(T)}+\|y^{\eta},(y^{\eta})^{\prime}\|_{W^\eta,2\alpha,0,I_{m}(T)}+1)^{2}\nonumber\\
&~~~~~~~~~~~~~~~\times(1+\|\theta_{-T}\boldsymbol{W}(\omega)\|_{\alpha,I_{m}(T)}+\|\theta_{-T}\boldsymbol{W}^{\eta}(\omega)\|_{\alpha,I_{m}(T)})^{2}\nonumber\\
&~~~~~~~~~~~~~~~\times\bigg(d_{\alpha,I_{m}(T)}(\theta_{-T}\boldsymbol{W}(\omega),\theta_{-T}\boldsymbol{W}^{\eta}(\omega))+d_{2\alpha,0,I_{m}(T)}(y,y^{\eta})\bigg)\bigg]\nonumber\\
&~~~~~~~~~~~ ~~~~+C\mu\bigg[ (\|y,y^{\prime}\|_{W,2\alpha,0,I_{i}(T)}+\|y^{\eta},(y^{\eta})^{\prime}\|_{W^\eta,2\alpha,0,I_{i}(T)}+1)^{2}\nonumber\\
&~~~~~~~~~~~~~~~\times(1+\|\theta_{-T}\boldsymbol{W}(\omega)\|_{\alpha,I_{i}(T)}+\|\theta_{-T}\boldsymbol{W}^{\eta}(\omega)\|_{\alpha,I_{i}(T)})^{2}\nonumber\\
&~~~~~~~~~~~~~~~\times\bigg(d_{\alpha,I_{i}(T)}(\theta_{-T}\boldsymbol{W}(\omega),\theta_{-T}\boldsymbol{W}^{\eta}(\omega))+d_{2\alpha,0,I_{i}(T)}(y,y^{\eta})\bigg)\bigg].
\end{align}
\end{small}
Thus,
\begin{align}\label{5.53}
 &d_{2\alpha,0,I_{i}(T)}\left(\int_{0}^{\cdot}S(\cdot-r)G(y_{r})d\theta_{-T}\boldsymbol{W}_{r}(\omega),\int_{0}^{\cdot}S(\cdot-r)G(y^{\eta}_{r})d\theta_{-T}\boldsymbol{W}^{\eta}_{r}(\omega)\right)\nonumber\\
  &~~~~\leq \sum_{m=1}^{i-1}C\mu\bigg[ e^{-\lambda(T_{i-1}-T_{m})}(\|y,y^{\prime}\|_{W,2\alpha,0,I_{m}(T)}+\|y^{\delta},(y^{\eta})^{\prime}\|_{W^\eta,2\alpha,0,I_{m}(T)}+1)^{2}\nonumber\\
&~~~~~~\times(1+\|\theta_{-T}\boldsymbol{W}(\omega)\|_{\alpha,I_{m}(T)}+\|\theta_{-T}\boldsymbol{W}^{\eta}(\omega)\|_{\alpha,I_{m}(T)})^{2}\nonumber\\
&~~~~~~\times\bigg(d_{\alpha,I_{m}(T)}(\theta_{-T}\boldsymbol{W}(\omega),\theta_{-T}\boldsymbol{W}^{\eta}(\omega))+d_{2\alpha,0,I_{m}(T)}(y,y^{\eta})\bigg)\bigg]\nonumber\\
& ~~~~~~+C\mu\bigg[ (\|y,y^{\prime}\|_{W,2\alpha,0,I_{i}(T)}+\|y^{\eta},(y^{\eta})^{\prime}\|_{W^\eta,2\alpha,0,I_{i}(T)}+1)^{2}\nonumber\\
&~~~~~~\times(1+\|\theta_{-T}\boldsymbol{W}(\omega)\|_{\alpha,I_{i}(T)}+\|\theta_{-T}\boldsymbol{W}^{\eta}(\omega)\|_{\alpha,I_{i}(T)})^{2}\nonumber\\
&~~~~~~\times\bigg(d_{\alpha,I_{i}(T)}(\theta_{-T}\boldsymbol{W}(\omega),\theta_{-T}\boldsymbol{W}^{\eta}(\omega))+d_{2\alpha,0,I_{i}(T)}(y,y^{\eta})\bigg)\bigg].
\end{align}
Furthermore, in terms of \eqref{5.27}, \eqref{5.39}, \eqref{5.53}, we have
\begin{align}\label{5.54}
&d_{2\alpha,0,I_{i}(T)}(y,y^{\eta})\leq \sum_{m=1}^{i-1}C\mu\bigg[ e^{-\lambda(T_{i-1}-T_{m})}(\|y,y^{\prime}\|_{W,2\alpha,0,I_{m}(T)}+\|y^{\eta},(y^{\eta})^{\prime}\|_{W^\eta,2\alpha,0,I_{m}(T)}+1)^{2}\nonumber\\
&~~~~~~\times(1+\|\theta_{-T}\boldsymbol{W}(\omega)\|_{\alpha,I_{m}(T)}+\|\theta_{-T}\boldsymbol{W}^{\eta}(\omega)\|_{\alpha,I_{m}(T)})^{2}\nonumber\\
&~~~~~~\times\bigg(d_{\alpha,I_{m}(T)}(\theta_{-T}\boldsymbol{W}(\omega),\theta_{-T}\boldsymbol{W}^{\eta}(\omega))+d_{2\alpha,0,I_{m}(T)}(y,y^{\eta})\bigg)\bigg]\nonumber\\
&~~~~~~ +C\mu\bigg[ (\|y,y^{\prime}\|_{W,2\alpha,0,I_{i}(T)}+\|y^{\eta},(y^{\delta})^{\prime}\|_{W^\eta,2\alpha,0,I_{i}(T)}+1)^{2}\nonumber\\
&~~~~~~\times(1+\|\theta_{-T}\boldsymbol{W}(\omega)\|_{\alpha,I_{i}(T)}+\|\theta_{-T}\boldsymbol{W}^{\eta}(\omega)\|_{\alpha,I_{i}(T)})^{2}\nonumber\\
&~~~~~~\times\bigg(d_{\alpha,I_{i}(T)}(\theta_{-T}\boldsymbol{W}(\omega),\theta_{-T}\boldsymbol{W}^{\eta}(\omega))+d_{2\alpha,0,I_{i}(T)}(y,y^{\eta})\bigg)\bigg]\nonumber\\
&~~~~~~+Ce^{-\lambda T_{i-1}(\theta_{-T}\boldsymbol{W}(\omega))}\|y_{0}-y^{\eta}_{0}\|.
\end{align}
For sufficiently small $\eta>0$ , due to the boundedness  of  $\|y,y^{\prime}\|_{W,2\alpha,0,I_{m}(T)}$ and $\|y^{\eta},(y^{\eta})^{\prime}\|_{W^{\eta},2\alpha,0,I_{m}(T)}$ for any $m$. Then \eqref{5.54} can be reduced  to as follows
\begin{align}\label{5.55}
&d_{2\alpha,0,I_{i}(T)}(y,y^{\eta})\leq \sum_{m=1}^{i-1}C\mu e^{-\lambda(T_{i-1}-T_{m})}\bigg(d_{\alpha,I_{m}(T)}(\theta_{-T}\boldsymbol{W}(\omega),\theta_{-T}\boldsymbol{W}^{\eta}(\omega))+d_{2\alpha,0,I_{m}(T)}(y,y^{\eta})\bigg)\nonumber\\
& +\!C\mu \bigg(\!d_{\alpha,I_{i}(T)}(\theta_{-T}\boldsymbol{W}(\omega),\theta_{-T}\boldsymbol{W}^{\eta}(\omega))\!+\!d_{2\alpha,0,I_{i}(T)}(y,y^{\eta})\!\bigg)\!+\!Ce^{-\!\lambda \!T_{i-1}}\|y_{0}\!-\!y^{\eta}_{0}\|.
\end{align}
Choose $\mu$ sufficiently small  such that
\begin{align*}\label{5.56}
d_{2\alpha,0,I_{i}(T)}(y,y^{\eta})&\leq \sum_{m=1}^{i-1}\frac{C\mu}{1-C\mu} e^{-\lambda(T_{i-1}-T_{m})}\bigg(d_{\alpha,I_{m}(T)}(\theta_{-T}\boldsymbol{W}(\omega),\theta_{-T}\boldsymbol{W}^{\eta}(\omega))+d_{2\alpha,0,I_{m}(T)}(y,y^{\eta})\bigg)\nonumber\\
& +\frac{C\mu}{1-C\mu} d_{\alpha,I_{i}(T)}(\theta_{-T}\boldsymbol{W}(\omega),\theta_{-T}\boldsymbol{W}^{\eta}(\omega))+\frac{C}{1-C\mu}e^{-\lambda T_{i-1}}\|y_{0}-y^{\eta}_{0}\|.
\end{align*}
In order to estimate the above inequality,  we use the property of stopping times
$$T_{i-1}(\theta_{-T}\boldsymbol{W}(\omega))-T_{m}(\theta_{-T}\boldsymbol{W}(\omega))=T_{i-1-m}(\theta_{T_{m}-T}\boldsymbol{W}(\omega))=-T_{-i+1+m}(\theta_{T_{i-1}-T}\boldsymbol{W}(\omega)),$$ and the discrete Gr\"{o}nwall inequality\cite{holte2009discrete}, i.e. for the nonnegative sequences $\{y_{n}\}$ and $\{g_{n}\}$  which  satisfy                                                                                                                                                                             \begin{align*}
y_{n}\leq C+\sum_{j=0}^{n-1}g_{j}y_{j},
\end{align*}
where $C>0$. Then
$$y_{n}\leq C\prod_{j=0}^{n-1}(1+g_{j}).$$
Then by the Corollary \ref{corollary 5.1*}, there exists $\eta(\epsilon)$ for any  $\epsilon>0$, such that  $\eta<\eta(\epsilon)$
$$d_{\alpha,I_{m}(T)}(\theta_{-T}\boldsymbol{W}(\omega),\theta_{-T}\boldsymbol{W}^{\eta}(\omega))<\epsilon,~m\in\mathbb{Z}$$
where the inequality is uniform for $m$. Furthermore, by the property of stopping times we have
\begin{align*}
\sum_{m=1}^{i-1}e^{-\lambda(T_{i-1}-T_{m})}&=\sum_{m=1}^{i-1}e^{\lambda T_{-i+1+m}(\theta_{T_{i-1}-T}\boldsymbol{W}(\omega))}\nonumber\\
&=\sum_{m=2-i}^{0}e^{\lambda T_{m}(\theta_{T_{i-1}-T}\boldsymbol{W}(\omega))}\nonumber\\
&\leq \sum_{m=-\infty}^{0}e^{m\lambda(d-\epsilon)}\leq C.
\end{align*}
Finally, we claim that $\prod_{m=1}^{i-1}(1+\frac{C\mu}{1-C\mu}e^{-m\lambda(d-\epsilon)})$  is uniformly finite for $i$. Then we can obatin
\begin{align*}
 d_{2\alpha,0,I_{i}(T)}(y,y^{\eta})\rightarrow 0, \quad \eta\rightarrow 0.
\end{align*}
\end{proof}
\subsection*{Acknowledgements}

{
	The authors would like to thank Prof. B. Schmalfuss for many useful discussions and comments.
}
\bibliographystyle{abbrv}

\end{document}